\documentclass{article}

\usepackage{amsmath,amsfonts,amssymb,amsthm,hyperref}
\usepackage{color}
\usepackage{tikz}
\usepackage{xcolor}

\newtheorem{thm}{Theorem}[section]
\newtheorem{cor}[thm]{Corollary}
\newtheorem{lem}[thm]{Lemma}
\newtheorem{prop}[thm]{Proposition}
\newtheorem*{prop*}{Proposition}

\theoremstyle{definition}

\newtheorem{conj}[thm]{Conjecture}
\newtheorem{question}[thm]{Question}
\newtheorem{defn}[thm]{Definition}

\newtheorem{example}[thm]{Example}

\newtheorem*{remark}{Remark}
\newtheorem*{note}{Note}

\newcommand{\Z}{\mathbb{Z}}
\DeclareMathOperator{\len}{len}

\begin{document}

\title{Free structures and limiting density} 

\author{Johanna N.\ Y.\ Franklin\thanks{The first author was supported in part by Simons Foundation Collaboration Grant \#420806.}, Meng-Che ``Turbo'' Ho\thanks{The second author acknowledges support from the National Science Foundation under Grant No.\ DMS-2054558.}
, and Julia Knight\thanks{The first and third authors acknowledge support from the National Science Foundation under Grant \#DMS-1800692.} \thanks{This material is based upon work supported by the National Science Foundation under Grant No.\ DMS-1928930 while the second and third authors participated in a program hosted by the Mathematical Sciences Research Institute in Berkeley, California, during Fall 2020 and Summer 2022 programs.}}

\maketitle 
\begin{abstract}
Gromov asked what a typical (finitely presented) group looks like, and he suggested a way to make the question precise in terms of \emph{limiting density}.  The typical finitely generated group is known to share some important properties with the non-abelian free groups.  We ask Gromov's question more generally, for structures in an arbitrary \emph{algebraic variety} (in the sense of universal algebra), with presentations of a specific form.  We focus on elementary properties.  We give examples illustrating different behaviors of the limiting density.  Based on the examples, we identify sufficient conditions for the elementary first-order theory of the free structure to match that of the typical structure; i.e., a sentence is true in the free structure iff it has limiting density $1$.    
\end{abstract}

\section{Introduction} 

In the paper where he introduced the notion of hyperbolic group, Gromov \cite{G} asked what a typical group looks like.  He was thinking of finitely presented groups.  He described, in terms of limiting density, what it might mean for a typical group to have some property $Q$, and he stated that the typical group is hyperbolic.  Gromov's notion has been made precise in different ways; see, for instance, the survey \cite{Olivier}.  Ol'shanskii \cite{Ol'shanskii} cleaned up the statement and the proof that the typical group is hyperbolic.  The third author conjectured that for the typical group obtained from a presentation consisting of $m\geq 2$ generators and a single relator, the elementary first order theory matches that of the non-abelian free groups.  In this paper, we generalize Gromov's question to arbitrary \emph{equational classes}, or \emph{algebraic varieties} (in the sense of universal algebra).  Here, as for groups, the notions of \emph{finite presentation} and \emph{free structure} make sense.  We find examples exhibiting different behavior.  Our main results are for varieties with only unary functions.  With some rather strong conditions on the variety, and on the presentations, we obtain the analogue of the conjecture for groups:  the sentences true in the free structure are exactly those with limiting density~$1$.  

\subsection{Organization} 

We begin with Gromov's original question, which concerned finitely presented groups. We then generalize this question to finitely presented members of a variety $V$.  Section \ref{sec:generalizing_the_question} has some background on algebraic varieties. Section~\ref{illu-ex} has four examples illustrating different possible behaviors and also introduces key ideas that will appear in later proofs. The first example shows that in a bijective structure with a single identity, the sentences true in the free structures have limiting density $1$. In the second example, the set of sentences with limiting density 1 is the theory of a structure in the variety, but this structure is not finitely generated, nor is it free. In the third example, we look at bijective structures as in the first example but with two identities, and we give a sentence for which the limiting density does not exist. In the fourth example, we look at abelian groups and give sentences where the limiting density is strictly between $0$ and $1$. 

In Section \ref{general}, we consider more general bijective varieties.  We show that for these varieties and presentations with a single generator and a single identity, a sentence has limiting density $1$ if and only if it is true in the free structure.  For a language with unary function symbols $f_1,\ldots,f_n$, injective and commuting, we say how to find elementary invariants.\footnote{We are grateful to Sergei Starchenko, who, having seen our result for bijective structures, suggested that we look further at varieties for languages with unary functions.}  We use a version of Gaifman's Locality Theorem, which we prove using saturation.\footnote{We are grateful to Phokion Kolaitis for alerting us to Gaifman's Theorem and providing references.}  In Section \ref{naming}, we consider sentences with constants.  For an arbitrary variety and presentations with a fixed tuple $\bar{a}$ of generators, we give conditions guaranteeing that, for sentences $\varphi$ in the language with added constants naming the generators, $\varphi$ has limiting density $1$ if and only if it is true in the free structure on $\bar{a}$.  In Section \ref{sec:MoreExamples}, we give further examples illustrating the results from Sections \ref{general} and \ref{naming}.   

\subsection{Gromov's question about groups}\label{subsec:GromovQ}

Here, we recall Gromov's original question and mention some prior work on typical or random groups.    
The usual language for groups has a binary operation symbol (for the group operation), a unary operation symbol (for the inverse), and a constant (for the identity).  Let $T$ be the theory of groups.  Recall that a \emph{group presentation} consists of a tuple $\bar{a}$ of \emph{generators} and a set $R$ of words $w_i(\bar{a})$ on these generators, called \emph{relators}.  In the group $G$ with presentation $\langle\bar{a}|R\rangle$, $G\models t(\bar{a}) = e$ if and only if $T\cup \{w(\bar{a}) = e:w(\bar{a})\in R\}\models t(\bar{a}) = e$.  Suppose $F$ is the free group on $\bar{a}$ and $N$ is the subgroup of $F$ consisting of the elements $t(\bar{a})$ such that $T\cup \{w(\bar{a}) = e\}\models t(\bar{a}) = e$.  Then $G\cong F/N$. 

The notion of limiting density depends not just on the variety, but also on the allowable group presentations, Ol'shanskii \cite{Ol'shanskii} considered presentations with $m$ generators and $k$ relators, all reduced.  Kapovich and Schupp \cite{KS} considered the case where $k = 1$.  In the Gromov ``density'' model, the number of relators may vary but is bounded in terms of the length of the relators and a parameter $d$.  It is important to bound the number of relators in some way; otherwise, the typical group will almost surely be trivial \cite{Ol'shanskii}.    
  
\begin{defn} 

Let $Q$ be a property of interest.  Let $P_s$ be the number of presentations in which the relators have length at most $s$, and let $P_s(Q)$ be the number of these presentations for which the resulting group has property $Q$.  The \emph{limiting density} for $Q$ is $\lim_{s\rightarrow\infty}\frac{P_s(Q)}{P_s}$, if this limit exists. 

\end{defn} 

We consider the \emph{typical group} to have property $Q$ if $Q$ has density $1$.  We are particularly interested in elementary properties.  For a sentence $\varphi$, the density of the property of satisfying $\varphi$ will be called simply the density of $\varphi$.  The typical group, in the sense of limiting density, is also called the \emph{random group}.\footnote{Harrison-Trainor, Khoussainov, and Turetsky \cite{HKT} took a different approach and considered \emph{random structures} more along the lines of the Rado graph.}  The typical group has some properties of free groups.  Gromov introduced the property of hyperbolicity and stated that the typical group is hyperbolic.  Ol'shanskii \cite{Ol'shanskii} showed that for presentations with $m$ generators and $k$ reduced relators, the property of being hyperbolic has limiting density $1$.  Kapovich and Schupp \cite{KS} showed that for presentations with $m$ generators and $1$ reduced relator, the property that all minimal generating tuples are Nielsen equivalent has limiting density $1$.  \emph{Nielsen equivalence} means that one tuple can be transformed into the other by a finite sequence of simple, obviously reversible, kinds of steps.     

Benjamin Fine, in conversation with the third author at the JMM in January of 2013, made an off-hand comment to the effect that in the limiting density sense, all groups look free.  Fine's comment gave rise to the conjecture below, saying that for presentations with $m\geq 2$ generators and $1$ relator, the typical group has the same elementary first order theory as the the free group.  By a result of Sela \cite{S} (see also Kharlampovich-Miasnikov \cite{KM}), the elementary first-order theory of all non-abelian free groups is the same.  The conjecture is given in \cite[Conjecture 2.2]{Ho}.    

\begin{conj}[Knight]
\label{Conjecture}

Take groups given by presentations with a fixed $m$-tuple of generators, for $m\geq 2$, and $1$ relator (the Kapovich-Schupp model).  For all elementary first order sentences $\varphi$, 
 
\begin{enumerate}

\item  the limiting density exists, and

\item  the density has value $1$ if $\varphi$ is true in the non-abelian free groups, and $0$ otherwise.  

\end{enumerate}

\end{conj}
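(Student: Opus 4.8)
The plan is to reduce both clauses to a single statement: with limiting density $1$, the group $G = \langle \bar a \mid r\rangle$ is \emph{elementarily free}, meaning it has the same first-order theory as the nonabelian free groups. This reduction settles everything at once. By the theorem of Sela \cite{S} (see also Kharlampovich-Miasnikov \cite{KM}) the common theory $\mathrm{Th}(F)$ of the nonabelian free groups is complete, so for each sentence $\varphi$ exactly one of $\varphi, \neg\varphi$ lies in $\mathrm{Th}(F)$. If $\varphi \in \mathrm{Th}(F)$, then $G \models \varphi$ wherever $G$ is elementarily free, so $\varphi$ has density $1$; applying this to $\neg\varphi$ handles the case $\varphi \notin \mathrm{Th}(F)$ and forces density $0$. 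In particular the limit exists and takes only the values $0$ and $1$. For the conjecture as literally stated one does not even need a single $G$ to satisfy all of $\mathrm{Th}(F)$ simultaneously; it suffices to show, for each fixed $\varphi \in \mathrm{Th}(F)$ separately, that the set of relators falsifying $\varphi$ has density $0$.

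The inputs establishing the free-like geometry of the typical $G$ are already available. A uniformly random reduced relator of length $s$ satisfies the small cancellation condition $C'(\lambda)$, for any fixed $\lambda > 0$, with probability tending to $1$ as $s \to \infty$; hence with density $1$ the group $G$ is word-hyperbolic (Gromov \cite{G}, Ol'shanskii \cite{Ol'shanskii}), matching the hyperbolicity and Nielsen-equivalence phenomena recorded by Kapovich-Schupp \cite{KS}. Moreover, with density $1$ the relator $r$ is not a proper power, so by the Karrass-Magnus-Solitar torsion theorem $G$ is torsion-free; and for $m \geq 2$ a generic $r$ is not conjugate into a proper free factor, so $G$ is freely indecomposable and therefore one-ended. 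Thus, with density $1$, the typical one-relator group is a one-ended, torsion-free, word-hyperbolic group.

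The hard part---indeed the entire content of the conjecture---is to cross from these \emph{geometric} free-like properties to \emph{elementary} freeness. Sela's work identifies the finitely generated elementarily free groups with the (hyperbolic) $\omega$-residually free towers, equivalently the elementarily free limit groups, and the task becomes to show that the generic one-relator group carries the tower/JSJ structure this criterion demands, uniformly enough that the construction succeeds with probability approaching $1$. This appears to require controlling the fine algebraic structure of a generic relator far beyond what small cancellation and hyperbolicity give. I expect this to be the principal obstacle, and I note that the Gaifman-locality method developed later in this paper does not transfer to it: the first-order language of groups does not see the word metric on the Cayley graph, so the local tree-likeness of a hyperbolic Cayley graph cannot be converted directly into agreement of first-order theories, as it can for the unary-function varieties. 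Absent such a locality reduction, the sentence-by-sentence route of the first paragraph remains---showing for each fixed $\varphi \in \mathrm{Th}(F)$ that density-$0$ many relators falsify it---but obtaining effective, uniform control over how a single existential or universal statement reflects into the quotient is, I believe, where the real difficulty concentrates.
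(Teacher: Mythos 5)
You have not proved the statement, and neither does the paper: what you were given is Conjecture \ref{Conjecture}, which the paper states as an \emph{open} conjecture of Knight and explicitly does not prove. The only things the paper offers are pieces of evidence --- the Arzhantseva--Ol'shanskii result that a random group contains many free subgroups (handling existential sentences true in the free group) and the Kharlampovich--Sklinos theorem that in Gromov's density model with $d \le 1/16$ a random group satisfies the same universal sentences as the non-abelian free groups --- together with the remark that the latter ``implies the conjecture (for universal sentences).'' So there is no paper proof to compare your argument against, and any purported complete proof would have to be checked on its own terms.

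On those terms, your proposal has a gap exactly as large as the conjecture itself, and to your credit you say so. Your first paragraph is a correct and careful reduction: by Sela's completeness of the common theory of non-abelian free groups, it suffices to show sentence-by-sentence that each $\varphi$ true in the free groups has limiting density $1$, and you rightly note that this avoids the quantifier-exchange problem of requiring a single generic $G$ to satisfy all of $\mathrm{Th}(F)$ at once (a union of countably many density-$0$ exceptional sets need not have density $0$). Your second paragraph's geometric inputs (generic $C'(\lambda)$, hence hyperbolicity; genericity of not being a proper power, hence torsion-freeness; one-endedness for $m \ge 2$) are standard and correctly attributed. But the third paragraph concedes that the passage from these geometric free-like properties to elementary equivalence with the free group --- via Sela's characterization of elementarily free groups as hyperbolic $\omega$-residually free towers, or by any other route --- is not carried out. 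That passage is not a technical detail to be filled in; it is the entire content of the conjecture, which is why the conjecture remains open and why the paper restricts its positive results to unary-function varieties where Gaifman locality applies (a tool you correctly observe does not transfer to groups, since first-order logic over the group language does not see the word metric). In short: your write-up is an accurate problem analysis and survey of partial results, but it is not a proof, and it should not be presented as one.
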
  

There is some evidence for the conjecture.  By a result of Arzhantseva and Ol'shanskii \cite{Ar96, Ar97}, a random group obtained from a presentation with $m$ generators and $k$ relators has many free subgroups.  Thus, an existential sentence true in the free group is also true in a random group.  Kharlampovich and Sklinos \cite{KhaSkl} used Gromov's density model, with parameter $d$.  In this setting, they showed the following.  
 
\begin{thm} [Kharlampovich and Sklinos \cite{KhaSkl}] 

A random group, in Gromov's density model with $d\le 1/16$, satisfies a universal sentence if and only if the sentence is true in the non-abelian free groups.

\end{thm}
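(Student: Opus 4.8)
The plan is to fix the nonabelian free group $F = F_m$ on the $m \geq 2$ generators and to present a random group as $G = F/N$, where $N$ is the normal closure of the random family of about $(2m-1)^{d\ell}$ cyclically reduced relators of length $\ell$. Every universal sentence is logically equivalent to one of the form $\varphi = \forall \bar{x}\,\psi(\bar{x})$ with $\psi$ quantifier-free, so its negation is $\exists\bar{x}\,\chi(\bar{x})$; putting $\chi$ into disjunctive normal form lets me treat the disjuncts one at a time, each a single system $\bigwedge_i u_i(\bar{x}) = e \,\wedge\, \bigwedge_j v_j(\bar{x}) \neq e$. The theorem then reduces to the claim that, with probability tending to $1$ as $\ell \to \infty$, such a fixed system is solvable in $G$ if and only if it is solvable in $F$. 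I would prove the two implications by entirely different means.

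The implication ``solvable in $G$ forces solvable in $F$'' is the same as $G \models \varphi \Rightarrow F \models \varphi$, and here I would use that universal sentences pass to substructures. For $d < 1/2$ the random group is, with high probability, a nonelementary torsion-free hyperbolic group, so it contains a nonabelian free subgroup $H \cong F_2$. Since $\varphi$ is universal and $H \leq G$, we get $G \models \varphi \Rightarrow H \models \varphi$; and by Sela's theorem all nonabelian free groups are elementarily equivalent, so $H \models \varphi$ iff $F \models \varphi$. (Dually, one can see the contrapositive directly: a counterexample tuple in $F$ to $\varphi$ has bounded length, and by the small-cancellation geometry of the random presentation the quotient map $F \to G$ is injective on balls of radius growing with $\ell$, so the finitely many short nontrivial words witnessing the failure survive in $G$.) Either way this direction is routine given hyperbolicity and Sela.

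The hard implication is ``solvable in $F$ forces solvable in $G$,'' equivalently $F \models \varphi \Rightarrow G \models \varphi$, which requires \emph{lifting} a solution from the quotient to $F$. Given a solution $\bar{g}$ of the system in $G$, choose any preimages $\bar{h}$ in $F$. The inequations lift for free: $v_j(\bar{g}) \neq e$ means $v_j(\bar{h}) \notin N$, hence $v_j(\bar{h}) \neq e$ in $F$, regardless of the choice of lift. The equations are the difficulty: $u_i(\bar{g}) = e$ only gives $u_i(\bar{h}) \in N$, and a careless lift can produce $u_i(\bar{h}) \neq e$. My plan is to choose the lift using the Rips--Sela theorem on canonical representatives for systems of equations in torsion-free hyperbolic groups, which converts the $G$-solution $\bar{g}$ into an $F$-tuple $\bar{h}$ for which each ``defect'' $u_i(\bar{h})$ is an element of $N$ whose length is bounded linearly in the hyperbolicity constant $\delta$ of $G$ and in the (fixed) complexity of the system. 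For random groups one has $\delta \asymp \ell$, so each $u_i(\bar{h})$ is an element of $N$ of length $O(\ell)$. I would then invoke a Greendlinger-type lemma for the random presentation: with high probability the relators satisfy a small-cancellation condition with parameter well below $1/6$, so every nontrivial element of $N$ has reduced length at least $(1-6d)\ell$. If the linear bound on $|u_i(\bar{h})|$ can be driven below this threshold, then each $u_i(\bar{h})$ must be trivial in $F$, so $\bar{h}$ solves the whole system in $F$.

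The main obstacle is precisely this quantitative matching, and it is where the hypothesis $d \le 1/16$ is used. One must simultaneously control (i) the Greendlinger threshold for the density model, which is the probabilistic analogue of classical $C'(1/6)$ small cancellation rather than a literal metric condition on fixed relators, and (ii) the length bound coming from canonical representatives together with the estimate $\delta \asymp \ell$; the constant $1/16$ is the value at which the bound in (ii) falls safely below the threshold in (i). Secondary points to verify are that the random group is torsion-free (needed to apply Rips--Sela), that all estimates hold uniformly over the finitely many disjuncts and finitely many equations of the fixed sentence, and that the ``high probability'' statements for the free subgroup, the injectivity on balls, and the Greendlinger bound can be combined into a single event of probability tending to $1$. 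I expect the delicate interplay of the canonical-representative length bound with the small-cancellation threshold to be the crux of the entire argument.
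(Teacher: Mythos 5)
The first thing to say is that the paper contains no proof of this statement: it is imported verbatim from Kharlampovich and Sklinos \cite{KhaSkl} as evidence for Conjecture \ref{Conjecture}, and the authors explicitly decline even to derive its consequence for that conjecture (``we will not give a proof here''). So there is no in-paper argument to compare yours against; it can only be measured against the published Kharlampovich--Sklinos proof, whose overall architecture your outline does reproduce: one direction via the fact that a random group at density $d<1/2$ is, with high probability, a nonelementary torsion-free hyperbolic group and hence contains a nonabelian free subgroup, combined with downward preservation of universal sentences and Sela's theorem on the common theory of nonabelian free groups; the other direction by lifting solutions of a finite system of equations and inequations from the quotient $G = F/N$ back to $F$, using Rips--Sela canonical representatives against small-cancellation-type control of $N$, with $d\le 1/16$ entering precisely in the quantitative comparison.

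Two problems remain, one cosmetic and one substantive. Cosmetic: your English glosses are swapped. ``Solvable in $G$ forces solvable in $F$'' is equivalent to $F\models\varphi\Rightarrow G\models\varphi$ (the system is a disjunct of $\neg\varphi$, so solvability transfers contrapositively to $\varphi$), not to $G\models\varphi\Rightarrow F\models\varphi$, and symmetrically for the other direction; since your two arguments do establish exactly the two $\varphi$-implications you attach them to, the mathematics is consistently paired, but both labels should be flipped. Substantive: the entire content of the theorem sits in the step you leave conditional (``if the linear bound on $|u_i(\bar h)|$ can be driven below this threshold''). The Greendlinger-type statement for the density model---that every nontrivial element of $N$ has length at least roughly $(1-2d)\ell$---is itself a probabilistic claim requiring proof, not a consequence of a literal metric small-cancellation condition; and the Rips--Sela defect bound is linear in $\delta\asymp\ell$ with a multiplicative constant depending on the system that has no a priori reason to lie below the threshold constant. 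If that constant exceeds it, the argument collapses outright, and arranging the inequality is exactly the technical heart of the Kharlampovich--Sklinos paper, where the hypothesis $d\le 1/16$ does its work. As written, your proposal correctly identifies the strategy, the role of $1/16$, and the location of the difficulty, but it is a plan rather than a proof: the crux is acknowledged, not closed.
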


The Kharlampovich-Sklinos result implies the conjecture (for universal sentences), but we will not give a proof here.

\section{Generalizing Gromov's question}\label{sec:generalizing_the_question}

The question that Gromov asked about groups makes sense for other algebraic varieties as well.  We begin by presenting our definition of an algebraic variety; then we discuss the types of presentations we will allow and give some basic lemmas.

\subsection{Algebraic varieties}

\begin{defn}  

A language is \emph{algebraic} if it consists only of function symbols and constants.

\end{defn}

The term ``algebraic variety'' is used to mean different things in algebraic geometry and in universal algebra.  The definition that we give  below is the one from universal algebra.  

\begin{defn}[Algebraic variety]

For a fixed algebraic language $L$, a class $V$ of $L$-structures is an \emph{algebraic variety}, or simply \emph{variety}, if it is closed under substructures, homomorphic images, and direct products.

\end{defn}

For our purposes, it is convenient to use the following equivalent definition, of ``equational class.'' 

\begin{defn} [Equational class]

For a fixed algebraic language $L$, a class $V$ of $L$-structures is an \emph{equational class} if it is axiomatized by sentences of the form $(\forall\bar{x}) t(\bar{x}) = t'(\bar{x})$---universal quantifiers in front of an equation.  

\end{defn}

Birkhoff showed that these two definitions are equivalent.  Mal'tsev defined a broader class of theories whose models have well-defined presentations.  See \cite{BS} for a general overview of universal algebra, where the result below appears as Theorem 11.9.      

\begin{thm}

For a fixed algebraic language $L$, a class of $L$-structures is an equational class if and only if it is a variety. 

\end{thm}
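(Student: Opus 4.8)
The plan is to prove the two implications separately, the forward one being routine and the reverse one carrying the substantive content (this is Birkhoff's HSP theorem). For the easy direction, suppose $V$ is an equational class, axiomatized by a set $\Sigma$ of identities $(\forall\bar{x})\,t(\bar{x})=t'(\bar{x})$. I would verify directly that each such identity is preserved under the three closure operations. Since term functions are built from the function symbols and constants, in any substructure $B\le A$ they are the restrictions of those on $A$, so an identity holding in $A$ holds in $B$. In a direct product $\prod_i A_i$ term functions are computed coordinatewise, so an identity holds in the product iff it holds in each factor. Finally, if $h\colon A\to B$ is a surjective homomorphism, then given $\bar b\in B$ I lift to a preimage $\bar a$ and use that $h$ commutes with every term function, so $t^{B}(\bar b)=h(t^{A}(\bar a))=h(t'^{A}(\bar a))=t'^{B}(\bar b)$. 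Hence $V$ is closed under substructures, products, and homomorphic images, i.e., $V$ is a variety.

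For the reverse direction, assume $V$ is closed under these operations and let $\Sigma$ be the set of all identities true in every member of $V$. Then $V\subseteq\mathrm{Mod}(\Sigma)$ trivially, and the task is the reverse inclusion: every $A\models\Sigma$ lies in $V$. The engine of the proof is the free structure $F_V(X)$ on a set $X$ of generators. I would build it as the quotient $T(X)/\theta_V$ of the absolutely free (term) algebra $T(X)$ by the congruence $\theta_V$ identifying terms $t,t'$ exactly when the identity $t=t'$ belongs to $\Sigma$. Its defining property is that any map from $X$ into a member of $V$ extends uniquely to a homomorphism out of $F_V(X)$.

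The crucial step is to show that $F_V(X)$ itself belongs to $V$, and this is where the closure hypotheses do the work. I would consider all pairs $(B,g)$ with $B\in V$ generated by $g(X)$ and $g\colon X\to B$ a map; each $g$ extends to a homomorphism $\bar g\colon T(X)\to B$, and assembling these yields a homomorphism from $T(X)$ into the product $\prod B$ over all such pairs. Its image is a substructure of a product of members of $V$, hence lies in $V$ by closure under substructures and products, and one checks that the kernel of the assembled map is exactly $\theta_V$, so the image is isomorphic to $F_V(X)$. The main obstacle here is set-theoretic: the collection of all such $(B,g)$ is a proper class. I would resolve this by noting that only structures generated by at most $|X|$ elements matter, and these have cardinality bounded by the number of $L$-terms in $|X|$ variables, so it suffices to take one representative of each isomorphism class of suitably small members of $V$, reducing the product to a genuine set-indexed one.

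With $F_V(X)\in V$ in hand, the conclusion is quick. Given $A\models\Sigma$, take $X=A$ and form $F_V(A)$. The evaluation map sending each term $t(\bar a)$ to its value in $A$ is a homomorphism $T(A)\to A$; because $A\models\Sigma$, whenever $t\equiv_{\theta_V}t'$ the equation $t=t'$ holds in $A$, so this map factors through $\theta_V$ to give a well-defined homomorphism $\varphi\colon F_V(A)\to A$. Since every $a\in A$ is the image of its own generator, $\varphi$ is surjective, so $A$ is a homomorphic image of $F_V(A)\in V$. Closure under homomorphic images then gives $A\in V$, completing the inclusion $\mathrm{Mod}(\Sigma)\subseteq V$ and hence the theorem.
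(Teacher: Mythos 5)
Your proof is correct, but there is little in the paper to compare it against: the paper does not prove this theorem at all. It records the equivalence as a classical result of Birkhoff and defers to Burris and Sankappanavar \cite{BS}, where it appears as Theorem 11.9. Your sketch is essentially the canonical HSP argument from that reference: the routine preservation checks for the easy direction, and, for the converse, the construction of the relatively free algebra $F_V(X)$ as $T(X)/\theta_V$, its realization inside a set-indexed product of members of $V$ (your handling of the proper-class issue, by bounding the relevant structures by the number of $L$-terms in $|X|$ variables and taking isomorphism-class representatives, is the standard fix), and the conclusion that any model of $\Sigma$ is a homomorphic image of $F_V(\cdot)$ and hence lies in $V$. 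The one step you compress --- ``one checks that the kernel of the assembled map is exactly $\theta_V$'' --- deserves a sentence: the inclusion $\theta_V \subseteq \ker$ is immediate because identities in $\Sigma$ hold in every factor, while for $\ker \subseteq \theta_V$ one uses that any failure of an identity $t = t'$ in some $C \in V$ under some assignment is witnessed inside the substructure generated by the images of the variables, which by closure under substructures is (isomorphic to) one of your structures $B$; your restriction to pairs $(B,g)$ with $B$ generated by $g(X)$ is exactly what makes this work. With that detail supplied, your argument is complete and is, in substance, the proof the paper is pointing to rather than reproducing.
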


In the usual language for groups, namely $\{\cdot, ^{-1}, e\}$, the group axioms have the required form.  Thus, groups form a variety.

\bigskip

Now we consider an arbitrary algebraic variety $V$.  For a fixed generating tuple $\bar{a}$, there is a well-defined free structure $F$ generated by $\bar{a}$.  If $\mathcal{A}$ is a structure in $V$ generated by $\bar{a}$, then $\mathcal{A}$ is a quotient of $F$ under an appropriate equivalence relation on terms $t(\bar{a})$.  This equivalence relation becomes equality in $\mathcal{A}$.          

\begin{defn}

For a variety $V$, a \emph{presentation} has the form $\bar{a}|R$, where $\bar{a}$ is a generating tuple and $R$ is a set of identities on $\bar{a}$.  We write $\langle\bar{a}|R\rangle$ for the structure $\mathcal{A}$ such that the identities $t(\bar{a}) = t'(\bar{a})$ true in $\mathcal{A}$ are just the ones that follow logically from $R$ and the axioms for $V$.    

\end{defn}

We ask what the typical behavior is for members of a variety given by presentations of a specific form.  

\subsection{Allowable presentations}

In this paper, almost all of the languages of our varieties will be either the group language or a language with just unary function symbols.  We consider presentations with a fixed generating tuple $\bar{a}$, say of length $m$.  For the analogue of the Ol'shanskii setting, we consider presentations with $k$ identities for some fixed $k$.  For the analogue of the Kapovich-Schupp setting, we set $k = 1$.  This is the primary case we will consider.  Where we do consider $k > 1$, our presentations have the form $\bar{a}|R$, where $R$ is an \emph{unordered} set of identities.  

We may restrict the identities in certain natural ways.  For groups, we do what the group theorists do; that is, we suppose that the identities have the form $w(\bar{a}) = e$, where $w(\bar{a})$ is a word representing a product of various $a_i$ and $a_i^{-1}$.   
For the variety in the language consisting of two unary function symbols $S,S^{-1}$ with axioms saying that the two functions are inverses, we may restrict in a similar way, allowing only identities of the form $t(a_i) = a_j$; that is, with function symbols only on the left.  For the language with finitely many unary function symbols $f_1,\ldots,f_n$ and varieties that do not have axioms explicitly saying that one $f_j$ is the inverse of another $f_i$, our identities have the form $t(a_i) = t'(a_j)$, where $t(x)$ and $t'(x)$ are terms built up from the function symbols.   

\subsubsection{Length}

We will need to measure length of identities in our presentations. We will use the following conventions, based on the restrictions described above.

\begin{defn}\

\begin{itemize}

\item  In the setting of groups, the \emph{length} of an identity of the form $w(\bar{a}) = e$ is the number of occurrences of the various $a_i$ and $a_i^{-1}$ in the word $w(\bar{a})$.  This is the usual length of the relator.  

\item  For varieties in the language $L$ with just the unary function symbols $f_1,\ldots,f_n$, the \emph{length} of an identity of the form $t(a_i) = t' (a_j)$ is the total number of occurrences of the function symbols in the terms $t$ and $t'$.

\end{itemize} 

\end{defn}

\subsubsection{Limiting density}   

As for groups, we consider limiting density.   Here is the formal definition of limiting density.     

\begin{defn}

Fix a language, a variety, and a set of allowable presentations with an $m$-tuple $\overline{a}$ of generators and $k$ identities.  We write $P_s$ for the number of presentations in which all of the identities have length at most $s$.  For a property $Q$, let $P_s(Q)$ be the number of these presentations for which the resulting structure has property $Q$.  Then the \emph{limiting density} of $Q$ is $\lim_{s\rightarrow\infty}\frac{P_s(Q)}{P_s}$, provided that this limit exists.

\end{defn}

We are particularly interested in the case where $Q$ is the property of satisfying an elementary first-order sentence $\varphi$ in the language of the variety, possibly with added constants for the generators.  We write $P_s(\varphi)$ for the number of presentations in which the identities have length at most $s$, and the resulting structure satisfies $\varphi$.  We say that $\varphi$ has \emph{limiting density $d$} if $\lim\limits_{s\to \infty}\frac{P_s(\varphi)}{P_s} = d$. 

\begin{defn}

We say the variety $V$, with a specified set of allowable presentations, satisfies the \emph{zero--one law} if for every elementary first-order sentence $\varphi$ in $L$, $\varphi$ has limiting density $1$ or $0$.

\end{defn}

\subsubsection{Sets versus tuples of identities}

We have said that our presentations consist of a tuple of generators and an \emph{unordered} set of distinct identities.  Other possibilities would be to consider ordered tuples of identities, with or without repetition.  In practice, most of the time, we will consider a single identity.  When we do consider more than one identity, we show that the results would be the same for ordered tuples of identities allowing repetition, ordered tuples not allowing repetition, and unordered sets of identities.  

As above, we write $P_s$ for the number of unordered sets of $k$ identities of length at most $s$.  In the result below, we write $P^*_s$ for the number of ordered $k$-tuples allowing repetition, and $P^{**}_s$ for the number of ordered $k$-tuples not allowing repetition.       

\begin{prop}
\label{unorderedsets.v}

Let $N(s)$ be the number of identities in $L$ of length at most $s$ and suppose that $\lim\limits_{s\to \infty}N(s) = \infty$. Then for any sentence $\varphi$, 
\[\lim\limits_{s\to \infty}\frac{P_s(\varphi)}{P_s} = \lim\limits_{s\to \infty}\frac{P^*_s(\varphi)}{P^*_s} = \lim\limits_{s\to\infty}\frac{P^{**}_s(\varphi)}{P^{**}_s}\ .\] 

\end{prop}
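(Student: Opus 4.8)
The plan is to reduce all three quantities to a single parameter $N = N(s)$, the number of identities of length at most $s$, and to exploit the fact that the structure $\langle\bar a\mid R\rangle$—and hence whether it satisfies $\varphi$—depends only on the \emph{underlying set} of identities, not on any ordering or multiplicity. First I would record the three denominators in closed form. A $k$-element unordered set of identities of length at most $s$ can be chosen in $P_s = \binom{N}{k}$ ways; an ordered $k$-tuple without repetition in $P^{**}_s = N(N-1)\cdots(N-k+1)$ ways; and an ordered $k$-tuple allowing repetition in $P^*_s = N^k$ ways. These formulas require $N \geq k$, which holds for all large $s$ since $N(s)\to\infty$.

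For the second equality, I would use that satisfaction of $\varphi$ is a function of the set $\{r_1,\dots,r_k\}$ alone. Thus each unordered $k$-set that yields a model of $\varphi$ corresponds to exactly $k!$ ordered $k$-tuples without repetition, all of which likewise yield a model of $\varphi$; hence $P^{**}_s(\varphi) = k!\,P_s(\varphi)$, while $P^{**}_s = k!\binom{N}{k} = k!\,P_s$. Dividing, one gets $\frac{P^{**}_s(\varphi)}{P^{**}_s} = \frac{P_s(\varphi)}{P_s}$ for every $s$ with $N(s)\geq k$, so these two sequences are \emph{identical} and their limits agree term by term.

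The first equality is the only place the hypothesis $N(s)\to\infty$ genuinely does work. A tuple allowing repetition that has a repeated coordinate yields a set of fewer than $k$ identities, but its truth value for $\varphi$ is still determined by that set; in any case I only need an upper bound on how many such tuples there are. Their number is $N^k - N(N-1)\cdots(N-k+1)$, so
\[
0 \leq P^*_s(\varphi) - P^{**}_s(\varphi) \leq N^k - N(N-1)\cdots(N-k+1).
\]
Dividing by $P^*_s = N^k$ and writing $\frac{N(N-1)\cdots(N-k+1)}{N^k} = \prod_{i=0}^{k-1}\bigl(1-\tfrac{i}{N}\bigr)$, I would observe that for fixed $k$ this product tends to $1$ as $N = N(s)\to\infty$. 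Consequently the error term $\frac{P^*_s(\varphi)-P^{**}_s(\varphi)}{N^k}$ tends to $0$, whence $\frac{P^*_s(\varphi)}{P^*_s} - \frac{P^{**}_s(\varphi)}{P^{**}_s}\to 0$. Combined with the previous paragraph, all three ratios share the same limiting behavior, which gives both equalities.

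This is elementary counting throughout, so there is no serious obstacle; the one genuine analytic input is the falling-factorial estimate $\prod_{i=0}^{k-1}(1-i/N)\to 1$, which is exactly where the assumption $N(s)\to\infty$ is needed, and the one conceptual point to get right is that the truth of $\varphi$ depends only on the set of relators, so that passing between sets, repetition-free tuples, and arbitrary tuples changes only bookkeeping factors that cancel (for $P^{**}_s$ versus $P_s$) or become negligible (for $P^*_s$).
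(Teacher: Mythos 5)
Your proposal is correct and follows essentially the same route as the paper: both exploit that satisfaction of $\varphi$ depends only on the underlying set of identities, identify $P^{**}_s(\varphi) = k!\,P_s(\varphi)$ and $P^{**}_s = k!\,P_s$ so those ratios agree exactly, and then sandwich $P^*_s(\varphi)$ between $P^{**}_s(\varphi)$ and $P^{**}_s(\varphi) + \bigl(N^k - N(N-1)\cdots(N-k+1)\bigr)$, with the error term vanishing after division by $N^k$ as $N(s)\to\infty$. Your phrasing of the error estimate via $\prod_{i=0}^{k-1}(1-i/N)\to 1$ is just a repackaging of the paper's observation that $N^k - k!\binom{N}{k}$ is a polynomial of degree less than $k$, and your difference-of-sequences formulation is, if anything, slightly cleaner about what happens when the limits are not assumed to exist.
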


\begin{proof} 

By definition, $P_s = {N(s) \choose k}$, $P^*_s = N(s)^k$, and $P^{**}_s = k!\cdot P_s$.     
Each unordered set of $k$ identities yields $k!$ ordered $k$-tuples of identities.  Thus, it is clear that 
\[\lim\limits_{s\to \infty}\frac{P_s(\varphi)}{P_s} = \lim\limits_{s\to \infty}\frac{P^{**}_s(\varphi)}{P^{**}_s}\ .\] 

To compare $\frac{P_s(\varphi)}{P_s}$ and $\frac{P^*_s(\varphi)}{P^*_s}$, we need the following:

\bigskip
\noindent
\textbf{Claim}:  

\begin{enumerate}

\item  $\frac{k!P_s}{P^*_s} \rightarrow 1$,

\item  $\frac{N(s)^k - k!P_s}{P^*_s}\rightarrow 0$. 

\end{enumerate}

\begin{proof} [Proof of Claim] 

For (1), the denominator is $N(s)^k$ and the numerator is a polynomial in $N(s)$ with leading term $N(s)^k$.  
For (2), the numerator is a polynomial in $N(s)$ of degree less than $k$, and the denominator is $N(s)^k$. 
\end{proof} 

Now, we note that  
\[k!P_s(\varphi) \le P^*_s(\varphi) \le k!P_s(\varphi)+ N(s)^k - k!{N(s) \choose k}.\]
Dividing by $P^*_s = N(s)^k$ and letting $s\to\infty$, we get
\[ \lim\limits_{s\to\infty} \frac{k!P_s(\varphi)}{P^*_s} \le \lim\limits_{s\to\infty} \frac{P^*_s(\varphi)}{P^*_s} \le \lim\limits_{s\to\infty} \frac{k!P_s(\varphi)+ N(s)^k - k!{N(s) \choose k}}{P^*_s}. \]
Using the claim, we see that the right-hand side is 
$$\lim\limits_{s\to\infty} \frac{k!P^*_s(\varphi)}{k!P^*_s} = \lim\limits_{s\to\infty} \frac{P^*_s(\varphi)}{P^*_s}.$$
\end{proof}

We can now phrase the questions we are interested in more formally.

\begin{question}\label{main-q}\

\begin{enumerate}

\item  Which varieties (with allowable presentations involving a fixed $m$-tuple $\bar{a}$ of generators) satisfy the zero--one law?  

\item  Given that the zero--one law holds, when do the sentences with limiting density $1$ match those true in the free structure? 

\end{enumerate} 

\end{question}

\subsection{Basic lemmas}  

Before we begin, we state three lemmas that hold very generally.

\begin{lem}

$(\varphi\vee\psi)$ has limiting density $0$ if and only if $\varphi$ and $\psi$ both have limiting density $0$; in fact, this holds for any finite disjunction.  

\end{lem}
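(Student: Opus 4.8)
The plan is to reduce everything to two elementary set-theoretic facts about the counting function $P_s$ and then apply the squeeze theorem. For a fixed $s$, let $A_\varphi$ denote the set of allowable presentations in which the identities have length at most $s$ and whose associated structure satisfies $\varphi$, so that $P_s(\varphi) = |A_\varphi|$. Since a structure satisfies $\varphi\vee\psi$ exactly when it satisfies $\varphi$ or satisfies $\psi$, we have $A_{\varphi\vee\psi} = A_\varphi \cup A_\psi$. This immediately yields the two inequalities $\max\{P_s(\varphi),P_s(\psi)\} \le P_s(\varphi\vee\psi) \le P_s(\varphi)+P_s(\psi)$: the first from the containment $A_\varphi, A_\psi \subseteq A_{\varphi\vee\psi}$, and the second from the union bound $|A_\varphi \cup A_\psi| \le |A_\varphi| + |A_\psi|$.

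First I would treat the forward direction. Assuming $\varphi\vee\psi$ has limiting density $0$, that is $P_s(\varphi\vee\psi)/P_s \to 0$, the containment $A_\varphi \subseteq A_{\varphi\vee\psi}$ gives $0 \le P_s(\varphi)/P_s \le P_s(\varphi\vee\psi)/P_s$, and since the right-hand side tends to $0$ the squeeze theorem forces $P_s(\varphi)/P_s \to 0$; in particular the limit exists and equals $0$, so $\varphi$ has limiting density $0$. The identical argument with $\psi$ in place of $\varphi$ handles $\psi$. Conversely, if both $\varphi$ and $\psi$ have limiting density $0$, the upper bound gives $0 \le P_s(\varphi\vee\psi)/P_s \le P_s(\varphi)/P_s + P_s(\psi)/P_s \to 0$, so again by the squeeze theorem $P_s(\varphi\vee\psi)/P_s \to 0$, i.e. $\varphi\vee\psi$ has limiting density $0$.

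Finally, the statement for an arbitrary finite disjunction $\varphi_1\vee\cdots\vee\varphi_n$ follows from the same two inequalities in their $n$-fold form, namely the containment $A_{\varphi_i}\subseteq A_{\varphi_1\vee\cdots\vee\varphi_n}$ and the union bound $P_s(\varphi_1\vee\cdots\vee\varphi_n)\le \sum_{i=1}^n P_s(\varphi_i)$, or equivalently by a short induction on $n$ using the binary case. There is no genuine obstacle in this argument; the only point deserving a moment's care is that ``limiting density $0$'' asserts the \emph{existence} of the limit, and this is precisely what the squeeze theorem supplies in each direction, so it need not be argued separately.
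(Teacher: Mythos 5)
Your proposal is correct and follows essentially the same route as the paper: the paper's entire proof consists of the two inequalities $\frac{P_s(\varphi)}{P_s},\frac{P_s(\psi)}{P_s}\leq\frac{P_s(\varphi\vee\psi)}{P_s} \leq \frac{P_s(\varphi)}{P_s} + \frac{P_s(\psi)}{P_s}$, which are exactly the containment and union bounds you derive, with the squeeze argument left implicit. Your write-up merely makes explicit the set-theoretic justification, the two limit arguments, and the existence-of-limit point, so there is nothing to add or correct.
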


\begin{proof}

We have $\frac{P_s(\varphi)}{P_s},\frac{P_s(\psi)}{P_s}\leq\frac{P_s(\varphi\vee\psi)}{P_s} \leq \frac{P_s(\varphi)}{P_s} + \frac{P_s(\psi)}{P_s}$.  From this, the lemma is clear.  
\end{proof}

\begin{lem}

$\varphi$ has limiting density $0$ just in case $\neg{\varphi}$ has limiting density $1$.  

\end{lem} 

\begin{proof}

We have $1 = \frac{P_s(\varphi)}{P_s} + \frac{P_s(\neg{\varphi})}{P_s}$.  Again, the lemma is clear.    
\end{proof} 

\begin{lem}

Let $S$ be the set of $L$-sentences with limiting density 1. Then $S$ is consistent and is closed under logical implication---if $\varphi_1,\ldots,\varphi_n\in S$ and $\varphi_1,\ldots,\varphi_n\vdash\psi$, then $\psi\in S$. 

\end{lem}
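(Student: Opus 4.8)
The plan is to reduce both assertions to a single observation: the set $S$ is closed under finite conjunction. This follows immediately from the two preceding lemmas by De Morgan duality. Indeed, $\varphi$ has density $1$ iff $\neg\varphi$ has density $0$; and by the first lemma a disjunction $\neg\varphi\vee\neg\psi$ has density $0$ iff each disjunct does. Since $\neg(\varphi\wedge\psi)$ is $\neg\varphi\vee\neg\psi$, we get that $\varphi\wedge\psi$ has density $1$ whenever $\varphi$ and $\psi$ do. Iterating, any finite conjunction $\varphi_1\wedge\cdots\wedge\varphi_n$ of members of $S$ again lies in $S$.

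For consistency, I would first record that any $\varphi\in S$ is satisfiable by a structure in $V$: since $\frac{P_s(\varphi)}{P_s}\to 1$, for all large $s$ we have $P_s(\varphi)>0$, so there is at least one allowable presentation whose associated structure $\langle\bar{a}\mid R\rangle$ is a member of $V$ satisfying $\varphi$. Now let $\varphi_1,\dots,\varphi_n\in S$. By the closure under conjunction established above, $\varphi_1\wedge\cdots\wedge\varphi_n\in S$, hence it is satisfiable, so the finite set $\{\varphi_1,\dots,\varphi_n\}$ is consistent. Since every finite subset of $S$ is consistent, the compactness theorem gives that $S$ itself is consistent.

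For closure under logical implication, suppose $\varphi_1,\dots,\varphi_n\in S$ and $\varphi_1,\dots,\varphi_n\vdash\psi$. Write $\chi=\varphi_1\wedge\cdots\wedge\varphi_n$, so that $\chi\in S$ and $\chi\vdash\psi$. The key point is monotonicity of the counting function under implication: any allowable presentation whose structure satisfies $\chi$ also satisfies $\psi$, so $P_s(\chi)\le P_s(\psi)\le P_s$ for every $s$. Dividing through by $P_s$ and letting $s\to\infty$, the outer terms both tend to $1$ (the left because $\chi\in S$), so by the squeeze theorem $\frac{P_s(\psi)}{P_s}\to 1$; that is, $\psi\in S$.

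The argument is essentially bookkeeping once the conjunction closure is in hand; the only step requiring a little care is the satisfiability observation underlying consistency, where one must note that a density-$1$ sentence is forced to hold in at least one genuine member of the variety (via $P_s(\varphi)>0$ for large $s$), so that compactness can legitimately be applied.
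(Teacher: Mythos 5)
Your proof is correct and follows essentially the same route as the paper's: both use the two preceding lemmas (via De Morgan) to get closure of $S$ under finite conjunction, monotonicity of $P_s$ under logical implication for closure under consequence, and compactness plus the fact that a density-$1$ sentence has $P_s(\varphi)>0$ for large $s$ to get consistency. If anything, you spell out more carefully the satisfiability step that the paper leaves implicit (the paper jumps from ``every sentence in the finite set has density $1$'' to ``there is a common model,'' which really requires the conjunction argument you make explicit).
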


\begin{proof}

Suppose $S$ is not consistent.  By the Compactness Theorem, some finite subset is inconsistent. As every sentence in this set has density 1, there is a model of $T$ that realizes all these (finitely many) sentences, a contradiction.  By the previous two lemmas, we have that each $\neg\varphi_i$ has limiting density 0, so $\bigvee \neg\varphi_i$ also has limiting density 0, and so $\bigwedge\varphi_i$ has limiting density 1. But, $\bigwedge\varphi_i \vdash \psi$, so $\frac{P_s({\psi})}{P_s} \ge \frac{P_s(\bigwedge\varphi_i)}{P_s} = 1$, and the lemma follows. 
\end{proof}

\section{Illustrative examples}\label{illu-ex} 

In this section, we consider some of varieties and classes of presentations that illustrate different possibilities.  First, we consider the variety of bijective structures, and presentations with a single generator and a single identity in which the function symbols occur only on the left.  We show that the sentences true in the free structure are exactly those with limiting density $1$.  Second, we consider a variety of a single unary function and presentations with a single generator and a single identity.  Here, we show that a specific sentence that is true in the free structure has limiting density $0$.  Next, we again consider bijective structures, and presentations with a single generator but with two identities.  Here, we give sample sentences for which the limiting density does not exist.  Finally, we consider the variety of abelian groups, and presentations with a single generator and a single relator.  We give sentences for which the limiting density does not exist, and sentences for which the limiting density exists but is neither $0$ nor $1$.    

\subsection{Bijective structures}
\label{early-examples}  

We start with the variety of bijective structures.  Recall that the language consists of two unary function symbols $S,S^{-1}$.  The axioms are 
\[(\forall x)SS^{-1}(x) = x \mbox{\ \ and\ \ } (\forall x)S^{-1}S(x) = x\ .\]  
These guarantee that the function $S$ is $1-1$ and onto and that $S^{-1}$ is the inverse of $S$.  Let $T$ be the theory generated by these axioms.  The models consist of infinite $\mathbb{Z}$-chains and finite cycles $\mathbb{Z}_m$.  While these structures lack the mathematical interest and importance of groups, it is instructive to consider them because there are relatively simple elementary invariants, and for presentations with a single generator and a single identity, we can calculate the limiting densities for these sentences.  It turns out that the analogue of Conjecture \ref{Conjecture} holds.    

\begin{lem}

Over the theory $T$, every sentence is equivalent to a Boolean combination of sentences of the following basic types:

\begin{enumerate}

\item  $\alpha(n,k)$, saying that there are at least $k$ cycles of size $n$, 

\item  $\beta(n)$, saying that there is a chain of length at least $n$.

\end{enumerate}

\end{lem}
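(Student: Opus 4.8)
The plan is to prove this by an Ehrenfeucht--Fra\"iss\'e analysis, showing that the two families of basic sentences capture exactly the elementary invariants of a bijective structure. Recall that a model of $T$ is a disjoint union of orbits, each either a finite cycle $\mathbb{Z}_m$ or an infinite $\mathbb{Z}$-chain, and that $\alpha(n,k)$ records the number of cycles of size $n$ (up to a threshold) while $\beta(n)$ records the presence of an orbit admitting a non-repeating segment of length $n$ (a long cycle or any chain). I would first fix a quantifier rank $r$ and define the \emph{$r$-type} of a model $M$: for each $n \le B(r)$, the number of size-$n$ cycles capped at $B(r)$, together with the largest $n \le B(r)$ for which $M \models \beta(n)$, where $B(r)$ is a bound of the order of $2^{r}$ to be chosen. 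The key reduction is that this finite packet of data is precisely what is recorded by the truth values of the basic sentences $\alpha(n,k)$ and $\beta(n)$ with $n,k \le B(r)$.

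The heart of the argument is to show that \emph{equal $r$-type implies $M \equiv_r N$}, i.e.\ Duplicator wins the $r$-round EF game. I would have Duplicator maintain, as an invariant after $t$ rounds, that the chosen points form a partial isomorphism in which the signed $S$-distance between any two selected points lying in a common orbit is matched exactly when it is at most $2^{\,r-t}$ in absolute value, and is ``large'' on both sides otherwise. Small cycles (size $\le B(r)$) are handled by the capped-count hypothesis: when Spoiler enters such a cycle, Duplicator answers in a cycle of the same size not yet touched, and since the counts agree up to the cap and only $r$ moves are played, an unused cycle of each relevant size always remains. Points far from all previous choices---whether in a long cycle or a chain---are treated uniformly as ``$\mathbb{Z}$-like'': Duplicator responds in a large orbit on its own side (one exists, by equality of the $\beta$-data), placing the new point at the matching large distance, with the usual halving of the distance threshold guaranteeing room to answer for the remaining rounds.

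Once the game is won, the conclusion is routine. Because Duplicator-winning in the $r$-round game depends only on the $r$-type, the $\equiv_r$-class of $M$ is determined by the truth values of the finitely many basic sentences with parameters $\le B(r)$. Hence the Hintikka sentence isolating that class is equivalent over $T$ to a Boolean combination of these basic sentences, and every sentence of quantifier rank $\le r$, being a disjunction of Hintikka sentences, is likewise such a Boolean combination. Since $r$ is arbitrary, every sentence is equivalent over $T$ to a Boolean combination of the $\alpha(n,k)$ and $\beta(n)$. One minor point to dispatch at the outset: because the language has the function symbols $S,S^{-1}$, I would first note that passing to the relational reduct in which $S$ is replaced by the graph relation $R(x,y) \equiv S(x)=y$ turns the nested terms $S^{k}(x)$ into relational formulas at the cost of a bounded increase in quantifier rank, so that the standard finiteness of $\equiv_r$-classes applies.

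The step I expect to be the main obstacle is the Duplicator strategy for the large orbits, precisely because the number of chains is \emph{not} an elementary invariant: a single $\mathbb{Z}$-chain and a disjoint union of several $\mathbb{Z}$-chains are elementarily equivalent. Consequently Duplicator must be free to answer in, and move between, different large orbits without this being detectable, and the careful point is to verify that the distance-matching invariant and the partial-isomorphism condition survive every such choice for the full $r$ rounds, while also ensuring a sufficient supply of untouched small cycles and at least one available large orbit whenever Spoiler forces a far move.
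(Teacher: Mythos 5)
Your proof is correct, but it takes a genuinely different route from the paper's. The paper begins with the same orbit decomposition---every model of $T$ is a disjoint union of finite cycles and $\mathbb{Z}$-chains---but then argues by saturation rather than by games: every model is elementarily equivalent to a saturated one, and the isomorphism type of a saturated model is pinned down by the truth values of the $\alpha(n,k)$ and $\beta(n)$; in particular, a saturated model has infinitely many $\mathbb{Z}$-chains exactly when the orbit sizes are unbounded, which is how the paper absorbs the point you rightly flag as the main obstacle (the number of chains is not an elementary invariant). Once the basic sentences are known to determine complete theories over $T$, the Boolean-combination statement follows by a standard compactness argument, which the paper leaves implicit here but writes out in its Gaifman section (cf.\ Lemma \ref{Lem4.12} and the propositions following it). Your Ehrenfeucht--Fra\"iss\'e argument replaces both saturation and compactness: it is more elementary and self-contained, and it is quantitative, showing that a sentence of (relational) quantifier rank $r$ is equivalent to a Boolean combination of basic sentences with parameters of order $2^r$---information the saturation proof does not provide. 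The costs are exactly the bookkeeping you anticipate: the halving invariant, the reservoir of untouched small cycles, and the room estimate when Spoiler spreads points across a $\mathbb{Z}$-chain while your side's only large orbit is a single big cycle all require $B(r)$ to be chosen with a constant factor of slack (say of order $2^{r+2}$); and one point should be stated carefully, namely that the bound must be computed from the relational translation of the given sentence, since term depth and not just quantifier rank enters it (e.g.\ $(\exists x)\,S^{1000}(x)=x$ has quantifier rank $1$ but is decided only by the sentences $\alpha(d,1)$ for $d$ dividing $1000$). These are standard verifications and none of them fails; what the paper's approach buys in exchange for invoking saturated models is brevity, and a template that it reuses verbatim for the commutative generalized bijective varieties of Section \ref{Gaif-sec}.
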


\begin{proof}

For any model $\mathcal{A}$ of $T$, we have a natural equivalence relation $\sim$ on the universe, where $a\sim b$ if $S^m(a) = b$ for some integer $m$.  Each $\sim$-class is a copy of $\Z$ or a finite cycle.  The isomorphism type of $\mathcal{A}$ is determined by the number of $\sim$-classes of different types.  Each model $\mathcal{A}$ of $T$ is elementarily equivalent to a saturated model $\mathcal{A}^*$, where $\mathcal{A}^*$ has infinitely many copies of $\Z$ if there is no finite bound on the sizes of the $\sim$-classes.  From this, we see that the isomorphism type of $\mathcal{A}^*$ and the elementary first order theory of $\mathcal{A}$ are determined by the sentences $\alpha(n,k)$ and $\beta(n)$.  
\end{proof}

We consider bijective structures with a single generator $a$.  There is a single $\sim$-class, which has the form $\Z$, an infinite chain, or $\Z_m$, a cycle of size $m$.  We note that in either $\Z$ or $\Z_m$, all elements are automorphic.  The following lemma is clear from the meanings of the sentences $\alpha(n,k)$ and $\beta(n)$.

\begin{lem}\

\begin{enumerate}

\item  For $k > 1$, $\alpha(n,k)$ is false in both $\Z$ and $\Z_m$, 

\item  $\alpha(n,1)$ is true only in $\Z_n$, 

\item  $\beta(n)$ is true in $\Z$; it is true in $\Z_m$ if and only if $m > n$. 

\end{enumerate}

\end{lem}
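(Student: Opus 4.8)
The plan is to reduce all three parts to one structural fact recorded just above: a bijective structure generated by a single element $a$ has exactly one $\sim$-class, and that class is isomorphic either to the infinite chain $\Z$ or to a single finite cycle $\Z_m$. The key consequence I would extract first is that such a structure contains \emph{at most one} cycle in total---none when it is $\Z$, and a single cycle filling the whole universe when it is $\Z_m$. Once this is noted, each part follows by reading off the meaning of the relevant basic sentence against these two isomorphism types.

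For part (1), I would observe that $\alpha(n,k)$ with $k>1$ requires at least two distinct cycles of size $n$, which no single-generator structure can contain: $\Z$ has no cycles and $\Z_m$ has exactly one. Hence $\alpha(n,k)$ is false in both, for all $n$ and $m$. For part (2), the sentence $\alpha(n,1)$ asserts a single cycle of size $n$; this fails in $\Z$ (no cycles) and holds in $\Z_m$ exactly when the unique cycle has size $n$, i.e.\ when $m=n$. Thus $\alpha(n,1)$ is true precisely in $\Z_n$.

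For part (3), I would read $\beta(n)$ as asserting the existence of a chain $x,S(x),\dots,S^n(x)$ whose entries are pairwise distinct. Since $\Z$ is infinite, such a chain exists for every $n$, so $\beta(n)$ holds in $\Z$. In $\Z_m$, iterating $S$ from any point first repeats after $m$ steps, so the $n+1$ entries $x,S(x),\dots,S^n(x)$ are distinct exactly when $n+1\le m$; equivalently $\beta(n)$ holds in $\Z_m$ iff $m>n$.

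The only step needing care---and the sole potential pitfall---is the off-by-one in part (3): whether the threshold is $m\ge n$ or $m>n$ turns on the convention for the ``length'' of a chain. Measuring length by the number of applications of $S$ (the number of edges rather than vertices) makes a chain of length $n$ consist of $n+1$ points, so the condition for all of them to be distinct in $\Z_m$ is $m\ge n+1$, i.e.\ $m>n$, matching the statement. With this convention fixed, everything else is an immediate unwinding of definitions, so I expect no real difficulty.
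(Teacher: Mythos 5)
Your proof is correct and matches the paper's treatment: the paper states this lemma without proof, calling it clear from the meanings of $\alpha(n,k)$ and $\beta(n)$ together with the observation that a single-generator bijective structure is either $\Z$ or a single cycle $\Z_m$, which is exactly the structural fact you isolate and unwind. Your attention to the edge-counting convention in part (3) is the right way to resolve the only point where the statement could be misread, and it agrees with the paper's claim that $\beta(n)$ holds in $\Z_m$ iff $m > n$.
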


For models with a single generator, $\alpha(n,k)$ is false for $k > 1$, and $\beta(n)$ is equivalent to 
$\bigwedge_{m \leq n}\neg{\alpha(n,1)}$.  Thus, it is enough to consider the elementary invariants of the form $\alpha(n,1)$.   

\bigskip 

Here our presentations have a single identity, of the form $t(a) = a$ (function symbols occur only on the left).  We may refer to the term $t(a)$ as a \emph{relator}.  For a single relator $t(a)$, we get $\Z$ if for some $k$, $t(a)$ has $k$ occurrences of $S$ and $k$ occurrences of $S^{-1}$.  We get $\Z_m$ if for some $k$, $t(a)$ has either $m+k$ occurrences of $S$ and $k$ of $S^{-1}$ or $m+k$ occurrences of $S^{-1}$ and $k$ of $S$.

\bigskip    

We will show that for all $n\geq 1$, $\alpha(n,1)$ has limiting density~$0$.  This implies that $\neg \alpha(n,1)$, which is true in the free structure, has limiting density $1$.  We will use two combinatorial lemmas. The first is an approximation for $\left(\begin{array}{c}2k\\k\end{array}\right)$, good for large $k$.  The proof requires the use of Stirling's formula on all three factorials (see the website of Das \cite{D}).  

\begin{lem} 
\label{Fact 2} 

$\left(\begin{array}{c}2k\\k\end{array}\right) = (1+o(1)) \frac{2^{2k}}{\sqrt{\pi k}}$. 

\end{lem}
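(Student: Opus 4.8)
The plan is to derive the asymptotic directly from Stirling's formula, applied to each of the three factorials appearing in the central binomial coefficient. Writing $\binom{2k}{k} = \frac{(2k)!}{(k!)^2}$, I would use Stirling in the form $n! = (1+o(1))\sqrt{2\pi n}\,(n/e)^n$. First I would substitute this expression for the numerator and for each copy of $k!$ in the denominator. The numerator $(2k)!$ becomes $(1+o(1))\sqrt{4\pi k}\,(2k/e)^{2k}$, and the denominator $(k!)^2$ becomes $(1+o(1))\,(2\pi k)\,(k/e)^{2k}$.

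Next I would perform the cancellations. Forming the quotient gives
\[
\binom{2k}{k} = (1+o(1))\,\frac{\sqrt{4\pi k}}{2\pi k}\cdot\frac{(2k/e)^{2k}}{(k/e)^{2k}}.
\]
In the power term the factors of $e^{-2k}$ cancel, and $\frac{(2k)^{2k}}{k^{2k}} = 2^{2k}$. For the prefactor, $\frac{\sqrt{4\pi k}}{2\pi k} = \frac{2\sqrt{\pi k}}{2\pi k} = \frac{1}{\sqrt{\pi k}}$. Combining these two simplifications yields $\binom{2k}{k} = (1+o(1))\,\frac{2^{2k}}{\sqrt{\pi k}}$, which is exactly the claimed estimate.

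The only point requiring attention is the handling of the three separate error terms. Since each factor of the form $(1+o(1))$ tends to $1$, any finite product or quotient of such factors is again of the form $(1+o(1))$; hence the three error terms coalesce into a single $(1+o(1))$ and can be pulled out front, as done above. In this sense there is no genuine obstacle: the statement follows by a direct computation from Stirling's formula applied to all three factorials, in agreement with the reference to Das \cite{D}. The computation is routine, and I would not expect any subtlety beyond keeping track of the constants $2\pi$ and $4\pi$ correctly when simplifying the square-root prefactor.
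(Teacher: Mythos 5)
Your proof is correct, and it is exactly the approach the paper indicates: applying Stirling's formula to all three factorials in $\binom{2k}{k} = \frac{(2k)!}{(k!)^2}$ and simplifying (the paper defers the routine computation to the cited reference of Das). The algebra checks out, including the prefactor simplification $\frac{\sqrt{4\pi k}}{2\pi k} = \frac{1}{\sqrt{\pi k}}$ and the observation that a fixed finite product or quotient of $(1+o(1))$ factors is again $(1+o(1))$.
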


The second combinatorial lemma is an inequality.  

\begin{lem}\label{lemma:section2ineq}  

For all $n\geq 1$ and all $k$, $2\left(\begin{array}{c}n+2k\\k\end{array}\right) < \left(\begin{array}{c}n+ 2(k+1)\\k+1\end{array}\right)$.

\end{lem}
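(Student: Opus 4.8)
The plan is to reduce the inequality to an elementary polynomial comparison by forming the ratio of the two binomial coefficients. Writing $\binom{n+2(k+1)}{k+1} = \binom{n+2k+2}{k+1}$ and expanding everything as factorials, almost all factors cancel, leaving
$$\frac{\binom{n+2k+2}{k+1}}{\binom{n+2k}{k}} = \frac{(n+2k+2)!\,k!\,(n+k)!}{(k+1)!\,(n+k+1)!\,(n+2k)!} = \frac{(n+2k+2)(n+2k+1)}{(k+1)(n+k+1)}.$$
Hence the claim $2\binom{n+2k}{k} < \binom{n+2k+2}{k+1}$ is equivalent to the quadratic inequality
$$(n+2k+2)(n+2k+1) > 2(k+1)(n+k+1).$$
The first step I would carry out is precisely this factorial cancellation; it is the cleanest route to an elementary statement, and it notably avoids any appeal to the Stirling estimate of Lemma~\ref{Fact 2}, which governs only the central coefficients.

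The key step is then to expand both sides of the quadratic inequality and subtract. A direct computation gives
$$(n+2k+2)(n+2k+1) - 2(k+1)(n+k+1) = n^2 + 2nk + 2k^2 + n + 2k.$$
Every term on the right is nonnegative for $n \geq 1$ and $k \geq 0$, and the contribution $n^2 + n$ alone is strictly positive (indeed at least $2$), so the difference is strictly positive and the inequality follows. I would also record the boundary case $k = 0$ explicitly as a sanity check: there the formula reduces to $(n+2)(n+1) > 2(n+1)$, i.e.\ $n+2 > 2$, which holds for all $n \geq 1$.

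There is no serious obstacle here: once the ratio has been formed, the argument is purely computational. The only point deserving a little care is confirming \emph{strict} inequality uniformly over the full range of $n$ and $k$ — in particular that no low-order term causes the difference polynomial to vanish at $k = 0$ or for small $n$ — but since that polynomial has all nonnegative coefficients together with the strictly positive summand $n^2 + n$, strictness is immediate across the whole range.
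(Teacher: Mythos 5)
Your proof is correct, and it takes a genuinely different route from the paper's. You form the ratio of consecutive terms, cancel factorials to get
\[
\frac{\binom{n+2k+2}{k+1}}{\binom{n+2k}{k}} = \frac{(n+2k+2)(n+2k+1)}{(k+1)(n+k+1)},
\]
and reduce the claim to the polynomial inequality $(n+2k+2)(n+2k+1) - 2(k+1)(n+k+1) = n^2 + 2nk + 2k^2 + n + 2k > 0$, which holds with room to spare since $n^2 + n \geq 2$; the computation checks out. The paper instead argues combinatorially: it handles $k=0$ directly (where the inequality reads $2 < n+2$) and, for $k \geq 1$, applies Pascal's identity twice to decompose
\[
\binom{n+2(k+1)}{k+1} = \binom{n+2k}{k-1} + 2\binom{n+2k}{k} + \binom{n+2k}{k+1},
\]
so the strict inequality follows from positivity of the two outer terms. (The paper frames this as induction on $k$, but the induction hypothesis is never actually invoked; it is really a case split.) Your approach buys a self-contained, one-shot algebraic verification with uniform treatment of all $k \geq 0$; the paper's buys structural information — it exhibits the surplus $\binom{n+2(k+1)}{k+1} - 2\binom{n+2k}{k}$ explicitly as a sum of two binomial coefficients — and stays within the Pascal-triangle bookkeeping used elsewhere in that section. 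You are also right that neither argument needs the Stirling estimate; that lemma is used only later, for the limiting-density computation itself.
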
 

\begin{proof}

Recall Pascal's Identity

$$\left(\begin{array}{c}n\\k\end{array}\right) + \left(\begin{array}{c}n\\k+1\end{array}\right) = \left(\begin{array}{c}n+1\\k+1\end{array}\right).$$
We prove the inequality by induction on $k$.  First, for $k = 0$, the inequality just says that $2<n+2$.  Now, suppose $k > 0$.  Applying Pascal's Identity to the right side of the inequality, we get 
\[\left(\begin{array}{c}
n+2(k+1)\\k + 1\end{array}\right) = \left(\begin{array}{c}
n+2k+1\\k\end{array}\right) + 
\left(\begin{array}{c}
n+2k+1\\k+1\end{array}\right)  \]
and then 
\[\left(\begin{array}{c}
n+2k\\k - 1\end{array}\right) +  
\left(\begin{array}{c}
n+2k\\k\end{array}\right) + 
\left(\begin{array}{c}
n+2k\\k\end{array}\right) + 
\left(\begin{array}{c}
n+2k\\k+1\end{array}\right)\ 
.\]
This is clearly greater than $2\left(\begin{array}{c}
n+2k\\k\end{array}\right)$.     
\end{proof}

To show that $\alpha(n,1)$ has limiting density $0$, we use several further lemmas.          

\begin{lem}  

$P_s = 2^{s+1} - 1$.

\end{lem}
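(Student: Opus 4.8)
The plan is to count relator terms directly, since in this setting a presentation is nothing more than a single identity $t(a) = a$, and the formal presentation is completely determined by the term $t(a)$. Here $t(a)$ is built by applying the two unary symbols $S$ and $S^{-1}$ to the generator $a$, with function symbols occurring only on the left, so $t(a)$ is just a word in the two-letter alphabet $\{S, S^{-1}\}$. Because distinct terms give distinct presentations (we are counting presentations as syntactic objects, not the isomorphism types of the resulting structures), $P_s$ equals the number of such words of length at most $s$, where length is the number of occurrences of function symbols.

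The first step is to count the terms of each fixed length. A term of length exactly $\ell$ is a string of $\ell$ symbols, each of which is independently either $S$ or $S^{-1}$, so there are exactly $2^\ell$ of them; the case $\ell = 0$ contributes the single empty term, namely $a$ itself, corresponding to the trivial identity $a = a$. The second step is then to sum over all admissible lengths $0 \le \ell \le s$, giving
\[
P_s \;=\; \sum_{\ell = 0}^{s} 2^\ell \;=\; 2^{s+1} - 1
\]
by the formula for a finite geometric series.

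The only point that needs care is the bookkeeping convention rather than any genuine combinatorial obstacle: whether the length-$0$ term is admitted, since this is precisely what distinguishes the closed form $2^{s+1}-1$ (including it) from $2^{s+1}-2$ (excluding it), and the stated value corresponds to including it. I would emphasize that there is no cancellation or reduction subtlety of the kind that arises for reduced group relators, because the terms are counted as formal syntactic expressions, so all $2^\ell$ words of a given length $\ell$ are genuinely distinct presentations even when several of them (for example, $S S^{-1}(a)$ and $a$) yield the same element of the free structure.
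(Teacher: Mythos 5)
Your proof is correct and is essentially identical to the paper's: both count the $2^\ell$ syntactic terms of each length $\ell$ (including the empty term of length $0$) and sum the geometric series $\sum_{\ell=0}^{s} 2^\ell = 2^{s+1}-1$. Your added remarks about including the trivial identity and not reducing terms make explicit the conventions the paper uses implicitly, but the argument is the same.
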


\begin{proof}

The number of terms of length $m$ is $2^m$, so the number of terms of length at most $s$ is
$1 + 2 + \ldots + 2^s = 2^{s+1} - 1$.  
\end{proof}

\begin{lem}

$P_s(\alpha(m,1)) = \sum_{m+2k\leq s}2\left(\begin{array}{c}
m+2k\\k\end{array}\right)$. 

\end{lem}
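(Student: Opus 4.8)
The plan is to combine the characterization of $\alpha(m,1)$ with a direct count of the relevant relators. By the preceding lemma, $\alpha(m,1)$ is true only in $\Z_m$, so $P_s(\alpha(m,1))$ counts exactly those relators $t(a)$ of length at most $s$ for which $\langle a \mid t(a) = a\rangle \cong \Z_m$. By the discussion just before the lemma, this happens precisely when $t(a)$ has either $m+k$ occurrences of $S$ and $k$ occurrences of $S^{-1}$, or $m+k$ occurrences of $S^{-1}$ and $k$ occurrences of $S$, for some $k \ge 0$. So the task reduces to counting words of these two shapes.

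First I would organize the count by length. A relator of the first type has length $(m+k)+k = m+2k$, and similarly for the second type, so it is natural to index by the parameter $k$, letting $m+2k$ range over the admissible lengths bounded by $s$. For a fixed $k$, a relator of the first type is a word of length $m+2k$ in the alphabet $\{S, S^{-1}\}$ with exactly $k$ occurrences of $S^{-1}$; the number of such words is the number of ways to choose the positions of those $k$ occurrences, namely $\binom{m+2k}{k}$. By the evident symmetry $S \leftrightarrow S^{-1}$, the relators of the second type are counted by the same binomial coefficient.

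Next I would verify that the two families are disjoint, so that no relator is counted twice. Since $m \ge 1$, a single word cannot simultaneously have $m$ more occurrences of $S$ than of $S^{-1}$ and $m$ more occurrences of $S^{-1}$ than of $S$; hence the two types are mutually exclusive, and the number of relators of length exactly $m+2k$ giving $\Z_m$ is $2\binom{m+2k}{k}$. Summing over all $k$ with $m+2k \le s$ then yields the claimed formula
\[ P_s(\alpha(m,1)) = \sum_{m+2k \le s} 2\binom{m+2k}{k}. \]

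There is no serious obstacle here; the argument is essentially bookkeeping. The only points requiring care are that the structure determined by a relator depends only on the net number of applications of $S$ (so the earlier characterization of when $\langle a \mid t(a) = a\rangle \cong \Z_m$ applies verbatim), and the disjointness of the two families, which uses $m \ge 1$.
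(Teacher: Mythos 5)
Your proposal is correct and is essentially the paper's own argument: the paper's proof is a one-line version of exactly this count, noting that for each $k$ there are $\binom{m+2k}{k}$ terms with $m+k$ occurrences of $S$ and $k$ of $S^{-1}$, and the same number with the symbols switched. Your additional checks (indexing by length $m+2k$, the $S \leftrightarrow S^{-1}$ symmetry, and disjointness of the two families when $m \ge 1$) are just explicit bookkeeping that the paper leaves implicit.
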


\begin{proof}

For each $m\geq 1$, and each $k$, we have $\left(\begin{array}{c}
m+2k\\k\end{array}\right)$ terms with $m+k$ occurrences of $S$ and $k$ of $S^{-1}$, and the same number with the symbols switched.  
\end{proof}

The next lemma bounds the sum $P_{n+2k}(\alpha(n,1))$ by a single term.   

\begin{lem}\label{lem:BoundingOneTerm}

For all $n\geq 1$ and all $k\geq 0$, 
$P_{n+2k}(\alpha(n,1)) < \left(\begin{array}{c}n+2(k+1)\\k+1\end{array}\right) $.

\end{lem}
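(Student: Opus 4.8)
The plan is to argue by induction on $k$, using the recursive structure of the partial sums together with the same double application of Pascal's Identity that appears in the proof of Lemma~\ref{lemma:section2ineq}. Substituting $s = n+2k$ into the formula $P_s(\alpha(n,1)) = \sum_{n+2j\le s} 2\binom{n+2j}{j}$ gives $P_{n+2k}(\alpha(n,1)) = \sum_{j=0}^{k} 2\binom{n+2j}{j}$, and hence the recursion
\[ P_{n+2k}(\alpha(n,1)) = 2\binom{n+2k}{k} + P_{n+2(k-1)}(\alpha(n,1)) \qquad (k\ge 1). \]
On the other side, applying Pascal's Identity twice to the target binomial coefficient yields
\[ \binom{n+2(k+1)}{k+1} = \binom{n+2k}{k-1} + 2\binom{n+2k}{k} + \binom{n+2k}{k+1}. \]

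For the base case $k=0$ we have $P_n(\alpha(n,1)) = 2\binom{n}{0} = 2$, which is strictly less than $\binom{n+2}{1} = n+2$ because $n\ge 1$. For the inductive step, the two displays above show that it suffices to prove $P_{n+2(k-1)}(\alpha(n,1)) < \binom{n+2k}{k-1} + \binom{n+2k}{k+1}$, since the common term $2\binom{n+2k}{k}$ then gives the claim. By the induction hypothesis applied to $k-1$, we have $P_{n+2(k-1)}(\alpha(n,1)) < \binom{n+2k}{k}$, so it is enough to check $\binom{n+2k}{k} \le \binom{n+2k}{k-1} + \binom{n+2k}{k+1}$. This is exactly where $n\ge 1$ is used: the ratio $\binom{n+2k}{k+1}/\binom{n+2k}{k} = (n+k)/(k+1) \ge 1$, so $\binom{n+2k}{k+1} \ge \binom{n+2k}{k}$ already, and the remaining summand $\binom{n+2k}{k-1}$ is nonnegative. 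Chaining these inequalities closes the induction.

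The main obstacle to avoid is the temptation to bound the sum geometrically using only Lemma~\ref{lemma:section2ineq}: that lemma gives each term less than half the next, so the tail sums to at most roughly $4\binom{n+2k}{k}$, which is about twice the allowed bound and does not close. The point of the argument above is to retain, rather than discard, the two outer terms $\binom{n+2k}{k-1}$ and $\binom{n+2k}{k+1}$ produced by the double Pascal expansion; these terms are precisely large enough to absorb the entire lower-order partial sum via the induction hypothesis, and the hypothesis $n\ge1$ is what guarantees that the neighbor $\binom{n+2k}{k+1}$ is at least as large as $\binom{n+2k}{k}$.
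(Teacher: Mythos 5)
Your proof is correct, and it is not just a stylistic variant of the paper's argument: it quietly repairs a factor-of-two slip in the paper's own inductive step. Both proofs induct on $k$, but the paper's step combines Lemma~\ref{lemma:section2ineq} with the recursion $P_{n+2(k+1)}(\alpha(n,1)) = \binom{n+2(k+1)}{k+1} + P_{n+2k}(\alpha(n,1))$, whereas the paper's own counting lemma says the increment is $2\binom{n+2(k+1)}{k+1}$ (the relators with $n+k$ occurrences of $S$ and $k$ of $S^{-1}$, and those with the symbols switched, both give $\Z_n$); your recursion is the correct one. With the honest increment, the paper's chain would need $\binom{n+2(k+2)}{k+2} \geq 3\binom{n+2(k+1)}{k+1}$, which is more headroom than the factor of $2$ that Lemma~\ref{lemma:section2ineq} supplies, so the paper's appeal to that lemma alone does not close the induction. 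Your double application of Pascal's Identity, which retains the outer terms $\binom{n+2k}{k-1}$ and $\binom{n+2k}{k+1}$ and uses $n\geq 1$ to get $\binom{n+2k}{k+1} \geq \binom{n+2k}{k}$, supplies exactly the missing factor: in effect you show $\binom{n+2(k+1)}{k+1} \geq 3\binom{n+2k}{k}$ for $n \geq 1$, which is what the statement with the factor of $2$ genuinely requires. Your base case $2 < n+2$ likewise corrects the paper's evaluation $P_n(\alpha(n,1)) = 1$, which should be $2\binom{n}{0} = 2$. Your closing observation is also apt: bounding the sum geometrically via Lemma~\ref{lemma:section2ineq} alone gives roughly $4\binom{n+2k}{k}$, which does not beat the target, so some argument of your absorption type is unavoidable. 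In short, same induction skeleton, but your inline Pascal expansion—rather than the black-box use of Lemma~\ref{lemma:section2ineq}—is what actually proves the lemma as stated.
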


\begin{proof}

We fix $n$ and proceed by induction on $k$.  For $k = 0$, the left side is $P_n(\alpha(n,1)) = \left(\begin{array}{c}n\\0\end{array}\right) = 1$, and the right side is $\left(\begin{array}{c}n+2\\1\end{array}\right) = n+2 > 1$. Supposing that the statement holds for $k$, we prove it for $k+1$.  By Lemma \ref{lemma:section2ineq}, 
\[\left(\begin{array}{c}n+2(k+2)\\k+2\end{array}\right) > 
2\left(\begin{array}{c}
n+2(k+1)\\k+1\end{array}\right) = \left(\begin{array}{c}n+2(k+1)\\k+1\end{array}\right) + 
\left(\begin{array}{c}n+2(k+1)\\k+1\end{array}\right)\ .\]  
By the Induction Hypothesis, this is greater than 
\[\left(\begin{array}{c}n+2(k+1)\\k+1\end{array}\right) + P_{n+2k}(\varphi_n) = P_{n+2(k+1)}(\varphi_n)\ .\]       
\end{proof}

Now we can show that the limiting density of $\alpha(n,1)$ is $0$ for any $n\geq 1$. To do so, we must make an odd/even case distinction because the only way to get presentations of different lengths of the same structure is for these lengths to differ by a multiple of two, so $P_{n+2k+1}(\alpha(n,1))$ will equal $P_{n+2k}(\alpha(n,1))$. However, if $s = n+2k$ for some $k$, then $P_s(\alpha(n,1))$ has a new last term $\left(\begin{array}{c}n+2k\\k\end{array}\right)$, and $P_{s+1}(\alpha(n,1)) = P_s(\alpha(n,1))$.  Therefore, it is enough to show that 
\[\frac{P_{n+2k}(\alpha(n,1))}{P_{n+2k}}\rightarrow 0\ .\]  
By Lemma \ref{lemma:section2ineq}, the last term of $P_{n+2k}(\alpha(n,1))$ is greater than the sum of the earlier terms.  Thus, 
$P_{n+2k}(\alpha(n,1)) < 2\left(\begin{array}{c}
n+2k\\k\end{array}\right)$.  
Recall that $P_{n+2k} = 2^{n+2k+1} - 1$, which is strictly greater than $2^{n+2k}$, so $\frac{P_{n+2k}(\alpha(n,1))}{P_{n+2k}} < \frac{2}{2^{n+2k}}
\left(\begin{array}{c}
n+2k\\k\end{array}\right)$.  To prove that the limiting density of $\alpha(n,1)$ is $0$, it is enough to prove that

$$\frac{2}{2^{n+2k}}\left(\begin{array}{c}
n+2k\\k
\end{array}\right)\rightarrow 0.$$ 

We can express 
$\frac{2}{2^{n+2k}}
\left(\begin{array}{c}n+2k\\k\end{array}\right)$
as a product of two factors, one involving the fixed $n$ and the other not.
The first factor is 
\[\frac{2}{2^n}\left(\frac{2k+n}{k+n}\right)\left(\frac{2k+(n-1)}{k+(n-1)}\right) \ldots \left(\frac{2k+1}{k+1}\right)\ .\]  
This is an $(n+1)$-fold product with limit $2$ as $k\rightarrow\infty$.  The second factor is $\frac{1}{2^{2k}}\left(\begin{array}{c}2k\\k\end{array}\right)$.  By Lemma \ref{Fact 2} 
above, this is $(1+o(1))\frac{1}{\sqrt{\pi k}}$, which has limit $0$.   
All together, we have shown the following.

\begin{prop}
\label{alpha}

For $n\geq 1$, $\lim_{s\rightarrow\infty} \frac{P_s(\alpha(n,1))}{P_s} = 0$.  

\end{prop}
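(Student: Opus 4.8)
The plan is to work directly from the two closed-form counts already in hand, $P_s = 2^{s+1}-1$ and $P_s(\alpha(n,1)) = \sum_{n+2k\le s} 2\binom{n+2k}{k}$, and to show their quotient tends to $0$. The first observation is that a single relator $t(a)$ yields the cycle $\Z_n$ exactly when it has $n+k$ occurrences of one symbol and $k$ of the other, so it has length $n+2k$ and contributes to $P_s(\alpha(n,1))$ only at lengths $s=n+2k$. Consequently the numerator increases only at these lengths, and since the denominator $P_s$ is strictly increasing while the numerator is constant across each pair $\{n+2k,\,n+2k+1\}$, each odd-step ratio is smaller than the even-step ratio just before it. It therefore suffices to prove that $\frac{P_{n+2k}(\alpha(n,1))}{P_{n+2k}}\to 0$ along the even-step subsequence.

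Next I would collapse the sum in the numerator to a constant multiple of its final term. Lemma~\ref{lemma:section2ineq} says precisely that each summand exceeds twice the previous one, so the sum is geometrically dominated by its last term $2\binom{n+2k}{k}$; hence $P_{n+2k}(\alpha(n,1)) < 4\binom{n+2k}{k}$ (a single-term bound of the kind recorded in Lemma~\ref{lem:BoundingOneTerm}). Bounding the denominator below by $P_{n+2k} = 2^{n+2k+1}-1 > 2^{n+2k}$, the whole problem reduces to the single asymptotic statement
\[ \frac{1}{2^{n+2k}}\binom{n+2k}{k}\longrightarrow 0 \qquad (k\to\infty), \]
the constant factor $4$ being irrelevant to whether the limit is $0$.

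The one genuinely delicate step is this last limit, and I would handle it by separating the dependence on the fixed parameter $n$ from the part that forces convergence to $0$. Writing $\binom{n+2k}{k} = \binom{2k}{k}\prod_{i=1}^{n}\frac{2k+i}{k+i}$ and regrouping the powers of $2$, the quantity factors as
\[ \frac{1}{2^{n+2k}}\binom{n+2k}{k} = \left[\frac{1}{2^{n}}\prod_{i=1}^{n}\frac{2k+i}{k+i}\right]\cdot\left[\frac{1}{2^{2k}}\binom{2k}{k}\right]. \]
The left-hand bracket is $2^{-n}$ times a fixed $n$-fold product whose every factor tends to $2$, so it converges to $2^{-n}\cdot 2^{n} = 1$; the right-hand bracket is exactly the central-binomial ratio, which by the Stirling estimate of Lemma~\ref{Fact 2} equals $(1+o(1))\frac{1}{\sqrt{\pi k}}$ and hence tends to $0$. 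A sequence converging to $1$ times one converging to $0$ converges to $0$, which establishes the claim and the proposition. I expect the only real care to be needed in this factorization---keeping the number of factors and the exponents of $2$ aligned so that the $n$-dependent bracket settles to the finite constant $1$ rather than growing with $n$---while the reductions of the first two paragraphs are immediate consequences of the lemmas already established.
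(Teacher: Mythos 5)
Your proof is correct and follows essentially the same route as the paper's: the same reduction to the even subsequence $s = n+2k$, the same use of Lemma~\ref{lemma:section2ineq} to dominate the sum by its last term, the same lower bound $P_{n+2k} > 2^{n+2k}$, and the same factorization of $\frac{1}{2^{n+2k}}\binom{n+2k}{k}$ into an $n$-fold product times the central-binomial ratio handled by Lemma~\ref{Fact 2}. The only differences are bookkeeping (your constant $4$ versus the paper's $2$ in the single-term bound, and your first bracket tending to $1$ where the paper's tends to $2$), which do not affect the argument.
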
   

From this, we get the following.    

\begin{thm}

For bijective structures with a single generator and a single relator, each sentence $\varphi$ has limiting density equal to $1$ if  $\ \Z\models\varphi$ and $\ 0$ otherwise.

\end{thm}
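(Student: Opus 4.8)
The plan is to combine the structural reduction already established—that over $T$ every sentence is equivalent to a Boolean combination of the basic sentences $\alpha(n,k)$ and $\beta(n)$—with the single quantitative computation carried out in Proposition~\ref{alpha}, namely that each $\alpha(n,1)$ has limiting density $0$. Proposition~\ref{alpha} is the only analytic input; everything else is soft and rests on the closure properties of the set $S$ of density-$1$ sentences and on the very restricted form of the structures that actually arise.

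First I would put an arbitrary sentence $\varphi$ into a manageable normal form on the structures that occur. Every presented structure $\langle a\mid t(a)=a\rangle$ is single-generated, hence is either $\Z$ or some $\Z_m$. On such structures $\alpha(n,k)$ is false for $k>1$, and $\beta(n)$ is equivalent to $\bigwedge_{m\le n}\neg\alpha(m,1)$. Consequently the Boolean combination equivalent to $\varphi$ over $T$ simplifies, on these structures, to a Boolean combination $B'$ of finitely many atoms $\alpha(1,1),\dots,\alpha(M,1)$, where $M$ bounds the indices appearing. Since every presented structure is single-generated, this gives $P_s(\varphi)=P_s(B')$ for all $s$.

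Next I would exploit that $\Z$ makes every $\alpha(m,1)$ false. Suppose $\Z\models\varphi$. Viewing $B'$ as a propositional formula in the atoms $\alpha(1,1),\dots,\alpha(M,1)$, the all-false assignment—the one realized in $\Z$—makes $B'$ true, so every single-generated structure satisfying $\bigwedge_{m\le M}\neg\alpha(m,1)$ also satisfies $B'$, hence $\varphi$. By Proposition~\ref{alpha} each $\alpha(m,1)$ has density $0$, so each $\neg\alpha(m,1)$ has density $1$, and by the lemmas on negation and finite disjunction (applied to the negations) the finite conjunction $\bigwedge_{m\le M}\neg\alpha(m,1)$ has density $1$ as well. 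The containment of presented structures then yields $P_s\!\left(\bigwedge_{m\le M}\neg\alpha(m,1)\right)\le P_s(B')=P_s(\varphi)\le P_s$, and a squeeze forces $P_s(\varphi)/P_s\to 1$. For the remaining case, if $\Z\not\models\varphi$ then $\Z\models\neg\varphi$, and the same argument shows $\neg\varphi$ has density $1$, whence $\varphi$ has density $0$.

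The only delicate point to get right is the bookkeeping in the reduction step: the equivalence $\varphi\equiv B'$ holds only on single-generated models, not over all of $T$, so I must not invoke the closure-under-logical-implication lemma directly (its $\vdash$ ranges over all structures). Instead I phrase the key step as the counting inequality $P_s(\bigwedge_{m\le M}\neg\alpha(m,1))\le P_s(\varphi)$, justified by the inclusion of presented structures, which is exactly where the single-generated hypothesis is used. Once this is set up correctly, the argument is routine, and there is no further obstacle, since Proposition~\ref{alpha} has already supplied the one limit that matters.
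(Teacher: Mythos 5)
Your proposal is correct and follows essentially the same route as the paper: the paper derives this theorem directly from the normal-form lemma, the single-generator simplification (eliminating $\beta(n)$ and $\alpha(n,k)$ for $k>1$ in favor of the atoms $\alpha(m,1)$), and Proposition~\ref{alpha}, exactly as you do. The paper states the theorem as an immediate consequence without spelling out the bookkeeping; your write-up supplies those details (in particular the observation that the equivalence with the Boolean combination need only hold on single-generated structures, phrased as a counting inequality), which is precisely the intended argument.
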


For later use, we state below another immediate consequence of Proposition~\ref{alpha}.  For a term $t(a)$, let $X$ be the difference between the number of occurrences of $S$ and the number of occurrences of $S^{-1}$ in $t(a)$.  The relators that make $\alpha(n,1)$ true are exactly those for which $|X| = n$.    

\begin{lem}
\label{later}

For each $k\in\mathbb{Z}$, the set of presentations $a,t(a) = a$ for which $X = k$ has density $0$.  

\end{lem}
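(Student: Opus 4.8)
The plan is to split on whether $k = 0$. For $k \neq 0$ the argument is immediate: a relator $t(a)$ with $X = k$ produces the structure $\Z_{|k|}$, and by the correspondence recorded just above the statement, such a relator is exactly one for which $|X| = |k|$, i.e.\ one making $\alpha(|k|,1)$ true. Hence $P_s(X = k) \le P_s(\alpha(|k|,1))$ for every $s$, and since $\alpha(|k|,1)$ has limiting density $0$ by Proposition \ref{alpha}, the set of presentations with $X = k$ has density $0$ as well.

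The case $k = 0$ requires a direct count, since these presentations are precisely the ones producing $\Z$ and they do not make any single $\alpha(n,1)$ true; thus they are not contained in the relator set of any one $\alpha(n,1)$, and Proposition \ref{alpha} cannot be invoked directly. I would count as follows. A term of length $m$ has $X = 0$ exactly when $m = 2j$ is even and the word has $j$ occurrences of each of $S$ and $S^{-1}$, and there are $\binom{2j}{j}$ such words. So the number of presentations of length at most $s$ with $X = 0$ is $\sum_{2j \le s}\binom{2j}{j}$. As in the proof of Proposition \ref{alpha}, it is enough to treat $s = 2K$, since a word of odd length cannot have $X = 0$ and so $P_{2K+1}(X=0) = P_{2K}(X=0)$, while the denominator only grows.

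To bound the sum, I would use that consecutive central binomial coefficients satisfy $\binom{2(j+1)}{j+1} \ge 2\binom{2j}{j}$, because the ratio equals $\frac{2(2j+1)}{j+1} \ge 2$ for all $j \ge 0$. Iterating upward from $j$ to $K$ gives $\binom{2j}{j} \le 2^{-(K-j)}\binom{2K}{K}$, so that $\sum_{j=0}^{K}\binom{2j}{j} < 2\binom{2K}{K}$. Since $P_{2K} = 2^{2K+1} - 1 > 2^{2K}$, this yields $\frac{P_{2K}(X=0)}{P_{2K}} < \frac{2\binom{2K}{K}}{2^{2K}}$, which by Lemma \ref{Fact 2} equals $(1+o(1))\frac{2}{\sqrt{\pi K}}$ and hence tends to $0$.

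The main obstacle is exactly this $k = 0$ case, for the structural reason noted above: it lies outside every individual $\alpha(n,1)$, so monotonicity alone does not finish it. Once the count $\sum_{2j \le s}\binom{2j}{j}$ is in hand, however, the required estimate is precisely the $n = 0$ instance of the second factor appearing in the proof of Proposition \ref{alpha}, so the remaining work is the routine application of the ratio bound and the Stirling approximation of Lemma \ref{Fact 2}.
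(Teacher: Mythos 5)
Your proof is correct, and it is actually more complete than the paper's treatment: the paper offers no proof at all, stating the lemma as ``another immediate consequence'' of Proposition~\ref{alpha}, on the strength of the observation that the relators making $\alpha(n,1)$ true are exactly those with $|X| = n$. For $k \neq 0$ that is precisely your containment argument, so there you coincide with the paper's intent. Your real contribution is the $k = 0$ case, and your structural point is well taken: the relators with $X = 0$ are the ones yielding $\Z$, they lie outside every $\alpha(n,1)$ set, and Proposition~\ref{alpha} is stated only for $n \geq 1$, so the lemma is not literally immediate from the proposition and a separate count is needed. Your count is the natural $n = 0$ instance of the paper's own proof of Proposition~\ref{alpha}: where the paper invokes Lemma~\ref{lemma:section2ineq} (whose strict inequality is stated only for $n \geq 1$, and indeed degenerates to equality at $n = 0$, $k = 0$), you use the ratio bound $\binom{2(j+1)}{j+1} \ge 2\binom{2j}{j}$, valid for all $j \ge 0$, to get $\sum_{j \le K}\binom{2j}{j} < 2\binom{2K}{K}$, and then finish with Lemma~\ref{Fact 2}; the reduction to even $s$ and the bound $P_{2K} > 2^{2K}$ are also right. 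One remark: the $k=0$ case can alternatively be folded back into Proposition~\ref{alpha} by noting $\binom{2j}{j} \le 2\binom{2j}{j-1}$ for $j \ge 1$, whence $P_s(X{=}0) \le 1 + 2P_s(X{=}2) \le 1 + 2P_s(\alpha(2,1))$; this buys brevity, while your direct count has the merit of being self-contained and of exhibiting exactly where the paper's ``immediate'' claim needs patching.
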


\subsection{Structures with a single unary function} \label{basic-single-unary} 

Next, take the language with a single unary function symbol $f$, and the variety with no axioms.  We consider presentations with a single generator $a$ and a single identity.    
At first, we focus on the sentence $\varphi$ saying that $f$ is not $1-1$.  We will see that this sentence is false in the free structure, but it has limiting density $1$. After that, we will consider arbitrary sentences, and prove a zero-one law.

For an identity $f^{(r)}(a) = f^{(r')}(a)$, where $r,r'\geq 0$, the \emph{length} is $r + r'$.  If $r = r'$, then we get an $\omega$-chain.  If $0 < r < r'$, then we get a chain of length $r$ leading to a cycle of length $r' - r$.  If $0 = r < r'$, then we get a cycle of length $r'$.  Similarly, if $0 < r' < r$, we get a chain of length $r'$ leading to a cycle of length $r - r'$, and if $0 = r' < r$, we get a cycle of length $r$. 

We write $m + \Z_n$ for a chain of length $m$ leading to a cycle of length $n$, allowing the possibility that $m = 0$.  Any structure in our variety obtained from a single generator and a single identity has one of the following forms:

\begin{enumerate}
  
\item  an $\omega$-chain---this is the free structure, 

\item  a finite chain leading to a finite cycle, or

\item  a finite cycle.

\end{enumerate}           
  
\begin{lem}

$P_s = \frac{1}{2}(s^2 + 3s + 2)$. 

\end{lem}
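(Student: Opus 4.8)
The plan is to count directly the set of allowable presentations of length at most $s$ and show that it has the stated size. By the discussion preceding the statement, a presentation in this variety is a single identity of the form $f^{(r)}(a) = f^{(r')}(a)$ with $r,r'\geq 0$, and its length is $r + r'$. Hence $P_s$ is exactly the number of such identities with $r + r' \leq s$, and the whole task reduces to a lattice-point count.

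First I would fix the combinatorial model. I treat an identity as an ordered pair $(r,r')$ of nonnegative integers, since the left-hand and right-hand terms are distinguished: thus $f(a) = a$ and $a = f(a)$ count as two separate presentations, while the trivial identity $a = a$, coming from $r = r' = 0$, is the unique presentation of length $0$ (so that $P_0 = 1$, consistent with the formula). Then I would count by length: for each fixed $m \geq 0$, the identities of length exactly $m$ correspond to the solutions of $r + r' = m$ in nonnegative integers, of which there are precisely $m+1$, namely $r = 0,1,\ldots,m$ with $r' = m - r$.

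Summing over all admissible lengths then gives
\[
P_s = \sum_{m=0}^{s} (m+1) = \sum_{j=1}^{s+1} j = \frac{(s+1)(s+2)}{2} = \frac{1}{2}\left(s^2 + 3s + 2\right),
\]
which is the claimed identity.

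There is no substantive obstacle here; the argument is elementary. The only point that genuinely requires care is the bookkeeping convention, namely that the two sides of an identity are ordered, so that the pairs $(r,r')$ and $(r',r)$ with $r \neq r'$ are counted as distinct presentations. Getting this wrong would roughly halve the count (and alter the leading coefficient), so I would make the ordering convention explicit before carrying out the summation.
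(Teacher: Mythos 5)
Your proof is correct and takes essentially the same approach as the paper: count the identities of each length $m$ as the $m+1$ ordered pairs $(r,r')$ with $r+r'=m$, then sum the arithmetic series $1+2+\cdots+(s+1) = \frac{1}{2}(s+1)(s+2)$. Your explicit attention to the ordering convention matches the paper's implicit one (the paper later confirms, e.g., that there are exactly two identities of length $1$), so nothing is missing.
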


\begin{proof}

The number of identities of length $m$ is $m+1$.   Then \\
$P_s = 1 + 2 + \ldots + (s+1) = \frac{1}{2}(s+2)(s+1) = \frac{1}{2}(s^2 + 3s + 2)$.   
\end{proof}     

Recall that $\varphi$ holds if the function is not $1-1$.  We can see that $\varphi$ is false in $\omega$ and the finite cycles $\Z_n$, and true in the structures $m+\Z_n$ for $m\geq 1$, so an identity $f^n(a) = f^{n'}(a)$ yields a structure in which $\varphi$ is false if $n = n'$ or if one of $n,n'$ is $0$, and true otherwise.  We can easily count the identities of length $m$ that make $\varphi$ false.  For $m = 0$, there is just one identity, and it makes $\varphi$ false.  For $m = 1$, there are two identities, and both make $\varphi$ false.  For $m\geq 2$, if $m$ is odd, there are just two identities that make $\varphi$ false, and if $m$ is even, there are three identities that make $\varphi$ false.  Thus, $P_s(\neg{\varphi}) = 1 + 2 + 3 + 2 + \ldots + (\frac52 + \frac12(-1)^s)$.      

\begin{prop}

The limiting density for $\varphi$ is $1$.  

\end{prop}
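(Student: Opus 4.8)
The plan is to show that the density of $\neg\varphi$ is $0$, which by the earlier lemma on complements is equivalent to the density of $\varphi$ being $1$. First I would recall the two counts already established in the excerpt: $P_s = \frac{1}{2}(s^2+3s+2)$, a quadratic in $s$, and the count $P_s(\neg\varphi)$ of identities (of length at most $s$) yielding a structure in which $f$ is $1$--$1$. The key observation is that $\neg\varphi$ holds for only a bounded number of identities at each length $m$: from the analysis, an identity $f^n(a)=f^{n'}(a)$ makes $f$ injective exactly when $n=n'$ or one of $n,n'$ is $0$, and these account for at most three identities per length (three when $m$ is even, two when $m$ is odd, and one or two for the small cases $m=0,1$).

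The main step is then just to sum this bounded contribution and compare growth rates. From the expression $P_s(\neg\varphi) = 1 + 2 + 3 + 2 + \cdots + (\tfrac52 + \tfrac12(-1)^s)$ given in the excerpt, each summand is at most $3$, so $P_s(\neg\varphi) \le 3(s+1)$, which is linear in $s$. Since $P_s$ is quadratic in $s$, I would write
\[
\frac{P_s(\neg\varphi)}{P_s} \le \frac{3(s+1)}{\tfrac12(s^2+3s+2)} = \frac{6(s+1)}{(s+1)(s+2)} = \frac{6}{s+2},
\]
and this clearly tends to $0$ as $s\to\infty$. Hence $\neg\varphi$ has limiting density $0$, and therefore $\varphi$ has limiting density $1$.

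There is essentially no serious obstacle here: the whole argument reduces to the fact that the injective-producing identities grow linearly while the total number of identities grows quadratically, so their ratio vanishes. The only point requiring a small amount of care is the bookkeeping of the per-length count of $\neg\varphi$-identities — verifying that the count is $2$ for odd $m\ge 3$ and $3$ for even $m\ge 2$ (with the stated values for $m=0,1$) — but this is the routine case analysis already recorded in the paragraph preceding the proposition, and any clean upper bound linear in $s$ suffices to drive the ratio to zero. If one prefers an exact computation rather than the crude bound, one can evaluate the sum $P_s(\neg\varphi)$ in closed form (it is linear in $s$ up to the alternating term) and divide by the quadratic $P_s$; either way the limit is $0$, giving the result.
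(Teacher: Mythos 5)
Your proof is correct and follows essentially the same route as the paper: both bound $P_s(\neg\varphi)$ by a linear function of $s$ (the paper uses $3s$ for $s\geq 1$, you use $3(s+1)$) using the per-length count of injective-producing identities, and then observe that this linear bound divided by the quadratic $P_s$ tends to $0$, so $\neg\varphi$ has density $0$ and $\varphi$ has density $1$.
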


\begin{proof}

For $s\geq 1$, $P_s(\neg{\varphi})\leq 3s$, and $\lim_{s\rightarrow\infty}\frac{3s}{P_s} = 0$.  Therefore, $\neg{\varphi}$ has limiting density $0$, and $\varphi$ has limiting density~$1$.     
\end{proof}

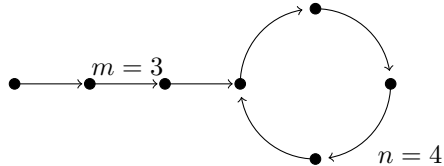
\begin{figure}[b]
\begin{center}
\begin{tikzpicture}
\coordinate (zero) at (-3,0);
\coordinate[label={$m=3$}] (middle) at (-1.5,0);
\coordinate(end) at (0,0);
\coordinate (loop) at (2,0);
\coordinate[label=315:{$n=4$}] (dots) at (1.707,-0.707);
\draw[fill] (zero) circle (2pt);
\draw[->] (zero) -- (-2.1,0);
\draw[->] (-1,0) -- (-.1,0);
\draw[->] (-2,0) -- (-1.1,0);
\draw[fill] (end) circle(2pt);
\draw[fill] (-2,0) circle (2pt);
\draw[fill] (-1,0) circle (2pt);
\draw[->] (0,0) arc (180:100:1cm);
\draw[->] (1,-1) arc (270:190:1cm);
\draw[->] (1,1) arc (90:10:1cm);
\draw[->] (2,0) arc (360:280:1cm);
\draw[fill] (loop) circle (2pt);
\draw[fill] (1,1) circle (2pt);
\draw[fill] (1,-1) circle (2pt);
\end{tikzpicture}
\end{center}
\caption{$m+\Z_n$ where $m = 3$ and $n = 4$}
\label{fig-fin-str}
\end{figure}

In fact, we will see that each sentence has limiting density $1$ or $0$.  The set of sentences with limiting density $1$ is not that of the free structure.  It is the theory of a structure $\mathcal{A}$ that we may think of as a limit of the structures $m+\Z_n$ (see Figure \ref{fig-fin-str}).  The limit structure $\mathcal{A}$ consists of an $\omega$-chain together with two $\omega^*$-chains and a single $\omega$-chain that come together at a special point---this point is the end of the two $\omega^*$-chains and the beginning of the $\omega$-chain (see Figure \ref{fig-lim-str}).  The chain of length $m$ is replaced, in the limit, by an $\omega$-chain plus one of the $\omega^*$-chains, and the $n$-cycle is replaced, in the limit, by the other $\omega$-chain and $\omega^*$-chain.  We note that $\mathcal{A}$ is not finitely generated.   

\begin{figure}[t]
\begin{center}
\begin{tikzpicture}
\coordinate (zero) at (-6,0);
\coordinate(end) at (0,0);
\coordinate[label={$\omega$}] (1) at (-4.5,0);
\coordinate[label={$\omega^*$}] (1) at (-1.25,0);
\coordinate[label=0:{$\omega^*$}] (1) at (0.25,-1.25,0);
\coordinate[label=0:{$\omega$}] (1) at (0.25,1.25,0);
\draw[fill] (zero) circle (2pt);
\draw[->] (zero) -- (-5.1,0);
\draw[fill] (-5,0) circle (2pt);
\draw[->] (-5,0) -- (-4.1,0);
\draw[fill] (-4,0) circle (2pt);
\draw[fill] (-3.75,0) circle (1pt);
\draw[fill] (-3.5,0) circle (1pt);
\draw[fill] (-3.25,0) circle (.5pt);
\draw[fill] (-3,0) circle (.5pt);

\draw[fill] (end) circle (2pt);
\draw[->] (-1,0) -- (-.1,0);
\draw[fill] (-1,0) circle(2pt);
\draw[fill] (-1.25,0) circle (1pt);
\draw[fill] (-1.5,0) circle (1pt);
\draw[fill] (-1.75,0) circle (.5pt);
\draw[fill] (-2,0) circle (.5pt);

\draw[->] (end) -- (0.18,0.9);
\draw[fill] (0.2,1) circle (2pt);
\draw[fill] (0.25,1.25) circle (1pt);
\draw[fill] (0.3,1.5) circle (1pt);
\draw[fill] (0.35,1.75) circle (.5pt);
\draw[fill] (0.4,2) circle (.5pt);

\draw[->] (0.2,-1) -- (0.02,-0.1);
\draw[fill] (0.2,-1) circle (2pt);
\draw[fill] (0.25,-1.25) circle (1pt);
\draw[fill] (0.3,-1.5) circle (1pt);
\draw[fill] (0.35,-1.75) circle (.5pt);
\draw[fill] (0.4,-2) circle (.5pt);
\end{tikzpicture}
\end{center}
\caption{The limit structure}
\label{fig-lim-str}
\end{figure}
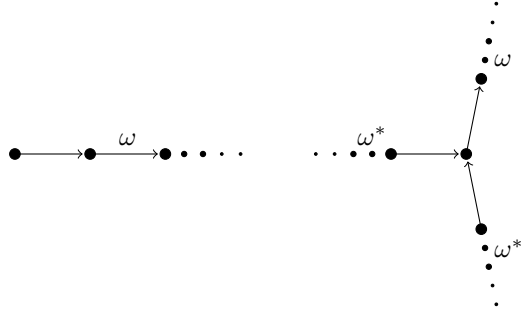

\begin{lem}

The theory of $\mathcal{A}$ is generated by the following sentences; note that the elements $a$ and $c$ are defined and not named by constants. 

\begin{enumerate}

\item  $\psi_a$, saying that there is a unique element $a$ with no $f$-pre-image, 

\item  $\psi_c$, saying that there is a unique element $c$ with two $f$-pre-images, 

\item  $\psi$, saying that there is no element with more than two $f$-pre-images,

\item  for every $n\in \omega$, $\alpha_n$ saying that there is no cycle of length $n$, and

\item  for every $n\in \omega$, $\beta_n$ saying that $a$ and $c$ are not connected by a chain of length $n$.

\end{enumerate}

\end{lem}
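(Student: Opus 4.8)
The plan is to show that the listed sentences form a \emph{complete} theory having $\mathcal{A}$ as a model; completeness then forces the deductive closure of these axioms to coincide with $\mathrm{Th}(\mathcal{A})$. Write $T^*$ for the set consisting of $\psi_a,\psi_c,\psi$, all the $\alpha_n$, and all the $\beta_n$. That $\mathcal{A}\models T^*$ is immediate from the description of $\mathcal{A}$: its unique element with no pre-image is $a$, its unique element with two pre-images is $c$, no element has three pre-images, there are no finite cycles, and $a$ and $c$ lie in distinct connected components, so no chain of any finite length joins them. Hence $T^*\subseteq\mathrm{Th}(\mathcal{A})$, and the deductive closure of $T^*$ is contained in $\mathrm{Th}(\mathcal{A})$. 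It remains to prove the reverse inclusion, for which it suffices to show that $T^*$ is complete.

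The heart of the argument is a structural analysis of the models of $T^*$. Let $\mathcal{M}\models T^*$. Using $\psi$, $\psi_a$, and $\psi_c$, every element of $\mathcal{M}$ has at most two $f$-pre-images, with $a$ the unique element having none and $c$ the unique element having exactly two; every other element has exactly one. I would decompose $\mathcal{M}$ into weakly connected components under the relation $x\sim y$ iff $f^m(x)=f^n(y)$ for some $m,n\geq 0$, and identify each one. The $\alpha_n$ rule out finite cycles, so following $f$ forward from any point yields an $\omega$-ray with no repetitions. For the component of $a$: each $f^i(a)$ with $i\geq 1$ could have a second pre-image only if it equalled $c$, and $f^i(a)=c$ is forbidden by $\beta_i$; thus this component is exactly the $\omega$-chain with start $a$. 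For the component of $c$: its two pre-images $b_1,b_2$ are distinct from $a$ and $c$ (else $\beta_1$ or $\alpha_1$ would fail), each has a unique pre-image, and the two backward rays cannot merge without producing a finite cycle; hence the component consists of two backward $\omega^*$-rays meeting at $c$ together with the forward $\omega$-ray from $c$, exactly as in $\mathcal{A}$. Any remaining component contains neither $a$ nor $c$, so each of its elements has exactly one pre-image and no cycle occurs; such a component is a bi-infinite $\mathbb{Z}$-chain. Consequently every model of $T^*$ is isomorphic to the disjoint union of the $\omega$-chain on $a$, the $c$-component, and some number $\lambda$ of copies of $\mathbb{Z}$, and its isomorphism type is determined by $\lambda$.

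With this classification in hand, completeness follows from the \L o\'s--Vaught test. The language is countable and $T^*$ has no finite models, since every model contains the infinite component of $a$. For any uncountable cardinal $\kappa$, a model of cardinality $\kappa$ must have exactly $\kappa$ copies of $\mathbb{Z}$ (the $a$- and $c$-components are countable, so the $\mathbb{Z}$-lines must supply $\kappa$ elements, each being countable), whence all models of size $\kappa$ are isomorphic. Thus $T^*$ is $\kappa$-categorical for every uncountable $\kappa$, and by the \L o\'s--Vaught test $T^*$ is complete. Since $\mathcal{A}\models T^*$, we conclude that $\mathrm{Th}(\mathcal{A})$ is the deductive closure of $T^*$; that is, the theory of $\mathcal{A}$ is generated by the listed sentences.

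I expect the main obstacle to be the structural analysis of the second paragraph: verifying rigorously that the axioms force precisely the three kinds of components, and in particular using $\beta_n$ correctly to keep $c$ (and each extra pre-image) out of the component of $a$, and $\alpha_n$ to prevent the backward rays of $c$ from merging or closing into a cycle. Once the component structure is pinned down, the cardinality count and the appeal to the \L o\'s--Vaught test are routine.
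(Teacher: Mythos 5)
Your proposal is correct, and its structural core coincides with the paper's: both arguments hinge on showing that every model of the axiom set decomposes into the $\omega$-chain of $a$, the component of $c$ (two $\omega^*$-chains meeting $c$ plus the forward $\omega$-chain), and some number of $\Z$-chains, so that the isomorphism type is determined by the number of $\Z$-chains. Where you diverge is in how completeness is extracted from this classification. The paper argues via saturation: any two \emph{saturated} models of the same cardinality have the same number of $\Z$-chains and are therefore isomorphic, and since every model is elementarily equivalent to a saturated one, all models of the axioms are elementarily equivalent. You instead observe that a model of uncountable cardinality $\kappa$ must contain exactly $\kappa$ copies of $\Z$, so the theory is $\kappa$-categorical for every uncountable $\kappa$, and completeness follows from the \L o\'s--Vaught test (the theory has no finite models, as the component of $a$ is infinite). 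Your finish is more elementary and self-contained: it needs no appeal to the existence of saturated models, which in ZFC requires either special models, cardinal-arithmetic hypotheses, or stability-theoretic facts of the kind the paper only discusses later (Section \ref{superstable}). What the paper's route buys is uniformity: the saturated-model technique is the same tool it uses for the elementary invariants of bijective structures, for its version of Gaifman's theorem, and for the abelian-group invariants, and it exhibits the invariants themselves (the number of $\Z$-components, counted up to ``infinitely many'') rather than only the completeness of the axiom list. One small point in your write-up worth making explicit: your relation $x\sim y$ iff $f^m(x)=f^n(y)$ for some $m,n\geq 0$ agrees with connectedness in the Gaifman graph precisely because every element has out-degree one, so forward orbits of elements joined by an undirected path must merge; this is easy but should be said, since the component analysis rests on it.
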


\begin{proof}

In any structure in our variety, there is an equivalence relation $\sim$, where $x\sim y$ if there is a finite sequence $x_0,\ldots,x_n$ such that $x = x_0$, $y = x_n$, and for each $i < n$, either $f(x_i) = x_{i+1}$ or $f(x_{i+1}) = x_i$.  If $\mathcal{B}$ and $\mathcal{C}$ are two saturated models of the sentences above, of the same cardinality, then $\mathcal{B}\cong\mathcal{C}$.  We map the special elements $a$ and $c$ in $\mathcal{B}$ to the corresponding elements of $\mathcal{C}$.  The $\sim$-class of $a$ is an $\omega$-chain that does not include $c$.  The $\sim$-class of $c$ has two $\omega^*$-chains ending in $c$ and one $\omega$-chain starting with $c$.  The other $\sim$-classes, not containing $a$ or $c$, are $\Z$-chains.  There are the same number of these in $\mathcal{B}$ and $\mathcal{C}$.
\end{proof}

\begin{lem}

The sentences above that generate the theory of $\mathcal{A}$ all have limiting density $1$. 

\end{lem}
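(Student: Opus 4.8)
The plan is to prove that each of the listed sentences $\theta$ has limiting density $1$ by showing that its negation has limiting density $0$; the two basic lemmas on density $0$ versus density $1$ then give the result. Since we have already computed $P_s = \frac{1}{2}(s^2+3s+2) = \Theta(s^2)$, it suffices to show that for each such $\theta$ the number of identities $f^r(a) = f^{r'}(a)$ of length $r+r' \leq s$ whose resulting structure satisfies $\neg\theta$ grows only linearly in $s$; dividing a quantity that is $O(s)$ by one that is $\Theta(s^2)$ sends the ratio to $0$.

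The key preliminary step is a dictionary reading off the shape of the structure from the pair $(r,r')$. Writing $d = |r-r'|$ for the cycle length and $\ell = \min(r,r')$ for the tail length, the identity yields an $\omega$-chain when $r = r'$, a pure cycle when $\ell = 0$ and $r \neq r'$, and a genuine tail-plus-cycle $\ell + \Z_d$ when $\ell \geq 1$. In every one of these cases no element has more than two $f$-pre-images, so $\psi$ holds in \emph{every} structure arising from one generator and one identity. Hence $\neg\psi$ is satisfied by no presentation at all, and $\psi$ has limiting density $1$ trivially.

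Next I would treat the remaining sentences by identifying, for each, the family of ``bad'' parameters and checking that it is one-dimensional. The negation of $\psi_a$ holds exactly in the pure cycles, i.e.\ when $\ell = 0$ and $r \neq r'$; the negation of $\psi_c$ holds exactly in the $\omega$-chains ($r = r'$) together with the pure cycles ($\ell = 0$), since our structures never have two distinct elements with two pre-images; the negation of $\alpha_n$ holds exactly when $d = n$; and the negation of $\beta_n$ holds exactly when both definable elements $a$ and $c$ exist and the tail between them has the prescribed length, i.e.\ when $\ell = n$ and $r \neq r'$. In each case one of the quantities $r = r'$ (difference $0$), $\ell = 0$, $d = n$, or $\ell = n$ is pinned to a fixed value, so the one remaining free coordinate ranges over an interval of length $O(s)$; allowing the factor of $2$ for the two orderings $(r,r')$ and $(r',r)$, each bad family contributes at most $O(s)$ identities of length $\leq s$.

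Dividing each of these counts by $P_s = \Theta(s^2)$ and letting $s \to \infty$ forces every negation to limiting density $0$, whence every listed sentence has limiting density $1$. The only point requiring genuine care---the main, if modest, obstacle---is the bookkeeping behind $\psi_c$ and $\beta_n$: one must first confirm from the shape dictionary precisely which structures contain the definable elements $a$ (no pre-image) and $c$ (two pre-images) before asserting where the negation holds. Once that is settled, each of the four counts is an immediate consequence of fixing a single coordinate.
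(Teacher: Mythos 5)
Your proposal is correct and follows essentially the same route as the paper: in both, one checks $\psi$ holds in every structure arising from one generator and one identity, and for each remaining sentence one observes that the bad identities have a pinned parameter (cycle length, tail length, or a degenerate case), so they number only $O(s)$ against $P_s = \Theta(s^2)$, forcing each negation to density $0$. The only cosmetic difference is that the paper dispatches $\psi_a$ and $\psi_c$ by citing its earlier result that the presentations yielding $m+\Z_n$ with $m,n>0$ have density $1$, whereas you recount the bad sets directly; the underlying counting argument is identical.
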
 

\begin{proof}

The sentence $\psi$, saying that there is no element with more than two $f$-pre-images, is true in all of the models that we get from a single generator $a$ and a single identity, so this has limiting density $1$.  We have seen that the set of presentations that give models in which $f$ is not $1-1$ has limiting density~$1$.  The models have the form $m+\Z_n$ for $m,n > 0$.  The sentences $\psi_a$, saying that there is a unique element $a$ with no $f$-pre-image, $\psi_c$, saying that there is a unique element $c$ with two $f$-pre-images, and $\psi$ are true in all of these models, so these sentences have limiting density $1$.     

Consider $\alpha_n$, saying that there is no cycle of length $n$.  Let $B$ be the set of presentations that make $\alpha_n$ false---the resulting model \emph{has} a cycle of length $n$.  These identities have the form $f^{n+k}(a) = f^k(a)$ or $f^k(a) = f^{n+k}(a)$.  The number of such identities of length $m$ is $0$ if $m - n$ is odd and $2$ if $m - n$ is even.  Then $P_s(\alpha_n) \leq 2s$.  Since $P_s = O(s^2)$, the limiting density of $B$ is $0$, so the density of $\alpha_n$ is $1$.     
Finally, consider $\beta_n$, saying that $a$ and $c$ are not connected by a chain of length $n$.  Let $C$ be the set of presentations that make this false.  These identities have the form $f^n(a) = f^{n+k}(a)$ or $f^{n+k}(a) = f^n(a)$.  The number of such identities of length $m$ is $2$ for all $m\geq 2n$.  Then $P_s(C)\leq 2s$.  Again the limiting density of $C$ is $0$, so the density of $\beta_n = 1$.  
\end{proof}

We conclude the following.

\begin{prop}

For all sentences $\varphi$ true in $\mathcal{A}$, the limiting density is $1$.  

\end{prop}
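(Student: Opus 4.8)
The plan is to combine the two immediately preceding lemmas with the closure properties of the set $S$ of density-$1$ sentences established among the basic lemmas. Write $\Sigma$ for the set of generating sentences
\[
\Sigma = \{\psi_a,\ \psi_c,\ \psi\}\cup\{\alpha_n : n\in\omega\}\cup\{\beta_n : n\in\omega\}.
\]
The lemma identifying $\mathrm{Th}(\mathcal{A})$ tells us that $\Sigma$ axiomatizes the complete theory of $\mathcal{A}$, i.e.\ $\mathrm{Th}(\mathcal{A})$ is the deductive closure of $\Sigma$. The lemma just before this proposition tells us that every sentence in $\Sigma$ has limiting density $1$, that is, $\Sigma\subseteq S$.

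Given a sentence $\varphi$ with $\mathcal{A}\models\varphi$, I would argue as follows. First, since $\varphi\in\mathrm{Th}(\mathcal{A})$ and $\Sigma$ generates this theory, we have $\Sigma\vdash\varphi$. Next, I would invoke the Compactness Theorem to extract a finite subset $\{\varphi_1,\ldots,\varphi_n\}\subseteq\Sigma$ with $\varphi_1,\ldots,\varphi_n\vdash\varphi$. Each $\varphi_i$ lies in $S$ because $\Sigma\subseteq S$, and the third basic lemma states that $S$ is closed under (finite) logical implication. Applying that lemma to $\varphi_1,\ldots,\varphi_n\vdash\varphi$ yields $\varphi\in S$; that is, $\varphi$ has limiting density $1$, as desired.

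This proposition is essentially a corollary: the genuine content has already been carried out in the two preceding lemmas (the axiomatization of $\mathrm{Th}(\mathcal{A})$ and the density-$1$ computations for each generating sentence). Consequently, there is no significant obstacle in the argument itself. The one point that deserves explicit attention—and the reason the Compactness Theorem is indispensable—is that $\Sigma$ is infinite (it contains $\alpha_n$ and $\beta_n$ for every $n\in\omega$), whereas the closure lemma for $S$ is stated only for \emph{finite} sets of premises. Compactness is exactly what reduces the infinite entailment $\Sigma\vdash\varphi$ to a finite one, after which the closure lemma applies verbatim.
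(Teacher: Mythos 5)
Your proposal is correct and is exactly the argument the paper intends: the paper states this proposition as an immediate consequence (``We conclude the following'') of the two preceding lemmas together with the basic lemma that the set $S$ of density-$1$ sentences is consistent and closed under logical implication. Your explicit use of compactness (equivalently, finiteness of deductions) to reduce the infinite generating set $\Sigma$ to a finite premise set is precisely the step the paper leaves implicit, so you have simply written out what the paper takes as read.
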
 

\subsection{Bijective structures with two identities} 
\label{bij-two-id}

In the next example, we return to the variety of bijective structures as in Section~\ref{early-examples}.  As before, our presentations have a single generator $a$, but there are two identities instead of just one.  We will show that the limiting density need not exist.  Recall that the language consists of unary function symbols $S, S^{-1}$ and that the axioms say that $S$ and $S^{-1}$ are inverses.  The identities have the form $t(a) = a$, with function symbols only on the left.  Each identity is thus equivalent to one of the form $S^m(a) = a$, where $m\in\mathbb{Z}$.  

\begin{prop}

For bijective structures, and presentations with a single generator $a$ and two identities, the sentence $\varphi$ saying that the structure is a $1$-cycle does not have a limiting density.     

\end{prop}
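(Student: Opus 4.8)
The plan is to translate $\varphi$ into a number-theoretic condition on the two relators and then show that the density oscillates with the parity of $s$. Since $S$ and $S^{-1}$ are inverses, each identity $t_i(a)=a$ is equivalent to $S^{X_i}(a)=a$, where $X_i\in\mathbb{Z}$ is the difference between the number of occurrences of $S$ and of $S^{-1}$ in $t_i$. The structure presented by $a\mid\{S^{X_1}(a)=a,\,S^{X_2}(a)=a\}$ is $\mathbb{Z}/d\mathbb{Z}$ with $d=\gcd(X_1,X_2)$ (using $\mathbb{Z}/0\mathbb{Z}=\mathbb{Z}$ for the free structure), because the set of $m$ with $S^m(a)=a$ is the subgroup $d\mathbb{Z}$ generated by $X_1,X_2$. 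Hence $\varphi$ (the structure is a $1$-cycle) holds iff $\gcd(X_1,X_2)=1$. By Proposition \ref{unorderedsets.v}---more precisely, by the estimates in its proof, which show the three density functions differ by $o(1)$, so that convergence of one is equivalent to convergence of another---it suffices to work in the model of ordered pairs with repetition: $t_1,t_2$ are independent uniform terms of length at most $s$, with exponents $X_1,X_2$. The key elementary fact is that $X_i\equiv\len(t_i)\pmod 2$, so the parity of $X_i$ equals the parity of $\len(t_i)$.

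The proof rests on the oscillation of the length-parity proportions. Writing $N(s)=2^{s+1}-1$ for the number of terms of length at most $s$ (Section \ref{early-examples}) and summing the geometric series $\sum 2^m$ over even, respectively odd, $m\le s$, one finds that the fraction $p_e$ of terms of even length satisfies $p_e\to 2/3$ along even $s$ and $p_e\to 1/3$ along odd $s$. I would exploit this by proving an upper bound on the density valid along even $s$ and a lower bound valid along odd $s$, and checking that they do not overlap. For the upper bound, $\gcd(X_1,X_2)=1$ forces $\gcd(X_1,X_2)$ to be odd, hence forces at least one of $X_1,X_2$ to be odd, i.e.\ at least one of $t_1,t_2$ to have odd length; by independence this event has probability $1-p_e^2$, so along even $s$ the density of $\varphi$ is at most $1-p_e^2\to 5/9$.

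For the lower bound along odd $s$, write $g(x)=\Pr[X_1=x]$ and bound the failure probability by a sum over primes, using that any common prime factor of two integers in $[-s,s]$ not both zero is at most $s$:
\[ \Pr[\gcd(X_1,X_2)\neq 1]\ \le\ \Pr[X_1=X_2=0]+\sum_{p\le s}\Pr[p\mid X_1]\,\Pr[p\mid X_2]\ =\ g(0)^2+\sum_{p\le s}\Pr[p\mid X]^2. \]
The separating term is $p=2$, contributing $p_e^2\to 1/9$ along odd $s$; everything else must be shown small. The main obstacle is controlling the odd-prime tail $\sum_{3\le p\le s}\Pr[p\mid X]^2$ uniformly. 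Here I would prove the uniform estimate $\Pr[p\mid X]\le \tfrac1p+O(1/\sqrt s)$, valid for all primes $p$: the law of $X$ is symmetric, and treating even and odd values of $X$ separately (so each is unimodal on its own lattice) bounds $\sum_k g(kp)$ by $\tfrac1p$ plus the maximal atom, which by Stirling's estimate (Lemma \ref{Fact 2}) is $O(1/\sqrt s)$. This unimodality-plus-Stirling argument is the technical heart of the proof.

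Squaring and summing the uniform bound gives $\sum_{3\le p\le s}\Pr[p\mid X]^2\to\sum_{p\ge 3}p^{-2}$, the error terms vanishing because $s^{-1/2}\sum_{p\le s}p^{-1}\to 0$ and $\pi(s)/s\to 0$. Since $\sum_{p\ge 3}p^{-2}<2/9$ and $g(0)\to 0$, the failure probability along odd $s$ has $\limsup$ at most $1/9+2/9=1/3$, so the density of $\varphi$ along odd $s$ is at least $2/3-o(1)$. Comparing the two regimes, $\liminf_s \tfrac{P_s(\varphi)}{P_s}\le 5/9<2/3\le\limsup_s \tfrac{P_s(\varphi)}{P_s}$, and therefore the limiting density of $\varphi$ does not exist.
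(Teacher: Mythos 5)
Your proof is correct, and its technical core is genuinely different from the paper's. The shared part is the setup: the presented structure is a $1$-cycle iff $\gcd(X_1,X_2)=1$; the passage to ordered pairs with repetition via Proposition~\ref{unorderedsets.v} (and you are right to note that the estimates in its proof make the two density functions differ by $o(1)$, so that non-existence of the limit transfers, not just its value); and the parity oscillation $\Pr[2\mid X]\to 2/3$ along even $s$ and $\to 1/3$ along odd $s$. From there the paper computes exact subsequential limits (Theorem~\ref{two-id-thm}): it splits the common-divisor event at the prime threshold $p_s$ of Definition~\ref{ps}, kills the tail $C_2$ with the bound $P'_s(p\mid X\ \&\ X\neq 0)/P'_s\le \frac{2}{p+1}$ and summability of $\sum_p p^{-2}$, and handles the surviving small primes by a random walk modulo $N_s=\prod_{p\le p_s}p$, invoking the quantitative mixing result Theorem~\ref{SC} together with the Chinese Remainder Theorem, arriving at the limits $\frac{5}{9}\prod_{3\le p}\left(1-\frac{1}{p^2}\right)$ along even $s$ and $\frac{8}{9}\prod_{3\le p}\left(1-\frac{1}{p^2}\right)$ along odd $s$. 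You avoid Theorem~\ref{SC} entirely and settle for one-sided bounds: a parity-only upper bound $5/9+o(1)$ along even $s$, and along odd $s$ a union bound over primes driven by the uniform estimate $\Pr[p\mid X]\le \frac{1}{p}+O(1/\sqrt{s})$, which you get from symmetry and monotonicity of the law of $X$ on each parity lattice plus the Stirling estimate of Lemma~\ref{Fact 2}. That sharper estimate is genuinely needed for your route: the paper's own divisibility bound $\frac{2}{p+1}$ would not suffice, since its $p=3$ term alone squares to $\frac14>\frac29$, whereas your endgame requires $\sum_{p\ge 3}p^{-2}\approx 0.202<\frac29$ to conclude $\limsup\ge 2/3>5/9\ge\liminf$. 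The trade-off is clear: the paper's heavier machinery yields the exact oscillating values and is reused elsewhere (in the abelian-group computations of Section~\ref{abelian groups} and the $\Z^k$ generalization remarked after Proposition~\ref{bij-str-par}), while your argument is elementary and self-contained but delivers only the separation of $\liminf$ and $\limsup$ --- which is all the proposition asks for.
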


The proof is somewhat involved.  We begin with some elementary lemmas, but eventually we will consider a random walk on a group and appeal to results from random group theory that depend on the Central Limit Theorem.  The lemma below tells us when the sentence $\varphi$ is true.     

\begin{lem}

Let $\mathcal{A}$ be the structure given by an unordered set consisting of two identities, equivalent to $S^m(a) = a$ and $S^{m'}(a) = a$.  Then $\mathcal{A}$ is a $1$-cycle if and only if $GCD(m,m') = 1$.   

\end{lem}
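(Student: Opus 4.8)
The plan is to reduce the statement to an elementary fact about the subgroup of $\Z$ generated by $m$ and $m'$. First I would recall that in the variety of bijective structures with a single generator $a$, every term reduces, using the axioms $(\forall x)SS^{-1}(x) = x$ and $(\forall x)S^{-1}S(x) = x$, to $S^k(a)$ for a unique integer $k$ in the free structure $\Z$ (with $a \mapsto 0$ and $S \mapsto +1$). Thus an identity $t(a) = t'(a)$ is equivalent, over $T$, to one of the form $S^j(a) = S^k(a)$, which in turn is equivalent to $S^{j-k}(a) = a$. In particular the content of a presentation is captured by the set of exponents $k$ for which $S^k(a) = a$ is forced.

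Next I would identify the set $H = \{k\in\Z : \mathcal{A}\models S^k(a) = a\}$. Because $S$ is a bijection, $H$ contains $0$ and is closed under addition and negation, so $H$ is a subgroup of $\Z$. By the definition of $\langle a \mid S^m(a) = a,\ S^{m'}(a) = a\rangle$, the identity $S^k(a) = a$ holds in $\mathcal{A}$ exactly when it follows from the two relators together with the axioms for $T$. I would argue that these consequences are precisely the subgroup $\langle m, m'\rangle$: every element of $\langle m, m'\rangle$ yields an identity provable by composing the two relations and their inverses, so $\langle m, m'\rangle \subseteq H$. For the reverse inclusion, the cleanest route is to observe that $\mathcal{A}$ is the quotient of the free structure $\Z$ by the congruence generated by $m \sim 0$ and $m' \sim 0$; since any congruence respecting $S$ on $\Z$ is translation invariant and hence given by the cosets of a subgroup, the smallest such congruence is exactly the one with classes the cosets of $\langle m, m'\rangle$. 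Equivalently, one exhibits the cycle of length $\gcd(m,m')$ as a model of both relators (it is one, since $\gcd(m,m')$ divides $m$ and $m'$) in which $S^k(a) = a$ fails for $k\notin\langle m, m'\rangle$, so no extra identity can be forced.

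Invoking the standard fact (B\'ezout's identity) that $\langle m, m'\rangle = \gcd(m,m')\,\Z$, we conclude that $\mathcal{A}\cong\Z/\gcd(m,m')\,\Z$, i.e.\ $\mathcal{A}$ is the cycle $\Z_{\gcd(m,m')}$ (with the convention that this is the infinite chain $\Z$ when $m = m' = 0$). Hence $\mathcal{A}$ is a $1$-cycle if and only if its length equals $1$, that is, if and only if $\gcd(m,m') = 1$, as claimed.

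I do not anticipate a serious obstacle; the only point demanding care is the reverse inclusion $H \subseteq \langle m, m'\rangle$, i.e.\ verifying that the presentation forces \emph{no} identity beyond those indexed by $\langle m, m'\rangle$. Making this precise is exactly where the semantics of the presentation (``follows logically from $R$ and the axioms'') must be tied to the congruence/subgroup picture, and the model $\Z_{\gcd(m,m')}$ serves as the witness that pins down $H$ from above.
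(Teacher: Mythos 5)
Your proof is correct, and it rests on the same two ingredients as the paper's own argument: B\'ezout's identity and the use of the $\gcd(m,m')$-cycle as a model of the presentation to show that no further identities are forced. The difference is purely one of packaging. The paper proves the two directions of the biconditional directly---for $\gcd(m,m')=1$ it writes $mr+m's=1$ and computes $S(a)=S^{mr+m's}(a)=a$, and for the converse it observes that the $d$-cycle ($d=\gcd(m,m')$) satisfies $T$ and both relators, so $\mathcal{A}$ cannot be a $1$-cycle unless $d=1$. You instead determine the full isomorphism type, showing via the congruence/subgroup correspondence on $\Z$ that $H=\{k:\mathcal{A}\models S^k(a)=a\}$ equals $\langle m,m'\rangle=\gcd(m,m')\Z$ and hence $\mathcal{A}\cong\Z/\gcd(m,m')\Z$; the stated equivalence then falls out as a corollary. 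What your version buys is a slightly stronger conclusion (the complete structure of $\mathcal{A}$, including the edge case $m=m'=0$ giving the infinite chain), at the cost of a little more bookkeeping; the paper's version is shorter because it only answers the $1$-cycle question. Your identification of the subtle point is also exactly right: the content of the lemma lives in the inclusion $H\subseteq\langle m,m'\rangle$, which is pinned down by exhibiting the $d$-cycle as a model, just as in the paper.
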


\begin{proof}

Note that $S^k(a) = a$ if and only if $a = S^{-k}(a)$.  Thus, we may suppose that both $m,m'\geq 0$.  First, suppose that $GCD(m,m') = 1$.  In this case, there are $r,s \in \Z$ such that $mr+m's = 1$. Then we have 
\[S(a) = S^{mr+m's}(a) = S^{mr}(S^{m's}(a)) = S^m\circ \cdots\circ S^m\circ S^{m'}\circ \cdots\circ S^{m'}(a) = a,\] 
so $\mathcal{A}$ is a $1$-cycle.  Now, suppose that $\mathcal{A}$ is a $1$-cycle, and let $GCD(m,m') = d$.  The axioms of $T$ and the identities $S^m(a) = a$ and $S^{m'}(a) = a$ are both satisfied in a $d$-cycle, and $\mathcal{A}$ can only be a $1$-cycle if $d =1$.               
\end{proof}                                                                                                                                                                                                                                                                                                                                                                                                                                         

Our presentations have two identities, but we also need some facts about single identities.  We indicate with $'$ that we are considering single identities, writing $P'_{=m}$ for the number of identities of length $m$ and $P'_s$ for the number of length at most $s$, and writing $P'_{=m}(B)$, $P'_s(B)$ for the number of these identities in a set $B$.  We reserve $P_s$ for the number of unordered pairs of identities of length at most $s$, and we write $P_s(B^2)$ for the number such that both identities are in $B$.  

For a single identity of the form $t(a) = a$, let $X$ be the difference between the number of occurrences of $S$ and the number of occurrences of $S^{-1}$ in $t$. 
Intuition may suggest that the statement $n|X$ should have limiting density $\frac{1}{n}$. This turns out to be true for odd $n$.  However, for $n = 2$, we find that the limiting density for the statement $2|X$ does not exist.  Essentially, the reason is that the last term of $P'_s(2|X) = \sum_{m\leq s} P'_{=m}(2|X)$ may be greater than the sum of all earlier terms, and this term depends on the parity of $s$.  The lemma below says what happens to $P_{=s}(2|X)$ as the parity of $s$ changes.   

\begin{lem}\

\begin{itemize}
\label{parity}
 
\item For even $m$, all identities of length $m$ satisfy $2\mid X$; none satisfies $2\nmid X$.

\item  For odd $m$, all identities of length $m$ satisfy 
$2\nmid X$ and none satisfies $2\mid X$.  

\end{itemize} 

\end{lem}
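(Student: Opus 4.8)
The plan is to reduce both bullets to a single parity observation and avoid any case analysis. Given an identity $t(a) = a$ with function symbols only on the left, write $t$ as a string in the symbols $S$ and $S^{-1}$, and let $p$ be the number of occurrences of $S$ and $q$ the number of occurrences of $S^{-1}$. By the length convention in force here, the length of the identity is $m = p + q$, while by definition $X = p - q$.

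The one thing to notice is that $m$ and $X$ always have the same parity. This is immediate from the two identities
\[
m - X = (p+q) - (p-q) = 2q, \qquad m + X = (p+q) + (p-q) = 2p,
\]
both of which are even, so $m \equiv X \pmod 2$. From this the lemma follows at once: if $m$ is even then $X$ is even, so every identity of length $m$ satisfies $2 \mid X$ and none satisfies $2 \nmid X$; and if $m$ is odd then $X$ is odd, so every identity of length $m$ satisfies $2 \nmid X$ and none satisfies $2 \mid X$. Since $p$ and $q$ are determined purely by the string $t$, this applies uniformly to \emph{all} identities of the given length, which is exactly the content of both bullets.

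I do not anticipate a genuine obstacle here: the statement is elementary, and its entire substance is the congruence $m \equiv X \pmod 2$. The only point worth stating explicitly is that ``length'' means the total count $p+q$ of function-symbol occurrences (as fixed by the earlier definition), rather than, say, the reduced length after cancellation of adjacent $S$, $S^{-1}$ pairs; with the count convention, the parity claim is forced and requires no further estimate. I would simply record the two displayed equalities and read off the conclusion.
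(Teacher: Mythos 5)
Your proof is correct. The paper establishes the same fact by induction on $m$: the base case is the single identity of length $0$ (with $X = 0$), and the inductive step notes that each term of length $m$ has exactly two extensions of length $m+1$ (append $S$ or $S^{-1}$), each of which flips the parity of $X$. Your argument is a direct, non-inductive computation: writing $p$ and $q$ for the counts of $S$ and $S^{-1}$, you observe $m - X = 2q$ and $m + X = 2p$, hence $m \equiv X \pmod 2$, and both bullets follow immediately. The two proofs rest on the same underlying fact --- each function symbol contributes $+1$ to $m$ and $\pm 1$ to $X$ --- but yours packages it as a single congruence rather than an induction, which is slightly cleaner and makes the ``uniformly over all identities of length $m$'' aspect explicit rather than implicit in the inductive bookkeeping. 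You are also right to flag that ``length'' here counts all symbol occurrences without cancellation; that convention is exactly what the paper's definition fixes, and it is what makes the congruence valid (a reduced-length convention would break the claim, since cancelling an adjacent $S$, $S^{-1}$ pair changes $m$ by $2$ but is not the notion in force).
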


\begin{proof}

For $m = 0$, there is just one identity, and for this identity, $X = 0$.  Supposing that the statements hold for $m$, if $t$ has length $m$, then $t$ has two extensions of length $m+1$, and the parity of $X$ changes.   
\end{proof} 

The next lemma gives the proportion of single identities of length at most $s$ for which $2|X$ holds.  The value depends on the parity of $s$.  
 
\begin{lem}\

\begin{enumerate}\label{even-and-odd}
 
\item  $\displaystyle\lim\limits_{s\rightarrow\infty} \frac{P'_{2s}(2\mid X)}{P'_{2s}} = \frac{2}{3}$ and  
$\displaystyle\lim\limits_{s\rightarrow\infty}\frac{P'_{2s}(2\nmid X)}{P'_{2s}} = \frac{1}{3}$.

\item   $\displaystyle\lim_{s\rightarrow\infty}\frac{P'_{2s+1}(2\mid X)}{P'_{2s+1}} = \frac{1}{3}$ and 
$\displaystyle\lim_{s\rightarrow\infty}\frac{P'_{2s+1}(2\nmid X)}{P'_{2s+1}} = \frac{2}{3}$.  

\end{enumerate}

\end{lem}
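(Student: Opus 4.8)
The plan is to reduce all four limits to the summation of a geometric series, with Lemma~\ref{parity} doing the essential work of sorting identities by parity. By that lemma, an identity of length $m$ contributes to $2\mid X$ exactly when $m$ is even and to $2\nmid X$ exactly when $m$ is odd. Since there are $2^m$ identities of each length $m$, this gives $P'_{=m}(2\mid X)=2^m$ for even $m$ and $0$ otherwise, and symmetrically for $2\nmid X$. So the only thing left is to add up powers of two restricted to an arithmetic progression of lengths.

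First I would record the closed forms. Summing over even lengths up to a bound $N$ gives
\[
P'_{N}(2\mid X)=\sum_{\substack{m\le N\\ m\ \mathrm{even}}}2^m=\sum_{j=0}^{\lfloor N/2\rfloor}4^j=\frac{4^{\lfloor N/2\rfloor+1}-1}{3},
\]
and the complementary count $P'_N(2\nmid X)=P'_N-P'_N(2\mid X)$ follows from $P'_N=2^{N+1}-1$ (established for these presentations in Section~\ref{early-examples}). The crucial feature, which I would flag explicitly, is that $\lfloor N/2\rfloor=s$ for both $N=2s$ and $N=2s+1$; hence the numerator $P'_N(2\mid X)=(4^{s+1}-1)/3$ is identical in the two cases, while the denominator $P'_N$ essentially doubles when $N$ advances from $2s$ to $2s+1$.

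Then I would substitute and pass to the limit along each subsequence. For the even bound,
\[
\frac{P'_{2s}(2\mid X)}{P'_{2s}}=\frac{(4^{s+1}-1)/3}{2^{2s+1}-1}=\frac{4\cdot4^{s}-1}{3(2\cdot4^{s}-1)}\longrightarrow\frac{2}{3},
\]
so the complement tends to $\tfrac13$, giving part~(1). For the odd bound the same numerator is divided by $P'_{2s+1}=4\cdot4^{s}-1$, which yields exactly $\tfrac13$ for every $s$, so the complement tends to $\tfrac23$, giving part~(2).

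I do not expect a genuine obstacle: the whole argument is the parity bookkeeping of Lemma~\ref{parity} together with a single geometric sum. The one point deserving emphasis is conceptual rather than technical, and it is exactly what makes this example worthwhile: because the top term $2^N$ swamps all the others, the dominant contribution is classified by the parity of $N$, so the even- and odd-indexed ratios settle at different values. This is precisely the mechanism behind the failure of $\lim_s P'_s(2\mid X)/P'_s$ to exist, which the surrounding discussion is building toward.
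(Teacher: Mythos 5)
Your proposal is correct and follows essentially the same route as the paper's proof: both invoke Lemma~\ref{parity} to reduce the count $P'_N(2\mid X)$ to a geometric sum over even lengths, observe that this numerator $(4^{s+1}-1)/3$ is the same for $N=2s$ and $N=2s+1$ while the denominator $P'_N=2^{N+1}-1$ doubles, and then compute the two limits $\tfrac23$ and $\tfrac13$ (the latter holding exactly for every $s$). Your unified treatment via $\lfloor N/2\rfloor$ is a tidy packaging of what the paper does in two separate cases, but it is the same argument.
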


\begin{proof}

The calculation is based on Lemma \ref{parity}.  For (1), the even case, we have $P'_{2s} = 2^{2s+1} - 1 = 4^s\cdot 2 - 1$, and
$$P'_{2s}(2\mid X) = \sum_{m\leq s} 2^{2m} = \sum_{m\leq s} 4^m = \frac{4^{s+1} - 1}{3}.$$ 
Then
$\displaystyle\frac{P'_{2s}(2\mid X)}{P'_{2s}} = \frac{4^{s+1} - 1}{3(4^s\cdot 2 - 1)}
\rightarrow\frac{2}{3}$. 

\bigskip

For (2), the odd case, we have $P'_{2s+1} = 2^{2s+2} - 1 = 4^{s+1} - 1$ and
$$P'_{2s+1}(2\mid X) = P'_{2s}(2\mid X) = \frac{4^{s+1} - 1}{3}.$$
Then 
$\displaystyle\frac{P'_{2s+1}(2 \mid X)}{P'_{2s+1}} = \frac{4^{s+1} - 1}{3\cdot(4^{s+1} - 1)} = \frac{1}{3}$.
\end{proof}

So far, the lemmas have involved only elementary calculations.  The next result is from random group theory \cite{CDDHS, SC}, concerning a random walk on a group.  The elements of the group $G = \Z_n$ represent the possible remainders after division of an integer $z$ by $n$.  In general, for a random walk, there are finitely many states, and given just the current state $s$, with no more prior history, we have fixed probabilities of passing next to state $s'$. We allow $s' = s$.  

Our states are group elements.  We write $\mu(g)$ for the probability of going in one step from the identity to $g$, and we write $\mu^{(k)}(g)$ for the probability of going in $k$ steps from the identity to $g$.  For the result below, the probability measure $\mu$, defined on $G$, is \emph{supported} on a special generating set $\Sigma$.  For $\mu$ to be supported on $\Sigma$ means that $\mu$ assigns non-zero probability to the elements of $\Sigma$, i.e., the set $\Sigma$ will consist of the group elements reachable from the identity in one step.  For any $n$, $\Sigma$ is also the set of differences $g' - g$, where $g'$ is a successor of $g$ reachable in one step.   The values of $\mu^k(g)$, for $k > 0$ are obtained by considering the tree of $1$-step extensions of length $k$ starting from the identity.  We multiply probabilities along the paths, and then sum over the paths leading to $g$.  

The result below tells us that the probability of each remainder $g$ in $\Z_n$ is approximately $\frac{1}{n}$, and that the convergence (as $k\rightarrow\infty$) has a great deal of uniformity. 

\begin{thm}[{\cite[Theorem 7.3]{SC}}]  
\label{SC}

There exist $\alpha,\beta > 0$ such that for any group $G$ of the form $\Z_n$, any generating set $\Sigma$ containing the group identity element, and a probability measure $\mu$ supported on $\Sigma$, we have that for all $g\in G$ and all $k\in\omega$, 
\[\left| \mu^{(k)}(g) - \frac{1}{n} \right| < \alpha e^{-(\frac{\beta k}{n^2})}\]

\end{thm}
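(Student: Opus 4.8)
The plan is to use harmonic analysis on the abelian group $\Z_n$, where the convolution operator governing the random walk is diagonalized by the additive characters. Write $\chi_j(x) = e^{2\pi i jx/n}$ for $j = 0,1,\ldots,n-1$, and let $\hat\mu(j) = \sum_{x}\mu(x)\chi_j(x)$ be the Fourier coefficients of the step distribution. Since the $k$-step distribution $\mu^{(k)}$ is the $k$-fold convolution of $\mu$ with itself, its Fourier coefficients are $\widehat{\mu^{(k)}}(j) = \hat\mu(j)^k$, and Fourier inversion then gives
\[\mu^{(k)}(g) = \frac{1}{n}\sum_{j=0}^{n-1}\hat\mu(j)^k\,\overline{\chi_j(g)}.\]
The term $j=0$ contributes exactly $\tfrac1n$ because $\hat\mu(0)=1$, so the entire problem reduces to controlling the remaining sum.

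First I would isolate the main term and apply the triangle inequality:
\[\left|\mu^{(k)}(g)-\frac1n\right| \le \frac1n\sum_{j=1}^{n-1}|\hat\mu(j)|^k \le \frac{n-1}{n}\max_{1\le j\le n-1}|\hat\mu(j)|^k.\]
Everything then comes down to a uniform spectral-gap estimate: a bound of the form $|\hat\mu(j)|\le 1 - c/n^2$ for $j\neq 0$, with $c>0$ independent of $n$, $\mu$, and $j$. Granting such an estimate, $|\hat\mu(j)|^k \le (1-c/n^2)^k \le e^{-ck/n^2}$, and taking $\alpha = 1$ and $\beta = c$ finishes the argument.

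The main obstacle is precisely this spectral-gap estimate, i.e.\ pushing the nontrivial eigenvalues $\hat\mu(j)$ off the unit circle by the right power of $n$. I would begin from the identity
\[1 - |\hat\mu(j)|^2 = \sum_{x,y}\mu(x)\mu(y)\bigl(1 - \cos(2\pi j(x-y)/n)\bigr) = 2\sum_{x,y}\mu(x)\mu(y)\sin^2\!\bigl(\pi j(x-y)/n\bigr),\]
which is manifestly nonnegative and shows the modulus can equal $1$ only when all characters $\chi_j$ agree on the support of $\mu$. Since $0\in\Sigma$ and $\Sigma$ generates $\Z_n$, for $j\neq 0$ there is a generator $s\in\Sigma$ with $js\not\equiv 0\pmod n$; retaining only the two cross terms in which one coordinate is the identity gives
\[1 - |\hat\mu(j)|^2 \ge 4\,\mu(0)\,\mu(s)\,\sin^2(\pi js/n) \ge 4\,\mu(0)\,\mu(s)\,\sin^2(\pi/n),\]
the last step using that $\|js/n\|\ge 1/n$ for $js\not\equiv 0$. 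The elementary bound $\sin^2(\pi/n)\ge 4/n^2$ (Jordan's inequality, for $n\ge 2$) then produces a gap of order $1/n^2$, yielding $|\hat\mu(j)|\le 1-c/n^2$ as required.

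The delicate point — and where the constants must be handled carefully — is the uniformity over all admissible $\mu$: the bound above degrades if $\mu(0)$ or the relevant generator weight $\mu(s)$ is allowed to be arbitrarily small (concentrating $\mu$ near a single generator makes $\mu^{(k)}$ nearly deterministic and mixing arbitrarily slow). The universal constant $\beta$ therefore genuinely relies on the fact that, in the regime to which we apply the theorem, the step probabilities are bounded below, as they are for the nearest-neighbor and uniform step distributions arising from words in $S$ and $S^{-1}$. With that uniformity secured, summing the geometric decay as in the second paragraph delivers the stated estimate.
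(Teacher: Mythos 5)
You should know at the outset that the paper contains no proof of this statement for yours to be compared against: it is quoted as an external result, cited to Theorem 7.3 of Saloff-Coste's survey on random walks on finite groups, and used as a black box. Your Fourier-analytic argument is therefore a genuine, self-contained alternative, and its skeleton is correct: the inversion formula $\mu^{(k)}(g)=\frac1n\sum_j\hat\mu(j)^k\overline{\chi_j(g)}$, the reduction to a spectral gap for $\max_{j\neq 0}|\hat\mu(j)|$, the identity $1-|\hat\mu(j)|^2=2\sum_{x,y}\mu(x)\mu(y)\sin^2\bigl(\pi j(x-y)/n\bigr)$, the existence of $s\in\Sigma$ with $js\not\equiv 0\pmod n$ (if $js\equiv 0$ for every $s$ in a generating set, then $j\equiv 0$), and the bounds $\sin^2(\pi js/n)\ge\sin^2(\pi/n)\ge 4/n^2$ all check out. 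Your argument also uses the hypothesis $0\in\Sigma$ exactly where it must be used: without the lazy cross terms $(0,s)$ and $(s,0)$, the walk can be periodic (e.g.\ $\Sigma=\{1\}$ in $\Z_2$) and the conclusion fails outright.

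The caveat you flag at the end is not cosmetic, and you are right to dwell on it: your gap is $c=8\,\mu(0)\mu(s)$, which is not universal, and no argument can make it universal, because the statement as transcribed in the paper is false for arbitrary $\mu$ supported on $\Sigma$. Take $\Sigma=\{0,1\}\subseteq\Z_n$ and $\mu(1)=\epsilon$: for any fixed $k$, $\mu^{(k)}$ tends to $\delta_0$ as $\epsilon\to 0$, so no constants $\alpha,\beta$ independent of $\mu$ can satisfy the displayed inequality for all $k$. The original theorem must therefore carry hypotheses that this paraphrase drops---a uniform (or below-bounded) step distribution on $\Sigma$, or constants depending on the minimum weight. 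Your proof, with $\beta$ of the order of the minimum weight squared, proves exactly that corrected statement, and the correction suffices for every use in the paper: the walks there have steps $0,\pm 1$ or $0,\pm 2$ with probabilities $1/2,1/4,1/4$, so the weights are bounded below by $1/4$, and your constants are uniform in $n$, which is what the applications (where $n=N_s\to\infty$ with $s$) genuinely require. In short: your proof is correct for the theorem as it is used, and it exposes an imprecision in the theorem as it is stated.
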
 

To adapt this theorem to our setting, we will consider an odd $n$ and identities of even length $m$. We break the identity into pieces of length 2, so each piece has $1/4$ chance of being each of $SS$, $S^{-1}S^{-1}$, $SS^{-1}$, or $S^{-1}S$. These correspond to $2, -2$, and $0$ in the random walk on $\Z_n$, and when $n$ is odd, $\Sigma = \{-2, 0, 2\}$ generates $\Z_n$.  Just as the identities of even length approach a uniform distribution, so do the identities of odd length, and, consequently, so do the identities of length at most $s$.  We have the following:

\begin{cor}\label{cor:OddnAlls}

For any odd number $n$ and any $s$,  \\
$$\displaystyle\lim_{s\to\infty}\frac{P'_{s}(n\mid X)}{P'_{s}} = \frac{1}{n} \text{ and } \displaystyle\lim_{s\to\infty}\frac{P'_{s}(n\nmid X)}{P'_{s}} = \frac{n-1}{n}.$$  

\end{cor}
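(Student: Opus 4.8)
The plan is to reduce the statement about all lengths $m\le s$ to the behavior of a single random walk, and then handle the summation over lengths by a weighted-averaging argument. For each $m$, write $q_m = P'_{=m}(n\mid X)/P'_{=m} = P'_{=m}(n\mid X)/2^m$ for the proportion of length-$m$ terms with $n\mid X$. Since $P'_s = 2^{s+1}-1$ and $P'_{=m} = 2^m$, we have
$$\frac{P'_s(n\mid X)}{P'_s} = \frac{\sum_{m=0}^s q_m\, 2^m}{2^{s+1}-1},$$
a weighted average of the $q_m$ with geometrically increasing weights. First I would show $q_m \to 1/n$ as $m\to\infty$, and then argue that the weighted average inherits this limit.

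For even length $m = 2\ell$, I would group the term into $\ell$ consecutive blocks of length $2$, each equally likely ($1/4$) to be $SS$, $S^{-1}S^{-1}$, $SS^{-1}$, or $S^{-1}S$, contributing $+2,-2,0,0$ to $X$. Reducing mod $n$ turns the accumulated value of $X$ into an $\ell$-step random walk on $G = \Z_n$ with step measure $\mu$ supported on $\Sigma = \{-2,0,2\}$ (with $\mu(\pm 2)=1/4$, $\mu(0)=1/2$); since $n$ is odd, $2$ is a unit mod $n$, so $\Sigma$ generates $\Z_n$ and contains the identity $0$, which are exactly the hypotheses of Theorem \ref{SC}. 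Hence $q_{2\ell} = \mu^{(\ell)}(0)$, and the theorem gives $|q_{2\ell} - 1/n| < \alpha e^{-\beta\ell/n^2}\to 0$.

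For odd length $m = 2\ell+1$, I would peel off the first symbol, which contributes $\pm 1$ to $X$ with probability $1/2$ each, and view the remaining $2\ell$ symbols as the same $\ell$-step walk. A length-$(2\ell+1)$ term then satisfies $n\mid X$ exactly when the walk lands at $-1$ (after an initial $+1$) or at $+1$ (after an initial $-1$), so $q_{2\ell+1} = \tfrac12\bigl(\mu^{(\ell)}(1)+\mu^{(\ell)}(-1)\bigr)$. Applying Theorem \ref{SC} to each of $\mu^{(\ell)}(\pm 1)$ again yields $q_{2\ell+1}\to 1/n$. Thus $q_m \to 1/n$ for every $m$, regardless of parity.

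Finally, I would close the gap between the limit of the $q_m$ and the limit of their weighted average. Given $\varepsilon>0$, choose $M$ with $|q_m - 1/n|<\varepsilon$ for $m\ge M$, and split the sum at $M$: the head $\sum_{m<M} q_m 2^m$ is bounded by the constant $2^M-1$ and so contributes $o(1)$ after dividing by $P'_s\to\infty$, while the tail is within $\varepsilon$ of $(1/n)\sum_{m=M}^s 2^m$, and $\sum_{m=M}^s 2^m/(2^{s+1}-1)\to 1$ as $s\to\infty$. (Equivalently, this is the Silverman--Toeplitz regularity of the weighted-average summation method, since the geometric weights concentrate on large $m$.) Letting $\varepsilon\to 0$ gives the first limit $1/n$; the second follows by complementation, since $P'_s(n\nmid X) = P'_s - P'_s(n\mid X)$. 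The main obstacle I anticipate is the bookkeeping in the odd-length case---verifying that peeling off one symbol leaves a genuine instance of the walk covered by Theorem \ref{SC} and correctly identifying the target states $\pm 1$---together with confirming that the exponential convergence supplied by Theorem \ref{SC} is strong enough to survive the geometric averaging over all lengths up to $s$.
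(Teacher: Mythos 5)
Your proposal is correct and takes essentially the same approach as the paper: pairing consecutive symbols to get a random walk on $\Z_n$ with steps supported on $\{-2,0,2\}$ (which generates $\Z_n$ because $n$ is odd), invoking Theorem \ref{SC} for fixed even lengths, treating odd lengths by a one-symbol offset, and then passing to lengths at most $s$ by averaging. The paper merely asserts the odd-length and averaging steps, and your fleshed-out versions of them (the peeled first symbol giving $q_{2\ell+1}=\tfrac12(\mu^{(\ell)}(1)+\mu^{(\ell)}(-1))$, and the fixed-$M$ Toeplitz split in place of the $\sqrt{s}$ truncation the paper uses in its later, related arguments) are correct.
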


Our presentations have an unordered set of two identities.  However, it is easier to count ordered pairs, allowing repetition.  By Proposition \ref{unorderedsets.v}, we get the same limiting densities, so we will count ordered pairs allowing repetition of elements instead.  Let $C$ be the set of presentations in which the difference functions $X_1,X_2$ are both divisible by some prime $p$.  It follows from Lemma \ref{later} that $X_1 = 0$, and $X_2 = 0$ both have limiting density $0$.  The important part of $C$ consists of the presentations such that $X_1\not= 0$ and $X_2\not= 0$, and in what follows, we write $C$ for this important part.  For the presentations in $C$, there is some prime that divides both $X_1$ and $X_2$, and both $X_1,X_2$ are non-zero.  

\begin{defn}\label{ps}

For each $s$, let $p_s$ be the greatest prime $p$ such that \\
$2\cdot 3\cdots p\leq ln(s)$.

\end{defn} 

For every $s$, we split $C$ into two parts, $C_1 = C_{s,1}$ and $C_2 = C_{s,2}$. Note that this splitting depends on $s$.  For a presentation in $C$, let $d$ be the least prime that divides both $X_1,X_2$.  Then the presentation is in $C_{s,1}$ if $d \leq p_s$ and in $C_{s,2}$ if $d > p_s$.  We will show that $C_{s,2}$ has limiting density $\lim\limits_{s\to \infty} \frac{P_s(C_{s,2})}{P_s} = 0$ and that the limiting density of $C_1$ does not exist---it toggles between two values, one for even 
$s$ and the other for odd $s$.  Among the primes, $2$ behaves differently from the odd primes.  We have shown that, for single identities with difference $X$, the limiting density of $2|X$ does not exist. We will see later that this explains why the limiting density of $C_1$ does not exist.  

\bigskip

Our first goal is to show that $C_2$ has limiting density $0$.  Toward this, we consider a single identity with difference $X$.

\begin{lem}

For each odd prime $p$ and all $s$, $\displaystyle\frac{P'_s(p|X\ \&\ X\not= 0)}{P'_s}\leq\frac{2}{p+1}$.

\end{lem}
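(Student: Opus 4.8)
The plan is to prove the inequality length by length and then sum. For a term of length $m$ whose difference of exponents is $X = x$, there are exactly $\binom{m}{(m+x)/2}$ such terms (we choose which positions carry $S$); write $c_x$ for this number, which is nonzero only when $|x|\le m$ and $x\equiv m\pmod 2$. Then $P'_{=m}=2^m=\sum_x c_x$, and since the symmetry $S\leftrightarrow S^{-1}$ sends $X$ to $-X$ while $x=0$ is excluded, the number of length-$m$ terms with $p\mid X$ and $X\ne 0$ equals $2M_m$, where $M_m:=\sum_{x>0,\,p\mid x}c_x$. Because $P'_s=\sum_{m\le s}2^m=2^{s+1}-1$ and $P'_s(p\mid X\ \&\ X\ne 0)=\sum_{m\le s}2M_m$, it suffices to prove the per-length bound $2M_m\le \frac{2}{p+1}\,2^m$ for every $m$; summing over $m\le s$ and dividing by $P'_s$ then yields the claim.

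For the per-length bound, note $2^m=c_0+2\sum_{x>0}c_x\ge 2\sum_{x>0}c_x$, so it reduces to showing $M_m\le \frac{2}{p+1}\sum_{x>0}c_x$; that is, among the positive-difference terms, the proportion whose difference is a multiple of $p$ is at most $\frac{2}{p+1}$. Two structural facts drive the argument. First, $c_x$ is unimodal in $x$, with maximum at $x=0$ (if $m$ is even) or $x=\pm 1$ (if $m$ is odd), so $c_x$ is nonincreasing as $x$ runs through the positive achievable values. Second, because $p$ is odd, the positive achievable multiples of $p$ are spaced $2p$ apart: they are $p,3p,5p,\dots$ when $m$ is odd and $2p,4p,6p,\dots$ when $m$ is even, with no achievable value strictly in between divisible by $p$.

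I would then compare each ``bad'' value (a positive achievable multiple of $p$) against the ``good'' values (positive achievable non-multiples) lying just below it, and set $G_m:=\sum_{x>0,\,p\nmid x}c_x$. Between two consecutive bad values there are exactly $p-1$ good achievable values, each smaller than the upper bad value and hence, by monotonicity, of count at least that of the upper bad value; likewise there are good values below the first bad value $x_1$, namely $p-1$ of them when $m$ is even and $(p-1)/2$ when $m$ is odd, each of count at least $c_{x_1}$. Collecting these disjoint contributions gives $G_m\ge \frac{p-1}{2}M_m$ (with the stronger $G_m\ge (p-1)M_m$ when $m$ is even). Hence $\frac{M_m}{\sum_{x>0}c_x}=\frac{M_m}{M_m+G_m}\le \frac{1}{1+(p-1)/2}=\frac{2}{p+1}$, which is the per-length bound.

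The delicate point is the odd-$m$ case, which is precisely where $\frac{2}{p+1}$ is tight: there the first bad value $x_1=p$ sits close to the peak, so only $(p-1)/2$ good values lie below it, and one must verify that the contribution $\frac{p-1}{2}c_{x_1}$ from below the peak, together with the $(p-1)c_{x_i}$ contributions from each later gap, still dominates $\frac{p-1}{2}\sum_i c_{x_i}$. The bookkeeping requires checking that the counted good values are genuinely achievable (correct parity and within range), pairwise distinct across gaps, and never themselves divisible by $p$ --- all consequences of $p$ being odd together with the $2p$-spacing. Once the per-length inequality is in hand, the summation over $m\le s$ completes the proof.
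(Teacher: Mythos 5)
Your proposal is correct and follows essentially the same route as the paper's proof: a per-length bound via the Pascal-triangle structure (unimodality and symmetry of the counts $c_x$, plus the $2p$-spacing of achievable multiples of $p$ forced by parity, with the odd-$m$ case giving the tight constant $\frac{2}{p+1}$), followed by summing over lengths $m\le s$. The paper organizes the comparison as ``each block of $p$ (or $\frac{p+1}{2}$) consecutive values dominates its final term, a multiple of $p$,'' while you phrase it as $G_m\ge\frac{p-1}{2}M_m$; these are the same estimate in different bookkeeping.
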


\begin{proof}
We will first prove that $\displaystyle\frac{P'_{=m}(p|X\ \&\ X\not= 0)}{P'_{=m}}\leq\frac{2}{p+1}$ for all $m$.
Note that the numbers $P'_{=m}(X = n)$, for $-m\leq n\leq m$, form a Pascal triangle.  At the top, for $m = 0$, we have $1$, corresponding to $X = 0$.  For $m = 1$, we have $1$'s corresponding to $X = \pm 1$.  In general, for even $m$, $X$ takes the even values $n$ in the interval $[-m,m]$, and for odd $m$, $X$ takes the odd values in the interval $[-m,m]$.  In both cases, $P'_{=m+1}(X = n) = P'_{=m}(X = n - 1) + P'_{=m}(X = n + 1)$.  We can see that $P'_{=m}(X = n)$ decreases as $|n|$ increases, and that $P'_{=m}(X = n) = P'_{=m}(X = -n)$.  

\bigskip

For odd $m$ (so that $X$ is odd), we have 
\[P'_{=m}(X = \pm 1) \ge P'_{=m}(X = \pm 3) \ge \cdots \ge P'_{=m}(X = \pm p)\ \,\]
\[P'_{=m}(X = \pm (p+2)) \ge P'_{=m}(X = \pm (p+4)) \ge \cdots \ge P'_{=m}(X = \pm3p)\]
\[\cdots\]
Note that there are $p$ terms in each of the lines, except the first line, which has only $\frac{p+1}{2}$ terms. Therefore, we have 
\begin{align*}
1 = \sum\limits_{n:\text{odd}} \frac{P'_{=m}(X = n)}{P'_{=m}} &= \frac{P'_{=m}(X = \pm1)}{P'_{=m}}+\frac{P'_{=m}(X = \pm3)}{P'_{=m}} +\cdots\\
&\ge \left(\frac{p+1}{2}\right)\frac{P'_{=m}(X = \pm p)}{P'_{=m}} + p\cdot\frac{P'_{=m}(X = \pm 3p)}{P'_{=m}} +  
\cdots\\
&\ge \left(\frac{p+1}{2}\right)\left(\frac{P'_{=m}(X = \pm p)}{P'_{=m}} + \frac{P'_{=m}(X = \pm 3p)}{P'_{=m}} + \cdots\right) \\
& = \left(\frac{p+1}{2}\right)\frac{P'_{=m}(p \mid X \ \& \ X\neq 0)}{P'_{=m}}.
\end{align*}

If $m$ is even (so that $X$ is even), then we have 
\[P'_{=m}(X = \pm 2) \ge P'_{=m}(X = \pm 4) \ge \cdots \ge P'_{=m}(X = \pm 2p)\]
\[P'_{=m}(X = \pm (2p+2)) \ge P'_{=m}(X = \pm (2p+4)) \ge \cdots \ge P'_{=m}(X = \pm4p)\]
\[\cdots\]
In this case, each line has $p$ terms, and we get the following slightly stronger inequality:
\begin{align*}
1 = \sum\limits_{n:\text{even}} \frac{P'_{=m}(X = n)}{P'_{=m}} &= \frac{P'_{=m}(X = 0)}{P'_{=m}}+
\frac{P'_{=m}(X = \pm2)}{P'_{=m}}+\frac{P'_{=m}(X = \pm4)}{P'_{=m}}+\cdots\\
&\ge \frac{P'_{=m}(X = \pm2)}{P'_{=m}}+\frac{P'_{=m}(X = \pm4)}{P'_{=m}}+\cdots\\
&\ge p\cdot \frac{P'_{=m}(X = \pm 2p)}{P'_{=m}} + p\cdot \frac{P'_{=m}(X = \pm 4p)}{P'_{=m}} + \cdots\\
& = p\cdot \frac{P'_{=m}(p \mid X \ \& \ X\neq 0)}{P'_{=m}}.
\end{align*}
Combining the even and odd case, we get the desired $\frac{P'_{=m}(p|X\ \&\ X\not= 0)}{P'_{=m}}\leq\frac{2}{p+1}$.

Now, we turn our attention back to the inequality in the lemma, which concerns identities up to a certain length.  The quotient $\frac{P'_s(p\mid X \ \& \ X \neq 0)}{P'_s}$ is a weighted average (weighted by the proportion of identities of each length) of the probabilities $\frac{P'_{=m}(p\mid X \ \& \ X \neq 0)}{P'_{=m}}$, where $m \le s$. Thus, the lemma follows from the inequality on identities of a fixed length $\frac{P'_{=m}(p|X\ \&\ X\not= 0)}{P'_{=m}}\leq\frac{2}{p+1}$.
\end{proof}

We are now ready to consider both identities.

\begin{lem}
$\displaystyle \lim\limits_{s\to\infty}\frac{P_s(C_2)}{P_s} =0$.
\end{lem}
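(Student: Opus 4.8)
The plan is to run a union bound over the primes exceeding $p_s$, using the single-identity estimate just established together with independence of the two coordinates. First I would invoke Proposition~\ref{unorderedsets.v} to replace unordered pairs by ordered pairs allowing repetition, so that the count factors as $P_s = (P'_s)^2$ and the two difference functions $X_1,X_2$ are chosen independently. The key observation is that a presentation lies in $C_2$ exactly when the least prime dividing both $X_1$ and $X_2$ exceeds $p_s$, with $X_1,X_2\neq 0$; in particular there is \emph{some} prime $p>p_s$ with $p\mid X_1$ and $p\mid X_2$. Hence
\[
C_2 \subseteq \bigcup_{p>p_s,\ p\text{ prime}} \{\,p\mid X_1\ \&\ p\mid X_2\ \&\ X_1,X_2\neq 0\,\}.
\]

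Next, since the condition on $X_1$ depends only on the first identity and the condition on $X_2$ only on the second, the count of pairs satisfying $p\mid X_1\ \&\ p\mid X_2\ \&\ X_1,X_2\neq 0$ is exactly $P'_s(p\mid X\ \&\ X\neq 0)^2$. Combining the inclusion above with subadditivity of counting and then dividing by $P_s=(P'_s)^2$ gives
\[
\frac{P_s(C_2)}{P_s} \le \sum_{p>p_s}\left(\frac{P'_s(p\mid X\ \&\ X\neq 0)}{P'_s}\right)^{2}.
\]
Now I would apply the previous lemma, which bounds each factor by $\tfrac{2}{p+1}$ for odd primes $p$; this is exactly the place where the restriction $X\neq 0$ is indispensable, since divisibility is vacuous when $X=0$. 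Because $p_s\to\infty$ (as $\ln(s)\to\infty$ the primorial bound in Definition~\ref{ps} admits ever-larger primes), all primes $p>p_s$ are odd once $s$ is large, so the lemma applies to every term in the sum and we obtain $\frac{P_s(C_2)}{P_s}\le \sum_{p>p_s}\bigl(\tfrac{2}{p+1}\bigr)^2$.

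Finally, I would conclude by noting that $\sum_{p}\tfrac{4}{(p+1)^2}$ converges, being dominated by $\sum_{n}\tfrac{4}{(n+1)^2}$, so its tail $\sum_{p>p_s}\tfrac{4}{(p+1)^2}$ tends to $0$ as $p_s\to\infty$, i.e.\ as $s\to\infty$. This forces $\frac{P_s(C_2)}{P_s}\to 0$. The only steps requiring genuine care are the justification that $p_s\to\infty$ (immediate from the definition of $p_s$ and $\ln(s)\to\infty$) and the factoring of the count into a square, which relies on treating the two identities as independent ordered choices; the rest is a convergent-tail estimate. I expect the main conceptual point, rather than an obstacle, to be recognizing that squaring the bound $\tfrac{2}{p+1}$ is what turns the divergent sum $\sum 1/p$ into the convergent $\sum 1/p^2$, which is precisely why having two identities—not one—makes this part of $C$ negligible.
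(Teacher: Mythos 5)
Your proposal is correct and follows essentially the same route as the paper: pass to ordered pairs via Proposition~\ref{unorderedsets.v}, take a union bound over primes $p > p_s$, factor the count using independence of $X_1,X_2$, apply the $\tfrac{2}{p+1}$ bound from the preceding lemma, and finish with the convergent tail of $\sum_p 1/p^2$ together with $p_s\to\infty$. Your explicit remark that all primes $p>p_s$ are eventually odd (so the preceding lemma applies to every term) is a small point the paper leaves implicit, but it is not a different argument.
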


\begin{proof}

Below, we will appeal to Proposition \ref{unorderedsets.v} and consider, for each $s$, the probability space consisting of the ordered pairs of identities, each of length at most $s$.  Then the random variables $X_1,X_2$ are independent.  Counting ordered pairs of identities and allowing repetition, we see that for each $s$, 
\begin{align*}
P_s(C_2) &\leq\sum\limits_{p > p_s}P_s(X_1,X_2\not= 0\ \&\ p|X_1\ \&\ p|X_2)\\ 
 &= \sum\limits_{p > p_s}P'_s(X_1 \neq 0\ \&\ p|X_1)P'_s(X_2 \neq 0\ \&\ p|X_2).
\end{align*}

So, it follows from the previous lemma that
\begin{align*}
\frac{P_s(C_2)}{P_s} & \leq \sum\limits_{p > p_s}\frac{P'_s(X_1 \neq 0\ \&\ p|X_1)}{P'_s}\frac{P'_s(X_2 \neq 0\ \&\ p|X_2)}{P'_s}\\
& \leq \sum_{p > p_s} \left(\frac{2}{p+1}\right)^2
\end{align*}

By a well-known fact from number theory, the sum of the squares of the reciprocals of primes (or of all natural numbers) converges. Since $\lim\limits_{s\to \infty}p_s = \infty$, we have that $\lim\limits_{s\to\infty}\sum\limits_{p > p_s} (\frac{2}{p+1})^2 = 0$.  
Thus, $C_2$ has limiting density $0$.  
\end{proof}

We turn to $C_1$.  Again, we consider first a single identity.   

\begin{lem}
\label{squareroot} 

We write $D_s$ for the set of identities of length at most $s$ but greater than $\sqrt{s}$.  Then $\frac{P'_s(D_s)}{P'_s}\rightarrow 1$.  

\end{lem}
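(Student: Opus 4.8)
The plan is to treat this as a direct counting estimate, exploiting the fact that the number of identities of a given length grows exponentially, so that almost all short identities are concentrated at lengths close to $s$. First I would recall the exact counts already established: the number of identities of length exactly $m$ is $P'_{=m} = 2^m$ (each of the $m$ positions is filled by $S$ or $S^{-1}$), and hence $P'_s = \sum_{m \le s} 2^m = 2^{s+1} - 1$. The key observation is that $D_s$ is precisely the complement, among identities of length at most $s$, of the identities of length at most $\sqrt{s}$. That is, $P'_s(D_s) = P'_s - P'_{\lfloor \sqrt{s}\rfloor}$, so that
\[
\frac{P'_s(D_s)}{P'_s} = 1 - \frac{P'_{\lfloor \sqrt{s}\rfloor}}{P'_s}.
\]

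It then suffices to show that the error term $\frac{P'_{\lfloor \sqrt{s}\rfloor}}{P'_s}$ tends to $0$. Substituting the closed forms gives
\[
\frac{P'_{\lfloor \sqrt{s}\rfloor}}{P'_s} = \frac{2^{\lfloor \sqrt{s}\rfloor + 1} - 1}{2^{s+1} - 1} < \frac{2^{\sqrt{s}+1}}{2^s} = 2^{\,\sqrt{s} + 1 - s},
\]
where for the upper bound I use $2^{\lfloor \sqrt s\rfloor+1}-1 < 2^{\sqrt s +1}$ in the numerator and $2^{s+1}-1 > 2^s$ in the denominator. Since $\sqrt{s} - s \to -\infty$ as $s \to \infty$, the right-hand side tends to $0$, and therefore $\frac{P'_s(D_s)}{P'_s} \to 1$, as claimed.

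There is essentially no hard step here: the argument is the observation that exponential growth dominates the square-root cutoff, so the identities of length at most $\sqrt{s}$ form a vanishing proportion of those of length at most $s$. The only point requiring minor care is the bookkeeping around the strict inequality ``greater than $\sqrt{s}$'' and the floor $\lfloor \sqrt{s}\rfloor$, which I handle simply by noting that identities of length at most $s$ split into those of length $\le \sqrt{s}$ and those of length $> \sqrt{s}$, and that the former are counted by $P'_{\lfloor \sqrt{s}\rfloor}$. This lemma is exactly the fact needed later to restrict attention to ``long'' identities, where the random-walk and Central Limit Theorem estimates for $C_1$ can be applied uniformly.
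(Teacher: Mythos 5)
Your proof is correct and follows essentially the same route as the paper's: both write $P'_s(D_s) = P'_s - P'_{\lfloor\sqrt{s}\rfloor}$, substitute the closed form $P'_m = 2^{m+1}-1$, and observe that the ratio $\frac{2^{\sqrt{s}+1}-1}{2^{s+1}-1}$ tends to $0$. Your version is slightly more careful about the floor $\lfloor\sqrt{s}\rfloor$ and the explicit bound $2^{\sqrt{s}+1-s}$, but the argument is the same.
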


\begin{proof}

We have $P'_s(D_s) = P'_s - P'_{\sqrt{s}}$, and $\frac{P'_s(D_s)}{P'_s} = 1 - \frac{2^{\sqrt{s}+1} - 1}{2^{s+1} - 1}\rightarrow 1$.       
\end{proof}

Lemma \ref{squareroot} may be interpreted as saying that most identities of length at most $s$ have length at least $\sqrt{s}$.  We write $P_s(D_s^2)$ for the number of pairs of identities of length at most $s$ such that both have length at least $\sqrt{s}$.  The next lemma says that for most pairs of identities of length at most $s$, the length of both is at least $\sqrt{s}$. 
 
\begin{lem}

$\lim_{s\rightarrow\infty}\frac{P_s(D_s^2)}{P_s}\rightarrow 1$.  

\end{lem}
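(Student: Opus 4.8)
The plan is to reduce this statement about pairs of identities to the single-identity estimate already established in Lemma~\ref{squareroot}. The key enabling observation is that, by Proposition~\ref{unorderedsets.v}, the limiting density of any property computed over unordered pairs of identities agrees with the one computed over ordered pairs allowing repetition. The hypothesis of that proposition holds here, since $N(s) = P'_s = 2^{s+1}-1 \to \infty$. So I would carry out the entire computation in the ordered-with-repetition model, where the two coordinates behave as independent draws from the same distribution on identities of length at most $s$.

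In that model, $P_s = (P'_s)^2$, and the number of ordered pairs in which both identities lie in $D_s$ (that is, both have length strictly greater than $\sqrt{s}$) is exactly $(P'_s(D_s))^2$. Hence the ratio factors as
\[
\frac{P_s(D_s^2)}{P_s} = \frac{(P'_s(D_s))^2}{(P'_s)^2} = \left(\frac{P'_s(D_s)}{P'_s}\right)^2 .
\]
By Lemma~\ref{squareroot} the inner ratio tends to $1$, so its square does as well, which gives $\frac{P_s(D_s^2)}{P_s} \to 1$.

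As a cross-check that does not rely on the factorization, one can instead bound the complement by a union bound: a pair fails to lie in $D_s^2$ only if at least one of its two coordinates has length at most $\sqrt{s}$, so the proportion of bad pairs is at most $2\cdot\frac{P'_s - P'_s(D_s)}{P'_s}$, which tends to $0$ again by Lemma~\ref{squareroot}. The only genuinely delicate point in either route is the passage from unordered pairs to a model in which the two identities may be treated independently; this is precisely what Proposition~\ref{unorderedsets.v} supplies, so once its hypotheses are confirmed, the argument is otherwise routine and I expect no real obstacle.
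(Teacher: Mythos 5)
Your proof is correct, and it is essentially the argument the paper intends: the paper states this lemma without proof, treating it as immediate from Lemma~\ref{squareroot} together with the passage to ordered pairs allowing repetition via Proposition~\ref{unorderedsets.v}, which the authors had already announced just before Definition~\ref{ps}. The one point worth making explicit is that Proposition~\ref{unorderedsets.v} is stated for a fixed sentence whereas the property ``both identities lie in $D_s$'' varies with $s$; this is harmless because the bounds in that proposition's proof are uniform in the property being counted, and indeed the paper applies it in the same way to the $s$-dependent sets $C_{s,1}$ and $C_{s,2}$.
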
       

Now, $\frac{P_s(C_1)}{P_s}$ is the probability that, among pairs of identities of length at most $s$, with difference functions $X_1$ and $X_2$, there is some prime $p \leq p_s$ such that $p|X_1\ \&\ p|X_2$.  We may suppose that both identities have length greater than $\sqrt{s}$.  We have seen that the limiting probability that $2$ divides both $X_1,X_2$ does not exist---for even $s$, it approaches $\frac{4}{9}$, while for odd $s$, it approaches $\frac{1}{9}$ \ref{cor:OddnAlls}.  Here, we consider odd primes.  We have justified thinking of the random variables $X_1,X_2$ (for identities of length at most $s$) as independent.  

We would like to assume that for $i = 1,2$, the events $p|X_i$ for \emph{different} primes $p$ are independent.  This turns out to be ``approximately'' true.  The probability that $X_1,X_2$ are \emph{not} both divisible by $3$ is approximately $1 - \frac{1}{3^2}$.  The probability that $X_1,X_2$ are \emph{not} both divisible by $3$ and not both divisible by $5$ is approximately $(1 - \frac{1}{3^2})(1 - \frac{1}{5^2})$.  The probability that $X_1,X_2$ are \emph{not} both divisible by any odd prime $p\leq p_s$ is approximately $\prod\limits_{3 \le p \leq p_s}(1 - \frac{1}{p^2})$.  This formula matches what we would get by laborious inclusion-exclusion counting. 

In fact, the divisibilities of $X_i$ by different primes may \emph{not} be independent.  However, we can apply the Chinese Remainder Theorem and consider the residue of $X_i$ modulo $N_s = \prod\limits_{p \le p_s} p$, which is $2\prod\limits_{3\le p\leq p_s}p$.  It follows from the Definition of $p_s$ (Definition \ref{ps}) that $N_s \le \ln(s)$.  This is where the random walk on the group comes in.  We will use Theorem \ref{SC}.  

\begin{thm}\label{two-id-thm} 

Below, we let $p$ range over all primes:

\begin{enumerate}

\item  $\displaystyle \frac{P_{2s}(C_{s,1}^c)}{P_{2s}}\rightarrow 
 \frac{5}{9}\cdot\prod_{3\le p}\left(1 - \frac{1}{p^2}\right)$.  
 
 \item  $\displaystyle \frac{P_{2s+1}(C_{s,1}^c)}{P_{2s+1}}\rightarrow
 \frac{8}{9}\cdot\prod_{3 \le p}\left(1 - \frac{1}{p^2}\right)$.  
  
  \end{enumerate}

 \end{thm}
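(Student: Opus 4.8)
The plan is to work throughout with ordered pairs of identities of length at most $s$, which is legitimate by Proposition \ref{unorderedsets.v}; this makes the difference functions $X_1$ and $X_2$ independent. Writing $A_p$ for the event that the prime $p$ divides both $X_1$ and $X_2$, the set $C_{s,1}^c$ differs from $\bigcap_{p\le p_s}A_p^c$ only on presentations with $X_1=0$ or $X_2=0$, which have limiting density $0$ by Lemma \ref{later}; so it suffices to compute the density of $\bigcap_{p\le p_s}A_p^c$, i.e.\ the density of the event that no prime $p\le p_s$ divides both $X_1$ and $X_2$. Moreover, by the lemma asserting $\frac{P_s(D_s^2)}{P_s}\to 1$, up to density $0$ we may restrict to pairs in which both identities have length at least $\sqrt s$; this reserve of length is what will make the associated random walk equidistribute.

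The crucial step is to separate the prime $2$ from the odd primes, since they behave completely differently. For $p=2$, Lemma \ref{parity} shows that $2\mid X_i$ is equivalent to the $i$th identity having even length, so $A_2$ is \emph{determined} by the parities of the two lengths rather than being genuinely random. Using $P'_{=m}=2^m$ and summing the geometric series as in Lemma \ref{even-and-odd}, the proportion of single identities of even length tends to $2/3$ when the length bound is even and to $1/3$ when it is odd. Hence, by independence of $X_1,X_2$, the probability of $A_2^c$ (``not both even'') tends to $1-(2/3)^2=5/9$ for the bound $2s$ and to $1-(1/3)^2=8/9$ for the bound $2s+1$; these are exactly the two leading constants in the statement.

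For the odd primes I would set $n=\prod_{3\le p\le p_s}p$, the odd part of $N_s$, and invoke Theorem \ref{SC} to show that for an identity of length at least $\sqrt s$ the residue $X_i\bmod n$ is nearly uniform. Writing an even-length identity as a product of length-$2$ blocks, each block moves the walk by $+2,-2,0$ with probabilities $\tfrac14,\tfrac14,\tfrac12$, and since $n$ is odd the support $\{-2,0,2\}$ generates $\Z_n$; an odd-length identity contributes one extra $\pm1$ step, which merely shifts a nearly uniform distribution and so preserves near-uniformity. The number of steps is $k\gtrsim\sqrt s/2$ while $n\le N_s\le\ln s$, so the error bound $\alpha e^{-\beta k/n^2}$ of Theorem \ref{SC} is at most $\alpha e^{-\beta\sqrt s/(2(\ln s)^2)}\to 0$. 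By the Chinese Remainder Theorem, near-uniformity of $X_i\bmod n$ means the events $\{p\mid X_i\}$ for distinct odd primes $p\le p_s$ are asymptotically independent, each of probability $1/p$; combining with the independence of $X_1$ and $X_2$ gives that $A_p$ has density tending to $1/p^2$ and that $\bigcap_{3\le p\le p_s}A_p^c$ has density tending to $\prod_{3\le p\le p_s}(1-1/p^2)$.

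Finally I would assemble the two pieces. Conditioning on the lengths $(m_1,m_2)$, both at least $\sqrt s$, the prime-$2$ event $A_2$ depends only on their parities, while the conditional density of $\bigcap_{3\le p\le p_s}A_p^c$ is the same near-uniform value regardless of those parities; this decoupling lets the density of $C_{s,1}^c$ factor in the limit as the prime-$2$ constant ($5/9$ for the even bound, $8/9$ for the odd bound) times $\prod_{3\le p\le p_s}(1-1/p^2)$. Since $p_s\to\infty$ and $\sum 1/p^2<\infty$, the finite product converges to $\prod_{3\le p}(1-1/p^2)$, giving the two stated limits. The main obstacle is making the odd-prime step rigorous: one must control the equidistribution error from Theorem \ref{SC} \emph{uniformly} while the modulus $n$ and the number of primes both grow with $s$, verify that the per-prime errors accumulate acceptably across the $\sim p_s$ primes so that the product of $(1-1/p^2)$ survives in the limit, and justify that this odd-prime contribution decouples cleanly from the deterministic prime-$2$ contribution computed above.
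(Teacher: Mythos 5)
Your proposal is correct and follows essentially the same route as the paper's proof: ordered pairs for independence (via Proposition \ref{unorderedsets.v}), restriction to identities of length at least $\sqrt{s}$, the deterministic parity analysis for $p=2$ via Lemmas \ref{parity} and \ref{even-and-odd} (giving the constants $5/9$ and $8/9$), Theorem \ref{SC} applied to the length-$2$-block walk on an odd modulus for the remaining primes, and an accumulated error of shape $(\ln s)^2\alpha e^{-\beta\sqrt{s}/(\ln s)^2}\to 0$. The only difference is cosmetic: you factor $\Z_{N_s}\cong\Z_2\times\Z_n$ explicitly by the Chinese Remainder Theorem and decouple the prime-$2$ and odd-prime contributions, whereas the paper works directly modulo $N_s=2n$ and tracks the parity-dependent weights $c\in\{1/9,2/9,4/9\}$---the same computation in different packaging.
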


\begin{proof}

We prove (1).  Take $\alpha,\beta > 0$ as in Theorem \ref{SC}.  Recall that $X_1$ and $X_2$ are the difference functions associated with the first and second identities, where in each identity, the function symbols are all on the left.   Fixing $s$, we consider the residue of $X_1$ and $X_2$ modulo $N_s = 2\prod\limits_{3 \le p\leq p_s} p$, where $N_s$ is at most $\ln(s)$.  For a single identity, we consider a string $t$ of length $k$.  For the fixed $s$, let $G_s$ be the group of possible remainders after division of $X_1$ by $N_s$.  Theorem~\ref{SC} tells us that for every $0 \le a < N_s$ and every $k$, 
$$\left|\frac{P'_{=k}(X_1 = a\pmod{N_s})}{P'_{=k}} - \frac2{N_s}\right| < \alpha e^{\frac{-\beta k}{N_s^2}}\text{ if $a$ and $k$ have the same parity, and}$$ 
$$\frac{P'_{=k}(X_1 = a\pmod{N_s})}{P'_{=k}} = 0 \text{ if $a$ and $k$ have different parities}.$$  
When we sum up the identities of length at most $s$, the previous lemma says that most of them will have length some $k \ge \sqrt{s}$. Thus, we may assume that $k \ge \sqrt{s}$.  The previous inequality yields
$$\left|\frac{P'_{=k}(X_1 = a\pmod{N_s})}{P'_{=k}} - \frac2{N_s}\right| < \alpha e^{\frac{-\beta k}{N_s^2}} < \alpha e^{\frac{-\beta\sqrt{s}}{\ln(s)^2}}.$$  

By Lemma \ref{even-and-odd}, among the identities of length up to $2s$, $2/3$ of them are even, and $1/3$ of them are odd. The probability $\frac{P'_{2s}(X_1 = a\pmod{N_s})}{P'_{2s}}$ is a weighted sum of $\frac{P'_{=k}(X_1 = a\pmod{N_s})}{P'_{=k}}$. Noticing that the rest of the previous inequality does not depend on $k$, doing a weighted sum gives
$$\left|\frac{P'_{2s}(X_1 = a\pmod{N_s})}{P'_{2s}} - \frac23\cdot\frac2{N_s}\right| < \alpha e^{\frac{-\beta\sqrt{s}}{\ln(s)^2}} \text{ if }a\text{ is even, and}$$
$$\left|\frac{P'_{2s}(X_1 = a\pmod{N_s})}{P'_{2s}} - \frac13\cdot\frac2{N_s}\right| < \alpha e^{\frac{-\beta\sqrt{s}}{\ln(s)^2}} \text{ if }a\text{ is odd.}$$

By the independence of $X_1,X_2$, we see that for all sufficiently large $s$, 
$$\left|\frac{P_{2s}(X_1 = a \pmod{N_s}\ \&\ X_2 = b\pmod{N_s})}{P_{2s}}- c\frac{4}{N_s^2}\right| < \alpha e^{\frac{-\beta\sqrt{s}}{\ln(s)^2}}$$
where $c = 
\begin{cases}
\frac19 & \mbox{if $a,b$ are both odd,}\\
\frac29 & \mbox{if one of $a,b$ is odd and the other is even,}\\
\frac49 & \mbox{if $a,b$ are both even.}\\
\end{cases}$

\bigskip

Now, we consider pairs $(a,b)$ modulo $N_s$ such that no $p\le p_s$ divides both $a$ and $b$.  Note that $(a,b)$ cannot both be even.  As we have seen, up to the parities of $a$ and $b$, for large $s$, the distribution of $X_i$ is approximately uniform, and the distribution of ordered pairs $(X_1,X_2)$ is also approximately uniform.  For each odd prime $p$, the fraction of the pairs $(a,b)$ such that $a,b$ are both divisible by $p$ is approximately $\frac{1}{p^2}$.  Thus, considering all primes, approximately $\prod\limits_{3 \le p \le p_s}(1-\frac1{p^2})$ of the possible pairs do not have a common odd prime factor $\le p_s$. More precisely,
$$\left|\frac{P_{2s}(\text{no odd prime }p\le p_s\text{ divides both }X_1,X_2)}{P_{2s}}- \prod\limits_{3 \le p \le p_s}\left(1-\frac1{p^2}\right) \right| < N_s^2\alpha e^{\frac{-\beta\sqrt{s}}{\ln(s)^2}}.$$

Finally, considering $p = 2$, the probability that both $a,b$ are even is approximately $\frac49$. Thus, we have that $\frac{P_{2s}(C_{s,1}^c)}{P_{2s}}$, the probability that no prime $p\le p_s$ divides both $X_1$ and $X_2$, satisfies
$$\left|\frac{P_{2s}(C_{s,1}^c)}{P_{2s}}-\frac59\cdot\prod\limits_{3 \le p \le p_s}\left(1-\frac1{p^2}\right)\right| < N_s^2\alpha e^{\frac{-\beta\sqrt{s}}{\ln(s)^2}} \le (\ln s)^2\alpha e^{\frac{-\beta\sqrt{s}}{\ln(s)^2}} .$$
Note that the right hand side of the inequality goes to 0 as $s\to \infty$. Thus,
$$\lim\limits_{s\to\infty} \frac{P_{2s}(C_{s,1}^c)}{P_{2s}}=  \frac{5}{9}\cdot\prod_{3\le p}\left(1 - \frac{1}{p^2}\right).$$  

The proof of (2) is similar.  
\end{proof}                                                                                                                                                                                                                                                                                                                                                                                                                                                                                                                                                                                                                                                                                                                                                                                                                                                                                                                                                                                                                                                                                                                                                                                                                                                                                                                                                                                                                                                                                                                                                                                                                                                                                                                                                                                                                                                                                                                                                                                                                                                                                                                                                                                                                                                                                                                                                                                                                                                                                                                                                                                                                                                                                                                                                                         

\subsection{Abelian groups}\label{abelian groups}

Let $V$ be the variety of abelian groups.  To axiomatize $V$, we add to the group axioms the sentence $(\forall x)(\forall y) x+y = y+x$.     

\subsubsection{Elementary invariants}

Szmielew \cite{Szmielew} carried out an elimination of quantifiers for abelian groups, and she gave elementary invariants.  Later, Eklof and Fisher \cite{EF} used saturation to give elementary invariants for modules.  Their methods also yield the Szmielew invariants for abelian groups.  We give invariants for abelian groups below.  For a prime $p$, we write $G[p]$ for the set $\{x\in G:px = 0\}$, which consists of the identity and the elements of order $p$.       

\begin{enumerate}

\item  $\alpha(p,n,k)$, saying $|p^nG|\geq k$, 

\item  $\beta(p,n,k)$, saying $dim(p^nG/_{p^{n+1}G})\geq k$, 

\item  $\gamma(p,n,k)$, saying $dim(p^n G[p])\geq k$, 

\item  $\delta(p,n,k)$, saying $dim(p^nG[p]/_{p^{n+1}G[p]})\geq k$.

\end{enumerate} 

We consider presentations with a single generator $a$ and a single relator.  The free abelian group on one generator is $\Z$, and the other abelian groups on one generator are the finite cyclic groups $C_m$.  We focus on the sentences $\beta(p,n,1)$, which say that there is an element divisible by $p^n$ and not by $p^{n+1}$.  We will see that these sentences are true in $\Z$ and do not have limiting density $0$ or $1$.  For $p = 2$, the limiting density does not exist, while for odd primes $p$, the limiting density exists and has a value strictly between $0$ and $1$.       

\begin{lem}\label{beta-condition}\

\begin{enumerate}

\item  $\beta(p,n,1)$ is true in $\Z$.

\item  $\beta(p,n,1)$ is true in $C_m$ if and only if $p^{n+1}|m$.  

\end{enumerate}

\end{lem}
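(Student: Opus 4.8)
The plan is to reduce the assertion $\beta(p,n,1)$ to the single condition $p^nG \neq p^{n+1}G$ and then to compute the subgroups $p^kG$ directly in the two cases $G = \Z$ and $G = C_m$. First I would observe that the quotient $p^nG/p^{n+1}G$ is annihilated by $p$, so it is a vector space over $\mathbb{F}_p$; hence $\dim(p^nG/p^{n+1}G) \geq 1$ is equivalent to $p^nG \neq p^{n+1}G$, i.e., to the existence of an element divisible by $p^n$ but not by $p^{n+1}$. This matches the informal reading of $\beta(p,n,1)$ stated before the lemma, and it is this reformulation that I will verify in each case.

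For part (1), with $G = \Z$, I would simply note that $p^n\Z/p^{n+1}\Z \cong \Z/p\Z$ is nontrivial; concretely, the element $p^n$ is divisible by $p^n$ but not by $p^{n+1}$ in $\Z$. Hence $\beta(p,n,1)$ holds in $\Z$.

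For part (2), with $G = C_m = \Z/m\Z$, I would compute the order of $p^kC_m$ for each $k$. The endomorphism ``multiplication by $p^k$'' of $\Z/m\Z$ has kernel of size $\gcd(p^k,m)$, so its image $p^kC_m$ has order $m/\gcd(p^k,m)$. Writing $m = p^e m'$ with $p \nmid m'$, one has $\gcd(p^k,m) = p^{\min(k,e)}$, and therefore $|p^kC_m| = p^{\,e - \min(k,e)}\,m'$. Comparing $k = n$ with $k = n+1$, the two orders agree exactly when $\min(n,e) = \min(n+1,e)$, that is, precisely when $e \leq n$. Thus $p^nC_m = p^{n+1}C_m$ when $e \leq n$ (so $\beta(p,n,1)$ fails), while $p^nC_m \supsetneq p^{n+1}C_m$ when $e \geq n+1$ (so $\beta(p,n,1)$ holds). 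Since $e \geq n+1$ is exactly the condition $p^{n+1} \mid m$, this yields the claimed equivalence.

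The computation is elementary, so there is no serious obstacle; the only points requiring a little care are the translation of the ``dimension $\geq 1$'' condition into nontriviality of the quotient and the correct bookkeeping of the order of the image $p^kC_m$ in terms of the $p$-adic valuation $e$ of $m$. Everything else follows from the structure of finite cyclic groups.
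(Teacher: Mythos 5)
Your proposal is correct, and for part (2) it takes a genuinely different route from the paper's. The paper writes $m = p^r m'$ with $p \nmid m'$ and uses the decomposition $C_m \cong C_{p^r} \oplus C_{m'}$, then argues componentwise: every element of $C_{m'}$ is divisible by all powers of $p$, while $C_{p^r}$ contains an element divisible by $p^n$ but not by $p^{n+1}$ exactly when $r > n$; this yields explicit witnesses. You avoid any decomposition and instead count orders: $p^k C_m$ is the image of multiplication by $p^k$, whose kernel has size $\gcd(p^k,m)$, so $|p^k C_m| = m/\gcd(p^k,m) = p^{\,e-\min(k,e)}m'$, and comparing $k = n$ with $k = n+1$ (together with the fact that a finite group cannot properly contain a subgroup of the same order) gives the equivalence with $p^{n+1} \mid m$. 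Both arguments are elementary; the paper's makes the divisibility structure of $C_m$ transparent and produces a concrete witness element, while yours reduces everything to arithmetic of orders and, as a bonus, explicitly justifies the translation of $\dim(p^nG/p^{n+1}G) \geq 1$ into $p^nG \neq p^{n+1}G$ (the quotient is annihilated by $p$, hence an $\mathbb{F}_p$-vector space) --- a step the paper takes for granted in its informal reading of $\beta(p,n,1)$. Part (1) is essentially identical in both: the element $p^n$ is the witness.
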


\begin{proof}

For (1), we note that in $\Z$, the element $p^n$ witnesses the truth of $\beta(p,n,1)$.  For (2), consider $C_m$.  For some $r$ (possibly $0$) and some $m'$ relatively prime to $p$, we have $m = p^r\cdot m'$, and then $C_m\cong C_{p^r}\oplus C_{m'}$.  If $r > n$, then $C_{p^r}$ has an element divisible by $p^n$ and not by $p^{n+1}$, and otherwise, there is no such element.  Furthermore, all elements of $C_{m'}$ are divisible by all powers of $p$.  So, $C_m$ has elements divisible by $p^n$ if and only if $r > n$.    
\end{proof}
  
A relator of length $m$ has the form $w(a) = \sum_{i\leq m} d_i a$, where $d_i = \pm 1$.  We consider the relator of length $0$, representing the empty sum, to be $0$.    

\begin{lem}

$P_s = 1 + 2 + \ldots + 2^s = 2^{s+1} - 1$. 

\end{lem}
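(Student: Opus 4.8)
The plan is elementary: I would count the allowable relators of each length and then sum. For a single generator $a$, a relator of length $m$ is a word $w(a)=\sum_{i\le m} d_i a$ in which each $d_i$ is independently $+1$ or $-1$, so specifying such a relator amounts to choosing a sign at each of the $m$ positions. First I would record that this gives exactly $2^m$ relators of length $m$; for $m=0$ the empty sum contributes the single relator $0=2^0$, so the count $2^m$ is valid for every $m\ge 0$. Summing the resulting geometric series over $0\le m\le s$ then yields
\[
P_s=\sum_{m=0}^{s}2^m=2^{s+1}-1,
\]
which is the claim.

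The one point deserving explicit comment is conceptual rather than computational: we are counting \emph{presentations}, that is, syntactic relators (words in $a$ and $a^{-1}$), and not the distinct elements of the abelian group that these words represent. Many distinct words collapse to the same element---for instance, the length-two words $a+(-a)$ and $(-a)+a$ both equal $0$---but such coincidences play no role in $P_s$, since the length function, and hence the count, are defined on the words themselves. (In particular we do not reduce the words, which is why we get $2^m$ rather than the handful of reduced words $a^m$, $a^{-m}$ of each length.) This is exactly the convention already adopted in the group setting, and the count $2^m$ for words of length $m$ is identical to the one used for terms of length $m$ in the bijective example of Section~\ref{early-examples}. Granting this, there is no real obstacle: the lemma is pure counting and the geometric summation is routine.
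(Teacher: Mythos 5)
Your proof is correct and takes essentially the same approach as the paper: count $2^m$ relators of each length $m$ (one sign choice $d_i=\pm 1$ per position, with the empty sum as the unique length-$0$ relator) and sum the geometric series to get $2^{s+1}-1$. Your clarifying remark that $P_s$ counts syntactic relators rather than the group elements they represent is exactly the convention the paper adopts, so it introduces no discrepancy.
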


\begin{proof}

There is one relator of length $0$.  For $m\geq 1$, there are $2^m$ possible relators $t(a) = d_1a + \ldots + d_m a$, $d_i = \pm 1$.  Then we have \\
$P_s = 1 + 2 + 2^2 + \ldots + 2^s = \frac{1 - 2^{s+1}}{1 - 2} = 2^{s+1} - 1$.
\end{proof} 

We consider the limiting density for $\beta(p,n,1)$ for various combinations of $p$ and $n$.  Recall that the sentence $\beta(p,n,1)$ is true in $\Z$, and it is true in $C_m$ provided that $p^{n+1}|m$.  Let $A$ be the set of relators that give $\Z$, and for $r < p^{n+1}$, let $B_r$ be the set of relators that give $\Z_m$ for the various $m\equiv_{p^{n+1}} r$.  We write $P_s(A)$ and $P_s(B_r)$ for the number of relators of length at most $s$ in the sets $A$ and $B_r$.    

\begin{lem}\

\begin{enumerate}

\item  $P_s(A) = 1+\sum\limits_{0<2m\leq s}\left(\begin{array}{cc}
2m\\m\end{array}\right)$,

\item $A$ has limiting density $0$.

\end{enumerate}      

\end{lem}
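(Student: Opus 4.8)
The plan is to treat the two parts in turn, reducing the abelian-group presentation to an elementary statement about the sum $\sum_i d_i$ of the $\pm 1$ coefficients.

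First, for part (1), I would observe that a relator of length $m$, namely $w(a) = \sum_{i \le m} d_i a$ with each $d_i = \pm 1$, collapses to $w(a) = ka$ where $k = \sum_{i\le m} d_i$, so the imposed relation is simply $ka = 0$. When $k = 0$ no relation is imposed and the presented group is the free abelian group $\Z$; when $k \ne 0$ it is the finite cyclic group $C_{|k|}$. Hence $A$ consists exactly of the relators with $\sum_{i\le m} d_i = 0$. This forces the length to be even, say $m = 2m'$, with precisely $m'$ of the coefficients equal to $+1$; there are $\binom{2m'}{m'}$ such relators. Adding the length-$0$ relator (the empty sum, which also gives $k = 0$) and summing over admissible lengths yields $P_s(A) = 1 + \sum_{0 < 2m \le s}\binom{2m}{m}$, as claimed.

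For part (2), I would compare $P_s(A)$ with $P_s = 2^{s+1}-1$. The central binomial coefficients $\binom{2m}{m}$ are increasing in $m$, and the ratio of consecutive terms is $\binom{2(m+1)}{m+1}/\binom{2m}{m} = 2(2m+1)/(m+1)$, which is at least $3$ for every $m \ge 1$ (and tends to $4$). Consequently each partial sum $\sum_{m=1}^{M}\binom{2m}{m}$ is dominated by a fixed constant multiple of its final term; summing the geometric bound gives $\sum_{m=1}^{M}\binom{2m}{m} \le \tfrac{3}{2}\binom{2M}{M}$. Writing $M = \lfloor s/2\rfloor$ and invoking the Stirling estimate of Lemma \ref{Fact 2}, $\binom{2M}{M} = (1+o(1))\,2^{2M}/\sqrt{\pi M} \le (1+o(1))\,2^{s}/\sqrt{\pi M}$. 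Dividing by $P_s \sim 2^{s+1}$ then gives $P_s(A)/P_s = O(1/\sqrt{s}) \to 0$, which is the desired conclusion. As in Proposition \ref{alpha}, I would treat even and odd $s$ together, noting that the largest admissible $m$ is $\lfloor s/2\rfloor$ in both cases.

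The routine part is part (1); the one place that needs care in part (2) is that the sum has roughly $s/2$ terms, so the naive bound ``each term $\le$ the last term, times the number of terms'' only yields $P_s(A) = O(\sqrt{s}\,2^{s})$, and hence $P_s(A)/P_s = O(\sqrt{s})$, which does \emph{not} tend to $0$. The main obstacle is therefore to exploit the geometric growth of the $\binom{2m}{m}$ (ratio bounded below by a constant exceeding $1$) to collapse the entire sum to a constant multiple of its last term before applying the Stirling asymptotics; this is the analogue here of Lemma \ref{lemma:section2ineq} in the bijective setting.
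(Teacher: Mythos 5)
Your proof is correct and takes essentially the same route as the paper: the paper disposes of (1) with the same counting observation, and for (2) it simply points back to Section \ref{early-examples}, where these identical quantities were handled by collapsing the sum to a constant multiple of its last term and then applying the Stirling estimate of Lemma \ref{Fact 2}. Your only deviation is cosmetic---you get the geometric domination from the explicit ratio $2(2m+1)/(m+1)\ge 3$ (yielding the constant $\tfrac{3}{2}$) rather than from the paper's Pascal-identity induction in Lemma \ref{lemma:section2ineq} (yielding the constant $2$), which is the same idea, as you yourself note.
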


\begin{proof}

For (1), note that the relator $t(a)$ gives $\Z$ just in case $t(a)$ has even length $2m$ for some $m$, and $a$ and $-a$ each occur $m$ times.  For (2), look back at Section \ref{early-examples} where we encountered the same quantities as the current $P_s$ and $P_s(A)$.  There, we saw that $\lim_{s\rightarrow\infty}\frac{P_s(A)}{P_s} = 0$.   
\end{proof}

Recall that for a relator $t(a)$, $X$ is the difference between the number of occurrences of $a$ and the number of occurrences of $a^{-1}$ in the term $t(a)$.  For a relator of even length $2m$, $X$ takes only even values $0,\pm 2, \ldots,\pm 2m$.  For an identity of odd length $2m+1$, $X$ takes only odd values
$\pm 1$, $\pm 3$, $\ldots$, $\pm (2m+1)$. Using arguments similar to those in Section \ref{bij-two-id}, we will show that the limiting density of $2^{n+1}|X$ does not exist, and for odd primes $p$, the limiting density of $p^{n+1}|X$ is $\frac{1}{p^{n+1}}$.    

\begin{lem}

For $p = 2$ and $n \geq 0$, the limiting density of $2^{n+1}|X$ does not exist. In particular, 
$\frac{P_{2s}(2^{n+1}|X)}{P_{2s}}\rightarrow (\frac{2}{3})(\frac{1}{2^{n}})$, and $\frac{P_{2s+1}(2^{n+1}|X)}{p_{2s+1}}\rightarrow (\frac{1}{3})(\frac{1}{2^{n}})$.  

\end{lem}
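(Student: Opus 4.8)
The plan is to imitate the structure of the analogous result for bijective structures in Section~\ref{bij-two-id}, especially Lemmas~\ref{parity} and~\ref{even-and-odd}, since the combinatorics of relators here is essentially identical to the single-identity case treated there: a relator of length $m$ is a string of $m$ symbols $\pm a$, there are $2^m$ of them, and $X$ is the signed count, so the numbers $P'_{=m}(X=n)$ again form a Pascal triangle and $P_s = 2^{s+1}-1$. First I would record the exact analogue of Lemma~\ref{parity}: for even $m$ every relator has $X$ even and for odd $m$ every relator has $X$ odd, since appending one more $\pm a$ flips the parity of $X$. This immediately gives that a relator of length $m$ can satisfy $2^{n+1}\mid X$ only when $m$ is even (for $n\ge 0$, $2^{n+1}$ is even, so $2^{n+1}\mid X$ forces $X$ even, hence $m$ even).

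Next I would compute the contribution length by length. The key point is to understand how many relators of length $m$ satisfy $2^{n+1}\mid X$ and how this compares to the total $2^m$. Because only even lengths contribute, and within even length $2m$ the number with $2^{n+1}\mid X$ is a sum of binomial coefficients $\binom{2m}{k}$ over the appropriate residue class, I would invoke the same kind of uniformity that Corollary~\ref{cor:OddnAlls} and Theorem~\ref{SC} supply: fixing an even length and looking at $X \bmod 2^{n+1}$ via a random walk on $\Z_{2^{n+1}}$ with steps $\pm 1$. The subtlety versus the odd-$n$ case is that the step distribution on $\Z_{2^{n+1}}$ is supported on $\{-1,+1\}$, which is \emph{not} the generating set with the group identity that Theorem~\ref{SC} requires, and moreover the walk is periodic (parity obstruction). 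I expect the cleanest route is to observe that among even-length relators, $X$ is even, so dividing by $2$ reduces $2^{n+1}\mid X$ to $2^{n}\mid (X/2)$, and the distribution of $X/2$ modulo $2^{n}$ over long even-length relators becomes uniform; this accounts for the factor $\frac{1}{2^{n}}$ appearing in both displayed limits.

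With the per-length behavior in hand, the final step is the weighted-average / dominant-last-term analysis that produces the oscillation. I would mirror Lemma~\ref{even-and-odd} precisely. For $P_{2s}$, since $P_{2s}=4^s\cdot 2 - 1$ and the even-length relators dominate (their count through length $2s$ is $\frac{4^{s+1}-1}{3}$, i.e. asymptotically $\frac{2}{3}$ of the total, exactly as in Lemma~\ref{even-and-odd}(1)), and among those even-length relators a proportion $\frac{1}{2^{n}}$ satisfy $2^{n+1}\mid X$, I get
\[
\frac{P_{2s}(2^{n+1}\mid X)}{P_{2s}}\longrightarrow \frac{2}{3}\cdot\frac{1}{2^{n}}.
\]
For the odd case, through length $2s+1$ the even-length relators make up only $\frac{1}{3}$ of the total (Lemma~\ref{even-and-odd}(2)), while the newly added odd-length relators contribute nothing to $2^{n+1}\mid X$, so
\[
\frac{P_{2s+1}(2^{n+1}\mid X)}{P_{2s+1}}\longrightarrow \frac{1}{3}\cdot\frac{1}{2^{n}}.
\]
Since these two subsequential limits differ, the overall limiting density does not exist. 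The main obstacle is the middle step: making the ``$\frac{1}{2^{n}}$ of even-length relators'' claim rigorous, because the naive random-walk uniformity statement of Theorem~\ref{SC} does not apply directly to the periodic $\pm 1$ walk on $\Z_{2^{n+1}}$. I would handle this either by the halving reduction to a uniform walk on $\Z_{2^{n}}$ described above, or by a direct Stirling/binomial estimate (as in Lemma~\ref{Fact 2}) showing that the residues of $X$ modulo $2^{n+1}$ equidistribute among the even residue classes as the even length grows; everything after that is the same bookkeeping as in Lemma~\ref{even-and-odd}.
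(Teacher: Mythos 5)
Your proposal is correct and follows essentially the same route as the paper: the paper likewise reduces to even-length relators, handles $n=0$ by the exact geometric computation (giving the $\frac{2}{3}$ / $\frac{1}{3}$ oscillation), and for $n\geq 1$ uses precisely your halving idea---pairing consecutive coefficients $d_{2i-1}+d_{2i}$ to get a lazy walk with support $\{-1,0,1\}$ on $\Z_{2^n}$ so that Theorem~\ref{SC} applies---followed by the same conditioning on the even-length set $E$ and the weighted-average (via the $\sqrt{s}$-cutoff and exponential convergence) bookkeeping. The subtlety you flagged about the periodic $\pm 1$ walk violating the hypotheses of Theorem~\ref{SC} is exactly the point the paper's pairing-and-halving construction is designed to fix.
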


\begin{proof}

We begin with the case where $n = 0$.  Here the calculations are straightforward.  We have $P_{=2m}(2|X) = 2^{2m}$, and $P_{=2m+1}(2|X) = 0$.  Then $P_{2s}(2|X) = 1 + 2^2 + 2^4 + \ldots + 2^{2s} = 
1 + 4 + 4^2 + \ldots + 4^s = 
\frac{4^{s+1} - 1}{3}$.  Therefore, 
$\frac{P_{2s}(2|X)}{P_{2s+1}} = \frac{(\frac{4^{s+1} - 1}{3})}{2^{2s+2} - 1}
\rightarrow \frac{2}{3}$.  
Since $P_{2s+1}(2|X) = P_{2s}$, we have $\frac{P_{2s+1}(2|X)}{P_{2s+1}} = \frac{(\frac{4^{s+1} - 1}{3})}{2^{2s+2} - 1}\rightarrow \frac{1}{3}$.  What we have shown is that if $E$ is the set of relators of even length, then $\frac{P_{2s}(E)}{P_{2s}} \rightarrow \frac{2}{3}$ and $\frac{P_{2s+1}(E)}{P_{2s+1}}\rightarrow \frac{1}{3}$.       

For $n \ge 1$, we again use Theorem \ref{SC}. For every presentation in $E^c$, we have $2^{n+1} \nmid X$, so we may condition on $E$ when we consider even length $2m$.  In the relation $w(a) = \sum_{i\leq 2m} d_i a$, we can consider the sums $d_1+d_2, d_3+d_4, \dots, d_{2m-1}+d_{2m}$. This gives us an $m$-step random walk on $\Z_{2^{n+1}}$ with each step being $2$ with probability $\frac14$, $-2$ with probability $\frac14$, and $0$ with probability $\frac12$.  Dividing everything by 2, we get a random walk with support $\{1,0,-1\}$ on $\Z_{2^n}$.  Thus, Theorem \ref{SC} applies. We have $2^{n+1}\mid X$ exactly when the random walk ends at $0\in \Z_{2^n}$.  The probability of this is $\frac{P_{=2m}(2^{n+1}|X)}{P_{=2m}}\to 1/2^n$. 

Now, as in the proof of Theorem \ref{two-id-thm}, we have that most identities of length at most $s$ will have length at least $\ge \sqrt{s}$.  Since the rate of convergence in Theorem \ref{SC} is exponential and all identities in $E$ have even length, we can pass from the probability for identities of a fixed length to the probability for identities with length $\le s$, and we get $\frac{P_{2s}(2^{n+1}|X \mid E)}{P_{2s}(E)}\to 1/2^n$.

Since $2^{n+1}\mid X$ only when $X$ is even, i.e., the identity is in $E$, and the above probability $1/2^n$ was conditioned to $E$, we have the desired 
$$\frac{P_{2s}(2^{n+1}|X)}{P_{2s}} = \frac{P_{2s}(2^{n+1}|X \mid E)}{P_{2s}} = \frac{P_{2s}(2^{n+1}|X \mid E)}{P_{2s}(E)} \cdot \frac{P_{2s}(E)}{P_{2s}} \to (\frac{1}{2^n})\cdot (\frac23).$$
The odd case can be proved similarly.        
\end{proof}

\begin{lem}\label{beta-prob}

For odd primes $p$, $p^{n+1}|X$ has limiting density $\frac{1}{p^{n+1}}$.  

\end{lem}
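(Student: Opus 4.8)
The plan is to imitate the $p=2$ case treated in the preceding lemma and the earlier Corollary~\ref{cor:OddnAlls}, applying the uniform random-walk estimate of Theorem~\ref{SC} to the group $G = \Z_{p^{n+1}}$, but now using that $p^{n+1}$ is \emph{odd} to conclude that the limit genuinely exists rather than toggling with the parity of $s$. First I would set up the walk: a relator of length $m$ is $\sum_{i\le m} d_i a$ with the $d_i = \pm 1$ chosen uniformly and independently, and $X = \sum_{i\le m} d_i$, so $X \bmod p^{n+1}$ is exactly the endpoint of an $m$-step simple walk with steps $\pm 1$ on $\Z_{p^{n+1}}$, and $p^{n+1}\mid X$ is the event that the walk ends at $0$. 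Since the step set $\{-1,+1\}$ does not contain the identity, Theorem~\ref{SC} does not apply directly; as in Corollary~\ref{cor:OddnAlls} I would group the steps in consecutive pairs, each pair contributing $+2,-2,0$ with probabilities $\tfrac14,\tfrac14,\tfrac12$, giving a measure $\mu$ supported on $\Sigma = \{-2,0,2\}$. Because $p^{n+1}$ is odd, $\gcd(2,p^{n+1})=1$, so $\Sigma$ both contains $0$ and generates $\Z_{p^{n+1}}$, and Theorem~\ref{SC} applies with this $G$ and $\Sigma$.

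For an even length $m = 2\ell$ I would apply Theorem~\ref{SC} to the $\ell$-step paired walk, obtaining $\bigl|\tfrac{P_{=2\ell}(p^{n+1}\mid X)}{P_{=2\ell}} - \tfrac{1}{p^{n+1}}\bigr| < \alpha e^{-\beta \ell/(p^{n+1})^2}$. For an odd length $m = 2\ell+1$ I would condition on the final step: $X = X' \pm 1$, where $X'$ is the endpoint of the even-length walk of length $2\ell$, so the probability that $p^{n+1}\mid X$ equals the average of the probabilities that $X' \equiv \mp 1 \pmod{p^{n+1}}$. Here I would invoke the target-uniform part of Theorem~\ref{SC}, which bounds $|\mu^{(\ell)}(g) - \tfrac{1}{p^{n+1}}|$ for \emph{every} $g$, not just $g=0$; applied to $g = \mp 1$ it shows each of these two probabilities is within $\alpha e^{-\beta \ell/(p^{n+1})^2}$ of $\tfrac{1}{p^{n+1}}$, hence so is their average. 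Thus for every fixed $m$, even or odd, $\tfrac{P_{=m}(p^{n+1}\mid X)}{P_{=m}} \to \tfrac{1}{p^{n+1}}$ at a rate exponential in $m$ and depending only on $p^{n+1}$.

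Finally I would pass from fixed lengths to the cumulative count. Exactly as in Lemma~\ref{squareroot}, whose proof uses only $P_s = 2^{s+1}-1$ (the same count as in the present setting), the relators of length $\le s$ but $> \sqrt{s}$ have proportion tending to $1$. Since $\tfrac{P_s(p^{n+1}\mid X)}{P_s}$ is a weighted average of the fixed-length probabilities $\tfrac{P_{=m}(p^{n+1}\mid X)}{P_{=m}}$, and all but a vanishing fraction of the weight lies on lengths $m \ge \sqrt{s}$, where the error is at most $\alpha e^{-\beta \sqrt{s}/(2(p^{n+1})^2)} \to 0$, the weighted average converges to $\tfrac{1}{p^{n+1}}$, giving $\lim_{s\to\infty}\tfrac{P_s(p^{n+1}\mid X)}{P_s} = \tfrac{1}{p^{n+1}}$.

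I expect the main obstacle to be the odd-length case: the pairing trick is tailored to even lengths, so one must separately verify that odd-length relators also equidistribute modulo $p^{n+1}$. This is precisely where the oddness of $p$ is essential. For $p=2$ the parity of $X$ is locked to the parity of $m$, which is what made the limit fail to exist in the previous lemma, whereas modulo an odd number there is no such congruence obstruction, and the conditioning-on-the-last-step argument, combined with the target-uniform bound in Theorem~\ref{SC}, is what closes the gap and forces a single limiting value.
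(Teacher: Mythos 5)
Your proposal is correct and takes essentially the same route as the paper's proof: pair consecutive $\pm 1$ steps to get a random walk on $\Z_{p^{n+1}}$ supported on $\{-2,0,2\}$, which contains the identity and generates the group because $p$ is odd, apply Theorem~\ref{SC} to get exponentially fast convergence to the uniform distribution, and then pass from fixed lengths to the cumulative count via the $\sqrt{s}$ splitting. If anything, your conditioning-on-the-last-step argument for odd-length relators supplies a detail that the paper's proof only asserts (``the same is true for the odd lengths''), using exactly the feature of Theorem~\ref{SC} that makes this work, namely that the uniform bound holds at every target $g$, not just $g=0$.
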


\begin{proof}

For a fixed even length $2m$, we get a random walk on $\Z_{p^{n+1}}$ supported on $\{2, 0, -2\}$---a single step increases the length by $2$.  By Theorem \ref{SC}, we have $\frac{P_{2m}(p^{n+1}\mid X)}{P_{2n}} \to \frac{1}{p^{n+1}}$.  This random walk converges to the uniform distribution for even lengths, and the same is true for the odd lengths.  

As before, we split the set of relators of length at most $s$ into two parts, those of length less than $\sqrt{s}$, and those of length at least $\sqrt{s}$.  Let $S_s$ be the number of relators of length at most $s$ for which the length is less than $\sqrt{s}$, and let $L_s$ be the number for which the length is at least $\sqrt{s}$.  Then $\frac{S_s}{P_s}\rightarrow 0$, so $\frac{L_s}{P_s}\rightarrow 1$.  
Then the exponential rate of convergence of Theorem \ref{SC} gives 
\[\frac{P_s(p^{n+1}|X)}{P_s}\rightarrow\frac{1}{p^{n+1}}.\]      
\end{proof}

\section{Generalizing}
\label{general}

In this section, we give general conditions that imply some of the behaviors that we saw in Section \ref{illu-ex}.  Our languages will have finitely many unary function symbols, and we may also allow finitely many constants.  

\subsection{Generalized bijective varieties} 

In Section \ref{early-examples}, we considered the variety with axioms saying of a pair of unary function symbols $S,S^{-1}$ that they are inverses, and we showed that for presentations with a single generator $a$ and a single identity of the form $t(a) = a$, the sentences true in the free structure are exactly those with limiting density $1$.  In this subsection and the next, we turn our attention to varieties of structures with multiple bijective unary functions, possibly with additional axioms.  We might suppose that the language of has unary function symbols $g_1, g_1^{-1}, \cdots, g_n, g_n^{-1}$, and that our varieties have axioms saying that for each $i$, $g_i$ and $g_i^{-1}$ are inverses.  However, the assumption that the functions have inverses named by function symbols turns out to be unnecessary once we know that the functions are $1-1$ and onto.  

\begin{defn}\label{def-bij}

Let $L$ be a language with unary function symbols $f_1, \ldots, f_n$, and let $V$ be an algebraic variety with theory $T$.  The variety is \emph{generalized bijective} if for all $i$,
$T\vdash (\forall x,y)(f_i(x) = f_i(y) \rightarrow x=y)$ and \\
$T\vdash (\forall y) (\exists x)\ f_i(x) = y$. 

\end{defn}

The result below says that for a generalized bijective variety, the basic functions have inverses named by terms.  

\begin{prop}\label{bij-inverse}

Let $T$ be the theory of a generalized bijective variety in the language $\{f_1,\ldots,f_n\}$.  Then for each $f_i$, there is some word $u_i$ such that \\
$T\vdash(\forall x) f_i\circ u_i(x) = u_i \circ f_i(x) = x$.

\end{prop}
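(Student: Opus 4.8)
The plan is to work inside the free structure $F$ on a single generator $x$, which is itself a member of the variety and hence a model of $T$. Because the language consists only of the unary symbols $f_1,\ldots,f_n$ and has no constants, every term in the one variable $x$ is a \emph{word}, i.e.\ a composition $f_{i_1}\circ\cdots\circ f_{i_k}$ applied to $x$ (the empty word giving $x$ itself), and every element of $F$ is the value $u(x)$ of such a word. The defining property I will lean on is that a one-variable identity $s(x)=t(x)$ holds in $F$ if and only if it holds throughout $V$, equivalently $T\vdash(\forall x)\,s(x)=t(x)$: given any model $\mathcal{M}$ and any $a\in\mathcal{M}$, freeness supplies a homomorphism $F\to\mathcal{M}$ sending $x\mapsto a$, and homomorphisms preserve word values, so an equation true in $F$ transfers to every $a$ in every model.

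First I would use surjectivity. By Definition \ref{def-bij}, $T\vdash(\forall y)(\exists x)\,f_i(x)=y$, so $f_i$ is onto in $F$ in particular. Hence some element of $F$ is mapped by $f_i$ to the generator $x$; writing that element as $u_i(x)$ for a suitable word $u_i$, we get $f_i(u_i(x))=x$ in $F$. By the free-structure property above, this lifts to
\[
T\vdash(\forall x)\,f_i\circ u_i(x)=x.
\]

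Next I would use injectivity to recover the other composite. Instantiating the identity just obtained at $x:=f_i(x)$ yields $f_i\bigl(u_i(f_i(x))\bigr)=f_i(x)$, and then the injectivity axiom $T\vdash(\forall x,y)(f_i(x)=f_i(y)\rightarrow x=y)$ lets me cancel the outer $f_i$, giving $T\vdash(\forall x)\,u_i\circ f_i(x)=x$. Combining the two statements produces exactly $T\vdash(\forall x)\,f_i\circ u_i(x)=u_i\circ f_i(x)=x$, as required.

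The only delicate point is the passage from ``holds in $F$'' to ``provable from $T$''; it rests on the standard universal-algebra fact that the free structure on a generating set realizes precisely the identities valid in the variety, so freeness does all the work and no separate completeness argument for equational logic is needed. Everything after that is a single substitution-and-cancel, so I expect the argument to be short once the free-structure transfer principle is stated carefully.
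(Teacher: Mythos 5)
Your proposal is correct and follows essentially the same route as the paper's proof: pass to the free structure on one generator, use surjectivity to write the $f_i$-preimage of the generator as a word $u_i$, lift the identity $f_i\circ u_i(x)=x$ to all of $V$ via the standard fact that identities of the generator in the free structure hold universally in the variety, and then cancel with injectivity to obtain $u_i\circ f_i(x)=x$. The only cosmetic difference is that you perform the injectivity cancellation at the level of $T$-provability, while the paper performs it inside the free structure and lifts a second time; both are immediate.
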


\begin{proof}

Fix $i$, and let $F$ be the free structure on one generator $a$.  There is some $b\in F$ with $f_i(b) = a$.  We can express $b$ as $u_i(a)$ for some word $u_i$. Then $F\models f_i\circ u_i(a) = a$.  Recall that in a variety, if an atomic formula is true of the generating tuplea in the free structure, then it holds on all tuples in all structures \cite[Theorem 11.4]{BS}.  Thus, $T \vdash( \forall x)\ f_i\circ u_i(x) = x$.  In $F$, let $x = f_i(a)$.  We have $f_i\circ u_i \circ f_i (a) = f_i(a)$.  Since $f_i$ is injective, this means that $F\models u_i \circ f_i(a) = a$.  Hence, $T \models (\forall x)\ u_i \circ f_i(x) = x$.  This completes the proof.
\end{proof}

\begin{defn}

Let $V$ be a variety in the language $\{f_1,\ldots,f_n\}$, where each $f_i$ is unary.  The variety is \emph{commutative} if the axioms imply $(\forall x) f_i(f_j(x)) = f_j(f_i(x))$ for all $i,j$.

\end{defn}

Our main general result, Theorem \ref{bij-str-thm}, says that for a commutative generalized bijective variety $V$ and presentations with a single generator and a single identity, the zero--one law holds.  Moreover, the sentences with density $1$ are those true in the free structure.  To prove Theorem \ref{bij-str-thm}, we will use a version of Gaifman's Locality Theorem, which we discuss below.  

\subsection{Gaifman's Locality Theorem}\label{Gaif-sec}

We state a special version of Gaifman's Locality Theorem for generalized bijective varieties, and we sketch a proof using saturation.  Fix a language $L$ consisting of unary function symbols $f_1,\ldots,f_m$.  Below, we define the \emph{Gaifman graph} of an $L$-structure.  Gaifman defined the graph for structures in a finite relational language.  When convenient, we treat the unary functions as binary relations.  

\begin{defn}

Let $\mathcal{A}$ be an $L$-structure.  The \emph{Gaifman graph} of $\mathcal{A}$ is the undirected graph with universe equal to that of $\mathcal{A}$, and with an edge between $x$ and $y$ if and only if $f_i(x) = y$ or $f_i(y) = x$ for some $i$.    

\end{defn}

We define an equivalence relation $\sim$ on $\mathcal{A}$ such that $x\sim y$ if $x$ and $y$ belong to the same connected component in the Gaifman graph; i.e., there is a finite path leading from $x$ to $y$.  

\begin{defn} [distance, $d(x,y)$]

For $x,y\in\mathcal{A}$, the \emph{distance} between $x$ and $y$ is the least $r$ such that there is a path of length $r$ from $x$ to $y$.  We write $d(x,y) \geq r$, $d(x,y) > r$ to indicate that the distance is, respectively, at least $r$, or greater than $r$.  

\end{defn}

\noindent
\textbf{Remark}:  Elements $x,y$ lie in different connected components just in case $d(x,y) > r$ for all $r$.  

\bigskip

We consider substructures of $\mathcal{A}$.  Note that two connected components, thought of as substructures, are isomorphic if there is a map from one onto the other that preserves the unary functions $f_i$, which we think of as binary relations.  The structure $\mathcal{A}$ is determined, up to isomorphism, by the number of connected components of different isomorphism types.

\begin{defn} [$r$-ball, $B_r(a)$, $B_r(\bar{a})$]

Let $\mathcal{A}$ be a structure and let $r\in\omega$.  

\begin{enumerate}

\item  For $a\in\mathcal{A}$, the \emph{$r$-ball around $a$} is $B_r(a) = \{x\in\mathcal{A}:d(a,x)\leq r\}$.  

\item  For $\bar{a}\in\mathcal{A}^n$, we write $B_r(\bar{a})$ for the set 
$\cup_{i < n}B_r(a_i)$. 

\item  We write $B_\infty(a)$ for the connected component of $a$, or $\cup_r B_r(a)$.  

\item  We write $B_\infty(\bar{a})$ for the union of the connected components of elements of $\bar{a}$, or $\cup_i B_\infty(a_i)$.  

\end{enumerate}

\end{defn}   

Let $V$ be a generalized bijective variety for the language $L$.  
For $\mathcal{A}\in V$, each element has a unique image and a unique pre-image under each $f_i$.  We show that for each $r$ and $n$, there is a finite set of formulas $\alpha(\bar{x})$ that describe, for all $\mathcal{A}\in V$, the possible substructures $B_r(\bar{a})$ for $n$-tuples $\bar{a}$.  

\begin{lem}
\label{Lem4.7}

Let $V$ be a generalized bijective variety for the language $L$. For each $r$ and $n$, there is a finite set $C_{r,n}$ of formulas $\alpha(\bar{x})$, such that 

\begin{enumerate}

\item  for each $\mathcal{A}\in V$, each $n$-tuple $\bar{a}$ in $\mathcal{A}$ satisfies a unique formula 
$\alpha(\bar{x})\in C_{r,n}$,

\item  for all $\mathcal{A},\mathcal{A}'\in V$, if $n$-tuples $\bar{a}$ in $\mathcal{A}$ and $\bar{a}'$ in $\mathcal{A}'$  satisfy the same formula $\alpha(\bar{x})\in C_{r,n}$, then there is an isomorphism from $B_r(\bar{a})$ onto $B_r(\bar{a}')$ that takes $\bar{a}$ to $\bar{a}'$.  

\end{enumerate} 
Moreover, we may take the formulas $\alpha(\bar{x})$ in $C_{r,n}$ to be existential.  We may equally well take them to be universal.     

\end{lem}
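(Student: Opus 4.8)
The plan is to reduce the statement to a finite enumeration of \emph{ball types} and to exploit the fact that, in a generalized bijective variety, balls are finite and term-definable. The starting observation is that for $\mathcal{A}\in V$ every $f_i$ is a bijection of $\mathcal{A}$, so each element has a unique $f_i$-image and a unique $f_i$-preimage; by Proposition~\ref{bij-inverse} the backward step along $f_i$ is given by an honest $L$-term $u_i$. Consequently, for a signed word $w$ in the $2n$ moves $f_1,u_1,\ldots,f_n,u_n$, the element $w(a_i)$ is always defined, and $B_r(\bar a)=\{\,w(a_i): i\le n,\ w\text{ reduced of length}\le r\,\}$ is completely determined by $\bar a$. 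Since there are only finitely many reduced signed words of length $\le r$ over $2n$ moves, each ball has at most $n\cdot\sum_{k\le r}(2n)^k$ elements, so there are only finitely many isomorphism types of pointed structures $B_r(\bar a)$. This finiteness is what ultimately makes $C_{r,n}$ finite.

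Next I would attach to each realizable type a defining formula. Let $\bar t=\big(w(a_i)\big)_{i\le n,\ w\text{ reduced},\ |w|\le r+1}$ be the finite tuple of terms naming the ball together with its one-step boundary, and let the \emph{type} of $\bar a$ be the quantifier-free equality type of $\bar t$, i.e.\ the conjunction $\theta(\bar x)$ of all equalities $w(x_i)=w'(x_j)$ and inequalities $w(x_i)\neq w'(x_j)$ that hold. Each $\theta$ is quantifier-free, there are finitely many, and I take $C_{r,n}$ to consist of the $\theta$'s that occur for some $\bar a$ in some $\mathcal{A}\in V$. Part (1) is then immediate: every tuple realizes exactly one equality type, hence satisfies exactly one $\theta\in C_{r,n}$. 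For part (2), if $\bar a$ and $\bar a'$ satisfy the same $\theta$, the assignment $w(a_i)\mapsto w(a_i')$ is well-defined and injective on $B_r$ because the recorded equalities and inequalities among length-$\le r$ words agree; it is surjective since both balls are enumerated by the same words; and it preserves each relation $f_i$, since an edge $f_i\!\big(w(a_j)\big)=w'(a_l)$ inside $B_r(\bar a)$ is exactly an equality $(f_iw)(a_j)=w'(a_l)$ between a length-$\le r+1$ term and a length-$\le r$ term, which $\theta$ records and which therefore transfers to $\bar a'$.

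Finally, for the ``moreover'' clause I would observe that each $\theta$ is already quantifier-free, hence is at once (trivially) existential and universal; but to produce genuinely local forms one can eliminate the inverse terms $u_i$ in favor of quantified preimage variables. Replacing a subterm $u_i(s)$ by a fresh $y$ governed by $f_i(y)=s$ and quantifying it existentially gives an existential formula, while quantifying it universally gives a universal formula, and the two are equivalent over $T$ because the $f_i$-preimage exists (surjectivity) and is unique (injectivity). I expect the only real obstacle to be the bookkeeping in part (2): pinning down the induced \emph{edge} relation of the substructure requires tracking equalities out to words of length $r+1$ and invoking injectivity to rule out spurious identifications or phantom edges---not a conceptual difficulty, but the place where generalized bijectivity (both surjectivity, for totality of preimages, and injectivity, for rigidity) is used essentially.
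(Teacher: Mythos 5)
Your proof is correct and follows essentially the same route as the paper: both use Proposition~\ref{bij-inverse} to name every element of $B_r(\bar{a})$ by a term $w(x_i)$ built from the $f_i$ and their inverse terms, and take $C_{r,n}$ to be the finitely many realizable atomic (equality) types of these terms. The only cosmetic difference is that the paper packages the atomic diagram using explicitly quantified variables $y$ corresponding to the possible ball elements (yielding the existential and universal forms directly), whereas you write the quantifier-free equality type out to length $r+1$ and note that it is trivially both existential and universal.
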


\begin{proof}

We describe the \emph{possible elements} of $B_r(\bar{x})$ inductively as follows.  The set $B_{0}(\bar{x})$ has just the members of the $n$-tuple $\bar{x}$ as possible elements. Now, suppose we have the possible elements of $B_r(\bar{x})$ for some $r\geq 0$. We will set the possible elements of $B_{r+1}(\bar{x})$ to be the elements of $B_{r}(\bar{x})$ together with additional possible elements $z$ obtained as follows:  Take some $y\in B_r(\bar{x})$ corresponding to a node at a distance $r$ from some $x\in \bar{x}$ and follow an arrow labeled $f_i$ or $f_i^{-1}$ from $y$ to $z$; please note that $f_i^{-1}$ is shorthand for the term that acts as an inverse to $f_i$ from Proposition \ref{bij-inverse}.

We may think of the possible elements of $B_r(\bar{x})$ as terms $u(x)$, where $u$ is a string of $f_i$, $f_i^{-1}$ of length at most $r$.   For an actual structure in our generalized bijective variety, with an actual tuple $\bar{a}$ corresponding to $\bar{x}$, we may have equalities---different paths may lead to the same point.  For $\mathcal{A}\in V$ generated by $\bar{a}$, the elements of $B_r(\bar{a})$ are equivalence classes of terms $u(a_i)$, where $u$ is a string of $f_i,f_i^{-1}$ of length at most $r$.  We have an existential formula saying that there exist $y$'s corresponding to the possible elements of $B_r(\bar{x})$ such that the structure has a specific atomic diagram.  We also have a universal formula saying that for all $y$'s corresponding to the possible elements of $B_r(\bar{x})$, the structure has a specific atomic diagram.           
\end{proof}

We fix sets of formulas $C_{r,n}$ as in the lemma.  Gaifman's Locality Theorem says that any formula $\varphi(\bar{x})$ (in a relational language) can be expressed as a finite Boolean combination of ``local'' formulas and ``local'' sentences (see the references \cite{Libkin}, \cite{Gaifman}, \cite{Kolaitis}).  For our setting, we take the local formulas and local sentences to be as follows. 

\begin{defn}\

\begin{enumerate}

\item  The \emph{$r$-local formulas} $\bar{x}$ are those in $C_{r,n}$ for various $n$.  

\item  The \emph{$r$-local sentences} have one of the following forms:

\begin{enumerate}

\item  $(\exists v_1, \cdots, v_s) \left( \bigwedge\limits_{i} \alpha_i(v_i)\ \& \bigwedge\limits_{i<j}d^{>2r}(v_i,v_j)\right)$,\\ for some $s$ and $\alpha_i(x)\in C_{r,1}$,

\item  $(\exists v)\alpha(v)$, for some $\alpha\in C_{r,1}$.  

\end{enumerate}

\end{enumerate}

\end{defn}

\begin{remark}

This definition is similar to Gaifman's, except that we allow only special formulas in $C_{r,n}$.  Note that the formulas in $C_{r,n}$ already give information on whether the distance between $x_i$ and $x_j$ is greater than $2r$.  Indeed, if $d(x_i,x_j)\leq 2r$, the formula will contain a conjunct that says (in the rational language) $t(x_i) = t'(x_j)$ for some $t,t'$ of length at most $r$.  Thus, we may equivalently replace 2(a) by $(\exists v_1, \cdots, v_s )\alpha(v_1,\cdots, v_s)$ for some $\alpha\in C_{r,s}$.  We chose the form above to stay closer to Gaifman's definition. 

\end{remark}

\begin{defn}

A formula or sentence is \emph{local} if it is $r$-local for some $r$. 

\end{defn}  

Here is our special version of Gaifman's Locality Theorem, where the local formulas and sentences are as defined above. 

\begin{thm}\label{Gaifman}

Let $V$ be a generalized bijective variety with theory $T$.   

\begin{enumerate}

\item  Any elementary first order sentence $\varphi$ is equivalent over $T$ to a sentence $\varphi^*$ that is a finite Boolean combination of local sentences.      

\item  Any elementary first order formula $\varphi(\bar{x})$ with free variables $\bar{x}$ is equivalent over $T$ to a formula $\varphi^*(\bar{x})$ that is finite Boolean combination of local sentences and local formulas.  In fact, we may take $\varphi^*(\bar{x})$ to be a finite disjunction of formulas $\alpha_i(\bar{x})\ \&\ \beta$, where for each $i$, $\alpha_i(\bar{x})$ is a single local formula, and 
$\beta$ is a finite conjunction of local sentences and negations of local sentences.     

\end{enumerate}   
\end{thm}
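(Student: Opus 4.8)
The plan is to derive both parts of the theorem from a single back-and-forth statement proved by saturation, with Lemma~\ref{Lem4.7} providing the finitely many local types at each radius. Call the \emph{local data} of a pair $(\mathcal{A},\bar{a})$, where $\mathcal{A}\models T$, the collection of all local sentences true in $\mathcal{A}$ together with all local formulas in $\bigcup_{r,n}C_{r,n}$ satisfied by $\bar{a}$. The whole argument rests on the claim
\[
(\star)\qquad \text{same local data}\ \Longrightarrow\ (\mathcal{A},\bar{a})\equiv(\mathcal{A}',\bar{a}').
\]

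First I would record why $(\star)$ suffices. This is the standard compactness reduction: fixing $\varphi(\bar{x})$ and a pair $(\mathcal{A},\bar{a})$ with $\mathcal{A}\models\varphi(\bar{a})$, the claim $(\star)$ forces $T$ together with the (infinite) local data of $(\mathcal{A},\bar{a})$ to entail $\varphi(\bar{a})$, so by compactness a finite piece of that local data already entails it; ranging over all such pairs and compactifying once more expresses $\varphi$ over $T$ as a finite Boolean combination of local sentences and local formulas. To obtain the sharpened form in part~(2), I would put this combination into disjunctive normal form and invoke Lemma~\ref{Lem4.7}(1): the formulas of $C_{r,n}$ are mutually exclusive and jointly exhaustive, and each formula of $C_{r',n}$ implies a unique formula of $C_{r,n}$ for $r<r'$, so within each disjunct the local-formula conjuncts collapse to a single $\alpha_i(\bar{x})$, leaving the sentential conjuncts to form $\beta$. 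Taking $\bar{x}$ empty gives part~(1).

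Next I would prove $(\star)$ by building a back-and-forth system. Replacing $\mathcal{A}$ and $\mathcal{A}'$ by $\aleph_1$-saturated elementary extensions loses nothing: local data is first order, hence preserved, and elementary equivalence of the extensions (with $\bar{a},\bar{a}'$) descends to the originals. The invariant of the system is that a finite matched set of balls is carried by an isomorphism of Gaifman structures sending the chosen points of $\mathcal{A}$ to those of $\mathcal{A}'$, while the complementary components of $\mathcal{A}$ and $\mathcal{A}'$ still have matching local-sentence data. When Spoiler plays a new $b\in\mathcal{A}$ there are two cases. If $b$ lies within the current radius of some matched point, then $b$ occurs among the possible elements of a ball already pinned down by a local formula, and Lemma~\ref{Lem4.7}(2) supplies the image $b'$. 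If $b$ is far from everything matched, its local $1$-type is some $\alpha\in C_{r,1}$, and agreement on the local sentences of form (2a) for \emph{all} thresholds $s$ tells us that $\mathcal{A}$ and $\mathcal{A}'$ carry the same number of pairwise-far realizations of $\alpha$---a common finite value or ``unboundedly many''---so saturation produces an unused far realization $b'$ in $\mathcal{A}'$; one then enlarges the radius and continues.

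The hard part will be the far case: one must guarantee that over finitely many rounds the stock of far witnesses of each local type never runs out, and that enlarging the radius around a freshly chosen far point introduces no unexpected coincidences with the matched region. This is exactly where agreement on every threshold sentence of form (2a), rather than just one, combines with $\aleph_1$-saturation---if only finitely many far points of type $\alpha$ exist then both structures have the same finite number and these are tracked individually, whereas if there are unboundedly many then saturation yields a fresh one at each round. The disjointness of a far ball from the matched region, needed for the isomorphism to extend coherently, follows from the definition of distance together with the injectivity and surjectivity of the $f_i$ in a generalized bijective variety; and the symmetric (back) direction is identical, since the hypotheses are symmetric in $(\mathcal{A},\bar{a})$ and $(\mathcal{A}',\bar{a}')$.
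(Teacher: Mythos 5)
Your overall architecture---a compactness reduction sitting on top of a saturation-based equivalence claim $(\star)$---is the same as the paper's; the difference is that the paper proves the core claim not by a back-and-forth game but by showing directly that saturated models of the same cardinality with the same local theory are isomorphic (Lemmas \ref{Lem4.9}, \ref{Lem4.10}, \ref{Lem4.13}), exploiting the fact that structures in $V$ are disjoint unions of connected components whose multiplicities are controlled by the local sentences. Your far/near case split in the game mirrors that component analysis, and with $\aleph_1$-saturation available it can be made to work. However, there is one concrete gap in your reduction from $(\star)$ to the theorem.

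You define the local data of $(\mathcal{A},\bar a)$ to consist only of the local sentences \emph{true} in $\mathcal{A}$ (plus the local formulas satisfied by $\bar a$), and then assert that $(\star)$ forces $T\cup(\text{local data})\models\varphi(\bar a)$. This does not follow: a model $(\mathcal{B},\bar b)$ of $T$ together with the positive local data may satisfy strictly \emph{more} local sentences than $\mathcal{A}$ does, in which case it does not have the same local data and $(\star)$ says nothing about it. The failure is real, not cosmetic. Take $\varphi$ to be the negation of a local sentence of form (2a), say ``there do not exist three pairwise-far realizations of $\alpha$'': in the basic bijective variety, adding an extra disjoint cycle realizing $\alpha$ to $\mathcal{A}$ produces a model of $T$ satisfying every positive local sentence true in $\mathcal{A}$ (balls and distances among old elements are unchanged) but falsifying $\varphi$, so no fragment of the positive data can ever entail $\varphi$ and the compactness step collapses. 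The fix is exactly what the paper builds into its definitions: the local theory must contain, for every local sentence, either it or its negation, so that (as the paper notes right after defining local types) containment of local data implies equality; then the entailment goes through, and the negations are absorbed into the conjunction $\beta$ that the statement of the theorem explicitly allows. A second, smaller point: in the near case of your back-and-forth, Lemma \ref{Lem4.7}(2) gives an isomorphism $B_r(\bar c)\cong B_r(\bar c')$ for each $r$ separately, but these isomorphisms need not cohere as $r$ grows, so citing Lemma \ref{Lem4.7}(2) alone does not supply a single $b'$ correct at all radii; you need either the saturation you have available (to realize the type $\{\alpha_r(\bar c{}',x):r\in\omega\}$) or the K\"{o}nig's Lemma argument of the paper's Lemma \ref{Lem4.9}.
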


We sketch a proof using saturation.  We begin with some definitions and lemmas.  
 
\begin{defn}\

\begin{itemize}

\item  For $\mathcal{A}\in V$, the \emph{local theory} of $\mathcal{A}$ is the set of all local sentences and negations of local sentences that are true in $\mathcal{A}$.  

\item  For $\bar{a}$ in $\mathcal{A}$, the \emph{local type} of $\bar{a}$ is the set of formulas generated by the local theory and the set of local formulas true of $\bar{a}$ in $\mathcal{A}$. 

\end{itemize} 

\end{defn}

Note that for $\bar{a}$ in $\mathcal{A}$ and $\bar{a}'$ in $\mathcal{A}'$ of the same length, if the local type of $\bar{a}$ in $\mathcal{A}$ is contained in the local type of $\bar{a}'$ in $\mathcal{A}'$, then the local types are the same.     

\begin{lem}
\label{Lem4.9}

Let $\mathcal{A},\mathcal{A}'\in V$.  If $n$-tuples $\bar{a}$ in $\mathcal{A}$ and $\bar{a'}$ in 
$\mathcal{A}'$ satisfy the same local type, then there is a partial isomorphism $f$ from $B_\infty(\bar{a})$ onto $B_\infty(\bar{a}')$ such that $f(\bar{a}) = \bar{a}'$.         

\end{lem}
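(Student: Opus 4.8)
The plan is to realize $f$ as the union $\bigcup_r g_r$ of a coherent chain of finite isomorphisms supplied by Lemma~\ref{Lem4.7}, with coherence forced by the bijectivity of the basic functions. Since $\bar{a}$ and $\bar{a}'$ have the same local type, they satisfy exactly the same local formulas; in particular, for every $r$ they satisfy the same formula $\alpha_r(\bar{x})\in C_{r,n}$. By Lemma~\ref{Lem4.7}(2), for each $r$ there is then an isomorphism $g_r\colon B_r(\bar{a})\to B_r(\bar{a}')$ with $g_r(\bar{a})=\bar{a}'$, where we regard each ball as an $L$-structure in which each $f_i$ is read as the binary relation $f_i(x)=y$. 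Note that each element of $B_r(\bar{a})$ is of the form $w(a_i)$ for a string $w$ of symbols $f_j,f_j^{-1}$ of length at most $r$, where $f_j^{-1}$ is the term-inverse from Proposition~\ref{bij-inverse}.

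The key step is to show that $g_r$ is the \emph{unique} relation-preserving bijection $B_r(\bar{a})\to B_r(\bar{a}')$ sending $\bar{a}$ to $\bar{a}'$. I would argue by induction on $d(\bar{a},x)$: the tuple $\bar{a}$ is mapped to $\bar{a}'$ by hypothesis, and if $x\in B_r(\bar{a})$ is joined to a closer point $y$ by an edge, then its image is pinned down. If the edge is $f_i(y)=x$, then $g_r(x)$ is determined as the unique image of $g_r(y)$ under $f_i$ (using that $f_i$ is a function); if instead $f_i(x)=y$, then $g_r(x)$ is the unique preimage of $g_r(y)$ under $f_i$ (using that $f_i$ is injective). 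This is precisely where the generalized bijective hypothesis is essential, and it is the main obstacle: without bijectivity the value of a map fixing the tuple need not be determined, and the finite isomorphisms need not fit together.

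Coherence and gluing then follow. Because an isomorphism of balls preserves graph distance (applying the same estimate to $g_{r+1}$ and to $g_{r+1}^{-1}$ shows distances at most $r$ are preserved in both directions), $g_{r+1}$ carries $B_r(\bar{a})$ exactly onto $B_r(\bar{a}')$. Hence $g_{r+1}\!\restriction_{B_r(\bar{a})}$ is again a relation-preserving bijection $B_r(\bar{a})\to B_r(\bar{a}')$ fixing the tuple, so by the uniqueness just established it equals $g_r$. Therefore the $g_r$ form a chain, and $f:=\bigcup_r g_r$ is a well-defined bijection from $B_\infty(\bar{a})=\bigcup_r B_r(\bar{a})$ onto $B_\infty(\bar{a}')=\bigcup_r B_r(\bar{a}')$.

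It remains to check that $f$ is a partial isomorphism. It sends $\bar{a}$ to $\bar{a}'$ since $g_0$ does. It preserves each $f_i$: given $x\in B_\infty(\bar{a})$, pick $r$ with $x\in B_r(\bar{a})$; then $x,f_i(x)\in B_{r+1}(\bar{a})$, and $g_{r+1}$ preserves the relation $f_i$, so $f(f_i(x))=f_i(f(x))$. Since the connected components $B_\infty(\bar a)$ and $B_\infty(\bar a')$ are closed under the $f_i$ and their term-inverses, they are genuine substructures, and $f$ is the desired isomorphism between them taking $\bar{a}$ to $\bar{a}'$. (A more direct variant of the same argument defines $f$ outright by $f(w(a_i))=w(a_i')$ and deduces well-definedness and injectivity from the agreement of the $C_{r,n}$-formulas, but I prefer the chain-of-isomorphisms formulation as it invokes Lemma~\ref{Lem4.7} verbatim.)
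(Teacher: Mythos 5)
Your proof is correct, but it glues the finite isomorphisms together by a different mechanism than the paper does. Both proofs begin identically: the common local type gives, for each $r$, the same formula in $C_{r,n}$, and Lemma~\ref{Lem4.7} then supplies an isomorphism $B_r(\bar{a})\to B_r(\bar{a}')$ carrying $\bar{a}$ to $\bar{a}'$. The paper, however, does not attempt to make these choices cohere directly: it forms the tree of \emph{all} such level-$r$ isomorphisms, ordered by extension, observes that the tree is infinite and finitely branching (balls are finite since each $f_i$ is a bijection), and extracts a coherent chain by K\"{o}nig's Lemma, treating the case of a finite component separately. You instead prove a rigidity statement --- that a relation-preserving bijection of $r$-balls fixing the tuple is \emph{unique}, by induction on distance, using functionality of $f_i$ for edges $f_i(y)=x$ and injectivity for edges $f_i(x)=y$ --- so that $g_{r+1}\restriction B_r(\bar{a})$ is forced to equal $g_r$ and the union is well defined with no compactness argument. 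Your route buys a stronger conclusion (the partial isomorphism is canonical, not merely existent) and handles finite and infinite components uniformly, but it leans essentially on the generalized bijective hypothesis, exactly as you note; the paper's K\"{o}nig argument is more robust, since it needs only existence of level isomorphisms plus finite branching and would survive in settings where ball isomorphisms fixing the tuple are not unique. One small point worth making explicit in your distance-preservation step: for $x\in B_r(\bar{a})$, a shortest path from the nearest $a_i$ to $x$ lies entirely inside $B_r(a_i)$, so distance computed in the induced subgraph on the ball agrees with distance in the Gaifman graph of $\mathcal{A}$; this is what licenses the claim that $g_{r+1}$ maps $B_r(\bar{a})$ exactly onto $B_r(\bar{a}')$.
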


\begin{proof} 
  
The fact that the tuples $\bar{a}$ and $\bar{a}'$ satisfy the same local type means that the structures $\mathcal{A}$ and $\mathcal{A}'$ satisfy the same local theory, and for each $r$, the tuples $\bar{a},\bar{a}'$ satisfy the same unique formula $\alpha(\bar{x})\in C_{r,n}$.  By Lemma \ref{Lem4.7}, for each $r$, there is an isomorphism $p$ from $B_r(\bar{a})$ onto $B_r(\bar{a}')$ taking $\bar{a}$ to $\bar{a}'$.  We have a tree of these finite partial isomorphisms $p$ between $B_\infty(\bar{a})\subseteq\mathcal{A}$ and 
$B_\infty(\bar{a}')\subseteq\mathcal{A}'$, where at level $r$, we put the isomorphisms from $B_r(\bar{a})$ onto $B_r(\bar{a}')$ that take $\bar{a}$ to $\bar{a}'$, and at level $r+1$, the successors of a given partial isomorphism $p$ from level $n$ are the extensions of $p$ taking $B_{r+1}(\bar{a})$ isomorphically onto $B_{r+1}(\bar{a}')$.  If $B_\infty(\bar{a})$ is infinite, then the tree is infinite, and it is finitely branching, so by K\"{o}nig's Lemma, there is a path $(p_r)_{r\in\omega}$.  The desired isomorphism is $\cup_r p_r$.  If the substructure $B_\infty(\bar{a})$ is finite, then it is contained in $B_r(\bar{a})$ for some $r$, and $p_r$ is the desired isomorphism.
\end{proof} 

For any $\mathcal{A}\in V$, the isomorphism type of $\mathcal{A}$ is determined by the number of connected components of each isomorphism type.  Suppose $\mathcal{A}$ is saturated, of infinite cardinality $\kappa$.  In $\mathcal{A}$, a local type $\Gamma(\bar{x})$ is satisfied if it is finitely satisfied.  For a local type $\Gamma(x) = \{\alpha_r(x):r\in\omega\}$, there are at least $n$ realizations of $\Gamma(x)$ on different connected components if and only if for all $r$, $\mathcal{A}$ satisfies the $r$-local sentence saying that there are at least $n$ elements satisfying $\alpha_r(x)$ and at a distance greater than $2r$.  The number of connected components with an element satisfying $\Gamma(x)$ is either finite or $\kappa$.  This yields the following.          

\begin{lem} 
\label{Lem4.10}

Suppose $\mathcal{A},\mathcal{A}'\in V$ are saturated and of the same cardinality $\kappa$.  If $\mathcal{A},\mathcal{A}'$ satisfy the same local sentences, then $\mathcal{A}\cong\mathcal{A}'$.

\end{lem}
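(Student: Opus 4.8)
The plan is to use the principle recalled just before the statement, that the isomorphism type of a structure in $V$ is determined by the cardinal number of connected components of each isomorphism type. So it suffices to show that for every isomorphism type $K$ of connected component, $\mathcal{A}$ and $\mathcal{A}'$ contain the same number of components isomorphic to $K$.

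First I would pin down the dictionary between component isomorphism types and local $1$-types. Since $\mathcal{A}$ and $\mathcal{A}'$ satisfy the same local sentences, they have the same local theory, so local $1$-types over them share their ``sentence part.'' I claim that two components (in either structure) are isomorphic if and only if they realize a common local $1$-type. The backward direction is exactly Lemma \ref{Lem4.9}: if $a$ and $a'$ realize the same local type, then $B_\infty(a)\cong B_\infty(a')$. For the forward direction, note that each local formula $\alpha_r(x)\in C_{r,1}$ asserts something about $B_r(x)$ only, so its truth is preserved by any isomorphism of connected components; hence an isomorphism $B_\infty(a)\to B_\infty(a')$ carries $a$ to an element of the same local type. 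Consequently the realized local $1$-types partition according to component isomorphism type: every element of a component $\cong K$ realizes exactly the same family of local types, and that family meets no other type $K'$.

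With this dictionary, I fix a component type $K$ and choose one local $1$-type $\Gamma_K$ realized in $K$. By the previous paragraph, the components isomorphic to $K$ are precisely the components containing a realization of $\Gamma_K$. Now I invoke the saturation analysis stated just before the lemma: in a saturated model of cardinality $\kappa$, the number of connected components containing a realization of $\Gamma_K$ is finite or $\kappa$, and for each $n$ the assertion ``there are at least $n$ such components'' is equivalent to the conjunction over all $r$ of the $r$-local sentences asserting the existence of $n$ realizations of $\alpha_r$ pairwise at distance greater than $2r$. Thus this count is determined by the local sentences true in the model. Since $\mathcal{A}$ and $\mathcal{A}'$ satisfy the same local sentences and have the same cardinality $\kappa$, the count of $\Gamma_K$-realizing components agrees: if it is a finite $n$ it is pinned down exactly, and if it is infinite then ``at least $n$'' holds for every $n$ in both, forcing the common value $\kappa$.

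Putting these together, for every component type $K$ the number of $K$-components is the same in $\mathcal{A}$ and $\mathcal{A}'$, so by the structure principle the two saturated models are isomorphic. I expect the main obstacle to be the middle step: ensuring that the count of components of a given isomorphism type can be read off from a single representative local type without over- or under-counting. A single component generally realizes infinitely many distinct local $1$-types, so the argument relies essentially on \emph{both} directions of the correspondence in Lemma \ref{Lem4.9} to guarantee that ``realizes $\Gamma_K$'' cuts out exactly the $K$-components; the remaining finite-or-$\kappa$ bookkeeping is the technical engine but is already supplied by the paragraph preceding the lemma.
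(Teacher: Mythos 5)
Your proof is correct and follows essentially the same route as the paper's (much terser) argument: both reduce the lemma to the principle that structures in $V$ are determined by the number of connected components of each isomorphism type, and both use Lemma \ref{Lem4.9} together with the saturation analysis in the paragraph preceding the lemma to show the component counts are read off from the common local theory. Your write-up simply makes explicit the dictionary between component isomorphism types and local $1$-types, and the finite-or-$\kappa$ counting, which the paper leaves implicit.
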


\begin{proof}

Since $\mathcal{A},\mathcal{A}'$ are saturated, of the same cardinality, and satisfy the same local sentences, they realize the same local types, and they have the same number of connected components of each isomorphism type.  Hence, they are isomorphic.
\end{proof}

Knowing what the saturated structures in the variety $V$ look like, we see that for any countable $\mathcal{A}\in V$, there exists a saturated structure $\mathcal{A}^*$ of cardinality $2^{\aleph_0}$ such that $\mathcal{A},\mathcal{A}^*$ satisfy the same local sentences.     

\begin{lem}

If $\mathcal{A},\mathcal{A}'\in V$ have the same local theory, then they are elementarily equivalent.

\end{lem}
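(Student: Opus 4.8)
The plan is to reduce the statement to Lemma~\ref{Lem4.10} by passing to saturated elementary extensions. The converse direction (elementarily equivalent structures have the same local theory) is immediate, since each local sentence is a genuine first-order sentence; the content here is the forward implication. The key fact I would exploit is that the local theory, being a set of first-order sentences, is preserved under elementary equivalence, and in particular a structure and any of its elementary extensions have the same local theory.

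First I would take saturated elementary extensions $\mathcal{B}\succeq\mathcal{A}$ and $\mathcal{B}'\succeq\mathcal{A}'$ of a common cardinality $\kappa$. Because $V$ is axiomatized by first-order (equational) sentences, we have $\mathcal{B},\mathcal{B}'\in V$. Since $\mathcal{B}\equiv\mathcal{A}$ and $\mathcal{B}'\equiv\mathcal{A}'$, the extensions inherit the local theories of $\mathcal{A}$ and $\mathcal{A}'$; as these agree by hypothesis, $\mathcal{B}$ and $\mathcal{B}'$ satisfy exactly the same local sentences. Now Lemma~\ref{Lem4.10} applies directly: two saturated members of $V$ of the same cardinality satisfying the same local sentences are isomorphic, so $\mathcal{B}\cong\mathcal{B}'$. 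Chaining the relations gives $\mathcal{A}\equiv\mathcal{B}\equiv\mathcal{B}'\equiv\mathcal{A}'$, which is the desired conclusion.

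The step requiring the most care, and the main obstacle, is producing saturated elementary extensions of the same cardinality, which is a purely set-theoretic existence question. I would handle it by first applying the downward L\"owenheim--Skolem theorem to replace $\mathcal{A},\mathcal{A}'$ by countable elementary substructures; these stay in $V$ (a variety is closed under substructures), remain elementarily equivalent to the originals, and hence carry the same local theories. For countable structures in this variety, the explicit description of saturated models recorded just before Lemma~\ref{Lem4.10} yields a saturated elementary extension of cardinality $2^{\aleph_0}$, so Lemma~\ref{Lem4.10} may be invoked at $\kappa=2^{\aleph_0}$. Alternatively, one can choose any $\kappa$ with $\kappa^{<\kappa}=\kappa$ above the sizes of $\mathcal{A},\mathcal{A}'$, where saturated elementary extensions exist by the standard elementary-chain construction; since the hypothesis and the conclusion are absolute, the choice of set-theoretic assumptions is harmless.

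Finally, I would take care that the extensions are genuinely \emph{elementary}---obtained by iterated realization of types rather than merely assembled to match a prescribed list of local sentences---so that the equivalences $\mathcal{A}\equiv\mathcal{B}$ and $\mathcal{A}'\equiv\mathcal{B}'$ hold exactly and no circularity is introduced with the Gaifman-type theorem that this lemma is meant to support.
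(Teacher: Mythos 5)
Your proposal is correct and is essentially the paper's own argument: pass to saturated structures of a common cardinality, apply Lemma~\ref{Lem4.10} to get an isomorphism, and pull elementary equivalence back down to $\mathcal{A}$ and $\mathcal{A}'$. Your insistence that the saturated structures be genuine elementary extensions (after a downward L\"owenheim--Skolem reduction, with the set-theoretic bookkeeping for their existence) is precisely the step the paper's one-line sketch leaves implicit: the paper takes ``saturated models of the common local theory,'' and its concluding ``hence $\mathcal{A}\equiv\mathcal{A}'$'' tacitly requires $\mathcal{A}\equiv\mathcal{A}^*$ and $\mathcal{A}'\equiv(\mathcal{A}')^*$, which is exactly the elementarity you take care to arrange.
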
 

\begin{proof}

Let $\mathcal{A}^*$ and $(\mathcal{A}')^*$ be saturated models of the common local theory of $\mathcal{A},\mathcal{A}'$ such that $\mathcal{A}^*,(\mathcal{A}')^*$ both have cardinality $2^{\aleph_0}$.  Applying Lemma \ref{Lem4.10}, we see that $\mathcal{A}^*\cong(\mathcal{A}')^*$.  Hence, $\mathcal{A},\mathcal{A}'$ are elementarily equivalent.
\end{proof} 

\begin{lem}
\label{Lem4.11}

Take $n$-tuples $\bar{a},\bar{a}'$ in $\mathcal{A}$.  If $\bar{a},\bar{a}'$ satisfy the same local type, then there is an automorphism of $\mathcal{A}$ that takes $\bar{a}$ to $\bar{a}'$.    

\end{lem}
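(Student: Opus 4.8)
The plan is to build the automorphism by combining the partial isomorphism supplied by Lemma \ref{Lem4.9} on the components meeting $\bar a$ with a suitable isomorphism of the remaining components. First I would apply Lemma \ref{Lem4.9} with $\mathcal{A}' = \mathcal{A}$: since $\bar a$ and $\bar a'$ have the same local type, there is a partial isomorphism $f$ from $B_\infty(\bar a)$ onto $B_\infty(\bar a')$ with $f(\bar a) = \bar a'$. Because $\bar a$ is an $n$-tuple, each of $B_\infty(\bar a)$ and $B_\infty(\bar a')$ is a union of at most $n$ connected components, and $f$ exhibits these two substructures as isomorphic.

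Next I would record the structural fact that makes gluing possible: in a generalized bijective variety each $f_i$, together with its inverse term $u_i$ from Proposition \ref{bij-inverse}, sends any element into its own connected component, so every connected component is an $L$-substructure and $\mathcal{A}$ is the disjoint union of its components. Consequently any family of isomorphisms between components glues to an isomorphism of the disjoint unions. It then suffices to produce an isomorphism $g$ between $R = \mathcal{A}\setminus B_\infty(\bar a)$ and $R' = \mathcal{A}\setminus B_\infty(\bar a')$, since $h = f\cup g$ will then be a bijection of $\mathcal{A}$ onto itself that respects every $f_i$, hence an automorphism, with $h(\bar a) = f(\bar a) = \bar a'$.

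The heart of the argument is the counting step showing $R\cong R'$. For each isomorphism type $C$ of connected component, let $m_C$ be its multiplicity (a cardinal) in $\mathcal{A}$. Because the components of $B_\infty(\bar a)$ partition off from those of $R$, the multiplicity of $C$ in $R$ is $m_C$ minus the (finite, at most $n$) number of $C$-components inside $B_\infty(\bar a)$, and similarly for $R'$ and $B_\infty(\bar a')$. Since $f$ is an isomorphism $B_\infty(\bar a)\cong B_\infty(\bar a')$, these two subtracted quantities agree for every $C$; thus the multiplicity of $C$ in $R$ equals that in $R'$ (when $m_C$ is infinite both equal $m_C$, and when $m_C$ is finite the finite subtraction matches). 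Matching components of each type then yields the desired isomorphism $g$, and the construction is complete.

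The main obstacle is this counting step, and in particular making it robust to two features: component types occurring with infinite multiplicity, where one must reason with cardinal subtraction rather than literal deletion, and the possibility that $B_\infty(\bar a)$ and $B_\infty(\bar a')$ overlap or even coincide as sets. The latter causes no difficulty, because the argument never requires $R$ and $R'$ to be the \emph{same} set, only abstractly isomorphic, and that follows purely from equality of multiplicities. I would also note that, unlike Lemma \ref{Lem4.10}, this argument needs no saturation hypothesis on $\mathcal{A}$.
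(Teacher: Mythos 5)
Your proposal is correct, and while it starts from the same key input as the paper, it assembles the automorphism in a genuinely different way. Both arguments begin by applying Lemma \ref{Lem4.9} with $\mathcal{A}' = \mathcal{A}$ to get a partial isomorphism $f$ from $B_\infty(\bar a)$ onto $B_\infty(\bar a')$ with $f(\bar a) = \bar a'$. The paper then extends $f$ by a \emph{swap}: its automorphism agrees with $f$ on $B_\infty(\bar a)$, with $f^{-1}$ on $B_\infty(\bar a')$, and with the identity on the rest of $\mathcal{A}$. You instead map \emph{complement to complement}: you prove $\mathcal{A}\setminus B_\infty(\bar a)\cong \mathcal{A}\setminus B_\infty(\bar a')$ by comparing, for each isomorphism type of connected component, its multiplicity in the two complements (at most $n$ components are removed on each side, and $f$ forces the removed multisets of types to coincide, with cardinal arithmetic absorbing the finite subtraction when a type has infinite multiplicity), and then glue $f$ to the resulting isomorphism $g$. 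What each approach buys: the paper's swap is shorter and needs neither the counting step nor a choice of matching isomorphisms; but as literally written it is only well defined when $B_\infty(\bar a)$ and $B_\infty(\bar a')$ are disjoint or equal. If they share some but not all components --- say three isomorphic components $C_1, C_2, C_3$ with $\bar a$ meeting $C_1, C_2$ and $\bar a'$ meeting $C_2, C_3$, so that $f(C_2) = C_3$ while $f^{-1}(C_2) = C_1$ --- then no map can agree with $f$ on $B_\infty(\bar a)$ and with $f^{-1}$ on $B_\infty(\bar a')$ simultaneously, and the swap must be replaced by a cycling of components. Your complement-matching argument, which never requires the two complements to interact as sets, handles this overlap case uniformly; you flag exactly this point, and you are also right that no saturation hypothesis is needed. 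So your version is, if anything, the more robust of the two proofs.
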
 

\begin{proof}

We have a partial isomorphism $f$ from $B_\infty(\bar{a})$ onto $B_\infty(\bar{a}')$ such that $f(\bar{a}) = \bar{a}'$.  This extends to an automorphism that agrees with $f$ on $B_\infty(\bar{a})$, with $f^{-1}$ on $B_\infty(\bar{a}')$, and with the identity on the rest of $\mathcal{A}$.  
\end{proof}   

\begin{lem}
\label{Lem4.12}

If $\mathcal{A}\models\varphi$, then there is a sentence $\psi$ true in $\mathcal{A}$ such that $\psi$ is a finite conjunction of local sentences and negations of local sentences and $T\vdash (\psi\rightarrow\varphi)$. 

\end{lem}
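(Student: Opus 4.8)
The plan is to show that the local theory of $\mathcal{A}$, taken together with $T$, already decides $\varphi$, and then to extract a finite fragment by compactness. First I would let $\Lambda$ denote the \emph{local theory} of $\mathcal{A}$ in the sense defined above: the set of all local sentences and negations of local sentences that are true in $\mathcal{A}$. The crucial structural observation is that $\Lambda$ is \emph{complete} for local sentences---for every local sentence $\sigma$, either $\sigma\in\Lambda$ or $\neg\sigma\in\Lambda$. Consequently, any $\mathcal{B}\in V$ with $\mathcal{B}\models\Lambda$ has exactly the same local theory as $\mathcal{A}$.

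Next I would prove the key claim that $T\cup\Lambda\vdash\varphi$, arguing by contradiction. If this fails, then $T\cup\Lambda\cup\{\neg\varphi\}$ is consistent, so it has a model $\mathcal{B}$. Since $\mathcal{B}\models T$ we have $\mathcal{B}\in V$, and since $\mathcal{B}\models\Lambda$ the previous paragraph gives that $\mathcal{B}$ and $\mathcal{A}$ have the same local theory. By the lemma asserting that members of $V$ with the same local theory are elementarily equivalent, $\mathcal{B}\equiv\mathcal{A}$. But $\mathcal{A}\models\varphi$ while $\mathcal{B}\models\neg\varphi$, a contradiction. Hence $T\cup\Lambda\vdash\varphi$.

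Then I would apply the Compactness Theorem: from $T\cup\Lambda\vdash\varphi$ we obtain a finite subset $\Lambda_0\subseteq\Lambda$ with $T\cup\Lambda_0\vdash\varphi$. Setting $\psi=\bigwedge\Lambda_0$, the sentence $\psi$ is by construction a finite conjunction of local sentences and negations of local sentences. Each conjunct lies in $\Lambda$ and so is true in $\mathcal{A}$, whence $\mathcal{A}\models\psi$; and $T\cup\{\psi\}\vdash\varphi$ rewrites as $T\vdash(\psi\rightarrow\varphi)$, exactly as required.

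I expect the only real content to be the completeness of $\Lambda$ for local sentences together with the appeal to the elementary-equivalence lemma; the passage from $T\cup\Lambda\vdash\varphi$ to a finite conjunction is routine compactness. The main point to be careful about is that the model $\mathcal{B}$ produced in the contradiction step genuinely lies in $V$---this is immediate because $V$ is a variety axiomatized by $T$, so that the elementary-equivalence lemma, which is stated only for members of $V$, actually applies to $\mathcal{B}$ and $\mathcal{A}$.
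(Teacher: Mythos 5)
Your proof is correct and follows essentially the same route as the paper: the paper's proof asserts $T\cup S\vdash\varphi$ for $S$ the local theory of $\mathcal{A}$ and then takes a finite conjunction, and your argument simply makes explicit the justification the paper leaves implicit, namely that any model of $T\cup S$ lies in $V$, shares the local theory of $\mathcal{A}$, and is therefore elementarily equivalent to $\mathcal{A}$ by the preceding lemma, after which compactness finishes the job.
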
 

\begin{proof}

If $S$ is the local theory of $\mathcal{A}$, then $T\cup S\vdash\varphi$.  Then there is some $\psi$, the conjunction of a finite subset of $S$, such that $T\vdash (\psi\rightarrow\varphi)$.  
\end{proof}

For a formula $\varphi(\bar{x})$ with an $n$-tuple $\bar{x}$ of variables, we have the following.

\begin{lem}
\label{Lem4.13}

If $\mathcal{A}\models\varphi(\bar{a})$, then there is a formula $\psi(\bar{x}) = (\alpha(\bar{x})\ \&\ \beta)$ such that $\beta$ is a finite conjunction of sentences in the local theory of $\mathcal{A}$, $\alpha(\bar{x})$ is a local formula satisfied by $\bar{a}$ in $\mathcal{A}$, and $T\vdash(\forall\bar{x})(\psi(\bar{x})\rightarrow\varphi(\bar{x}))$. 

\end{lem}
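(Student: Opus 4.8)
The plan is to follow the proof of Lemma \ref{Lem4.12}, replacing the local theory of $\mathcal{A}$ by the local type of $\bar a$. Let $\Gamma(\bar x)$ be the local type of $\bar a$ in $\mathcal{A}$, generated by the local theory $S$ of $\mathcal{A}$ together with the local formulas satisfied by $\bar a$. The crux is the claim that $T\cup\Gamma(\bar x)\vdash\varphi(\bar x)$. Granting this, compactness produces a finite subset $\Gamma_0\subseteq\Gamma$ with $T\vdash(\forall\bar x)(\bigwedge\Gamma_0(\bar x)\rightarrow\varphi(\bar x))$; this finite subset splits into finitely many sentences from $S$, whose conjunction I call $\beta$, and finitely many local formulas $\alpha_{r_1}(\bar x),\dots,\alpha_{r_k}(\bar x)$ satisfied by $\bar a$, with $\alpha_{r_i}\in C_{r_i,n}$. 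To combine the latter into one local formula, I take $r=\max_i r_i$ and let $\alpha(\bar x)$ be the unique formula in $C_{r,n}$ satisfied by $\bar a$ (Lemma \ref{Lem4.7}). Since the ball $B_r(\bar a)$ determines each sub-ball $B_{r_i}(\bar a)$, the ball-description $\alpha(\bar x)$ entails each $\alpha_{r_i}(\bar x)$ over $T$; hence, setting $\psi(\bar x)=\alpha(\bar x)\ \&\ \beta$, we obtain $T\vdash(\forall\bar x)(\psi(\bar x)\rightarrow\varphi(\bar x))$ with $\alpha$ a single local formula true of $\bar a$ and $\beta$ a finite conjunction of sentences from the local theory, as required.

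To prove the claim, suppose $\mathcal{A}'\models T$ and $\bar a'$ in $\mathcal{A}'$ satisfies $\Gamma(\bar x)$; I must show $\mathcal{A}'\models\varphi(\bar a')$. Because $\Gamma$ contains the complete local theory $S$, the structures $\mathcal{A}$ and $\mathcal{A}'$ have the same local theory. As in the proofs above, pass to saturated elementary extensions $\mathcal{A}\preceq\mathcal{A}^*$ and $\mathcal{A}'\preceq(\mathcal{A}')^*$ of a common infinite cardinality; these extensions still share the same local theory, since local sentences are first order, so Lemma \ref{Lem4.10} yields an isomorphism $g\colon\mathcal{A}^*\to(\mathcal{A}')^*$. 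The tuples $\bar a$ and $\bar a'$ keep their local type $\Gamma$ in the respective extensions (local formulas are likewise first order), so $g(\bar a)$ and $\bar a'$ have the same local type in $(\mathcal{A}')^*$. By Lemma \ref{Lem4.11} there is an automorphism $h$ of $(\mathcal{A}')^*$ with $h(g(\bar a))=\bar a'$, and the composite $h\circ g$ is an isomorphism $\mathcal{A}^*\to(\mathcal{A}')^*$ sending $\bar a$ to $\bar a'$. Since $\mathcal{A}\models\varphi(\bar a)$ gives $\mathcal{A}^*\models\varphi(\bar a)$, applying $h\circ g$ gives $(\mathcal{A}')^*\models\varphi(\bar a')$, and descending to the elementary substructure gives $\mathcal{A}'\models\varphi(\bar a')$, proving the claim.

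The step I expect to be the main obstacle is exactly this transfer between two different models: having the same local type guarantees, via Lemma \ref{Lem4.10}, an isomorphism of the saturated extensions, but such an isomorphism need not respect the distinguished tuples. The automorphism correction furnished by Lemma \ref{Lem4.11} is what bridges that gap, and arranging the saturated extensions so that both $\bar a$, carrying $\varphi$, and $\bar a'$, carrying $\neg\varphi$, are simultaneously available is the delicate bookkeeping in the argument; the collapsing of several local formulas into a single $\alpha\in C_{r,n}$ and the final compactness step are then routine.
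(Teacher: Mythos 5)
Your proposal is correct and follows essentially the same route as the paper: both arguments show that $T$ together with the local type of $\bar{a}$ entails $\varphi(\bar{x})$ by passing to saturated models of a common cardinality, invoking Lemma \ref{Lem4.10} to get an isomorphism, adjusting it to carry the distinguished tuple to the distinguished tuple (the paper compresses your Lemma \ref{Lem4.11} step into ``we may suppose $f(\bar{b}) = \bar{b}'$''), and then concluding by compactness and by collapsing the finitely many local formulas into a single $\alpha(\bar{x})\in C_{r,n}$ for $r$ large enough. The only difference is expository: you spell out the compactness step and the automorphism correction explicitly, while the paper leaves them implicit.
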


\begin{proof}

We have a saturated model $\mathcal{B}$ of cardinality $2^{\aleph_0}$ with a tuple $\bar{b}$ satisfying the type of $\bar{a}$.  If $\mathcal{B}'$ is saturated and satisfies the local theory of $\mathcal{A}$ and $\mathcal{B}$, there is an isomorphism $f$ from $\mathcal{B}$ onto $\mathcal{B}'$.  If $\bar{b}'$ is an $n$-tuple in $\mathcal{B}'$ satisfying the local type of $\bar{a}$ and $\bar{b}$, we may suppose that $f(\bar{b}) = \bar{b}'$.  Hence, $\bar{b}'$ realizes the complete type of $\bar{a}$.  This shows that the local theory of $\mathcal{A}$ and the local type of $\bar{a}$ generate the full theory and type.  If $\chi(\bar{x})$ is a finite conjunction of local formulas and negations of local formulas in the local type of $\bar{a}$, then there is a single formula $\alpha(\bar{x})$ in the local type of $\bar{a}$ that implies $\chi(\bar{x})$---take $\alpha(\bar{x})\in C_{r,n}$ for sufficiently large~$r$.  
\end{proof}  

A standard model-theoretic argument gives the following. 

\begin{prop} 

Any elementary first order sentence $\varphi$ is equivalent over $T$ to a finite disjunction of local sentences and negations of such sentences.     

\end{prop}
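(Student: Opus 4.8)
The plan is to deduce this from Lemma \ref{Lem4.12} by a standard compactness argument. For each model $\mathcal{A}\models T\cup\{\varphi\}$, Lemma \ref{Lem4.12} supplies a sentence $\psi_{\mathcal{A}}$, a finite conjunction of local sentences and negations of local sentences, such that $\mathcal{A}\models\psi_{\mathcal{A}}$ and $T\vdash(\psi_{\mathcal{A}}\rightarrow\varphi)$. I would collect all such $\psi_{\mathcal{A}}$ into a family $\Psi$; note that up to logical equivalence there are only countably many candidates, since there are only countably many local sentences, so $\Psi$ is genuinely a set of sentences.

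Next, I would form the set $\Sigma = T\cup\{\varphi\}\cup\{\neg\psi:\psi\in\Psi\}$ and argue that it is inconsistent. Indeed, any model $\mathcal{A}$ of $T\cup\{\varphi\}$ satisfies its own witness $\psi_{\mathcal{A}}\in\Psi$, so it cannot satisfy $\neg\psi_{\mathcal{A}}$; hence $\Sigma$ has no model. By the Compactness Theorem, some finite subset is already inconsistent, so there are $\psi_1,\ldots,\psi_n\in\Psi$ with $T\cup\{\varphi\}\cup\{\neg\psi_1,\ldots,\neg\psi_n\}$ inconsistent. This gives $T\vdash\bigl(\varphi\rightarrow(\psi_1\vee\cdots\vee\psi_n)\bigr)$.

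For the reverse implication, each $\psi_i$ was chosen so that $T\vdash(\psi_i\rightarrow\varphi)$, whence $T\vdash\bigl((\psi_1\vee\cdots\vee\psi_n)\rightarrow\varphi\bigr)$. Combining the two directions, $T\vdash\bigl(\varphi\leftrightarrow(\psi_1\vee\cdots\vee\psi_n)\bigr)$. Since each $\psi_i$ is a finite conjunction of local sentences and negations of local sentences, the disjunction $\psi_1\vee\cdots\vee\psi_n$ is a finite Boolean combination of local sentences, which is exactly the desired normal form (a disjunction of conjunctions of local sentences and their negations).

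I expect no serious obstacle at this stage: the real content lies in Lemma \ref{Lem4.12} (and behind it the saturation arguments of Lemmas \ref{Lem4.9}--\ref{Lem4.11}), which reduce the semantic behavior of $\varphi$ to local data. The only points requiring care are confirming that $\Psi$ may be taken to be a set, so that compactness applies to a set of sentences, and keeping the two implications straight so that the finitely many $\psi_i$ extracted by compactness serve simultaneously for both directions of the equivalence.
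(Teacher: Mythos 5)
Your proof is correct and takes essentially the same approach as the paper: both apply Lemma \ref{Lem4.12} to each model of $T\cup\{\varphi\}$, note that $T\cup\{\varphi\}\cup\{\neg\psi : \psi\in\Psi\}$ is unsatisfiable, and use compactness to extract finitely many $\psi_i$ whose disjunction is $T$-equivalent to $\varphi$. Your added remarks (that $\Psi$ is a genuine set since there are only countably many local sentences up to equivalence, and that the same $\psi_i$ serve both directions of the equivalence) are correct points of care that the paper leaves implicit.
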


\begin{proof}  

For each $\mathcal{A}\in V$ satisfying $\varphi$, choose $\psi$ as in Lemma \ref{Lem4.12}, a finite conjunction of local sentences and negations, true in $\mathcal{A}$, such that $T\vdash (\psi\rightarrow\varphi)$.  Let $S$ be the set of chosen sentences.  Now, $T\cup\{\neg{\psi}:\psi\in S\}\cup\{\varphi\}$ is inconsistent, so there is a finite set $S'\subseteq S$ such that $T\vdash (\varphi\rightarrow \bigvee_{\psi\in S'} \psi)$.  Then $\varphi$ is equivalent over $T$ to the disjunction of the sentences in $S'$. 
\end{proof} 

Here is the companion result for formulas with free variables.  

\begin{prop}

For any formula $\varphi(\bar{x})$ with free variables among $\bar{x}$, there is a formula $\varphi^*(\bar{x})$ equivalent over $T$ to $\varphi(\bar{x})$ such that $\varphi^*(\bar{x})$ is a finite disjunction of formulas $(\alpha(\bar{x})\ \&\ \beta)$, where $\beta$ is a conjunction of local sentences and negations and $\alpha(\bar{x})$ is a local formula.  

\end{prop}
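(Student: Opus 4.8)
The plan is to run exactly the compactness argument used in the sentence case of the preceding proposition, but anchored on Lemma~\ref{Lem4.13} in place of Lemma~\ref{Lem4.12}. First I would collect into a set $S$ all formulas of the form $\psi(\bar{x}) = (\alpha(\bar{x})\ \&\ \beta)$, where $\alpha(\bar{x})$ is a local formula, $\beta$ is a finite conjunction of local sentences and negations of local sentences, and $T\vdash(\forall\bar{x})(\psi(\bar{x})\rightarrow\varphi(\bar{x}))$. Up to logical equivalence there are only countably many local formulas and local sentences, so $S$ is (up to equivalence) a genuine set rather than a proper class. Every member of $S$ implies $\varphi(\bar{x})$ over $T$ by construction, so any finite disjunction drawn from $S$ does as well; this hands me the easy direction of the desired equivalence for free, since all $\psi$ share the free variables $\bar{x}$ and hence the disjunction is again a formula in $\bar{x}$.

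For the reverse direction I would establish $T\vdash(\forall\bar{x})(\varphi(\bar{x})\rightarrow\bigvee_{\psi\in S}\psi(\bar{x}))$ by contradiction. Introduce fresh constants $\bar{c}$ and suppose that $T\cup\{\varphi(\bar{c})\}\cup\{\neg\psi(\bar{c}):\psi\in S\}$ is consistent; take a model $(\mathcal{B},\bar{b})$ with $\bar{b}$ interpreting $\bar{c}$. Then $\mathcal{B}\models\varphi(\bar{b})$, so Lemma~\ref{Lem4.13} supplies a formula $\psi_0(\bar{x}) = (\alpha(\bar{x})\ \&\ \beta)$ of the required shape, satisfied by $\bar{b}$ in $\mathcal{B}$, with $T\vdash(\forall\bar{x})(\psi_0(\bar{x})\rightarrow\varphi(\bar{x}))$. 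By construction $\psi_0\in S$, so $\mathcal{B}\models\psi_0(\bar{b})$, contradicting $\mathcal{B}\models\neg\psi_0(\bar{b})$. Hence the theory is inconsistent, and by the Compactness Theorem some finite subset already is, yielding a finite $S'\subseteq S$ with $T\vdash(\forall\bar{x})(\varphi(\bar{x})\rightarrow\bigvee_{\psi\in S'}\psi(\bar{x}))$. Setting $\varphi^*(\bar{x}) = \bigvee_{\psi\in S'}\psi(\bar{x})$ then gives a formula of exactly the asserted form, equivalent to $\varphi(\bar{x})$ over $T$. (The degenerate case where $\varphi$ is $T$-unsatisfiable is covered automatically, with $S'=\emptyset$ and the empty disjunction.)

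I expect this step itself to be routine: all the genuine content is already packed into Lemma~\ref{Lem4.13}, whose proof uses saturation to show that the local theory of $\mathcal{A}$ together with the local type of $\bar{a}$ generates the full type, so that a single local formula $\alpha(\bar{x})\in C_{r,n}$ for sufficiently large $r$ can stand in for the whole local type. The only mild subtlety is the bookkeeping that lets me treat $S$ as a set, which is legitimate precisely because the local formulas and local sentences range over a countable stock modulo $T$-equivalence. Granting that, the compactness passage is entirely parallel to the sentence case and presents no real obstacle; the main conceptual work was discharged earlier through the saturation machinery.
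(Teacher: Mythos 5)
Your proof is correct and follows essentially the same route as the paper's: both rely on Lemma~\ref{Lem4.13} to produce, for each model and tuple satisfying $\varphi(\bar{x})$, an implicant of the form $(\alpha(\bar{x})\ \&\ \beta)$, and both then replace $\bar{x}$ by fresh constants $\bar{c}$ and apply compactness to $T\cup\{\varphi(\bar{c})\}\cup\{\neg\psi(\bar{c}):\psi\in S\}$ to extract the finite disjunction. The only cosmetic difference is that you take $S$ to be all implicants of the required shape rather than one chosen per model--tuple pair (and your worry about $S$ being a proper class is unnecessary, since formulas in a countable language already form a set), but this changes nothing in the argument.
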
 

\begin{proof}

We replace $\bar{x}$ with a tuple of constants $\bar{c}$.  For each model $\mathcal{A}$ of $T$ and each tuple $\bar{a}$ satisfying $\varphi(\bar{x})$, choose a formula $\psi(\bar{x})$ in the local type of $\bar{a}$ such that $T\vdash (\psi(\bar{c})\rightarrow\varphi(\bar{c}))$.  Let $S$ be the set of chosen formulas.  Now, $T\cup\{\neg{\psi(\bar{c})}:\psi(\bar{c})\in S\}\cup\{\varphi(\bar{c})\}$ is inconsistent, so for some finite $S'\subseteq S$, $T\vdash(\varphi(\bar{c})\rightarrow\bigvee_{\psi(\bar{c})\in S'}\psi(\bar{c}))$.  We may take $\psi$ of the form $\alpha(\bar{x})\ \&\ \beta$, where $\beta$ is the conjunction of the local sentences in $S'$ and $\alpha(\bar{x})$ is the local formula in $C_{r,n}$ that is true of $\bar{a}$, where $r$ is greatest such that $S'$ contains a formula in~$C_{r,n}$.           
\end{proof} 

\begin{remark}

For our special version of Gaifman's Locality Theorem, the local formulas may be taken to be either existential or universal.  Thus, over a completion of $T$ (or over the set of local sentences in the complete theory), each formula is equivalent to an existential formula, and to a universal formula.  

\end{remark}

\subsection{The group associated to a generalized bijective variety}\label{subsec:group-gen-bij-thy}

Let $V$ be a generalized bijective variety with theory $T$.  There is an equivalence relation on strings of function symbols such that strings $t,t'$ are equivalent if $T\vdash(\forall x) t(x) = t'(x)$.  For a string of symbols $t$, we may write $\len(t)$ for the length of $t$.  We will associate to the variety $V$ a group $G(V)$, whose elements are the equivalence classes of strings.  

\begin{defn}  [Gaifman group, $G(V)$]

For a generalized bijective variety $V$, the \emph{Gaifman group} is the group $G(V)$ consisting of equivalence classes of strings of symbols under the operation induced by concatenation of strings.  

\end{defn} 

The identity in $G(V)$ is the equivalence class of the empty string.  For each function symbol $f_i$, we fix a term $u_i$ that names the inverse, as in Proposition~\ref{bij-inverse}.  We may write $f_i^{-1}$ for $u_i$.  The inverse function extends in a natural way to any word $v$ in $f_1, \cdots, f_n$.  
Let $F$ be the element of $V$ obtained as the free structure generated by the finite tuple $\bar{a}$.  The group $G(V)$ has a natural action on $F$, taking $t\in G(V)$ and $b\in F$ to $t(b)$.  Since $b = t'(a)$ for some $t'$, the action takes $t'(a)$ to $t\circ t'(a)$.      

\begin{defn} [orbit under action of $G(V)$]

For $\mathcal{A}\in V$ and $b\in \mathcal{A}$, the \emph{orbit} of $b$ under the action of $G(V)$ is the set of all $x$ such that for some $t\in G(V)$, $t(b) = x$. 

\end{defn}

\begin{note}

For $\mathcal{A}\in V$ and $b\in\mathcal{A}$, the orbit of $b$ under the action of $G(V)$ is just the set of elements of $\mathcal{A}$ generated by $b$.  The \emph{automorphism orbit} of $b$ results from the action of the group of automorphisms.     
  
\end{note}

\begin{lem}

Let $V$ be a generalized bijective variety, and let $F$ be the free structure in $V$ generated by the tuple $\bar{a}$. The action of $G(V)$ on $F$ is well defined and simply transitive on the orbits.   

\end{lem}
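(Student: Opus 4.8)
The plan is to verify three things: that the action is \emph{well defined}, that it is \emph{transitive} on each orbit, and that it is \emph{free} (so that, together with transitivity, it is simply transitive on each orbit). Throughout I would use that $G(V)$ is a genuine group: inverses exist because Proposition~\ref{bij-inverse} supplies, for each $f_i$, a term $u_i = f_i^{-1}$ inverting it, and these extend to an inverse $s^{-1}$ of any string $s$. I would also use the defining feature of the free structure, namely that for strings $p,q$ and a generator $a_j$, $F\models p(a_j)=q(a_j)$ if and only if $T\vdash(\forall x)\,p(x)=q(x)$; this is the same instance of \cite[Theorem~11.4]{BS} already invoked in the proof of Proposition~\ref{bij-inverse}. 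Since every function symbol is unary and $F$ is generated by $\bar{a}$, each element of $F$ has the form $s(a_j)$ for a single generator $a_j$ and a string $s$; as $s$ is invertible in $G(V)$, the orbit of $s(a_j)$ coincides with the orbit of $a_j$, so every orbit is the orbit of a single generator.

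For well-definedness, I would first note that $t(b)\in F$ for every $t\in G(V)$ and $b\in F$, because $F$ is closed under the $f_i$ and under the inverse terms $u_i$. The only genuine point is independence of the representative string: if $t\sim t'$, meaning $T\vdash(\forall x)\,t(x)=t'(x)$, then because $F\models T$ we get $t(b)=t'(b)$ for all $b\in F$. That the assignment is a left action---the empty string acts as the identity and $(ts)(b)=t(s(b))$---is then immediate from the definition of the group operation as concatenation together with composition of the corresponding functions. Transitivity on an orbit is equally direct: taking $x,y$ in the orbit of a generator $a_j$, say $x=s(a_j)$ and $y=s'(a_j)$, the element $t:=s's^{-1}\in G(V)$ satisfies $t(x)=s's^{-1}s(a_j)=s'(a_j)=y$.

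The main obstacle is freeness, and this is where the free structure property does the real work. I would suppose that $x$ lies in the orbit of $a_j$, write $x=s(a_j)$, and suppose $t(x)=x$ for some $t\in G(V)$. Then $(ts)(a_j)=s(a_j)$ holds in $F$. Reading this as an identity between the terms $ts$ and $s$ evaluated at the free generator $a_j$, the free structure property gives $T\vdash(\forall z)\,(ts)(z)=s(z)$, that is, $ts=s$ in $G(V)$. Cancelling $s$ using the group structure of $G(V)$ yields $t=e$, so the stabilizer of every point is trivial. Combined with transitivity, this shows the action is simply transitive on each orbit. The only subtlety to watch is keeping straight that composition of functions corresponds to concatenation in $G(V)$ (so that $t(s(a_j))=(ts)(a_j)$), and that the passage from an equality in $F$ to a provable identity over $T$ is exactly the free-generator property; both are already established above, so no further machinery is needed.
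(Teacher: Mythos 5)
Your proof is correct and follows essentially the same route as the paper's: well-definedness comes from $F\models T$ (strings equivalent over $T$ act identically on any model), and simple transitivity comes from the free-generator property (equalities in $F$ lift to $T$-provable identities, hence to equalities in $G(V)$) followed by cancellation in the group. The only cosmetic difference is that you split simple transitivity into transitivity plus trivial stabilizers, whereas the paper shows in one step that $u(x) = v(x)$ forces $u = v$; the underlying mechanism is identical.
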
 

\begin{proof}

We first prove that the action is well defined. Suppose $t_1 = t_2$ in $G(V)$. Without loss of generality, we assume that $t_1$ is obtained from $t_2$ by applying an identity of $G(V)$, say 
$w = w'$.  This means that $t_1 = uw(w')^{-1}v$ and $t_2 = uv$ for some words $u,v$. Then 
 $(t_1,t'(a)) \mapsto t_1\circ t'(a) = uw(w')^{-1}vt'(a)$.  Since $w = w'$ is an identity in $G(V)$, we have $T\vdash (\forall x) w(x) = (w')(x)$ and for an element $a$ of $F$, $F\models uw(w')^{-1}vt'(a) = uw'(w')^{-1}vt'(a) = uvt'(a) = t_2t'(a)$. Thus, the action is well defined.  

Recall that every element $x$ of $F$ has the form $t(a_i)$ for some generator $a_i$, and every such $x$ is in the orbit of $a_i$.  Thus, every orbit in $F$ has the form $\{t(a_i):t\in G(V)\}$ for some generator $a_i$.
Now, take $x = t(a_i)$ in $F$ and suppose that $F\models u \circ t(a_i) = v \circ t(a_i)$.  Since $F$ is free, we have that $T \models (\forall x) ut(x) = vt(x)$. Therefore, $ut = vt$ holds in the group $G(V)$, so by cancellation, we have $u = v$. Thus, the action is simply transitive on its orbits.  
\end{proof}

For our commutative generalized bijective variety with theory $T$, we have the following.    

\begin{lem}\ 

\begin{enumerate}

\item  For $u,v,w\in G(V)$, $T\vdash (\forall x)(u(w(x)) = v(w(x))\leftrightarrow u(x) = v(x))$.

\item  For $\alpha\in C_{r,1}$, $T\vdash (\forall x)(\alpha(w(x))\leftrightarrow \alpha(x))$.  

\end{enumerate}

\end{lem}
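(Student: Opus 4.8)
The plan is to derive both parts from two structural facts about a commutative generalized bijective variety. First, the Gaifman group $G(V)$ is \emph{abelian}: the commutativity axioms make the generators $f_1,\ldots,f_n$ pairwise commute, and a group generated by pairwise-commuting elements is abelian, so any $u,w\in G(V)$ satisfy $uw=wu$. Second, every $w\in G(V)$ acts as a \emph{bijection}: since $G(V)$ is a group, $w$ has an inverse $w^{-1}\in G(V)$ (built from the inverse terms supplied by Proposition~\ref{bij-inverse}), and $T\vdash(\forall x)(w^{-1}(w(x)) = w(w^{-1}(x)) = x)$. In particular $w$ is injective.

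For part (1), I would chain three equivalences. Because $G(V)$ is abelian, $uw=wu$ and $vw=wv$ as group elements, so $T\vdash(\forall x)(u(w(x)) = w(u(x)))$ and $T\vdash(\forall x)(v(w(x)) = w(v(x)))$. Hence $u(w(x)) = v(w(x))$ holds exactly when $w(u(x)) = w(v(x))$ holds. Applying $w^{-1}$ to both sides (and using injectivity of $w$), this is equivalent to $u(x) = v(x)$. Concatenating these gives $T\vdash(\forall x)(u(w(x)) = v(w(x))\leftrightarrow u(x) = v(x))$. Note that only the first step uses commutativity; the rest is pure bijectivity.

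For part (2), I would reduce the local formula $\alpha\in C_{r,1}$ to its atomic content and then invoke part (1). By the construction in Lemma~\ref{Lem4.7}, the possible elements of $B_r(x)$ are precisely the terms $t(x)$ for words $t$ of length at most $r$ (backward steps expressed via the inverse terms of Proposition~\ref{bij-inverse}), and each such term denotes an actual element uniquely determined by $x$. Consequently $\alpha(x)$ is equivalent over $T$ to a quantifier-free Boolean combination of atomic formulas $t(x) = t'(x)$ with $\len(t),\len(t')\leq r$; the existential (resp.\ universal) quantifiers in the two forms of $\alpha$ are vacuous, since their witnesses are the determined elements $t(x)$. Substituting $w(x)$ for $x$, each atomic constituent becomes $t(w(x)) = t'(w(x))$, which by part (1) is equivalent to $t(x) = t'(x)$. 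Since biconditionals are preserved under Boolean combinations, we conclude $T\vdash(\forall x)(\alpha(w(x))\leftrightarrow\alpha(x))$.

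The hard part will be the reduction in part (2): one must verify carefully that $\alpha$ is genuinely equivalent to a quantifier-free Boolean combination of the equalities $t(x) = t'(x)$, so that part (1) applies constituent by constituent. This is exactly what the generalized bijective hypothesis provides—totality and bijectivity make every possible element $t(x)$ a term definable from $x$, collapsing the existential witnesses and leaving only equalities among such terms. Once that reduction is secured, both parts follow directly.
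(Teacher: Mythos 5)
Your proof is correct. The paper actually states this lemma with no proof at all, so there is nothing to diverge from; your argument supplies exactly the routine verification the paper leaves implicit: part (1) follows from the abelianness of $G(V)$ (forced by the commutativity axioms, with inverses of commuting elements also commuting) together with injectivity of each $w\in G(V)$, and part (2) follows because, as in the proof of Lemma \ref{Lem4.7}, every element of $B_r(x)$ is named by a term in $x$, so each $\alpha\in C_{r,1}$ is equivalent over $T$ to a quantifier-free Boolean combination of equations $t(x)=t'(x)$, to which part (1) applies constituent by constituent (your length bound $\leq r$ on the constituents should really be $r+1$ to absorb the edge relations $f_k(t_i(x))=t_j(x)$, but part (1) imposes no length restriction, so this is immaterial).
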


For structures $\mathcal{A}\in V$ with a single generator $a$, all elements have the same local type.  In fact, they are in the same automorphism orbit as well as the same orbit under the action of $G(V)$.  

\begin{lem}

Suppose $\mathcal{A}\in V$ is generated by $a$.  For $\alpha\in C_{r,1}$, \\
$\mathcal{A}\models (\exists x)\alpha(x)\leftrightarrow (\forall x)\alpha(x)$. 

\end{lem}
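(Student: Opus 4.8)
The plan is to reduce everything to the commutativity lemma stated immediately above, and in particular to its second clause, which asserts that $T \vdash (\forall x)(\alpha(w(x)) \leftrightarrow \alpha(x))$ for every $w \in G(V)$ and every local formula $\alpha \in C_{r,1}$. One direction is free: any structure in $V$ is nonempty, so $(\forall x)\alpha(x) \rightarrow (\exists x)\alpha(x)$ holds automatically. The entire content of the statement therefore lies in the converse, and the strategy is to show that whenever a single element satisfies $\alpha$, the property propagates to \emph{every} element, using the generator $a$ as a common reference point.

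First I would exploit the hypothesis that $\mathcal{A}$ is generated by the single element $a$. By the discussion of orbits in Section \ref{subsec:group-gen-bij-thy} (in particular the Note identifying the orbit of $b$ under $G(V)$ with the set of elements generated by $b$), the orbit of $a$ is all of $\mathcal{A}$. Consequently every element of $\mathcal{A}$ can be written in the form $w(a)$ for some $w \in G(V)$. This representation is the one structural ingredient the argument needs.

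Now suppose $\mathcal{A}\models(\exists x)\alpha(x)$, witnessed by an element $b = w(a)$. Applying clause (2) of the preceding lemma with this particular $w$ and instantiating at $x = a$, we get $\mathcal{A}\models \alpha(w(a)) \leftrightarrow \alpha(a)$, i.e. $\alpha(b)\leftrightarrow\alpha(a)$; since $\alpha(b)$ holds, so does $\alpha(a)$. For an arbitrary $c\in\mathcal{A}$, write $c = w'(a)$ and apply clause (2) once more, now with $w'$, to obtain $\alpha(w'(a))\leftrightarrow\alpha(a)$, i.e. $\alpha(c)\leftrightarrow\alpha(a)$. Because $a$ satisfies $\alpha$, so does $c$, and as $c$ was arbitrary we conclude $\mathcal{A}\models(\forall x)\alpha(x)$, which completes the proof.

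Since the commutativity lemma carries the analytic weight, there is no genuine obstacle to overcome; the only point requiring care is the passage from ``$\mathcal{A}$ is generated by $a$'' to ``every element has the form $w(a)$,'' which rests on the orbit description for generalized bijective varieties rather than on anything about $\alpha$ itself. It is worth noting that commutativity is precisely what makes clause (2) available: without it, $\alpha(w(a))$ and $\alpha(a)$ could describe genuinely different local neighborhoods, and the local type of an element could vary across the orbit of $a$.
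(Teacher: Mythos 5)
Your proof is correct and is essentially the argument the paper intends: the paper states this lemma without proof, placing it immediately after the commutativity lemma (whose clause (2) you invoke) and the remark that in a singly generated structure all elements lie in one orbit under $G(V)$, which is exactly the combination you spell out. The only work is the reduction you perform, so there is nothing missing.
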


Consider a local sentence $\rho$ saying that there exists $\bar{x}$ with $x_i$ satisfying $\alpha_i\in C_{r,1}$ and with $d(x_i,x_j) > 2r$ for $i < j$.  For $\mathcal{A}$ generated by a single element $a$, $\rho$ cannot be true unless the $\alpha_i$'s are all the same and $\mathcal{A}$ has a tuple of elements $\bar{x}$ such that $d(x_i,x_j) > 2r$ for $i < j$.  Thus, the important local invariants are the sentences $(\exists x)\alpha(x)$ for $\alpha\in C_{r,1}$ and the sentences saying that there are at least $n$ elements at a distance at least $2r$.  We will show that for these important sentences, the ones true in $F$ have density $1$.

For a string $t$ of function symbols, we write $t^n$ for the $n$-fold concatenation of $t$.  We write $\langle t\rangle$ for the subgroup of $G(V)$ generated by the equivalence class of $t$---the elements are the equivalence classes of the strings $t^n$, $t^{-n} = (t^{-1})^n$.  We need to understand truth in the structure $\mathcal{A}$ with presentation $a|R$, where $R$ is a single identity.  Any identity is equivalent over $T$ to a canonically chosen identity of the form $t^*(a) = a$, where the length of $t^*$ is bounded by a constant multiple of the length of $R$.  The next lemma will tell us a great deal about truth in $\mathcal{A}$.      

\begin{lem}\label{coset} 

Let $V$ be a generalized bijective variety, and consider presentations $a|R$, where $R$ is an identity equivalent to one of the form $t^*(a) = a$.  Then for $u,v\in G(V)$, $\langle a|R\rangle\models u(a) = v(a)$ iff $u,v$ are in the same left coset of $\langle t^* \rangle$.     

\end{lem}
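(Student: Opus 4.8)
The plan is to exploit the defining property of the presentation recorded earlier: by definition, $\langle a|R\rangle\models u(a)=v(a)$ precisely when the identity $u(a)=v(a)$ is a logical consequence of $T$ together with the single relation $t^*(a)=a$. Thus the two directions of the lemma amount to showing (i) that $u^{-1}v\in\langle t^*\rangle$ forces $u(a)=v(a)$ in \emph{every} model of $T\cup\{t^*(a)=a\}$, and (ii) that there is \emph{one} such model in which $u(a)=v(a)$ fails whenever $u^{-1}v\notin\langle t^*\rangle$. No commutativity is needed for any of this.

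For the ``if'' direction I would first observe that $t^*(a)=a$ entails $(t^*)^n(a)=a$ for every $n\in\mathbb{Z}$, by applying $t^*$ and its inverse word (which exists by Proposition~\ref{bij-inverse}) repeatedly. Then, if $u^{-1}v\in\langle t^*\rangle$, we may write $v=u(t^*)^n$ in $G(V)$, so that in any model of $T\cup\{t^*(a)=a\}$ we get $v(a)=u\bigl((t^*)^n(a)\bigr)=u(a)$. Since $\langle a|R\rangle$ is itself a model of $T\cup\{t^*(a)=a\}$, the identity holds there.

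The ``only if'' direction is the heart of the argument, and I would prove it by exhibiting an explicit separating model $\mathcal{B}$. Take the universe of $\mathcal{B}$ to be the left-coset space $G(V)/\langle t^*\rangle$, with each function symbol $f_i$ acting by left multiplication, $f_i\cdot(w\langle t^*\rangle)=(f_iw)\langle t^*\rangle$; this is well defined on cosets because $(gw)^{-1}(gw')=w^{-1}w'$ lies in $\langle t^*\rangle$ whenever $w\langle t^*\rangle=w'\langle t^*\rangle$. Interpreting the generator as $a_{\mathcal{B}}=\langle t^*\rangle$, so that $u(a_{\mathcal{B}})=u\langle t^*\rangle$, we immediately get $t^*(a_{\mathcal{B}})=t^*\langle t^*\rangle=\langle t^*\rangle=a_{\mathcal{B}}$, and $u(a_{\mathcal{B}})=v(a_{\mathcal{B}})$ iff $u\langle t^*\rangle=v\langle t^*\rangle$ iff $u^{-1}v\in\langle t^*\rangle$. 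The essential point to verify is that $\mathcal{B}\in V$. Each axiom of $V$ has the form $(\forall x)\,s(x)=s'(x)$ with $s,s'$ words in the $f_i$, and by the very definition of the Gaifman group this holds iff $s=s'$ as elements of $G(V)$; when it does, left multiplication gives $s\cdot(w\langle t^*\rangle)=(sw)\langle t^*\rangle=(s'w)\langle t^*\rangle=s'\cdot(w\langle t^*\rangle)$ for every coset, so the axiom is satisfied in $\mathcal{B}$. Hence $\mathcal{B}\models T$ (and in particular the $f_i$ are bijective, since bijectivity follows from $T$). Now if $\langle a|R\rangle\models u(a)=v(a)$, then $u(a)=v(a)$ is a logical consequence of $T\cup\{t^*(a)=a\}$ and so must hold in $\mathcal{B}$, which forces $u^{-1}v\in\langle t^*\rangle$.

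The main obstacle is the verification that $\mathcal{B}\models T$: one must check both that the left-coset action is independent of the chosen representative and, more importantly, that the correspondence between the equational axioms of $V$ and the relations holding in $G(V)$ is exactly as built into the definition of the Gaifman group, so that $\mathcal{B}$ collapses \emph{only} the cosets of $\langle t^*\rangle$ and introduces no further identifications. Once this is in place, the separation property of $\mathcal{B}$ is read off directly, and the remaining steps are short computations.
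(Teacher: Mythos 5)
Your proof is correct, but the key (``only if'') direction takes a genuinely different route from the paper's. The paper argues syntactically: any derivation of $u(a)=v(a)$ from $T\cup\{t^*(a)=a\}$ is a finite chain $u = x_0, x_1,\ldots,x_\ell = v$ of elementary replacements, each of which either appends or removes a copy of $t^*$ on the right (an application of the relator) or replaces a word by a $G(V)$-equal word (an application of an axiom of $T$), and each such step preserves the left coset of $\langle t^*\rangle$. You argue semantically: since, by the definition of a presentation, $\langle a|R\rangle\models u(a)=v(a)$ exactly when $u(a)=v(a)$ is a logical consequence of $T\cup\{t^*(a)=a\}$, it suffices to exhibit one model of $T\cup\{t^*(a)=a\}$ separating distinct cosets, and the left-coset space $G(V)/\langle t^*\rangle$ with the left-multiplication action does the job. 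Your route buys self-containedness: it avoids the paper's implicit appeal to the completeness of equational reasoning (that equational consequence is always witnessed by a chain of one-step rewrites), and it has the side benefit of identifying the presented structure explicitly as the coset space viewed as a $G(V)$-set. What it costs is the verification that $\mathcal{B}\in V$, which you carry out correctly except for one caveat: your assertion that every axiom of $V$ has the one-variable form $(\forall x)\,s(x)=s'(x)$. In a purely unary language an equational axiom may relate terms in two distinct variables, $(\forall x)(\forall y)\,s(x)=s'(y)$ --- the paper itself uses such axioms in Theorem \ref{constants-like}. However, in a generalized bijective variety such an axiom forces every model to be a singleton (a word in injective functions cannot be constant on a set with two elements), in which case $G(V)$ is trivial and both the lemma and your construction hold vacuously; so this is a dismissible corner case rather than a real gap, though it deserves a sentence. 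The ``if'' directions of the two proofs are essentially identical.
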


\begin{proof}

Let $\mathcal{A} = \langle a|R\rangle $.  

\bigskip
\noindent
$\Leftarrow$:  Without loss of generality, suppose $v = u(t^*)^n$.  In $\mathcal{A}$, we have 
\[v(a) = u(t^*)^n(a) = u(t^*)^{n-1}(a) = \cdots = u(a)\ .\] 

\noindent
$\Rightarrow$:  Now, suppose $u(a) = v(a)$ in $\mathcal{A}$.  Then $T\cup\{t^*(a) = a\}$ must prove     
\[u(a)= x_0(a) = x_1(a) = \cdots = x_\ell(a) = v(a)\ ,\]  
where for each $i < \ell$, we have one of the following:

\begin{enumerate}

\item [(i)]  $x_{i+1} = x_i t^*$, 

\item [(ii)]  $x_i = x_{i+1}t^*$, or 

\item [(iii)]  $x_i(a) = x_{i+1}(a)$. 

\end{enumerate}
In the first two cases, $x_i$ and $x_{i+1}$ are clearly in the same left coset of $\langle t^* \rangle$.  In the third case, $x_i = x_{i+1}$ in $G(V)$, so again $x_i$ and $x_{i+1}$ are in the same left coset of $\langle t^* \rangle$.
\end{proof}

For a given identity $u(a) = v(a)$, we are interested in the identities $R$ such that $\langle a|R\rangle\models u(a) = v(a)$.  The lemma above lets us recognize these identities.  We come to the theorem that gives conditions under which the sentences true in the free structure have limiting density $1$.    

\begin{thm}
\label{bij-str-thm}

Let $V$ be a commutative generalized bijective variety in the language $\{f_1, \cdots, f_n\}$, and consider presentations with a single generator $a$ and a single identity.  Let $F$ be the free structure on $a$.  If $F$ is  infinite, then the sentences true in $F$ have limiting density $1$.  

\end{thm}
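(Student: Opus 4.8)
The plan is to use our version of Gaifman's Locality Theorem (Theorem \ref{Gaifman}) to replace an arbitrary sentence $\varphi$ by an equivalent finite Boolean combination $\varphi^*$ of local sentences, and then to show that every local sentence or negation of a local sentence that holds in $F$ has limiting density $1$. This suffices: the set of density-$1$ sentences is closed under finite conjunction and logical consequence, so if $F\models\varphi$ then $F\models\varphi^*$, and $\varphi^*$ follows from the conjunction of those local sentences and negations of local sentences that are true in $F$; if each such conjunct has density $1$, so does $\varphi^*$, and hence so does $\varphi$. Before doing this I would record that, since $V$ is commutative, the Gaifman group $G(V)$ is a finitely generated \emph{abelian} group, so $G(V)\cong\mathbb{Z}^d\oplus T$ with $T$ finite; because $G(V)$ acts simply transitively on the single orbit $F$ of $a$, $F$ is infinite iff $d\geq 1$. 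As observed just before the theorem, in a single-generator structure every local sentence reduces to one of two important forms: (I) $(\exists x)\alpha(x)$ for $\alpha\in C_{r,1}$, and (II) the assertion that there are at least $n$ elements pairwise at distance greater than $2r$. So it is enough to treat these two families.

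Both families funnel into a single counting fact. By Lemma \ref{coset}, the structure $\langle a\mid R\rangle$ has universe $G(V)/\langle t^*\rangle$, where $R$ is equivalent to $t^*(a)=a$, and $u(a)=v(a)$ precisely when $u^{-1}v\in\langle t^*\rangle$. The free $r$-ball type $\alpha_F$ realized in $F$ fails in $\langle a\mid R\rangle$ exactly when $\langle t^*\rangle$ meets the finite set $W\setminus\{e\}$, where $W$ is the finite set of group elements $u^{-1}v$ with $u,v$ positive words of length $\leq r$; this forces some nonzero power $(t^*)^k$ to lie in $W$, which forces the projection of $t^*$ to $\mathbb{Z}^d$ to have norm bounded by a constant depending only on $r$. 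Any non-free ball type can be realized only under this same condition, so the density of $(\exists x)\alpha(x)$ for $\alpha\neq\alpha_F$ is dominated by the density of this ``bad'' event. Similarly, the Gaifman graph of $F$ is infinite, connected, and locally finite, hence of unbounded diameter, so $F$ satisfies every family-(II) sentence; and if the projection of $t^*$ is large, then $G(V)/\langle t^*\rangle$ is large (or infinite) and already contains $n$ elements pairwise farther than $2r$ apart, so failure of family (II) again forces the projection of $t^*$ to be bounded. Because bounded projection together with the finite torsion $T$ leaves only finitely many group elements, the presentations bad for either family are exactly those whose canonical relator $t^*$ lies in some fixed \emph{finite} set $S\subseteq G(V)$.

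It therefore remains to prove the core lemma, which generalizes Lemma \ref{later}: for each fixed $g\in G(V)$, the set of presentations with $t^*=g$ has limiting density $0$. This is the main obstacle, and I would prove it by a random-walk argument on $G(V)$. Writing $t^*=(t')^{-1}t$ with $t,t'$ random positive words, the image of $t^*$ in $\mathbb{Z}^d$ is a difference of two walks whose steps are the vectors $\mathrm{proj}(f_i)$, and these generate $\mathbb{Z}^d$ since $d\geq 1$. A local central limit theorem then shows that the probability of hitting any fixed value decays like $m^{-d/2}\to 0$, where $m$ is the length; the one-dimensional case is exactly the central-binomial estimate of Lemma \ref{Fact 2} used for Proposition \ref{alpha}, and the residue of $t^*$ in the finite part $T$ is controlled uniformly by the convergence in Theorem \ref{SC}. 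Summing these density-$0$ contributions over the finitely many $g\in S$ shows that the bad set has density $0$.

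Combining the pieces, for family (I) the density of presentations realizing a non-free ball tends to $0$, so $(\exists x)\alpha_F(x)$ has density $1$ and $\neg(\exists x)\alpha(x)$ has density $1$ for every $\alpha\neq\alpha_F$; for family (II) the density of presentations lacking $n$ elements pairwise farther than $2r$ apart tends to $0$, so the sentence true in $F$ has density $1$. Hence every local invariant holding in $F$ has density $1$, and the reduction through Theorem \ref{Gaifman} finishes the proof. I expect the only real work to be the core random-walk estimate; the length-parity bookkeeping seen in the earlier examples (the even/odd toggling) will reappear, but since every quantity of interest tends to $0$ it does not affect the conclusion.
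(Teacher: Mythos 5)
Your proposal is correct and follows essentially the same route as the paper's proof: after the Gaifman reduction to the same two families of local invariants, you use Lemma \ref{coset} together with projection onto an infinite cyclic factor of the abelian group $G(V)$ to show that the presentations on which a local invariant can disagree with $F$ form a ``bad'' set, and you kill its density by a central-limit-theorem counting argument on the relator --- exactly the paper's two-step scheme (Lemmas \ref{stat} and \ref{lift}). The only deviations are organizational: you handle the distance sentences directly via the size of $G(V)/\langle t^*\rangle$ plus a bounded-degree greedy argument (where the paper instead shows any such sentence true in $F$ is implied over $T$ by a sufficiently large ball type true in $F$), and your group-valued core lemma is stronger than necessary --- it already follows from the paper's one-dimensional estimate on $|\Pi_1(t)-\Pi_1(t')|$, so the multidimensional local CLT and the appeal to Theorem \ref{SC} for the torsion part could be dropped.
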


\begin{proof}

We show that for the important sentences $\alpha$, if $\alpha$ is true in $F$, then it has density $1$, and if $\alpha$ is false in $F$, then it has density $0$.  For structures in $V$ with generator $a$, the important sentences say one of the following: 

\begin{enumerate}

\item $(\exists x)\alpha(x)$ for $\alpha\in C_{r,1}$---this is equivalent to a finite conjunction of formulas of the form $u(a) = v(a)$ or $u(a)\not= v(a)$.	 

\item  $(\exists x_1,\ldots x_n)\bigwedge_{i < j} d(x_i,x_j) > 2r$.  

\end{enumerate}  

If $F$ is infinite, then we can show that any sentence of the second form true in $F$ is implied over $T$ by a sentence of the first form true in $F$.  A saturated model of the theory of $F$ has infinitely many connected components, and the sentence $(\exists x_1,\ldots x_n)\bigwedge_{i < j} d(x_i,x_j) > 2r$ is clearly true in this model.  Therefore, it is true in $F$.  Take witnesses $x_1,\ldots,x_n$, where $x_i = w_i(a)$.  Choose $k$ such that all $x_i$ are in $B_k(a)$, and take $\alpha\in C_{1,k}$ true of $a$ in $F$.  Then over $T$, $(\exists x)\alpha(x)$ implies $(\exists x_1,\ldots x_n)\bigwedge_{i < j} d(x_i,x_j) > 2r$. 

The group $G(V)$ is abelian and finitely generated, so it is a finite direct product of cyclic groups generated by some elements $b_1, \cdots, b_k$.  We write $\Pi_i(x)$ for the projection of an element $x$ on the subgroup generated by $b_i$.  Since $G(V)$ is infinite, some $b_i$ must have infinite order.  Without loss of generality, we suppose $b_1$ has infinite order and generates a copy of $\Z$.  We focus on $\Pi_1(x)$, and we suppose that the values are integers.  

Each identity $R$ has the form $t(a) = t'(a)$, but this is equivalent to an identity of the form $t^*(a) = a$.  Let $e_0 = \max_i |\Pi_1(f_i)|$.  If $\len(t)\leq r$, then the projection $\Pi_1(t)$ is an integer bounded by $r\cdot e_0$.  If $\len(t),\len(t')\leq r$, then $d(t,t')\leq 2r$.  Then $|\Pi_1(t) - \Pi_1(t')|\leq 2r\cdot e_0$.  To prove Theorem \ref{bij-str-thm}, it is enough to show that all statements of the form $t(a) = t'(a)$ or $t(a)\not= t'(a)$ true in $F$ have limiting density $1$.  The proof consists of two steps.    

\begin{enumerate}

\item  The first step is to show that for a fixed $k$, the set of presentations\\ 
$a|t(a) = t'(a)$ such that $|\Pi_1(t) - \Pi_1(t')| < k$ has limiting density $0$. 

\item  The second step is to show that for a fixed $k$ and a fixed identity $R$ of the form $t(a) = t'(a)$, if $|\Pi_1(t) - \Pi_1(t'))| > e_0k$, then for any $u$, $v$ such that $d(u,v)\leq k$ in the Gaifman graph $G(F)$, we have $F\models u(a) = v(a)$ if and only if $\langle a \mid R \rangle \models u(a) = v(a)$. 

\end{enumerate}  

Toward the first step, we prove some lemmas.   

\begin{lem}\ \label{number-of-id}

\begin{enumerate}

\item  The number of identities of length $m$ is $n^m(m+1)$. Furthermore, for every $0 \le k \le m+1$, there are exactly $n^m$ identities of length $m$ in which $t$ (the string of function symbols on the left side) has length 
$k$. 

\item  $P_s = \frac{n^{s+1}(s+2)(n-1) + 1}{(n-1)^2}$.

\end{enumerate}

\end{lem}

\begin{proof}

For (1), the number of strings of function symbols of length $m$ is $n^m$.  To determine an identity $t(a) = t'(a)$, we choose one of the $m+1$ initial segments to serve as the left-hand side.
For (2), we simply note that 
\begin{eqnarray*}
P_s & = & \sum_{0\leq m\leq s}(m+1)n^m = (1 + 2n + \ldots + (s+1)n^s) \\
 & = & \displaystyle \frac{(s+2)n^{s+2} - (s+2)n^{s+1} + 1}{(n-1)^2} = \frac{n^{s+1}(s+2)(n-1) + 1}{(n-1)^2}.
\end{eqnarray*}

\end{proof}

The next lemma may by interpreted as saying that a random identity of length $\le s$ has length $ > \sqrt{s}$.

\begin{lem}
\label{sqareroot}

$\lim_{s\rightarrow\infty} \frac{P_{s^2} - P_s}{P_{s^2}} = 1$.

\end{lem}

\begin{proof}

Using Lemma \ref{number-of-id}, we get $\frac{P_s}{P_{s^2}} = 
\frac{n^{s+1}(s+2)(n-1) + 1}{n^{s^2+1}(s^2(n-1)+2) + 1}$.  
This clearly has limit $0$, so $\frac{P_{s^2} - P_s}{P_{s^2}} = 1 - \frac{P_s}{P_{s^2}}$ has limit $1$.  
\end{proof}
   
Let $P_{=m}$ be the number of identities of length exactly $m$, and let $P_{=m}(A)$ be the number of identities in $A$ of length equal to $m$.  Calculating the limit of $\frac{P_{=s}(A)}{P_{=s}}$ is often easier than calculating the limit of $\frac{P_s(A)}{P_s}$.  The lemma below gives us permission to do that.     
                                                                                                                                                                                                                                                                                                                                                                                                                                                                                                                                                                                                                                                                                                                                                                                                                                                                                                                                                                                                                                                                                                                                                                                                                                                                                                                                                                                                                                                                                                                                                                                                                                                                                                                                                                                                                                                                                                                                                                                                                                                                                                                                                                                                                                                                                                            
\begin{lem}
\label{lem4.29}

For any set $A$ of identities of arbitrary length, if $\frac{P_{=s}(A)}{P_{=s}}$ has limit $0$, then so does $\frac{P_s(A)}{P_s}$.   

\end{lem}  
  
\begin{proof}

We show that for $\epsilon > 0$, there is some $m$ such that for $s \geq m$, $\frac{P_s(A)}{P_s} < \epsilon$.
Take $m_1$ such that for all $s\geq m_1$, we have $\frac{P_{=s}(A)}{P_{=s}} < \frac{\epsilon}{2}$, and take $m_2$ such that for all $s$ such that $\sqrt{s}\geq m_2$, we have $\frac{P_{\sqrt{s}}}{P_s} < \frac{\epsilon}{2}$.   
Let $s\geq m_1,m_2$.  Then  
    
\[P_s(A) - P_{\sqrt{s}}(A) = 
\sum_{\sqrt{s} < m\leq s}P_{=m}(A) < 
\frac{\epsilon}{2} \sum_{\sqrt{s} < m\leq s}P_{=m} = \frac{\epsilon}{2} (P_s - P_{\sqrt{s}}).\] 
This gives us 
\[\frac{P_s(A)}{P_s} = \frac{P_{\sqrt{s}}}{P_s} + \frac{P_s(A) - P_{\sqrt{s}}(A)}{P_s} < \frac{\epsilon}{2} + \frac{\epsilon}{2} \cdot \frac{P_s - P_{\sqrt{s}}}{P_s} < \frac{\epsilon}{2} + \frac{\epsilon}{2} =
\epsilon.\ \]
\end{proof} 

The next lemma will complete the first step of the proof of Theorem~\ref{bij-str-thm}.  We write $t$ and $t'$ for both strings of function symbols and elements of $G(V)$.     

\begin{lem}\label{stat}

For every $k\in \mathbb{N}$, we  have 
$$\lim\limits_{s\to \infty} \frac{P_s(|\Pi_1(t) - \Pi_1(t')|< k)}{P_s} = 0\ .$$

\end{lem}

\begin{proof}

By Lemma \ref{lem4.29}, it suffices to prove that 
$$\lim\limits_{s\to \infty} \frac{P_{=s}(|\Pi_1(t) - \Pi_1(t')|< k)}{P_{=s}} = 0.$$
Furthermore, since $k$ is fixed, it is enough to prove that for every $k\in \Z$, $$\lim\limits_{s\to \infty} \frac{P_{=s}( \Pi_1(t) - \Pi_1(t')= k)}{P_{=s}} = 0.$$ 

Fix $s$.  The identities of length $s$ form a finite probability space, and the random variables $\Pi_1(t)$ and $\Pi_1(t')$ are not independent.  By Lemma \ref{number-of-id}, we may consider $\Pi_1(t) - \Pi_1(t')$ conditioned on $t$ having length $\ell$.  Then $\len(t') = s - \ell$.  For each $\ell \le s$, the number of identities with $\len(t) = \ell$ and $\len(t') = s - \ell$ is equal to the number of strings of length $s$, so the probability that $\len(t) = \ell$ is $\frac{1}{s+1}$.  The probability that $\Pi_1(t) - \Pi_1(t') = k$ is the sum over $\ell \le s$ of the probability that $\len(t) = \ell$ times the conditional probability that $\Pi_1(t) - \Pi_1(t') = k$ given $\len(t) = \ell$.  
We have
\[\frac{P_{=s}( \Pi_1(t) - \Pi_1(t') = k)}{P_{=s}} \]
\[ =\frac{1}{s+1} \sum\limits_{\ell = 0}^{s} \frac{P_{=s}( \Pi_1(t) - \Pi_1(t')= k\ \&\  \len(t) = \ell\ \& \ \len(t') = s-\ell)}{P_{=s}(\len(t) = \ell\ \&\ \len(t')= s-\ell)}.\] 

We write $X_\ell$ for $\Pi_1(t)$ conditioned on $t$ having length $\ell$. Then, as a random variable, $X_\ell$ is a sum of $\ell$ i.i.d.\ random variables $Y_{\ell_k}$ whose value is equal to the projection of the $k^{th}$ symbol.    
All function symbols are equally likely.  Thus, with probability $\frac{1}{n}$, $Y$ will be $\Pi_1(f_i)$ for $1 \le i \le n$.  As $s\to \infty$, we have $\ell\to \infty$.  By the Central Limit Theorem, we have that $X_\ell/\ell$ converges to a normal distribution. This means that, in particular, for every $\epsilon$, there is some $\ell_\epsilon$ such that for every $\ell > \ell_\epsilon$, the probability that $X_\ell = i$ is less than $\epsilon$ for all $i$; i.e., 
$$ \frac{P_{=s}( \Pi_1(t) = i\ \&\  \len(t) = \ell)}{P_{=s}(\len(t) = \ell)} < \epsilon.$$ 
Without loss of generality, we will assume that $\len(t) \ge \len(t')$, so $\ell \ge s/2$.  Thus, $\ell > \ell_\epsilon$ whenever $s > 2\ell_\epsilon$. 

Now, we have that
\begin{align*}
&\frac{P_{=s}( \Pi_1(t) -\Pi_1(t') = k \ \&\  \len(t) = \ell \ \&\  \len(t') = s-\ell)}{P_{=s}(\len(t) = \ell \ \&\  \len(t') = s-\ell)} \\
= &\sum\limits_{i} \frac{P_{=s}( \Pi_1(t) = i \ \&\  \len(t) = \ell )}{P_{=s}(\len(t) = \ell )}\cdot \frac{P_{=s}( \Pi_1(t') = i-k \ \&\  \len(t') = s-\ell)}{P_{=s}(\len(t') = s-\ell)}\\
< &\sum\limits_{i} \epsilon\cdot \frac{P_{=s}( \Pi_1(t') = i-k \ \&\  \len(t') = s-\ell)}{P_{=s}(\len(t') = s-\ell)}\\
 < &\ \ \epsilon.
\end{align*}
Combining these, we get
\begin{align*}
&\lim\limits_{s\to \infty}\frac{P_{=s}(\Pi_1(t) - \Pi_1(t')= k)}{P_{=s}} \\
= &\lim\limits_{s\to \infty}\frac{1}{s+1} \sum\limits_{\ell = 0}^{s} \frac{P_{=s}( \Pi_1(t) -\Pi_1(t') = k \ \&\  \len(t) = \ell \ \&\  \len(t') = s-\ell)}{P_{=s}(\len(t) = \ell \ \&\  \len(t') = s-\ell)} \\
= &\ \ 0.
\end{align*}

\end{proof} 

We proceed to the second step of the proof. Recall that $e_0 = \max_i |\Pi_1(f_i)|$.  

\begin{lem}\label{lift} 

Fix $R$ of the form $t(a) = t'(a)$, and fix $k$ such that \\
$|\Pi_1(t) - \Pi_1(t'))| > e_0k$. For any $u$, $v$ at a distance $\leq k$ in the Gaifman graph of $F$, we have $F\models u(a) = v(a)$ if and only if $\langle a \mid R \rangle\models u(a) = v(a)$, where $\langle a \mid R \rangle$ is the structure given by the presentation $a|R$.

\end{lem}

\begin{proof}

Based on the discussion before Lemma \ref{number-of-id}, we can see that if $u(a)$ and $v(a)$ are adjacent in the Gaifman graph, then $|\Pi_1(u) - \Pi_1(v)| = |\Pi_1(f_i)| \le e_0$ for some $f_i$. Thus, if $d(u,v)\leq k$ in the Gaifman graph, then $|\Pi_1(u) - \Pi_1(v)| \le e_0k$. We will also write $t^* = t^{-1} \circ t'$, where $t^{-1}$ is the term that is the inverse of $t$ in the theory of the commutative generalized bijective variety. Note that $t^{-1}$ may be longer than $t$, but this does not affect the argument below. 

It is easy to see that if $u(a) = v(a)$ holds in $F$, then it holds in the structure $\langle a|R\rangle$, where $R$ is $t(a) = t'(a)$, which is equivalent to $t^*(a) = a$.  Suppose
$\langle a \mid R\rangle\models u(a) = v(a)$.  By Lemma \ref{coset}, this implies that $u,v$ are in the same left coset of $\langle t^* \rangle$; i.e., $u^{-1}v \in \langle t^* \rangle$.  Taking the projection $\Pi_1$, we see that $\Pi_1(u^{-1}v) \in \Pi_1(\langle t^* \rangle)$.  For some integer $k$, we have $u^{-1}v = (t^*)^k \in \langle t^* \rangle$, and $\Pi_1((t^*)^k) = k\cdot\Pi_1(t^*)$.  However, by assumption, $|\Pi_1(t^*)| = |\Pi_1(t^{-1}t')| > e_0|k|$, and we have $|\Pi_1(u^{-1}v)| \le e_0|k|$.  Therefore, we must have $k = 0$.  It follows that $\Pi_1(u^{-1}v) = 0\cdot \Pi_1(t^*) = 0$.  Moreover, $u^{-1}v = (t^*)^0$.    
It follows that $u = v$ in $G(V)$, and $F\models u(a) = v(a)$.
\end{proof}

We are ready to complete the proof of the theorem.  We just need to show that the sentences of the form $u(a) = v(a)$ or $u(a)\not= v(a)$ true in $F$ have limiting density $1$.  By Lemma \ref{stat}, for any  integer $k$, the set of identities $t(a) = t'(a)$ such that $|\Pi_1(t) - \Pi_1(t')| >\epsilon_0|k|$ has density $1$.
For a fixed sentence $u(a) = v(a)$, take $k$ such that $u,v$ both have length at most $\frac{k}{2}$, so that $                                                                                                                                                                                                                                                                                                                                                                                                                                                                                                                                                                                                                                                                                                                                                                                                                                                                                                                                                                                                                                                                                                                                                                                                                                                                                                                                                     u(a),v(a)$ are at distance at most $k$.  Then by Lemma \ref{lift}, the sentence $u(a) = v(a)$ holds in $F$ iff it holds in the structures given by identities $t(a) = t'(a)$ such that $|\Pi_1(t) = \Pi_1(t')| > \epsilon_0 k$, where this set has density $1$.  
\end{proof}

This theorem can be generalized to presentations with multiple generators.  

\begin{prop}\label{bij-str-par}

Let $V$ be a commutative generalized bijective variety in the language $\{f_1, \cdots, f_n\}$ and suppose that the free structure on $a$ is infinite. Then for the structures in $V$ with an $m$-tuple $\overline{a}$ of generators and a single identity, the sentences true in the free structure on $\overline{a}$ have limiting density $1$.  

\end{prop}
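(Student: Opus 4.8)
The plan is to reduce to the single-generator Theorem~\ref{bij-str-thm} by analyzing, for each presentation, the decomposition of $\langle\bar{a}\mid R\rangle$ into orbits under the action of $G(V)$. I assume $m\geq 2$ throughout, since $m=1$ is exactly Theorem~\ref{bij-str-thm}. As $V$ is commutative, $G(V)$ is abelian, and the free structure $F$ on $\bar{a}$ is a disjoint union of $m$ orbits, each a free copy of $G(V)$ and hence infinite. By our version of Gaifman's Locality Theorem (Theorem~\ref{Gaifman}), $\mathrm{Th}(F)$ is axiomatized over $T$ by the local sentences true in $F$ together with the negations of the local sentences false in $F$; since the set of density-$1$ sentences is consistent and closed under logical consequence, it suffices to show that each such generator has density~$1$, i.e. that each local sentence true in $F$ has density~$1$ and each local sentence false in $F$ has density~$0$.

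The structural observation driving the argument is that a single identity destroys the freeness of at most one orbit. Writing $R$ as $t(a_i)=t'(a_j)$: if $i\neq j$, then $R$ merely identifies $a_j$ with the element $(t')^{-1}t(a_i)$ of the (still free) orbit of $a_i$, so it merges the orbits of $a_i$ and $a_j$ into a single free orbit and collapses nothing; if $i=j$, then $R$ is equivalent to $s^{*}(a_i)=a_i$ with $s^{*}=(t')^{-1}t$, so the orbit of $a_i$ becomes $G(V)/\langle s^{*}\rangle$ while every other orbit stays free. In either case at least $m-1\geq 1$ orbits remain free copies of $G(V)$. Consequently every \emph{positive} local sentence true in $F$ holds in \emph{every} presentation: $(\exists v)\,\alpha_0^{(r)}(v)$, where $\alpha_0^{(r)}\in C_{r,1}$ is the unique free local type, is witnessed by any point of a free orbit, and the sentence asserting the existence of $s$ points pairwise at distance $>2r$ all of free type is witnessed inside a single infinite free orbit using powers of an infinite-order element of $G(V)$. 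Thus all positive generators have density~$1$.

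It remains to handle the local sentences false in $F$. A sentence $(\exists v)\,\alpha(v)$ with $\alpha\in C_{r,1}$ non-free is false in $F$; since every element of a structure generated by $\bar{a}$ lies in some orbit $\{w(a_i):w\in G(V)\}$, and since $T\vdash(\forall x)\bigl(\alpha(w(x))\leftrightarrow\alpha(x)\bigr)$, this sentence is equivalent in $\langle\bar{a}\mid R\rangle$ to $\bigvee_{i=1}^{m}\alpha(a_i)$. A non-free $\alpha$ must contain a conjunct $u(x)=v(x)$ with $u\neq v$ in $G(V)$, so $\alpha(a_i)$ implies the fixed nontrivial equality $u(a_i)=v(a_i)$. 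By the first basic lemma on disjunctions it therefore suffices to prove that for each fixed $i$ and each fixed $u\neq v$ in $G(V)$ the statement $u(a_i)=v(a_i)$ has density~$0$; the remaining false local sentences of the form $(\exists v_1,\dots,v_s)(\cdots)$ each imply such an $(\exists v)\,\alpha(v)$ and so inherit density~$0$.

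Finally, I would compute this density using the orbit analysis above. The equality $u(a_i)=v(a_i)$ asserts that $u^{-1}v$ stabilizes $a_i$. Whenever $R$ leaves the orbit of $a_i$ free---that is, when $i\notin\{i',j'\}$ or when $R$ merges the orbit of $a_i$ with another orbit---the stabilizer of $a_i$ is trivial, so $u\neq v$ makes the equality fail. The only surviving presentations are those of the form $t(a_i)=t'(a_i)$, which are in bijection with the single-generator presentations; for these Lemma~\ref{coset} shows the equality holds exactly when $u^{-1}v\in\langle s^{*}\rangle$. The number of these presentations of length at most $s$ equals the single-generator count $P^{(1)}_s$, while the total number $P_s$ of $m$-generator presentations of length at most $s$ is a fixed constant multiple of $P^{(1)}_s$; within the surviving family the proportion satisfying $u^{-1}v\in\langle s^{*}\rangle$ is precisely the single-generator density of the (free-false) equality $u(a)=v(a)$, which is $0$ by Lemmas~\ref{stat} and~\ref{lift} in the proof of Theorem~\ref{bij-str-thm}. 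Hence $u(a_i)=v(a_i)$ has density~$0$, completing the reduction. I expect the main obstacle to be the orbit bookkeeping in the cross-generator case $i\neq j$: one must verify that merging two orbits via a single identity introduces no new identifications, so that the only nontrivial density estimate is the one already supplied by the single-generator theorem.
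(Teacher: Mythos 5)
Your proposal is correct, but it is organized quite differently from the paper's proof, so a comparison is worthwhile. The paper first proves a lemma that $F_m$ and $F_1$ are elementarily equivalent, then splits the presentations into the set $M$ whose identity involves a single generator and its complement $\neg M$, and computes $P_s(\varphi)/P_s$ as a sum of two conditional probabilities: given $\neg M$ the resulting structure is free, so $\varphi$ holds iff $F\models\varphi$; given $M$ the conditional probability is identified with the single-generator probability, which tends to $1$ or $0$ by Theorem~\ref{bij-str-thm} used as a black box. You instead axiomatize $\mathrm{Th}(F)$ over $T$ by local sentences and their negations (Theorem~\ref{Gaifman}, Lemma~\ref{Lem4.12}), observe that every presentation leaves at least $m-1\geq 1$ infinite free orbits so that the positive (free-type) local sentences true in $F$ hold in \emph{every} resulting structure, and reduce each local sentence false in $F$ to a fixed equality $u(a_i)=v(a_i)$ with $u\neq v$ in $G(V)$, which can only hold for the diagonal presentations $t(a_i)=t'(a_i)$ and there has density $0$ by Lemmas~\ref{coset}, \ref{stat} and \ref{lift}, diluted further by the constant factor $m^2=P_s/P^{(1)}_s$. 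Both arguments rest on the same structural dichotomy (identity within one orbit versus across two orbits), and both assert, rather than prove in detail, that a cross-generator identity merges two orbits into a single free orbit with no further collapse (you flag this explicitly; the paper asserts it, and the injectivity needed is only proved carefully later, in Lemma~\ref{r-types}). What your route buys: it avoids a step the paper treats glibly, namely equating the conditional probability of $\varphi$ given $M$ with the single-generator probability of $\varphi$, which conflates truth in $\langle a_i\mid R\rangle$ with truth in the disjoint union of $\langle a_i\mid R\rangle$ with $m-1$ copies of $F_1$ and strictly speaking requires a locality or Feferman--Vaught argument; your local-sentence bookkeeping handles disjoint unions automatically, since free-type witnesses survive in the untouched orbits and non-free witnesses are traced back to the generators. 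What the paper's route buys: it is shorter, it reuses Theorem~\ref{bij-str-thm} wholesale instead of re-entering its proof, and the lemma $F_m\equiv F_1$ is of independent interest.
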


To do so, we need the following lemma.   

\begin{lem}

Let $V$ be a commutative generalized bijective variety, with theory $T$.  Let $F_m$ be the free structure on $m$ generators.  Suppose that $F_1$ is infinite.  Then for all $m\geq 1$, $F_m$ and $F_1$ satisfy the same theory.    

\end{lem}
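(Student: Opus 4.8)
The plan is to reduce the claim to the behavior of \emph{local sentences} and then to exploit the fact that $F_m$ is, up to isomorphism, a disjoint union of copies of $F_1$. By our version of Gaifman's Locality Theorem (Theorem \ref{Gaifman}), every first-order sentence is equivalent over $T$ to a Boolean combination of local sentences, so it suffices to show that $F_1$ and $F_m$ satisfy exactly the same local sentences.

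First I would record the shape of $F_m$. Since $L$ has only unary function symbols, every term is a string of function symbols applied to a single variable; hence in the free structure $F_m$ no term built from $a_i$ can be identified with one built from $a_j$ for $i\neq j$, and the connected component ($B_\infty$-class) of each generator $a_j$ is exactly its orbit under the action of the Gaifman group $G(V)$. By the simple transitivity of this action (Section \ref{subsec:group-gen-bij-thy}), each such component is isomorphic to $F_1$. Thus $F_m \cong \bigsqcup_{j=1}^m F_1$, a disjoint union of $m$ copies of the infinite structure $F_1$.

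I would then treat the two forms of local sentence separately. For a sentence $(\exists v)\alpha(v)$ with $\alpha\in C_{r,1}$: in a structure generated by a single element all elements share the same local type, so $F_1\models(\exists v)\alpha(v)$ iff $\alpha$ is the common local type $\alpha_0$ of $F_1$; since every component of $F_m$ is isomorphic to $F_1$, every element of $F_m$ also has local type $\alpha_0$, and so $F_m\models(\exists v)\alpha(v)$ iff $\alpha=\alpha_0$ as well. The main point is the sentences of the first form, $(\exists v_1,\dots,v_s)\big(\bigwedge_i \alpha_i(v_i)\ \&\ \bigwedge_{i<j} d^{>2r}(v_i,v_j)\big)$. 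Because the only realized $1$-type is $\alpha_0$, such a sentence can hold in $F_1$ or in $F_m$ only if every $\alpha_i=\alpha_0$, and then it asserts the existence of $s$ points pairwise at distance $>2r$. Here I would use that $F_1$ is infinite: under the identification $F_1\cong G(V)$ the Gaifman distance is the word metric for the finite generating set $\{f_1,\dots,f_n\}$, so every ball is finite while $G(V)$ is infinite. A greedy choice then produces, for any $s$, points $g_1,\dots,g_s$ with $g_{i+1}\notin\bigcup_{j\le i}B_{2r}(g_j)$, i.e.\ pairwise at distance $>2r$; hence the sentence holds in $F_1$, and a fortiori in each infinite component of $F_m$, so it holds in $F_m$ too, while if some $\alpha_i\neq\alpha_0$ it fails in both.

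Putting these together, $F_1$ and $F_m$ satisfy the same local sentences, and therefore, by Theorem \ref{Gaifman}, the same first-order theory. The step I expect to require the most care is the metric argument: justifying that distances in $F_1$ really are the word metric on the finitely generated group $G(V)$, and that infiniteness forces unboundedly many far-apart points of the single realized type $\alpha_0$. Everything else is bookkeeping with the local normal form and the identification of $F_m$ with a disjoint union of copies of $F_1$.
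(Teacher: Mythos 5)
Your reduction to local sentences is the same first move as the paper's, but after that the two arguments genuinely diverge, and yours is essentially sound apart from one gap. The paper never determines which local sentences actually hold: it passes to a saturated elementary extension $F^*$ of $F_1$, whose Gaifman graph has infinitely many connected components, lets $A\subseteq F^*$ be the substructure generated by an $m$-tuple chosen from distinct components, identifies $F_m$ with $A$, and then uses the fact that the local sentences are existential to sandwich the local theories: every local sentence true in $F_1$ persists to $A$ (as $F_1\subseteq A$), every one true in $A$ persists to $F^*$ (as $A\subseteq F^*$), and $F^*$ has the same local theory as $F_1$ by elementary equivalence. You instead compute both local theories outright, using the homogeneity of $F_1$ (every element realizes the same $\alpha_0\in C_{r,1}$) together with a greedy choice of pairwise-far points, which is legitimate because the Gaifman graph has finite balls (degree at most $2n$) while $F_1$ is infinite. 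That part of your analysis is correct and somewhat more informative than the paper's sandwich argument.

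The gap is in your very first step, the claim $F_m\cong\bigsqcup_{j=1}^m F_1$. You rule out identifications between terms in $a_i$ and terms in $a_j$ ($i\neq j$) ``since $L$ has only unary function symbols.'' That inference is not valid: in the free structure, $t(a_i)=t'(a_j)$ holds iff $T\vdash(\forall x)(\forall y)\,t(x)=t'(y)$, and purely unary varieties can prove such cross-variable identities --- the paper's own Theorem \ref{constants-like} is built on varieties whose theory contains $(\forall x)(\forall y)\,t(x)=t(y)$, and there the free structure on several generators really does collapse across components. What rescues your claim is not unarity but the hypotheses of the lemma: in a generalized bijective variety every term is provably injective (a composition of injective functions), so if $T$ proved $(\forall x)(\forall y)\,t(x)=t'(y)$, then $t$ would be provably constant as well as injective, forcing every model of $T$ to have at most one element and contradicting the assumption that $F_1$ is infinite. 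The same observation is needed to see that $\bigsqcup_{j=1}^m F_1$ is a model of $T$ at all (an equational axiom relating distinct variables would fail in a disjoint union), a point your write-up uses silently when it invokes the universal property of $F_m$. With these two points patched, your proof goes through; note that the paper's device of realizing $F_m$ as the substructure $A$ of $F^*$ sidesteps both issues at once, since a substructure of a model of $T$ is automatically in $V$.
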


\begin{proof}

All elements of $F_1$ have the same local type.  
Now, $F_1$ has a saturated elementary extension $F^*$ whose Gaifman graph has infinitely many connected components.  Let $A$ be the substructure of $F^*$ extending $F_1$ and generated by an $m$-tuple $a_1,\ldots,a_m$ from different connected components.  Clearly, $F_1$ and $F^*$ satisfy the same special local sentences.  Since the sentences are existential, any special local sentence true in $F_1$ is true in $A$, and any special local sentence true in $A$ is true in $F^*$.  

We may suppose that $F_m$ has generators $a_1,\ldots,a_n$.  The connected component of $a_i$ in $F_m$ and in $A$ is generated by $a_i$---the elements are named by terms $t(a_i)$.  The special $r$-local formula $\alpha(x)\in C_{r,1}$ true of the elements of $F_1$ is true of each $a_i$ in $F_m$ and in $A$.  We have an isomorphism from $F_m$ onto $A$ that takes $a_i$ to $a_i$ and takes $B_r(a_i)$ in $F_m$ to $B_r(a_i)$ in $A$.  Then $F_1$ and $F_m$ have the same theory. 
\end{proof}               

\begin{proof} [Proof of Proposition \ref{bij-str-par}]

For presentations with $m$ generators and a single identity, we consider separately the set $M$ of presentations in which the identity involves a single generator and the complementary set $\neg{M}$ in which the identity involves two distinct generators.  
For a presentation $\bar{a}|t_1(a_i) = t_2(a_i)$ in $M$, the resulting structure is the disjoint union of the structure $\langle a_i | t_1(a_i) = t_2(a_i)\rangle$ (with generator $a_i$) and $(m -1)$ copies of $F_1$ (one for each of the other $a_j$'s).  The identities in $\neg{M}$ have the form $t_1(a_i) = t_2(a_j)$ for $i\not= j$.  In the structure $\langle \overline{a} | t_1(a_i) = t_2(a_j)\rangle$, the connected component of $a_i$ and the connected component of $a_j$ are collapsed via the relation $t(a_i) = t_1^{-1}t_2t(a_j)$. Thus, the structure is a disjoint union of $(m-1)$ copies of the free structure on one generator.    

For fixed $s$, we have a finite probability space.  For a sentence $\varphi$, the probability that $\varphi$ is true is $\frac{P_s(\varphi)}{P_s} = \frac{P_s(M\ \&\ \varphi)}{P_s} + \frac{P_s(\neg{M}\ \&\ \varphi)}{P_s}$.  
For presentations with a single generator, we write $P'_s$ and $P_s'(\varphi)$.  By Theorem \ref{bij-str-thm}, 
\[\frac{P'_s(\varphi)}{P'_s}\rightarrow 
\left\{\begin{array}{cc}
1 & \mbox{if $F_1\models\varphi$,}\\
0 & \mbox{otherwise}.
\end{array}\right.\]

Now, $\frac{P_s(M\ \&\ \varphi)}{P_s}$ is the probability of $(M\ \&\ \varphi)$.  This is equal to the probability of $M$ times the conditional probability of $\varphi$ given $M$.  The probability of $M$ is $\frac{1}{n}$.  The conditional probability of $\varphi$ given $M$ is the same as the probability of $\varphi$ for presentations with a single generator; namely, $\frac{P'_s(\varphi)}{P'_s}$.  Thus, $\frac{P_s(M\ \&\ \varphi)}{P_s} = (\frac{1}{n})(\frac{P'_s(\varphi)}{P'_s})$.     
As $s\rightarrow\infty$, this approaches $\frac{1}{n}$ if $\varphi$ is true in the free structures and $0$ otherwise. 

Similarly, the probability of $(\neg{M}\ \&\ \varphi)$ is the probability of $\neg{M}$ times the conditional probability of $\varphi$ given $\neg{M}$.  The probability of $\neg{M}$ is $\frac{(n-1)}{n}$.  The conditional probability of $\varphi$ given $\neg{M}$ is $1$ if $\varphi$ is true in $F_{m-1}$ and $0$ otherwise.  Thus, 
\[\frac{P_s(\neg{M}\ \&\ \varphi)}{P_s} = 
\left\{\begin{array}{cc}
\frac{(n-1)}{n} & \mbox{if $F_{m-1}\models\varphi$}\\
0 & \mbox{otherwise.}
\end{array}\right.\]
In total, $\frac{P_s(\varphi)}{P_s}$ has limit $\frac{1}{n} + \frac{n-1}{n} = 1$ if $\varphi$ is true in the free structures and $0$ otherwise.
\end{proof}

\begin{remark}\label{bij-str-rmk}

Using the multidimensional Central Limit Theorem \cite{vdVaart}, we can generalize the theorem and corollary above to any commutative generalized bijective variety $V$ where $\mathbb{Z}^k$ embeds into $G(V)$. In this case, the random structures in $V$ with a single generator and $k$ identities satisfy the zero--one conjecture, and the limiting theory agrees with the theory of the free structure.  However, without the condition that $\mathbb{Z}^k$ embeds in $G(V)$, the statement is false, as witnessed by the bijective structures with two identities considered in Section \ref{bij-two-id}.

\end{remark}

\subsubsection{Superstability}\label{superstable}

We make a brief comment on the superstability of completions of the theory of generalized bijective varieties. Recall that for an infinite cardinal $\kappa$, a (complete) theory $T$ is \emph{$\kappa$-stable} if for every set $A$ in a model of $T$, if $A$ has cardinality $\kappa$, then the set of complete types over $A$ has cardinality $\kappa$ as well. A theory is \emph{stable} if it is $\kappa$-stable for some $\kappa$, and it is \emph{superstable} if it is $\kappa$-stable for all sufficiently large $\kappa$.  If the language of $T$ is countable, then $\kappa\geq 2^{\aleph_0}$ will suffice. For more on stable theories, see Chapter 4 of \cite{Marker}.

\begin{prop} 

All completions of the theory of a generalized bijective variety are superstable.  

\end{prop}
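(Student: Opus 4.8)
The plan is to establish superstability by counting complete types over a parameter set, leaning on the locality machinery already developed. Fix a completion $T'$ of the theory of $V$ and a cardinal $\kappa \geq 2^{\aleph_0}$. Working inside a monster model of $T'$, let $A$ be any parameter set with $|A| \leq \kappa$; I want to show that the space $S_1(A)$ of complete $1$-types over $A$ has size at most $\kappa$. Bounding $1$-types suffices for $\kappa$-stability by a standard reduction (see \cite{Marker}), and doing this for every $\kappa \geq 2^{\aleph_0}$ yields superstability for a countable language.

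The organizing idea is a dichotomy coming from the Gaifman graph: for an element $b$, either its connected component $B_\infty(b)$ meets $A$ or it does not. If $B_\infty(b)$ meets $A$, say $a \in A \cap B_\infty(b)$, then $d(a,b) = r$ for some finite $r$, so $b = w(a)$ for a word $w$ in the symbols $f_i$ and their term-inverses $u_i$ (Proposition \ref{bij-inverse}) of length $r$. Hence $b \in \mathrm{dcl}(A)$ and $\mathrm{tp}(b/A)$ is isolated by the formula $x = w(a)$. Since there are only countably many words $w$, the set $\mathrm{dcl}(A)$ has cardinality at most $\kappa \cdot \aleph_0 = \kappa$, so the types realized in this case number at most $\kappa$.

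The heart of the argument is the complementary case, in which $B_\infty(b)$ is disjoint from $A$. Here I would show that $\mathrm{tp}(b/A)$ is determined by $\mathrm{tp}(b/\emptyset)$; since $T'$ is countable there are at most $2^{\aleph_0} \leq \kappa$ types over $\emptyset$, so this case also contributes at most $\kappa$ types, and combining the two cases gives $|S_1(A)| \leq \kappa$. To prove the claim, I would apply the formula version of our Gaifman Locality Theorem (Theorem \ref{Gaifman}): a formula $\varphi(x,\bar{a})$ with $\bar{a}$ from $A$ is equivalent over $T$ to a Boolean combination of local sentences with parameters $\bar{a}$ and local formulas $\alpha(x,\bar{a}) \in C_{r,|\bar a|+1}$. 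The local sentences do not mention $x$, so they have fixed truth value. For the local formula $\alpha(x,\bar{a})$, the point is that $d(b,a_j) = \infty > 2r$ for every $a_j$, so the ball $B_r(b\bar{a})$ is the disjoint union $B_r(b) \sqcup B_r(\bar{a})$; hence whether $\alpha(b,\bar{a})$ holds depends only on the isomorphism type of $B_r(b)$ together with that of the fixed tuple $B_r(\bar{a})$, that is, only on the $r$-local type of $b$ over $\emptyset$. As the full type over $\emptyset$ is generated by the local type (shown in the proof of Theorem \ref{Gaifman}), two elements in components disjoint from $A$ with the same type over $\emptyset$ have the same local type and therefore agree on every such $\varphi(x,\bar{a})$, so they have the same type over $A$.

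The main obstacle is making the ball-splitting step precise: verifying that, for $b$ in a component disjoint from $A$, the $C_{r,|\bar a|+1}$-formula satisfied by $(b,\bar{a})$ factors cleanly into the $C_{r,1}$-type of $b$ and the $C_{r,|\bar a|}$-type of $\bar{a}$, with all cross-distances infinite. This is exactly the decomposition the families $C_{r,n}$ were designed to support in Lemma \ref{Lem4.7}, so I expect it to go through; but it is the step that must be written out carefully rather than merely asserted, since everything else reduces to routine cardinal arithmetic once it is in hand.
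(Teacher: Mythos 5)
Your proposal is correct and follows essentially the same route as the paper's own proof: the same dichotomy (either $x = t(a)$ for some $a$ in the parameter set, contributing at most $\kappa$ isolated types, or the component of $x$ misses the parameter set and its type over the set is determined by its type over $\emptyset$, contributing at most $2^{\aleph_0}$ types), yielding the bound $\kappa + 2^{\aleph_0}$. The only difference is that you justify the second case explicitly via the Gaifman locality machinery, a step the paper compresses into the assertion that such a type is determined by a quantifier-free $1$-type.
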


\begin{proof}

Let $T$ be a completion of this theory, and let $X$ be a subset of some model of $T$ of cardinality $\kappa\geq 2^{\aleph_0}$.  We show that the number of $1$-types over $X$ is at most $\kappa$.   

A type in a variable $x$ over $X$ will say one of the following:
\begin{enumerate}

\item  $x = t(a)$ for some $a\in X$.

\item  For every term $t$ and every $a\in X$, $x\not= t(a)$, and $x$ satisfies a certain quantifier-free $1$-type $p(x)$.  

\end{enumerate}
From this, it follows that if $\kappa$ is the cardinality of $X$, then the number of $1$-types over $X$ is at most $\kappa + 2^{\aleph_0}$.  Thus, for $\kappa\geq 2^{\aleph_0}$, $T$ is $\kappa$-stable.   
\end{proof}

\begin{remark}

If we drop the condition that $T$ is a completion of the theory of a generalized bijective variety, then there are theories in a language with finitely many unary function symbols that are unstable. We will not give an example here, although one is easily obtained taking the theory of a structure in the variety which we will study in Section \ref{subsec:mult-unary-fns}.  
\end{remark}

\subsection{Failure of the zero--one law}

The next result gives conditions under which the zero--one law fails.  

\begin{thm}\label{constants-like}  

Let $L$ be a language with $n$ unary functions, including $f$, where $n\geq 2$.  Let $V$ be a variety such that for some term $t$ involving a symbol apart from $f$, the theory $T$ of $V$ contains the sentence $(\forall x)(\forall y) t(x) = t(y)$.  Consider presentations with an $m$-tuple $\bar{a}$ of generators and a single identity, and suppose that in the free structure, $f(t(a))\not= t(a)$.  Then there is a sentence with limiting density neither $0$ nor $1$.   

\end{thm}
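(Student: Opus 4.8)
The plan is to use the term $t$ to produce a $0$-definable constant and then to exhibit a single sentence about it whose truth is ``robust'' in two opposite directions. Write $\ell_0 = \len(t)$, and let $c$ denote the common value of $t$, a single element of every structure in $V$ since $T\vdash(\forall x)(\forall y)\,t(x)=t(y)$. I would take the candidate sentence to be
\[ \varphi \;:=\; (\forall x)\, f(t(x)) = t(x), \]
which, by constancy of $t$, is equivalent over $T$ to the assertion $f(c)=c$. The hypothesis that $f(t(a))\neq t(a)$ in the free structure $F$ says exactly that $F\models\neg\varphi$, with the generator $a$ as a witness. The hypothesis that $t$ involves a symbol other than $f$ is what makes $\varphi$ and $\neg\varphi$ jointly achievable: were $t$ a power of $f$, then constancy of $t$ would already entail $f(c)=f^{k}(f(a))=f^{k}(a)=c$, contradicting the hypothesis.

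First I would show that $\varphi$ has positive liminf density. Consider relators of the form $f\,t\,u(a)=t\,v(a)$, where $u,v$ range over all words in $f_1,\dots,f_n$. In any structure of $V$ satisfying such a relator, constancy of $t$ gives $t\,u(a)=t\,v(a)=c$, so the relator reads $f(c)=c$; hence $\langle\bar a\mid R\rangle\models\varphi$. These relators have length $1+2\ell_0+\len(u)+\len(v)$, so the number of them of length at most $s$ is comparable to $P_{s-1-2\ell_0}$ (the number of choices of the pair $(u,v)$ of total length at most $s-1-2\ell_0$). By Lemma~\ref{number-of-id} one has $P_{s-d}/P_s\to n^{-d}$, so this family has density bounded below by a positive constant, whence $\liminf_s P_s(\varphi)/P_s>0$ and $\varphi$ does not have density $0$. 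It is essential to vary \emph{both} $u$ and $v$ here: fixing one of them loses the factor of $s$ present in $P_s$ and drives the density to $0$.

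Next I would bound the density of $\varphi$ away from $1$ by producing a positive-density family on which $\varphi$ fails. Consider the \emph{trivial} relators $t\,u(a)=t\,v(a)$, again with $u,v$ arbitrary words. By constancy of $t$ both sides equal $c$, so the relator is a logical consequence of the axioms of $V$; hence $\langle\bar a\mid R\rangle\cong F$ and $\langle\bar a\mid R\rangle\models\neg\varphi$. These relators have length $2\ell_0+\len(u)+\len(v)$, so the number of length at most $s$ is comparable to $P_{s-2\ell_0}$, which again has positive limiting density relative to $P_s$. Thus $\liminf_s P_s(\neg\varphi)/P_s>0$, so $\limsup_s P_s(\varphi)/P_s<1$ and $\varphi$ does not have density $1$.

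Combining the two bounds, $P_s(\varphi)/P_s$ stays in a closed subinterval of $(0,1)$, so whether or not the limit exists, $\varphi$ has limiting density neither $0$ nor $1$ and the zero--one law fails. The routine points needing care are the precise counting of the two families against $P_s$ and the bookkeeping for an $m$-tuple of generators, where one simply works inside the connected component of a single $a_i$ and absorbs the fixed factor coming from the choice of generators into the constants; one also checks that the relators $f\,t\,u(a)=t\,v(a)$ genuinely collapse to $f(c)=c$ in an arbitrary variety rather than degenerating further. The main conceptual step—the one doing the real work—is recognizing that the constant term $t$ converts $\varphi$ into a statement about the \emph{single} definable element $c$, so that a fixed positive fraction of relators can force a loop at $c$ while another fixed positive fraction leaves $c$ exactly as in $F$.
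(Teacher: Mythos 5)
Your proposal is correct and takes essentially the same approach as the paper: the same witness sentence $(\forall x)\, f(t(x)) = t(x)$, the same two families of identities (those of the form $f(t(u(a_i))) = t(v(a_j))$, which force $f(c)=c$ and hence $\varphi$, and the trivial ones $t(u(a_i)) = t(v(a_j))$, which follow from the axioms and hence yield the free structure where $\varphi$ fails), and the same counting argument showing each family has positive limiting density (the paper computes these densities exactly as $1/n^{2r+1}$ and $1/n^{2r}$, matching your ratios $P_{s-1-2\ell_0}/P_s$ and $P_{s-2\ell_0}/P_s$).
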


\begin{remark} 

The sentence $(\forall x)(\forall y) t(x) = t(y)$ says that $t$ has a constant value.  If $t$ involved just the symbol $f$, then the free structure would satisfy the sentence $f(t(a)) = t(f(a)) = t(a)$. 

\end{remark}

\begin{proof} [Proof of Theorem \ref{constants-like}]

Let $\varphi$ be a sentence saying that $f$ fixes the constant given by $t$.  For instance, we may take $\varphi = (\forall x)f(t(x)) = t(x)$.  We show that $\varphi$ does not have limiting density 0 or 1.  We consider presentations with a tuple $\bar{a}$ of $m$ generators and a single identity.   
Let $A$ be the set of identities of the form $u(a_i) = v(a_j)$, where $u(a_i) = t(u'(a_i))$ and $v(a_j) = f(t(v'(a_j)))$.  In the resulting structures, $f$ fixes the constant, so $\varphi$ is true.  Let $B$ be the set of identities of the form $u(a_i) = v(a_j)$, where $u(a_i) = t(u'(a_i))$ and $v(a_j) = t(v'(a_j))$.  The resulting structure is free and $f$ does not fix the constant, so $\varphi$ is false.  We show that neither $A$ nor $B$ has density $0$.  It follows that neither $\varphi$ nor $\neg{\varphi}$ has density $0$.     

The number of identities of length $\ell$ is $m^2 n^\ell(\ell+1)$.  Then
\begin{align*}
P_s & =  m^2\sum_{0\leq \ell\leq s}(\ell+1)n^\ell = m^2(1 + 2n + \ldots + (s+1)n^s) \\
 & =  m^2\left(\frac{(s+2)n^{s+2} - (s+2)n^{s+1} + 1}{(n-1)^2}\right) = m^2\left(\frac{n^{s+1}(s+2)(n-1) + 1}{(n-1)^2}\right).
\end{align*}

Say that $t$ has length $r$.  Then the identities in $A$ have length at least $2r+1$, and for $\ell = 2r+1+\ell'$, the number of identities in $A$ of length $\ell$ is \\
$m^2 n^{\ell'}(\ell'+1)$.  Then 
\begin{align*}
P_s(A) & = m^2\sum_{2r+1+\ell'\leq s}(\ell'+1)n^{\ell'} = m^2(1 + 2n + \ldots + (s-2r)n^{s-2r-1}) \\
 &= m^2\left(\frac{n^{s-2r}(s-2r+1)(n-1) + 1}{(n-1)^2}\right),
\end{align*}
and
\[\frac{P_s(A)}{P_s} = \frac{1}{n^{2r+1}}\frac{(s-2r+1)(n-1) + 1}{(s+2)(n-1) + 1}\rightarrow\frac{1}{n^{2r+1}}\ .\]

The identities in $B$ have length at least $2r$.  For $\ell = 2r + \ell'$, the number of identities in $B$ of length $\ell$ is $m^2 n^{\ell'}(\ell'+1)$.  Then 
\begin{align*}
P_s(B) & = m^2\sum_{2r+\ell'\leq s} (\ell'+1)n^{\ell'} = m^2(1 + 2n + \ldots (s - 2r + 1)n^{s-2r}) \\
& =m^2\left(\frac{n^{s-2r+1}(s-2r+2)(n-1) + 1}{(n-1)^2}\right),
\end{align*}
and
\[\frac{P_s(B)}{P_s} = \frac{1}{n^{2r}}\frac{(s-2r+2)(n-1) + 1}{(s+2)(n-1) + 1}\rightarrow\frac{1}{n^{2r}}.\]     
Since $n\geq 2$, both of these limits are strictly between $0$ and $1$.
\end{proof}

\section{Naming the generators}  
\label{naming}

\subsection{A general result}

Let $V$ be a variety in a language $L$ with axioms generating a theory $T$.  We consider presentations with a fixed generating tuple $\bar{a}$, 
and $k$ identities.  Let $L'$ be the result of adding to $L$ constants for the generators.  We ask when the $L'$-sentences true in the free structure have limiting density $1$.  

\begin{prop} \label{par-thm}

Let $T_F$ be the set of $L'$-sentences true in the free structure $F$ generated by $\bar{a}$, and let $S$ be the set of $L'$-sentences with limiting density $1$.  Then the following are equivalent:

\begin{enumerate}

\item  $T_F\subseteq S$, 

\item $T_F = S$,

\item  $S$ has the following two properties: 

\begin{enumerate}

\item $S$ includes the sentences from $T_F$ of the form $t(\bar{a}) \not= t'(\bar{a})$, 

\item  for any $L'$-formula $\varphi(x)$ with just $x$ free, if $\varphi(t(\bar{a}))\in S$ for all closed terms $t(\bar{a})$, then $(\forall x)\varphi(x)\in S$.

\end{enumerate}

\end{enumerate}              

\end{prop}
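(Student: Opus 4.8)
The plan is to prove the cycle $(1)\Rightarrow(2)\Rightarrow(3)\Rightarrow(1)$, using throughout two facts supplied by the basic lemmas of Section~2.3: the set $S$ is consistent and closed under logical consequence. I would also use that $T_F$, being the full $L'$-theory of the single structure $F$, is a \emph{complete} theory, so for every $L'$-sentence exactly one of it and its negation lies in $T_F$. For $(1)\Rightarrow(2)$, assume $T_F\subseteq S$; to get $S\subseteq T_F$, suppose some $\varphi\in S$ were not in $T_F$. Completeness of $T_F$ gives $\neg\varphi\in T_F\subseteq S$, so $S$ contains both $\varphi$ and $\neg\varphi$, contradicting consistency; hence $S=T_F$. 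For $(2)\Rightarrow(3)$, part (a) is immediate from $S=T_F$, and for (b), if $\varphi(t(\bar a))\in S=T_F$ for every closed term $t(\bar a)$, then since $F$ is generated by $\bar a$ every element of $F$ is denoted by a closed $L'$-term, so $F\models\varphi(b)$ for all $b\in F$, whence $F\models(\forall x)\varphi(x)$ and $(\forall x)\varphi(x)\in T_F=S$.

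The substance is $(3)\Rightarrow(1)$. Assuming (3), I would prove by induction on the complexity of formulas the claim: for every $L'$-formula $\theta(\bar x)$ and all closed $L'$-terms $\bar r$, if $F\models\theta(\bar r)$ then $\theta(\bar r)\in S$, and if $F\models\neg\theta(\bar r)$ then $\neg\theta(\bar r)\in S$. Taking $\bar x$ empty yields $F\models\theta\Rightarrow\theta\in S$, i.e.\ $T_F\subseteq S$, which is (1). In the base case $\theta$ is atomic, so after substitution it is an equation $s_1(\bar a)=s_2(\bar a)$ of closed terms. If $F\models s_1(\bar a)=s_2(\bar a)$, then by the free-structure property (\cite[Theorem 11.4]{BS}) $T\vdash(\forall\bar x)\,s_1(\bar x)=s_2(\bar x)$, so the equation holds in every presented structure and thus has limiting density $1$, giving membership in $S$ without even invoking (a). If instead $F\models s_1(\bar a)\neq s_2(\bar a)$, then property (a) places $s_1(\bar a)\neq s_2(\bar a)$ in $S$.

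For the inductive steps, negation and the binary connectives follow from the induction hypothesis together with closure of $S$ under logical consequence and the lemma on finite conjunctions/disjunctions. The universal case is the crux: for $\theta(\bar r)=(\forall x)\psi(x,\bar r)$ with $F\models\theta(\bar r)$, the fact that $F$ is generated by $\bar a$ shows that every $b\in F$ equals some closed term $s(\bar a)$, so $F\models\psi(s(\bar a),\bar r)$ for all closed terms $s(\bar a)$; the induction hypothesis applied to the lower-complexity formula $\psi$ puts each $\psi(s(\bar a),\bar r)$ into $S$, and then property (b), applied to the $L'$-formula $x\mapsto\psi(x,\bar r)$, yields $(\forall x)\psi(x,\bar r)\in S$. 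The negative universal case, and dually both existential cases, combine the same ``a witness is a closed term'' observation with closure of $S$ under consequence (a single true instance $\neg\psi(s(\bar a),\bar r)\in S$ implies $\neg(\forall x)\psi(x,\bar r)\in S$).

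I expect the universal step to be the main obstacle, and it is the only place property (b) is genuinely used, just as property (a) is used only in the atomic inequality base case. The delicate point is converting quantification over all elements of $F$ into quantification over all closed terms; this conversion is legitimate precisely because $F$ is the free structure generated by $\bar a$, so its universe consists of values of closed $L'$-terms, and it is exactly the hypothesis that (b) is designed to consume. Everything else reduces to bookkeeping with the deductive closure and consistency of $S$ established earlier.
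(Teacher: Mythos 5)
Your proof is correct and takes essentially the same route as the paper: the cycle $(1)\Rightarrow(2)\Rightarrow(3)\Rightarrow(1)$, with the substance in $(3)\Rightarrow(1)$ done by induction on formula structure, using property (a) exactly for atomic inequalities, property (b) exactly for the universal quantifier, the closed-term-witness observation (legitimate because $F$ is generated by $\bar{a}$) for existentials, and closure of $S$ under logical consequence for the connectives. The only cosmetic differences are that the paper pushes negations inside to atomic level and runs a one-sided induction where you carry a two-sided claim (formula and its negation), and your atomic base case for true equations goes through Birkhoff's theorem while the paper's rests on presented structures being quotients of $F$; both are sound.
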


\begin{proof}

We will prove $(1)\Rightarrow(2)\Rightarrow(3)\Rightarrow (1)$.
First, we assume (1) and prove (2).  We must show that $S\subseteq T_F$.  Take $\varphi\in S$.  If $\varphi\notin T_F$, then $\neg{\varphi}$ must be in $T_F$, so it is in $S$.  Then $\varphi$ has limiting density $0$, and we have a contradiction.  
Next, we assume (2) and prove (3).  We can see that $T_F$ has properties (a) and (b), so $S$ does as well.  
Finally, we assume (3) and prove (1).  The set $S$ has properties (a) and (b).  Sentences that are logically equivalent have the same limiting density as well as the same truth value in the free structure $F$.  We show by induction on $\varphi(\bar{a})$ that if $\varphi(\bar{a})\in T_F$, then $\varphi(\bar{a})\in S$.  We suppose that the negations in our formulas are brought inside, next to the atomic formulas.  

\begin{enumerate}

\item  Suppose $\varphi$ has the form $t(\bar{a}) = t'(\bar{a})$.  If $F\models\varphi$, then $T_F\vdash\varphi$, so the limiting density is $1$.  

\item  Suppose $\varphi$ has the form $t(\bar{a})\not= t'(\bar{a})$.  By (a), if $F\models\varphi$, then 
$\varphi$ has limiting density $1$.

\item  Suppose $\varphi = (\varphi_1\ \&\ \varphi_2)$.  
If $F\models\varphi$, then both conjuncts are true, so both have limiting density $1$.  Then $\varphi$ also has limiting density $1$.

\item  Suppose $\varphi = (\varphi_1\ \vee\ \varphi_2)$.  If $F\models\varphi$, then at least one disjunct is true, so it has limiting density $1$.  Then $\varphi$ also has limiting density $1$.

\item  Suppose $\varphi = (\exists x) \psi(x)$.  If $F\models\varphi$, then $F\models\psi(t(\bar{a}))$ for some $t(\bar{x})$.  Then this sentence has limiting density $1$, so $\varphi$ also has limiting density $1$. 

\item  Suppose $\varphi = (\forall x) \psi(x)$.  If $F\models\varphi$, then $F\models\psi(t(\bar{a}))$ for all closed terms $t(\bar{a})$.  Then the sentence $\psi(t(\bar{a}))$ has limiting density $1$ for all $t(\bar{a})$, and by (b), $(\forall x) \psi(x)\in S$. 
\end{enumerate}    
\end{proof}

Consider the following further property.

\bigskip
\noindent
\textbf{Property (c)}:  If $(\exists x)\psi(x)\in S$, then $\psi(t(\bar{a}))\in S$ for some $t(\bar{a})$.

\begin{lem}

If $S$ is complete (i.e., we have the zero--one law), then (b) and (c) are equivalent.

\end{lem}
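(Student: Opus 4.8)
The plan is to exploit the duality between $\forall$ and $\exists$ under negation, using two facts already available: that $S$ is consistent (the third basic lemma), and that, by the zero--one law, for every $L'$-sentence $\chi$ exactly one of $\chi$, $\neg\chi$ lies in $S$. I would also use freely that logically equivalent sentences have the same limiting density, so membership in $S$ respects logical equivalence; in particular $\neg(\forall x)\varphi(x)$ is interchangeable with $(\exists x)\neg\varphi(x)$, and $\neg(\exists x)\psi(x)$ with $(\forall x)\neg\psi(x)$. Both implications are then short arguments by contradiction in which completeness is used only to pass from ``$\chi\notin S$'' to ``$\neg\chi\in S$.''

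For the direction (b)$\Rightarrow$(c), I would argue as follows. Suppose $(\exists x)\psi(x)\in S$, and assume toward a contradiction that $\psi(t(\bar{a}))\notin S$ for every closed term $t(\bar{a})$. By completeness, each $\neg\psi(t(\bar{a}))$ then lies in $S$. Applying (b) to the formula $\varphi(x):=\neg\psi(x)$ gives $(\forall x)\neg\psi(x)\in S$, which is logically equivalent to $\neg(\exists x)\psi(x)$ and hence also in $S$. Now $S$ contains both $(\exists x)\psi(x)$ and its negation, contradicting the consistency of $S$. Therefore $\psi(t(\bar{a}))\in S$ for some closed term $t(\bar{a})$, which is exactly (c).

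For (c)$\Rightarrow$(b), I would run the dual argument. Suppose $\varphi(t(\bar{a}))\in S$ for every closed term $t(\bar{a})$, and assume toward a contradiction that $(\forall x)\varphi(x)\notin S$. By completeness, $\neg(\forall x)\varphi(x)\in S$, equivalently $(\exists x)\neg\varphi(x)\in S$. Applying (c) with $\psi(x):=\neg\varphi(x)$ yields a closed term $t(\bar{a})$ with $\neg\varphi(t(\bar{a}))\in S$. But $\varphi(t(\bar{a}))\in S$ by hypothesis, so again $S$ contains a sentence and its negation, violating consistency. Hence $(\forall x)\varphi(x)\in S$, which is (b).

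I do not expect a genuine obstacle here: the whole argument is the formal $\forall$/$\exists$ de Morgan duality, and the only essential use of completeness is the conversion of ``$\chi\notin S$'' into ``$\neg\chi\in S$,'' which is precisely the content of the zero--one law. The single point to state with care is that membership in $S$ is invariant under logical equivalence, so that rewriting $\neg(\forall x)\varphi$ as $(\exists x)\neg\varphi$ (and symmetrically) is legitimate; this follows from the definition of limiting density together with the closure of $S$ under logical implication.
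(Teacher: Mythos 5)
Your proof is correct and follows essentially the same route as the paper's: both directions proceed by contradiction, using completeness only to convert non-membership in $S$ into membership of the negation, then applying the other property to the negated formula and contradicting the consistency of $S$ via the $\forall$/$\exists$ duality. The only difference is expository—you make explicit the invariance of $S$ under logical equivalence, which the paper uses tacitly.
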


\begin{proof}

First, suppose that (b) holds and that $(\exists x)\psi(x)\in S$.  If there is no $t(\bar{a})$ such that 
$\psi(t(\bar{a}))\in S$, then $\neg{\psi(t(\bar{a}))}\in S$ for all $t(\bar{a})$, and  
$(\forall x)\neg{\psi(x)}\in S$ for a contradiction.  Now, suppose (c) holds and that $\psi(t(\bar{a}))\in S$ for all $t(\bar{a})$.  If $\neg{(\forall x)\psi(x)}\in S$, then $(\exists x)\neg{\psi(x)}\in S$.  By (c), 
$\neg{\psi(t(\bar{a}))}\in S$ for some $t(\bar{a})$ for a contradiction, so $(\forall x)\psi(x)\in S$. 
\end{proof} 

\begin{lem}

Suppose $S$ satisfies (a) and (b).  Then for all formulas $\varphi(x,y)$ with free variables $x,y$, if 
$\varphi(t(\bar{a}),t'(\bar{a}))\in S$ for all terms $t(\bar{a}),t'(\bar{a})$, then\\
$(\forall x)(\forall y)\varphi(x,y)\in S$.

\end{lem}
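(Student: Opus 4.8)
The plan is to bootstrap property (b)—which is exactly the one-variable instance of the desired statement—by applying it twice, stripping off one universal quantifier at a time. The key observation is that substituting a closed term $t(\bar{a})$ into one argument of $\varphi(x,y)$ produces a legitimate $L'$-formula in a single free variable, so property (b) applies to it directly. Since the generators are named by constants in $L'$, these substitutions are harmless, and no new machinery beyond (b) is needed.

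First I would fix an arbitrary closed term $t(\bar{a})$ and consider the formula $\psi_t(y) := \varphi(t(\bar{a}),y)$, which has only $y$ free. For every closed term $t'(\bar{a})$ we have $\psi_t(t'(\bar{a})) = \varphi(t(\bar{a}),t'(\bar{a}))$, and this lies in $S$ by the hypothesis of the lemma. Applying property (b) to $\psi_t(y)$ then yields $(\forall y)\,\varphi(t(\bar{a}),y)\in S$.

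Next I would let $t(\bar{a})$ vary. Define $\chi(x) := (\forall y)\,\varphi(x,y)$, again a formula with a single free variable $x$. The previous step shows precisely that $\chi(t(\bar{a}))\in S$ for every closed term $t(\bar{a})$. A second application of property (b), now to $\chi(x)$, gives $(\forall x)\,\chi(x) = (\forall x)(\forall y)\,\varphi(x,y)\in S$, as required.

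I do not expect a genuine obstacle here: the argument is a clean reduction, and the only point worth checking is that the intermediate formula $(\forall y)\,\varphi(x,y)$ is a bona fide $L'$-formula with $x$ as its sole free variable, which is immediate. The same inductive pattern—peeling one quantifier per application of (b)—extends the statement to tuples of any finite length, though here only the two-variable case is needed.
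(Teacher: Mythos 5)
Your proof is correct and follows essentially the same argument as the paper: both fix a closed term, apply property (b) to the one-variable formula $\varphi(t(\bar{a}),y)$ to conclude $(\forall y)\varphi(t(\bar{a}),y)\in S$ for every $t(\bar{a})$, and then apply (b) a second time to $(\forall y)\varphi(x,y)$ to obtain the full statement. Your write-up is in fact slightly more careful than the paper's, since you explicitly name the intermediate formulas and note they are legitimate $L'$-formulas in one free variable.
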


\begin{proof}

For a fixed term $t(\bar{a})$, suppose $\varphi(t(\bar{a}),t'(\bar{a}))\in S$ for all $t'(\bar{a})$.  By (b), \\
$(\forall y)\varphi(t(\bar{a}),y)\in S$ for all $t(\bar{a})$.  So, by (b), $(\forall x)(\forall y)\varphi(\bar{a},x,y)\in S$.
\end{proof}       

If the orbit of $\bar{a}$ in $\mathcal{A}$ is defined by an $L$-formula $\psi(\bar{x})$, then for each $L'$-sentence $\varphi$, we have $\mathcal{A}\models\varphi(\bar{a})$ iff $\mathcal{A}$ satisfies the $L$-sentences $(\exists\bar{x})(\psi(\bar{x})\ \&\ \varphi(\bar{x}))$ and $(\forall\bar{x})(\psi(\bar{x})\rightarrow\varphi(\bar{x}))$.  

\begin{prop}
\label{parameters}

Let $F$ be the free structure generated by $\bar{a}$.  Suppose that the orbit of $\bar{a}$ is defined by the $L$-formula $\psi(\bar{x})$ and the $L'$-sentence $\psi(\bar{a})$ has limiting density $1$.  Suppose also that for all $L$-sentences $\varphi$, $\varphi$ is true in $F$ if and only if it has limiting density $1$.  Then the same is true for all $L'$-sentences.    
 
\end{prop}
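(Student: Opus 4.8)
The plan is to prove the inclusion $T_F\subseteq S$ directly, where $T_F$ is the set of $L'$-sentences true in $F$ and $S$ is the set of $L'$-sentences of limiting density $1$, and then to invoke Proposition \ref{par-thm}. Since the implication $(1)\Rightarrow(2)$ of that proposition upgrades $T_F\subseteq S$ to $T_F=S$, and $T_F=S$ says exactly that an $L'$-sentence is true in $F$ if and only if it has limiting density $1$, the entire statement reduces to establishing this one inclusion. In particular I would avoid checking conditions (a) and (b) by hand, letting Proposition \ref{par-thm} do that bookkeeping.

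The main device I would use is to translate each $L'$-sentence into an $L$-sentence by means of the defining formula $\psi$. Given an $L'$-sentence $\varphi$, let $\varphi(\bar{x})$ denote the $L$-formula obtained by replacing the constants naming $\bar{a}$ with the variables $\bar{x}$, and set $\varphi^{\forall} := (\forall\bar{x})(\psi(\bar{x})\rightarrow\varphi(\bar{x}))$, which is an $L$-sentence. Suppose $F\models\varphi$, i.e.\ $F\models\varphi(\bar{a})$. Since $\psi$ defines the (automorphism) orbit of $\bar{a}$ in $F$, every tuple $\bar{a}'$ with $F\models\psi(\bar{a}')$ lies in the orbit of $\bar{a}$ and therefore satisfies $\varphi(\bar{a}')\leftrightarrow\varphi(\bar{a})$; as $F\models\varphi(\bar{a})$, this forces $F\models\varphi^{\forall}$. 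This is precisely the $\forall$-half of the orbit observation stated just before the proposition, applied inside $F$.

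Next I would combine two density-$1$ facts. By the hypothesis on $L$-sentences, $F\models\varphi^{\forall}$ yields that the $L$-sentence $\varphi^{\forall}$ has limiting density $1$; and by assumption the $L'$-sentence $\psi(\bar{a})$ has limiting density $1$. Using the basic lemmas that density $0$ is closed under finite unions and is complementary to density $1$, the conjunction $\varphi^{\forall}\wedge\psi(\bar{a})$ again has limiting density $1$. Finally this conjunction logically implies $\varphi$: instantiating $\bar{x}=\bar{a}$ in $\varphi^{\forall}$ gives $\psi(\bar{a})\rightarrow\varphi(\bar{a})$, so any structure satisfying both $\varphi^{\forall}$ and $\psi(\bar{a})$ satisfies $\varphi(\bar{a})$, that is, $\varphi$. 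Since a sentence implied by a density-$1$ sentence has density $1$, $\varphi$ has limiting density $1$, which is the desired inclusion $T_F\subseteq S$.

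The one genuinely substantive point, and the step I would be most careful about, is the asymmetry in where the orbit hypothesis gets used. I invoke ``$\psi$ defines the orbit of $\bar{a}$'' only inside the free structure $F$, to pass from $F\models\varphi$ to $F\models\varphi^{\forall}$; by contrast, the implication $\varphi^{\forall}\wedge\psi(\bar{a})\Rightarrow\varphi$ that is applied across the random structures is purely logical (a single instantiation at $\bar{x}=\bar{a}$) and requires no structural assumption about those structures. This asymmetry is exactly what makes the argument work despite our having no control over the orbit of $\bar{a}$ in an individual random structure $\langle\bar{a}\mid R\rangle$: all we know there is that $\psi(\bar{a})$ holds for density-$1$ many $R$, and that turns out to be enough.
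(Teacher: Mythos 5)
Your proposal is correct and takes essentially the same route as the paper's own proof: the paper likewise replaces $\varphi(\bar{a})$ by the $L$-sentence $\varphi^* = (\forall\bar{x})(\psi(\bar{x})\rightarrow\varphi(\bar{x}))$, observes that it is true in $F$ and hence has limiting density $1$, and then uses closure of the density-$1$ sentences under logical consequence together with the density-$1$ hypothesis on $\psi(\bar{a})$ to conclude that $\varphi(\bar{a})$ has density $1$. Your explicit appeal to Proposition \ref{par-thm} to upgrade $T_F\subseteq S$ to the full equivalence is just a more careful rendering of the step the paper leaves implicit.
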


\begin{proof}

Take an $L'$-sentence $\varphi(\bar{a})$ that is true in $F$.  In $F$, this is equivalent to the $L$-sentence $\varphi^* = (\forall\bar{x})(\psi(\bar{x})\rightarrow\varphi(\bar{x}))$.  The sentence $\varphi^*$ is true in $F$, so it has limiting density $1$.  The set of sentences with limiting density $1$ is closed under logical consequence, so since $\psi(\bar{a})$ has limiting density $1$, it follows that $\varphi(\bar{a})$ has limiting density $1$.
\end{proof}  

\subsection{Generalized bijective structures and sentences with constants}

We have seen that for the basic bijective variety and for the broader class of commutative generalized bijective varieties, when we consider presentations with a single generator and a single identity, the sentences (in the language of the variety) true in the free structure have density $1$.  We can apply Proposition~\ref{par-thm} to extend this to sentences with a constant naming the generator.         

\begin{example}

Let $V$ be a commutative generalized bijective variety in the language $L$.  Consider presentations with a single generator $a$ and a single identity, and let $L'$ be the extension of $L$ with a constant for the generator.  Suppose that the free structure $F$ generated by $a$ is infinite.  In $F$, all elements are automorphic. In particular, $a$ and $t(a)$ are automorphic via the automorphism $x\mapsto t(x)$. Preparing to apply Proposition \ref{par-thm}, we take $\psi(x)$ to be $x = x$.  Clearly, $\psi(a)$  has limiting density $1$.  By Theorem \ref{bij-str-thm}, the $L$-sentences true in the free structure have limiting density $1$. Then  Proposition \ref{par-thm} says that this holds also for the $L'$-sentences (involving $a$).   

\end{example}

For a generating tuple $\bar{a}$, the sentences $\varphi(\bar{a})$ true in the free structure on $\bar{a}$ have density $1$.  To establish this, we need to take a closer look at the formulas $C_{r,n}$ from Section \ref{Gaif-sec} in the expanded language $L'$ with constants naming the generators.  We consider the unary functions $f_i$ as binary relations and the constants $a_i$ as unary relations.  Thus, we have atomic formulas with the meanings $x = a_i$ and $f_i(x) = y$. 

\begin{lem}\label{r-types}

Let $V$ be a commutative generalized bijective variety, and consider presentations with a generating $m$-tuple $\bar{a}$ and a single identity.  Let $F$ be the free structure.  If $F$ is infinite, then for every $r\in \omega$, there is a set $S$ of presentations such that 

\begin{enumerate}

\item $S$ has limiting density $1$, and 

\item for $\alpha(\bar{x})\in C_{r,m}$, the following are equivalent:

\begin{enumerate}

\item  $\alpha(\bar{a})$ holds in $F$,

\item  $\alpha(\bar{a})$ holds in some structure given by a presentation in $S$, 

\item  $\alpha(\bar{a})$ holds in all structures given by a presentation in $S$.  

\end{enumerate}   

\end{enumerate}
If $\mathcal A$ is the structure given by the identity $t_1(a_i) = t_2(a_j)$, then for each $r$, we get an isomorphism $p$ from $B_{2r}(\bar{a})$ in $F$ to $B_{2r}(\bar{a})$ in $\mathcal{A}$, given by $p(u(a_k)) = u(a)$.

\end{lem}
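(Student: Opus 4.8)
The plan is to reduce the entire statement to producing a single density-$1$ set $S$ of presentations on which the ball $B_{2r}(\bar a)$ is reproduced exactly as in $F$. Recall that $F$ is the disjoint union of the $m$ connected components of $a_1,\dots,a_m$, each isomorphic to the infinite free structure on one generator, so the elements of $B_{2r}(\bar a)$ in any of our structures are represented by words $u(a_k)$ with $\len(u)\le 2r$, and the candidate map is $p\big(u(a_k)\big)=u(a_k)$, sending the element of $F$ named by $u(a_k)$ to the element named by the same word in $\mathcal A$. By Lemma~\ref{Lem4.7}(2), the formula $\alpha\in C_{r,m}$ satisfied by $\bar a$ is determined by the isomorphism type of $B_r(\bar a)$ over $\bar a$; hence it suffices to find $S$ of limiting density $1$ such that for every presentation in $S$ the map $p$ is an isomorphism of $B_{2r}(\bar a)^F$ onto $B_{2r}(\bar a)^{\mathcal A}$ fixing $\bar a$. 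Restricting such a $p$ to $B_r(\bar a)$ and applying Lemma~\ref{Lem4.7}(2), the tuple $\bar a$ then satisfies the same $\alpha\in C_{r,m}$ in $F$ and in every $\mathcal A$ coming from $S$; since $S\neq\varnothing$, this is precisely the equivalence of (a), (b), and (c).

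For the set itself I would take, with $e_0=\max_i|\Pi_1(f_i)|$ as in Theorem~\ref{bij-str-thm},
\[ S=\big\{\,t_1(a_i)=t_2(a_j)\ :\ |\Pi_1(t_1)-\Pi_1(t_2)|>4re_0\,\big\}. \]
To see that $S$ has limiting density $1$, I would transfer Lemma~\ref{stat} to the present $m$-generator count: an identity of length $\ell$ is a choice of the ordered pair $(i,j)$ together with a word of length $\ell$ split into $t_1,t_2$, so the number of such identities is $m^2n^\ell(\ell+1)$, and the condition $|\Pi_1(t_1)-\Pi_1(t_2)|<k$ depends only on the word. The generator factor $m^2$ therefore cancels between numerator and denominator, and the proportion of identities of length $\le s$ with $|\Pi_1(t_1)-\Pi_1(t_2)|<4re_0+1$ coincides with the single-generator proportion, which tends to $0$ by Lemma~\ref{stat}. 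Hence $S$ has limiting density $1$.

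The heart of the argument is checking that $p$ is an isomorphism for each presentation in $S$, and here I would split $S$ into the two families identified in the proof of Proposition~\ref{bij-str-par}: the family $M$ with $i=j$ and its complement with $i\neq j$. In both cases the components of the unused generators are free, so their $2r$-balls already match those of $F$. For a presentation $t_1(a_i)=t_2(a_i)$ in $M$, I would pass to the equivalent identity $t^*(a_i)=a_i$ with $t^*=t_1^{-1}t_2$, note $|\Pi_1(t^*)|=|\Pi_1(t_1)-\Pi_1(t_2)|>4re_0$, and invoke Lemma~\ref{coset} and Lemma~\ref{lift}: for $u,v$ with $\len(u),\len(v)\le 2r$ one has $|\Pi_1(u^{-1}v)|\le 4re_0<|\Pi_1(t^*)|$, forcing $u^{-1}v=(t^*)^0$, so $u(a_i)=v(a_i)$ holds in $\mathcal A$ iff it holds in $F$ and no collapsing occurs inside $B_{2r}(a_i)$. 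For a presentation $t_1(a_i)=t_2(a_j)$ with $i\neq j$, the components of $a_i$ and $a_j$ merge into one free component with $a_j=w(a_i)$ for $w=t_2^{-1}t_1$; since each Gaifman step changes $\Pi_1$ by at most $e_0$ and $|\Pi_1(w)|>4re_0$, the distance $d(a_i,a_j)$ exceeds $4r$, so $B_{2r}(a_i)$ and $B_{2r}(a_j)$ are disjoint and each free, again matching $F$. In either case $p$ is a bijection preserving all the relations $f_\bullet(x)=y$ on the ball and fixing $\bar a$, hence an isomorphism.

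I expect the $i\neq j$ case to be the main obstacle: there the structure $\mathcal A$ genuinely identifies the two named components, and the whole point is to show that the identification is pushed beyond radius $2r$. This is exactly what the projection bound $|\Pi_1(t_1)-\Pi_1(t_2)|>4re_0$ buys, through the fact (already used in Lemma~\ref{lift}) that $\Pi_1$ changes by at most $e_0$ along each Gaifman edge. The density input is entirely carried by Lemma~\ref{stat}, and the remaining work is the routine verification that the natural term map respects the function graph, which I would spell out only by checking it on single applications of each $f_\bullet$.
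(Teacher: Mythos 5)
Your proposal is correct and follows essentially the same route as the paper: the same density-one set of presentations defined by a lower bound on $|\Pi_1(t_1)-\Pi_1(t_2)|$ (the paper uses the threshold $e_0r$ with $e_0=\max_i|\Pi_1(f_i)|+4$, you use the equivalent-in-spirit $4re_0$), the same appeal to Lemma~\ref{stat} for the density claim, and the same case split $i=j$ (via Lemma~\ref{coset}) versus $i\neq j$ (via the merged free component as in Proposition~\ref{bij-str-par}) to show that $p$ is an isomorphism of $2r$-balls, from which the equivalence of (a), (b), (c) follows by Lemma~\ref{Lem4.7}. The only cosmetic differences are your slightly cleaner projection constant and your more explicit observation that the $m^2$ generator factor cancels when transferring the single-generator density computation.
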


\begin{proof} 

We use notation from the proof of Theorem \ref{bij-str-thm}. Recall that $\Pi_1$ is the projection onto the copy of $\mathbb{Z}$ generated by $b_1$, where $b_1$ is an element of infinite order in the abelian group $G(V)$ associated with the variety $V$.  Let $S$ be the set of presentations in which the identity $t_1(a_i) = t_2(a_j)$ satisfies $|\Pi_1(t_1) - \Pi_1(t_2)| > e_0r$, where $e_0 = \max_i |\Pi_1(f_i)|+4$. The fact that $S$ has limiting density $1$ follows from the proof of Theorem \ref{bij-str-thm}.

Since the formulas in $C_{r,m}$ uniquely describe the isomorphism type of $B_r(\bar x)$, it suffices to show that $p$ is an isomorphism. We know that $p$ is surjective since $\mathcal A$ is a quotient of $F$.  As in the proof of Theorem \ref{bij-str-thm}, it is also injective. Indeed, if $i= j$, then the projection from $F$ to $\mathcal A$ is injective on the substructure generated by $a_1, \cdots, a_{i-1}, a_{i+1},\cdots, a_m$. On the substructure generated by $a_i$, if $u(a_i) = u'(a_i)$, then we can apply Lemma \ref{coset}, and we see that  (the equivalence class of) $u^{-1}u'$ is in $\langle t_1t_2^{-1} \rangle$ as an element of $G(V)$. Since $|\Pi_1(t_1) - \Pi_1(t_2)| > e_0r$ and the length of $u^{-1}u'$ is at most $4r$, this is not possible. If $i \neq j$, then, as in Corollary \ref{bij-str-par}, the substructure generated by $a_i$ and that generated by $a_j$ are identified via $a_j = t_1t_2^{-1}(a_i)$, while the projection map is injective on the substructure generated by the further generators $a_k$. Thus, if $u(a_i) = u'(a_j)$, then we must have $uu'^{-1} = t_1t_2^{-1}$, but since $|\Pi_1(t_1) - \Pi_1(t_2)| > e_0r$ and the length of $u^{-1}u'$ is at most $4r$, this is again impossible. 

Recall that we are thinking of the language as relational and we have atomic formulas with the meanings $x = a_i$ and $f_i(x) = y$.  The formula saying $x = a_i$ holds exactly on $a_i$, in either $B_{2r}(\bar a;F)$ or $B_{2r}(\bar a; \mathcal A)$.  If the formula saying $f_i(x) = y$ holds in $B_{2r}(\bar a;F)$, then it holds of $p(x)$ and $p(y)$ in $B_{2r}(\bar a; \mathcal A)$ because $\mathcal A$ is a quotient of $F$.  Thus, we have $f_i(p(x)) = p(y)$.  Finally, suppose that in $B_{2r}(\bar{a};\mathcal{A})$, $f_i(p(x)) = p(y)$. Then $p(x) = u(a_j)$ for some $j$ and $p(y) = f_i(u(a_j))$.  However, the map $p$ is bijective, so $F\models x = u(a_j)$ and $F\models y = f_i(u(a_j))$ as well.  Then $f_i(x) = y$ holds in $B_{2r}(\bar a;F)$. This shows that $p$ is an isomorphism, completing the proof. 
\end{proof}

\begin{thm}\label{gen-bij-par}

Let $V$ be a commutative generalized bijective variety in the language $L$, and suppose that the free structure on one generator is infinite.  Consider presentations with a fixed generating $m$-tuple $\bar a$ and a single identity.  Let $F$ be the free structure on $\bar{a}$.  Let $L'$ be the result of adding constants for the elements of $\bar{a}$ to $L$.  Then an $L'$-sentence is true in $F$ iff it has limiting density $1$.   

\end{thm}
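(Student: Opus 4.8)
The plan is to combine Gaifman's Locality Theorem with Lemma \ref{r-types} and Proposition \ref{bij-str-par}, so that the only genuinely new input is the control of local formulas on the balls around the named generators. Since replacing each constant $a_i$ by a fresh variable $x_i$ turns an $L'$-sentence into an $L$-formula $\varphi(\bar x)$ with $\varphi\equiv\varphi(\bar a)$, it suffices to show that for every $L$-formula $\varphi(\bar x)$ the event ``$\varphi(\bar a)$ holds in the structure given by the presentation'' has limiting density $1$ if $F\models\varphi(\bar a)$ and $0$ otherwise. By Theorem \ref{Gaifman}(2), applied in the base language $L$ with the generators as free variables, $\varphi(\bar x)$ is equivalent over $T$ to a finite disjunction $\bigvee_i\big(\alpha_i(\bar x)\ \&\ \beta_i\big)$, where each $\alpha_i(\bar x)$ is a single local formula, say $\alpha_i\in C_{r_i,m}$, and each $\beta_i$ is a finite conjunction of local sentences and negations of local sentences.

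First I would handle the local-formula factors. Set $r=\max_i r_i$; since every formula of $C_{r',m}$ with $r'\le r$ is a disjunction of formulas of $C_{r,m}$, it is enough to control $C_{r,m}$. By Lemma \ref{r-types} there is a set $S$ of presentations of limiting density $1$ such that, for every $\alpha(\bar x)\in C_{r,m}$ and every presentation in $S$, $\alpha(\bar a)$ holds in the resulting structure iff $\alpha(\bar a)$ holds in $F$; in particular each $\alpha_i(\bar a)$ takes its $F$-truth-value on $S$. Next I would handle the local-sentence factors. Each $\beta_i$ is a Boolean combination of local sentences, and these are genuine $L$-sentences, involving no constants and hence no generators. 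By Proposition \ref{bij-str-par}, every $L$-sentence true in the free structure on $\bar a$ has limiting density $1$ and every one false in $F$ has limiting density $0$; applying this to the finitely many local sentences occurring in the $\beta_i$, each of them takes its $F$-truth-value on a density-$1$ set, and hence so does each $\beta_i$.

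Finally I would assemble the pieces. Only finitely many local formulas $\alpha_i$ and local sentences are involved, so intersecting $S$ with the finitely many density-$1$ sets just produced yields a single set $U$ of presentations of limiting density $1$ (its complement is a finite union of density-$0$ sets, hence of density $0$, by the union bound $P_s(A\cup B)\le P_s(A)+P_s(B)$). On $U$ the truth value of each $\alpha_i(\bar a)$ and each local sentence agrees with its value in $F$, so the truth value of the Gaifman normal form, and therefore of $\varphi(\bar a)$, agrees with its value in $F$. Thus $\varphi(\bar a)$ has limiting density $1$ when $F\models\varphi(\bar a)$ and $0$ otherwise. In the language of Proposition \ref{par-thm} this establishes $T_F\subseteq S$, so by the implication $(1)\Rightarrow(2)$ there, the $L'$-sentences true in $F$ are exactly those of limiting density $1$.

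The main obstacle is already isolated in Lemma \ref{r-types}: it is what guarantees that, with density $1$, the local structure around the named generators in a random single-identity presentation looks exactly like $F$, and its proof is precisely where the infinitude of the free structure and the projection estimate $|\Pi_1(t_1)-\Pi_1(t_2)|>e_0 r$ enter. The one point requiring care is to run the Gaifman reduction in $L$ rather than in $L'$, so that the factors $\beta_i$ carry no constants and Proposition \ref{bij-str-par} applies to them verbatim, while all dependence on the generators is quarantined into the single local formula $\alpha_i(\bar x)$ governed by Lemma \ref{r-types}.
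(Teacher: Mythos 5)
Your proof is correct and follows essentially the same route as the paper's: Gaifman normal form taken in the base language $L$, Lemma \ref{r-types} to control the local formulas at the named generators, the constant-free density result to control the local sentences, and an intersection of finitely many density-$1$ sets. If anything, your version is slightly more careful than the paper's own writeup---you cite Proposition \ref{bij-str-par} (the $m$-generator statement) rather than the single-generator Theorem \ref{bij-str-thm}, you uniformize the locality radius across disjuncts, and you track both truth values so that the ``iff'' follows cleanly via Proposition \ref{par-thm}.
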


\begin{proof}

Let $\varphi'$ be an $L'$-sentence that is true in $F$, so $\varphi' = \varphi(\bar a)$ for some $L$-formula $\varphi$.  By Theorem \ref{Gaifman}, $\varphi(\bar x)$ can be expressed as a finite disjunction $\bigvee_i \varphi_i(\bar x)$ for $\varphi_i(\bar x)$ of the form $\rho_i(\bar x)\ \&\ \chi_i $, where $\rho_i(\bar x) \in C_{r,m}$ and $\chi_i$ is a conjunction of special sentences and negations of special sentences.  Recall that the special sentences have the form $(\exists v_1, \cdots, v_s) \left( \bigwedge\limits_{i} \alpha_i(v_i) \ \& \bigwedge\limits_{i<j}d^{>2r}(v_i,v_j)\right)$, where $\alpha_i(v_i)\in C_{r,1}$ and $d^{>2r}(v_i,v_j)$ is the formula saying that the distance between $v_i$ and $v_j$ in the Gaifman graph is greater than $2r$.  Since $F \models \varphi(\bar a)$, we have $F \models \varphi_i(\bar a)$ for all $i$, i.e., $F \models \rho_i(\bar a)\ \&\ \chi_i$. Since $\chi_i$ is an $L$-sentence true in $F$, it has limiting density $1$ by Theorem~\ref{bij-str-thm}.  On the other hand, $\rho_i(\bar x)\in C_{r,m}$, and by Lemma \ref{r-types}, $\rho_i(\bar a)$ also has limiting density $1$. Thus, $\varphi'$ has limiting density $1$.
\end{proof}  

\section{More examples}\label{sec:MoreExamples}

In Section \ref{illu-ex}, we gave some examples illustrating different possible behaviors of limiting density.  We considered sentences with no constants.  In Section \ref{general}, we gave conditions guaranteeing that the sentences with limiting density $1$ are those true in the free structure.  In Section \ref{naming}, we gave some results for sentences with constants naming the generators.  In the current section, we look again at some of the examples from Section \ref{illu-ex} in light of the results from Sections \ref{general} and \ref{naming}.  We we also give some further examples, illustrating more subtle points suggested by these results.     

\subsection{Examples of Proposition~\ref{par-thm}}

Let $V$ be a variety in the language $L$.  Consider presentations with a fixed tuple $\bar{a}$ of generators, and some number of identities, and let $L'$ be the result of adding to $L$ constants for the generators.  Here, for reference, is the statement of Proposition~\ref{par-thm}.

\bigskip
\noindent
\textbf{Proposition \ref{par-thm}}.  Let $T_F$ be the set of $L'$-sentences true in the free structure $F$ generated by $\bar{a}$, and let $S$ be the set of $L'$-sentences with limiting density $1$.  Then the following are equivalent:

\begin{enumerate}

\item  $T_F\subseteq S$, 

\item $T_F = S$,

\item  $S$ has the following two properties: 

\begin{enumerate}

\item $S$ includes the sentences from $T_F$ of the form $t(\bar{a}) \not= t'(\bar{a})$, 

\item  for any $L'$-formula $\varphi(x)$ with just $x$ free, if $\varphi(t(\bar{a}))\in S$ for all closed terms $t(\bar{a})$, then $(\forall x)\varphi(x)\in S$.

\end{enumerate}

\end{enumerate}              

The proposition says that conditions (a) and (b) are necessary and sufficient for the $L'$-sentences true in $F$ to have density $1$.  We revisit some examples and see what the result says about them.

\subsubsection{Generalized bijective structures} 

In Theorem~\ref{gen-bij-par}, we saw that for the variety of generalized bijective structures and presentations with a single generator $a$ and a single identity, any sentence, possibly involving the constant $a$, has limiting density $1$ iff it is true in the free structure on $a$.  Hence, we must have both properties (a) and (b) from Proposition~\ref{par-thm}.

\subsubsection{Abelian groups}

In Section~\ref{abelian groups}, we saw that for abelian groups and presentations with a single generator and a single relator, the zero--one law fails.  

\begin{prop}

For abelian groups and presentations with a single generator and a single relator, Property (a) holds and Property (b) fails, witnessed by the formulas $\varphi(a,x)$ saying that $p^{n+1}x \not= p^na$, where $p$ is an odd prime. 

\end{prop}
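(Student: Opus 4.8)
The plan is to reduce the whole statement to arithmetic of the integer invariant $X$. Since the only closed terms in $L'$ are the terms $ka$ for $k \in \mathbb{Z}$, every atomic $L'$-sentence $t(a) = t'(a)$ is equivalent to one of the form $ja = 0$ with $j \in \mathbb{Z}$, and the structure attached to a relator of length $m$ with invariant $X = \sum_i d_i$ is $C_{|X|}$ (with $C_0 = \mathbb{Z}$, the free structure). In $C_{|X|}$ the equation $ja = 0$ holds exactly when $X \neq 0$ and $|X| \mid j$; for a fixed $j \neq 0$ this forces $X$ into the finite set consisting of the nonzero divisors of $j$ together with their negatives.

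For Property (a), I would note that the counting here is identical to that of Section~\ref{early-examples}: there are $2^m$ relators of length $m$, and $X$ has the same Pascal-triangle distribution, so for each fixed $k \in \mathbb{Z}$ the set $\{X = k\}$ has limiting density $0$ (the argument of Proposition~\ref{alpha} and Lemma~\ref{later}). If $F = \mathbb{Z} \models t(a) \neq t'(a)$, then the corresponding $j$ is nonzero, so the set of relators making $ja = 0$ is a finite union of density-$0$ sets $\{X = d\}$ and hence has density $0$; thus $t(a) \neq t'(a)$ has density $1$ and lies in $S$. This establishes (a).

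For the failure of (b), take $\varphi(x)$ to be $p^{n+1}x \neq p^n a$. Substituting a closed term $x = ka$ yields the sentence $(p^{n+1}k - p^n)a \neq 0$, i.e. $p^n(pk - 1)a \neq 0$. Since $pk - 1 \equiv -1 \pmod p$, the coefficient $p^n(pk-1)$ is never zero, so by Property (a) every instance $\varphi(ka)$ has density $1$ and belongs to $S$; the hypothesis of (b) thus holds. It remains to show $(\forall x)\varphi(x) \notin S$. Working in $C_m$, the equation $p^{n+1}x = p^n a$ is solvable iff $\gcd(p^{n+1}, m) \mid p^n$; writing $m = p^r m'$ with $p \nmid m'$ gives $\gcd(p^{n+1},m) = p^{\min(n+1,r)}$, so solvability fails exactly when $r \geq n+1$. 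Hence $(\forall x)\varphi(x)$ holds in $C_m$ iff $p^{n+1} \mid m$ (the same truth condition as $\beta(p,n,1)$ in Lemma~\ref{beta-condition}), and it holds in $\mathbb{Z}$. Up to the density-$0$ event $X = 0$, this is precisely the event $p^{n+1} \mid X$, which by Lemma~\ref{beta-prob} has limiting density $\frac{1}{p^{n+1}}$. For an odd prime $p$ this value is at most $\frac13 < 1$, so $(\forall x)\varphi(x) \notin S$ and (b) fails.

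The only non-routine step is pinning down the exact truth condition for $(\forall x)\varphi(x)$ in each one-generated group and matching it to the invariant computed in Lemma~\ref{beta-prob}; the rest is bookkeeping with the integer $X$. I would also emphasize why $p$ must be odd: Lemma~\ref{beta-prob} supplies the clean density $\frac{1}{p^{n+1}}$ only for odd primes, and for $p = 2$ the limiting density of $2^{n+1}\mid X$ does not even exist, so the witnessing formulas must avoid $p = 2$.
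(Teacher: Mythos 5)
Your proof is correct and rests on the same key ingredients as the paper's (the density-zero counting behind Lemma \ref{later} and the pair of Lemmas \ref{beta-condition} and \ref{beta-prob}), but it differs in execution at both steps, and in one place it is actually more careful than the paper. For Property (a), the paper asserts that a sentence $ma \neq 0$ (with $m \neq 0$) is in $S$ ``since all relators longer than $|m|$ make it true''; taken literally this is false (a relator of length $6$ with $X = 2$ yields $C_2$, where $2a = 0$ holds), and your argument --- that failure of $ja \neq 0$ forces $X$ into the finite set of nonzero divisors of $j$ and their negatives, each value having density $0$ --- is the correct repair and presumably what the authors intended. For the failure of (b), the paper argues indirectly: if $(\forall x)\, p^{n+1}x \neq p^n a$ were in $S$, then, since this sentence logically implies $\beta(p,n,1)$ (the element $p^n a$ is the witness) and $S$ is closed under logical consequence, $\beta(p,n,1)$ would lie in $S$, contradicting its limiting density $\frac{1}{p^{n+1}}$. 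You instead compute the exact truth condition of the universal sentence in each $C_m$ via the gcd solvability criterion, observe that it coincides with the truth condition of $\beta(p,n,1)$, and conclude that the universal sentence itself has limiting density exactly $\frac{1}{p^{n+1}}$. Your route yields slightly more information (the precise density, not merely that it is not $1$) at the cost of the solvability computation; the paper's route avoids that computation but leans on the closure of $S$ under implication. One small caveat on your closing remark: oddness of $p$ is needed to quote Lemma \ref{beta-prob} for a clean value, but formulas with $p = 2$ would in fact still witness the failure of (b), since the density of $2^{n+1} \mid X$ oscillates between two values both bounded away from $1$, so the universal sentence is still not in $S$.
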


\begin{proof}

The free structure is $\Z$.  Take a sentence of the form $ma\not= 0$.  This is true in $\Z$, and the sentence is in $S$ since all relators longer than $|m|$ make it true.  Thus, Property (a) holds.  Now fix $n$ and an odd prime $p$.  For all closed terms $t(a)$, the sentence $p^{n+1}t(a)\not= p^n(a)$ is in $S$.   
We have $p^{n+1}t(a)\not= p^na$ for all terms $t(a) = ma$.  By Property (a), the sentences $p^{n+1}t(a)\not= p^na$ are all in $S$. If we had Property (b), then the sentence $(\forall x) p^{n+1}x\not= p^na$ would be in $S$. However, recall from Section \ref{abelian groups} that the sentence $\beta(p,n,1)$ says that there is an element divisible by $p^n$ and not by $p^{n+1}$.  This is true in $\Z$ but not in $S$---by Lemmas \ref{beta-condition} and \ref{beta-prob}, the limiting density is $\frac{1}{p^{n+1}}$.  Since $p^na = p^na$ is logically valid, $p^na$ is divisible by $p^n$ in all models.  Thus, if the sentence $(\forall x) p^{n+1}x\not= p^na$ is in $S$, then $\beta(p,n,1)$ is in $S$---in the models satisfying $(\forall x) p^{n+1}x\not= p^na$, $p^na$ is not divisible by $p^{n+1}$.  This is a contradiction.
\end{proof}  

\subsubsection{Structures with a single unary function, one generator, and one identity}

The next example is from Section \ref{basic-single-unary}.  The variety of unary functions has a single unary function symbol $f$ and no axioms.      

\begin{prop}

For the variety of unary functions and presentations of the form $a|f^m(a) = f^n(a)$, Property (a) holds and Property (b) fails. 

\end{prop}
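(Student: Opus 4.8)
The plan is to verify both properties by direct density computations, leaning on the structure theory already established in Section~\ref{basic-single-unary}. Recall that the free structure $F$ is the $\omega$-chain, in which the closed $L'$-terms $f^k(a)$ ($k\ge 0$) are pairwise distinct, and that a presentation $a\mid f^m(a)=f^n(a)$ produces the $\omega$-chain when $m=n$ and otherwise the structure $\mu+\Z_c$ with tail length $\mu=\min(m,n)$ and cycle length $c=|m-n|$. In $\mu+\Z_c$ two closed terms $f^i(a),f^j(a)$ with $i<j$ are equal exactly when $i\ge\mu$ and $c\mid(j-i)$.

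For Property~(a), I would show that each atomic inequality $f^i(a)\ne f^j(a)$ that is true in $F$, i.e.\ with $i\ne j$, has limiting density $1$. Fixing $i<j$ and writing $d=j-i$, a presentation forces $f^i(a)=f^j(a)$ only if $\mu\le i$ and $c\mid d$; since $c\mid d$ gives $c\le d$, this yields $\max(m,n)=\mu+c\le i+d=j$, so both $m,n\le j$ and there are only finitely many such presentations. As $P_s=\tfrac12(s^2+3s+2)\to\infty$, the collapsing set has density $0$, whence $f^i(a)\ne f^j(a)$ has density $1$ and lies in $S$. This establishes Property~(a).

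For Property~(b), I would exhibit a single witness. Take $\varphi(x)$ to be $\neg(\exists y_1)(\exists y_2)\bigl(y_1\ne y_2\ \&\ f(y_1)=x\ \&\ f(y_2)=x\bigr)$, saying that $x$ has no two distinct $f$-preimages. For a fixed $k$, the element $f^k(a)$ has two preimages precisely when it is identified with the merge point of the tail and cycle; a short count shows that the number of presentations of length $\le s$ with this feature is $O(s)$ (the dominant family being $f^k(a)=f^{k+c}(a)$ with $c$ ranging up to $s-2k$), so they have density $0$ and $\varphi(f^k(a))\in S$ for every $k$. On the other hand $(\forall x)\varphi(x)$ asserts that $f$ is injective, and its negation is exactly the sentence saying that $f$ is not $1-1$, which was shown in Section~\ref{basic-single-unary} to have density $1$. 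Hence $(\forall x)\varphi(x)$ has density $0$ and is not in $S$, so Property~(b) fails.

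The counting is routine, reducing to estimates of the type already carried out in Section~\ref{basic-single-unary}; the only point needing care---and the closest thing to an obstacle---is the bookkeeping in Property~(a), where I must confirm that the conditions $\mu\le i$ and $c\mid d$ really bound $\max(m,n)$ by $j$ and thus make the collapsing set finite rather than merely sparse. Once that finiteness is pinned down, the remaining density estimates are immediate.
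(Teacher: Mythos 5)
Your proof is correct and follows essentially the same route as the paper: Property (a) is verified by showing that the presentations collapsing a fixed inequality $f^i(a)\not=f^j(a)$ are negligible (you even get finiteness, via $\max(m,n)\leq j$, where the paper settles for density $0$), and Property (b) is refuted by a formula $\varphi(x)$ whose universal closure is injectivity of $f$, combined with the fact from the single-unary-function section that non-injectivity has limiting density $1$. The only difference is cosmetic: your $\varphi(x)$ says that $x$ has at most one $f$-preimage, while the paper's $\varphi(x) = (\forall y)(x\not=y\rightarrow f(x)\not=f(y))$ says that no other element shares $x$'s image---dual formulations of the same idea, each requiring only a routine $O(s)$-versus-$\Theta(s^2)$ count for the termwise density-$1$ claim.
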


\begin{proof}   

To show that Property (a) holds, consider a sentence of the form $f^i(a) \neq f^j(a)$.  Note that the set of presentations with identity $f^n(a) = f^m(a)$ for $n,m > i+j+1$ has limiting density $1$.  Moreover, for any such presentation, we recall from Section \ref{basic-single-unary} that the resulting structure is a finite chain leading to a finite cycle where the chain is longer than both $i+1$ and $j+1$.  Then we have $f^i(a) \neq f^j(a)$ in the structure.  Thus, $f^i(a) \neq f^j(a)$ is in $S$. 

To show that Property (b) fails, let $\varphi(x) = (\forall y)(x\not= y\rightarrow f(x)\not= f(y))$. We will show that this witnesses the failure of (b). For any fixed $x = f^i(a)$, note that the set of presentations with identity $f^n(a) = f^m(a)$ for $n,m > i+1$ has limiting density $1$. In any such presentation, we have $f^{j+1}(a) \neq f(y)$ unless $y = f^j(a)$.  Thus, the sentence saying that the formula $$\varphi(x) = (\forall y)(x\not= y\rightarrow f(x)\not= f(y))$$ holds for $x = f^i(a)$ is in $S$ for any closed term $f^i(a)$. On the other hand, the sentence $(\forall x)(\forall y)(x\not= y\rightarrow f(x)\not= f(y))$ saying that $f$ is injective has limiting density $0$, as shown in Section \ref{basic-single-unary}.  Thus Property (b) fails in this variety. 
\end{proof} 

\subsubsection{A new example}

In the next example, we modify the variety of bijective structures to obtain an example in which Property (b) holds but Property (a) fails.  

\begin{example}

Let $L_c$ be the language that consists of unary function symbols $S$, $S^{-1}$ and a constant $c$, and let $V$ be the variety with axioms saying that $S$ and $S^{-1}$ are inverse functions and $S^3(c) = c$.  Consider presentations with a single generator $a$ and one identity.  For the resulting structure $A$, let $A_a$ be the cycle generated by $a$ and let $A_c$ be the cycle generated by $c$.  We describe the structures obtained from all possible identities, and we give some limiting densities.      

\begin{enumerate}

\item  Let $S_1$ be the set of identities of the form $S^n(a) = S^m(a)$.  This has density $\frac{1}{4}$.  If $k = |m - n|$, then $A_a$ is a $k$-cycle if $k > 0$ and a $\Z$-chain if $k = 0$. $A_c$ is always a $3$-cycle in this case.

\item  Let $S_2$ be the set of identities of the form $S^n(c) = S^m(c)$.  This has density $\frac{1}{4}$.  Then $A_a$ is a $\Z$-chain (always the same), and $A_c$ is a $3$-cycle or a $1$-cycle.     

\item  Let $S_3$ be the set of identities of the form $S^n(a) = S^m(c)$ or $S^n(c) = S^m(a)$.  This set has density $\frac{1}{2}$.  In the resulting structure, $A_a = A_c$ is a $3$-cycle.

\end{enumerate}

To see that Property (a) fails, consider the sentence $c \neq S(c)$. This is true in the free structure but fails exactly in the subset of $S_2$ where $A_c$ is a 1-cycle, which has limiting density $\frac16$.

To show that Property (b) holds, assume for some $\varphi(x)$, $\varphi(t)$ has limiting density 1 for all closed terms $t = t(c,a)$. We will show that for $i = 1,2,3$, the set of identities in $S_i$ for which the resulting structure satisfies $(\forall x)\varphi(x)$ has the same density as $S_i$.  For any finite set $\sigma$ of closed terms $t = t(c)$ or $t = t(a)$, the sentence $\psi(c,a) = \bigwedge_{t\in\sigma} \varphi(t)$ has density $1$.  This makes the case $S_3$ easy.  For the structures given by identities in $S_3$, all $x$ are named by terms $c$, $S(c)$, or $S^2(c)$.   

For the remaining cases, we use Gaifman's Locality Theorem.  Consider a formula $\varphi'(u,v,x)$ in the language of bijective structures such that $\varphi(x) = \varphi'(c,a,x)$.  By Gaifman, $\varphi'(u,v,x)$ is equivalent in bijective structures to a formula $\bigvee_i(\alpha_i(u,v,x)\ \&\ \beta_i)$, where for some $r$, $\beta_i$ is a conjunction of local sentences and negations, each $r'$-local for $r'\leq r$, and $\alpha_i(u,v,x)$ is an $r$-local formula that describes the union of the $r$-balls around $u,v,x$.  

For identities in $S_2$, $A_c$ may have one element or three, and $A_a$ is fixed.  Let $\sigma$ be a finite set with closed terms naming the elements of $A_c$ and the elements of $A_a$ that are not far from $a$, with $d(x,a)\leq 2r$, plus one more element $x = t^*(a)$ where $d(x,a) > 2r$.  The sentence $\psi(a,c)$ saying that $\varphi(t)$ holds for all of these terms has density $1$.  For the other $x\in A_a$, the ones far from $a$, the balls $B_r(x)$ are isomorphic.  If $t^*(a)$ satisfies $\alpha_i(c,a,x)$, then all elements do, so $\varphi(x)$ holds.   Then $(\forall x)\varphi(x)$ has density $1$.       

Finally, for an identity in $S_1$, $A_c$ is a fixed $3$-cycle, while $A_a$ varies with the identity.  Consider the disjuncts $\alpha_i(c,a,x)\ \&\ \beta_i$ that might be satisfied by some $x\in A_a$.  The same identities also yield plain bijective structures $A_a$.  Let $\alpha'_i(a,x)$ be the part of $\alpha_i(c,a,x)$ describing the $r$-balls around $a$ and $x$.  For $x = t(a)$, $x$ satisfies $\alpha_i(c,a,x)\ \&\ \beta_i$, iff $\beta_i$ holds in $A_a\cup Z_3$ and $\alpha'_i(a,x)$ holds in $A_a$. For each $\beta_i$, there is a sentence $\beta'_i$ such that for the structures $A$ given by an identity in $S_1$, $A\models \beta_i$ iff $A_a\models \beta_i'$.  We may take $\beta'_i$ to be a finite disjunction of conjunctions of sentences that, in the setting of bijective structures, are $r'$-local sentences or negations.  We can see this by using the Feferman-Vaught Theorem or, less formally, just by thinking about what $\beta_i$ says.  Let $\varphi'(a,x) = \bigvee_i\alpha'_i(a,x)\ \&\ \beta'_i$.  For $x = t(a)$, the formula $\varphi'(a,x)$ has density $1$ in the bijective structure $A_a$.  Then by our earlier result, $(\forall x)\varphi'(x)$ has density $1$.  For each bijective structure generated by $a$ in which $(\forall x)\varphi'(x)$ is true, we consider the structure to be $A_a$, and in the variety we are currently considering, $A = A_a\cup Z_3$ satisfies $(\forall x)\varphi(x)$, so this has density $1$.  

\end{example}              

This example also shows that the zero--one law may fail if we allow constants in the language, giving an obstacle for generalizing Theorem \ref{bij-str-thm} (on the zero--one law for generalized bijective structures) to varieties in a language with constants.  Note that in Section~\ref{naming}, we did add constants naming a tuple of generators.  However, these constants were not part of the language of the variety---they did not appear in the axioms. 

\subsection{Structures with a single unary function and more generators and identities}

For the language with a single unary function symbol $f$ and the variety with no axioms, we saw in Section \ref{basic-single-unary} that for presentations with a single generator and a single identity, the zero--one law holds, but the limiting theory is not that of the free structure.  In particular, the sentence $\varphi$ saying that $f$ is not injective has density $1$, but it is false in the free structure. We now consider presentations with multiple generators and identities.

\begin{prop}

Let $L$ be the language with a single unary function symbol $f$, and let $V$ be the variety with no axioms.  For presentations with $m$ generators and $k$ identities, the sentence $\varphi = (\exists x,y)(f(x) = f(y)\ \&\ x\neq y)$ has density $1$.

\end{prop}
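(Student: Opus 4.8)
The plan is to prove the complementary statement that $\neg\varphi$, the assertion that $f$ is injective, has limiting density $0$. By Proposition~\ref{unorderedsets.v} I may compute densities using ordered $k$-tuples of identities with repetition allowed, so that the $k$ identities become independent and uniformly distributed over the identities of length at most $s$. I model the free structure $F$ on $\bar a$ as $\{1,\dots,m\}\times\omega$ with $f(i,t)=(i,t+1)$, and recall that $\langle\bar a\mid R\rangle$ is $F$ modulo the congruence $\sim$ generated by the pairs coming from $R$. The first (routine) step is to record that $\sim$ is exactly the equivalence relation generated by the ``shifted'' pairs $\{((i_t,r_t+e),(j_t,r'_t+e)):t\le k,\ e\ge 0\}$, where the $t$-th identity is $f^{r_t}(a_{i_t})=f^{r'_t}(a_{j_t})$; thus $x\sim y$ iff $x$ and $y$ are joined by a finite path through such pairs. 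In this description, $f$ is injective on the quotient precisely when $f(x)\sim f(y)$ implies $x\sim y$ for all $x,y\in F$.

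The heart of the argument is the following observation. Call an identity a \emph{non-trivial interior gluing} if both of its exponents are at least $1$ and its two sides are distinct elements of $F$ (this rules out only $f^r(a_i)=f^r(a_i)$). I would prove: if $\langle\bar a\mid R\rangle$ is injective and some identity, say $f^r(a_i)=f^{r'}(a_j)$ with $r\le r'$, is a non-trivial interior gluing, then some identity of $R$ has an exponent equal to $0$. Starting from the generating pair $(i,r)\sim(j,r')$ and applying injectivity repeatedly (from $(i,s)=f(i,s-1)\sim f(j,s'-1)=(j,s')$ one recovers $(i,s-1)\sim(j,s'-1)$), I descend $r$ steps to obtain $(i,0)\sim(j,r'-r)$. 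Since the gluing is non-trivial, $(i,0)\ne(j,r'-r)$, so the class of the generator $(i,0)$ is non-trivial and hence $(i,0)$ must occur in one of the generating pairs. But a generating pair can contain $(i,0)$ only if the shift $e$ and the corresponding exponent both vanish, i.e.\ only if some identity has an exponent equal to $0$. This is the step I expect to be the main obstacle: it captures the precise sense in which the merge created by an interior gluing can be undone by the other identities only by ``absorbing'' the source generator, and absorbing a source requires a zero exponent. This is what makes non-injectivity robust under adding further identities.

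With the observation in hand, the proof finishes by counting. Let $G$ be the event that some identity is a non-trivial interior gluing and $Z$ the event that some identity has an exponent equal to $0$. The observation shows that injectivity implies $\neg G$ or $Z$, so the set of presentations yielding injective $f$ is contained in $\neg G\cup Z$. For fixed $m$ the number of identities of length at most $s$ is $\Theta(s^2)$, while the number with a zero exponent, and the number that are trivial, are each $\Theta(s)$; hence a uniformly random identity has a zero exponent, or is trivial, with probability $\Theta(1/s)$. Consequently the density of $Z$ is at most $k\cdot\Theta(1/s)\to 0$, and since $\neg G$ forces every one of the $k$ independent identities to be trivial or to have a zero exponent, the density of $\neg G$ is $\bigl(\Theta(1/s)\bigr)^{k}\to 0$. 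Therefore $\neg\varphi$ has limiting density $0$, and $\varphi$ has limiting density $1$.
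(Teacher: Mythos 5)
Your proof is correct, but it takes a genuinely different route from the paper's. The paper argues directly: it shows that the presentations in which \emph{every} identity $f^p(a_i)=f^q(a_j)$ has $p,q\geq 1$ and $p\neq q$ form a set of density $1$, and on that set it exhibits explicit witnesses to non-injectivity --- choose an identity $f^n(a_i)=f^q(a_j)$ where $n$ is minimal among the exponents of sides involving $a_i$, and take $x=f^{n-1}(a_i)$, $y=f^{q-1}(a_j)$; the minimality of $n$ is what guarantees $x\not\sim y$ in the quotient, a verification the paper leaves implicit. You instead argue contrapositively: assuming the quotient is injective, your descent uses injectivity itself to pull an interior gluing down to $(i,0)\sim(j,r'-r)$, and the description of the congruence then forces some identity to have a zero exponent. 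Your key lemma is thus different from the paper's witness construction, and it gives a weaker sufficient condition for $\varphi$ (one non-trivial interior gluing plus no zero exponents, rather than all $k$ identities being good), though both conditions have density $1$. The counting is organized differently as well: the paper directly compares binomial coefficients counting unordered $k$-sets, while you pass to i.i.d.\ identities via Proposition~\ref{unorderedsets.v} and use a union bound for $Z$ and a product bound for $\neg G$; both versions rest on the same rarity facts, namely that zero-exponent and trivial identities number $\Theta(s)$ out of $\Theta(s^2)$. A side benefit of your descent is that it makes fully rigorous the distinctness of the glued pair in the quotient, which the paper asserts without argument; a benefit of the paper's route is that it is self-contained and does not need the reduction to ordered tuples.
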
  

Let the generators be $a_1,\ldots,a_m$.  The identities have the form $f^p(a_i) = f^q(a_j)$.  As before, the sentence $\varphi$ is true if the chosen identities all satisfy that $p,q$ are both non-zero and $p\neq q$. Indeed, in this case, without loss of generality, we may take $n,i$ such that $f^n(a_i)$ appears as one side of some identity and there is no $m<n$ such that $f^m(a_i)$ appears as one side of some identity. Suppose $f^n(a_i) = f^q(a_j)$ is one of the identities. Then $x = f^{n-1}(a_i)$ and $y = f^{q-1}(a_j)$ witness $\varphi$.  

We can show that $\varphi$ has density $1$.  The number of identities of length $r$ is $m^2(r+1)$.  The number of length at most $s$ is $m^2(1+2+\cdots + (s+1)) = m^2\frac{(s+2)(s+1)}{2}$.  The number of unordered sets of $k$ identities of length at most $s$ is $P_s = \left(\begin{array}{c}
m^2\frac{(s+2)(s+1)}{2}\\k\end{array}\right)$.  We count the identities of length $r$ such that $p,q\neq 0$ and $p\neq q$.  If $r$ is even, then there are at most $3m^2$ identities of length $r$ for which the condition fails; namely, $f^r(a_i) = a_j$, $a_i = f^r(a_j)$, and $f^{r/2}(a_i) = f^{r/2}(a_j)$.  (If $r$ is odd, then the number is at most $2m^2$.)  

Thus, there are at least $m^2(r-2)$ identities of length $r$ satisfying the condition, and there are at least $m^2\frac{(s-1)(s-2)}2$ identities of length at most $s$ satisfying the condition. Let $A$ be the set of presentations with all identities satisfying the condition.  Then $P_s(A) \ge \left(\begin{array}{c}m^2\frac{(s-1)(s-2)}{2} \\k\end{array}\right)$.        

It is now a calculus exercise to show that $\frac{P_s(A)}{P_S} \to 1$, and the proposition follows.

f

\begin{remark}

We saw that when $m = k = 1$, the zero--one law holds. However, it does not hold in the case where $m = 1$ and $k = 2$.  Suppose the two identities are $f^p(a) = f^q(a)$, $f^{p'}(a) = f^{q'}(a)$, and consider the sentence $\psi = (\exists x) f(x)~=~x$. This case is similar to the case of bijective structures with two identities in Section \ref{bij-two-id}.  The sentence $\psi$ is true if and only if $GCD(p-q,p'-q') = 1$. An argument like that in Section \ref{bij-two-id} shows that $\psi$ has density strictly between $0$ and $1$.  We omit the proof here. 

\end{remark}

\subsection{Structures with multiple unary functions}\label{subsec:mult-unary-fns}

We turn our attention to a more complicated case.  Take the language with $n$ function symbols $f_1,\ldots,f_n$ and the variety with no axioms, and consider presentations with $m$ generators and $k$ identities.  We begin with the case where $k = 1$.     

\begin{prop}\label{prop:FinFreeStruct}

Let $\varphi$ be the sentence 
\[(\exists x)(\exists y)\left(x\not= y\ \&\ \bigvee_{1\leq i,j\leq n} f_i(x) = f_j(y)\right)\ .\]  
This sentence is false in the free structure, but it has limiting density $1$ among structures given by presentations with generators $a_1, \ldots, a_m$ and a single identity of the form $t(a_i) = t'(a_j)$.

\end{prop}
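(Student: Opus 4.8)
The plan is to show that the complementary sentence $\neg\varphi$ has limiting density $0$. Since the structure given by a presentation is the quotient $\mathcal A = F/\!\sim$ of the free structure $F$ by the congruence $\sim$ generated by the single identity, and in $F$ every non-generator has a unique $f$-predecessor, the falsity of $\varphi$ in $F$ is immediate. Let $\pi\colon F\to\mathcal A$ be the quotient map. The first observation is a \emph{peeling} principle: if the identity is $t(a_i)=t'(a_j)$ with $t=f_c s$ and $t'=f_d s'$ both of length $\geq 1$, then $f_c(\pi(s(a_i)))=\pi(t(a_i))=\pi(t'(a_j))=f_d(\pi(s'(a_j)))$, so as soon as $\pi(s(a_i))\neq\pi(s'(a_j))$ the two distinct elements $\pi(s(a_i)),\pi(s'(a_j))$ witness $\varphi$. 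Hence $\varphi$ can fail only when one side of the identity is trivial (length $0$) or when $\pi(s(a_i))=\pi(s'(a_j))$, i.e.\ $s(a_i)\sim s'(a_j)$.

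To control $\sim$ I would first record that, because the variety has no axioms, $\sim$ is exactly the equivalence closure of the family of pairs $\{\,(w\,t(a_i),\,w\,t'(a_j)) : w \text{ a string of function symbols}\,\}$; that is, $u(a_k)\sim v(a_l)$ iff one can pass from $u(a_k)$ to $v(a_l)$ by repeatedly replacing an innermost block $t(a_i)$ by $t'(a_j)$, or vice versa. The main case split is on whether the two generators agree. If $i\neq j$, then no replacement can ever apply to the term $s(a_i)$: matching $w\,t(a_i)$ would force $s=w\,t$, impossible since $\len(t)=\len(s)+1$, while matching $w\,t'(a_j)$ would force the generator $a_i$ to equal $a_j$. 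Thus the $\sim$-class of $s(a_i)$ is a singleton, $s(a_i)\not\sim s'(a_j)$, and $\varphi$ holds for \emph{every} such identity with both sides nonempty.

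The remaining, and genuinely delicate, case is $i=j$, where the rewriting relation can be nontrivial. Here I would argue that $s(a_i)\sim s'(a_i)$ with $s\neq s'$ forces a first replacement out of $s(a_i)$, which (again because $t=f_c s$ is too long to be an innermost block of $s$) requires $t'$ to occur as an innermost block of $s$, i.e.\ $s=w\,t'$ for some $w$. Consequently the set of ``bad'' identities in this case is contained in $\{\,s=s'\,\}\cup\{\,t'\text{ is a suffix of }s\,\}$. The endgame is then a counting estimate at each fixed length $\ell$: the total number of identities of length $\ell$ is $m^2(\ell+1)n^\ell$; the identities with a trivial side number $O(n^\ell)$, those with $s=s'$ number $O(n^{\ell/2})$, and those with $s=w\,t'$ number at most $m\,n^{\ell}\sum_{q\geq 0}n^{-q}=O(n^\ell)$ (here $n\geq 2$ makes the geometric series converge; the case $n=1$ is the non-injectivity statement already treated in Section \ref{basic-single-unary}). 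Each of these is $O\!\left(n^\ell\right)$ or smaller against a total of order $(\ell+1)n^\ell$, so each ratio $P_{=\ell}(\cdot)/P_{=\ell}\to 0$. Passing from fixed length to length $\leq s$ via Lemma \ref{lem4.29} (the extra factor $m^2$ cancels in all ratios), I conclude that the whole bad set has density $0$, whence $\varphi$ has density $1$. I expect the $i=j$ rewriting analysis to be the main obstacle, since it is the only place where the congruence is not just a single identification.
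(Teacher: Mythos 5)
Your proof is correct, and it shares the paper's overall skeleton --- show that an exceptional family of identities has limiting density $0$, and for every other identity produce witnesses for $\varphi$ by peeling the outermost symbol off both sides --- but your treatment of the key step is genuinely different, and in fact more careful than the paper's. The paper's proof discards only the identities with a trivial side (your sets of size $O(n^\ell)$ per length) and then asserts outright that for every remaining identity $t(a_i)=t'(a_j)$, writing $t=f_{i'}t^*$ and $t'=f_{j'}t'^*$, one has $t^*(a_i)\neq t'^*(a_j)$ in the presented structure. That assertion is false as stated: for the identity $f_1f_1(a_1)=f_2f_1(a_1)$ the peeled terms coincide, and indeed $\varphi$ fails in that structure, since the congruence consists exactly of the pairs $wf_1f_1(a_1)\sim wf_2f_1(a_1)$ and one checks it never identifies the images of two inequivalent elements. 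So the exceptional set genuinely must be enlarged, which is exactly what your rewriting analysis accomplishes: your description of $\sim$ as the equivalence closure of $\{(wt(a_i),wt'(a_j)):w \text{ a string}\}$ shows that failure of the peeling witnesses forces $i=j$ and either $s=s'$ or $s=wt'$, and your per-length counts $O(n^{\ell/2})$ and $O(n^\ell)$ (the latter needing $n\geq 2$ for the geometric series) against the total $m^2(\ell+1)n^\ell$, combined with Lemma~\ref{lem4.29} (whose proof uses only the geometric growth of $P_{=\ell}$ and is unaffected by the constant factor $m^2$), give density $0$ for the whole bad set. In short, your argument proves the proposition, while the paper's, read literally, does not, although it is repaired by exactly your two extra exceptional families. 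One small caveat: your disposal of $n=1$ by appeal to Section~\ref{basic-single-unary} is not quite accurate, since that section treats a single generator only; but this is immaterial, as the proposition lives in the setting of multiple unary functions ($n\geq 2$), which is also what the paper's own counting formulas (with $(n-1)^2$ in denominators) presuppose.
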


\begin{proof}

For $m$ generators $a_1,\ldots,a_m$, the free structure $F$ is the join of disjoint substructures generated by the separate $a_j$.  In $F$, each element is uniquely expressed as $t(a_i)$, where the term $t$ is built up out of the functions $f_j$.  The terms are all distinct, and the sentence $\varphi$ is false.  For an identity $t(a_i) = t'(a_j)$, the length is the sum of the lengths of $t,t'$.  The number of identities of length $\ell$ is $m^2 n^\ell (\ell+1)$, so the number of identities of length at most $s$, or $P_s$, is $m^2\sum_{0\leq \ell\leq s}n^\ell(\ell+1)$, which is equal to $m^2\frac{(n-1)(s+2)n^{s+1} - (n^{s+2} - 1)}{(n-1)^2}$.

Let $A$ be the set of identities $t(a_i) = t'(a_j)$ such that $t$ has length $0$.  We show that $A$ has limiting density $0$.  The number of identities in $A$ of length $\ell$ is $m^2 n^\ell$, so the number of length at most $s$ is $m^2(\sum_{0\leq \ell\leq s}n^\ell)$, or $m^2\frac{n^{s+1} - 1}{n - 1}$.  This is $P_s(A)$.  It is not difficult to verify that $\lim_{s\rightarrow\infty}\frac{P_s(A)}{P_s} = 0$.  Similarly, let $B$ be the set of identities $t(a_i) = t'(a_j)$ such that $t'$ has length $0$.  Then $B$ also has limiting density $0$.  Therefore, the limiting density of $A\cup B$ is $0$.  Let $C$ be the set of identities not in $A\cup B$.  This will have limiting density $1$.  The identities in $C$ have the form $t(a_i) = t'(a_j)$ where $t,t'$ both have length at least $1$.  Say that $t(a_i) = f_{i'}(t^*(a_i))$ and $t'(a_j) = f_{j'}(t'^*(a_j))$ for terms $t^*$ and $t'^*$.  In the model given by the identity $t(a_i) = t'(a_j)$, we have $t^*(a_i)\not= t'^*(a_j)$.  The elements $x = t^*(a_i)$ and $y = t'^*(a_j)$ witness that the sentence $\varphi$ is true. 
\end{proof}

Now, we consider presentations with more than one identity.  We let $\varphi$ be as in Proposition \ref{prop:FinFreeStruct}.      

\begin{prop}

For the language with $n$ unary function symbol $f_1, \dots, f_n$, let $V$ be the variety with no axioms.  For presentations with a fixed $m$-tuple of generators and $k$ identities, where $k\geq 2$, the sentence $\varphi$ has limiting density $1$.

\end{prop}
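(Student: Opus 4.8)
The plan is to reduce the statement to producing, with limiting density $1$, a \emph{single} identity in the presentation that forces $\varphi$, and then to exploit the independence of the $k$ identities licensed by Proposition~\ref{unorderedsets.v}. The witnessing mechanism is the one already used in Proposition~\ref{prop:FinFreeStruct}: if one of the chosen identities is \emph{good}, meaning it has the form $t(a_i) = t'(a_j)$ with $\len(t),\len(t')\geq 1$, then writing $t = f_{i'}\circ t^*$ and $t' = f_{j'}\circ t'^*$ we have $f_{i'}(t^*(a_i)) = f_{j'}(t'^*(a_j))$ in the quotient $\mathcal{A}$, so that $x = t^*(a_i)$ and $y = t'^*(a_j)$ satisfy the disjunct $f_{i'}(x) = f_{j'}(y)$ of $\varphi$. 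Thus $\varphi$ holds in $\mathcal{A}$ \emph{provided} $x\neq y$ there, and it suffices to show that the set of presentations admitting some good identity whose stripped pair $(x,y)$ remains distinct in $\mathcal{A}$ has limiting density $1$.

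First I would dispose of the purely combinatorial part. Working with ordered $k$-tuples of identities allowing repetition (Proposition~\ref{unorderedsets.v}), the $k$ coordinates become independent and uniform among identities of length at most $s$. For a single coordinate, the identities that are \emph{not} good---having one side of length $0$---are exactly the set $A\cup B$ of Proposition~\ref{prop:FinFreeStruct}, of limiting density $0$; and the good identities whose stripped arguments already coincide in the free structure, namely those of the special form $f_{i'}(u(a_i)) = f_{j'}(u(a_i))$, are parametrized by $u$, $i'$, $j'$, $i$ alone and so also form a density-$0$ set (for length $2\ell+2$ there are $mn^{\ell+2}$ of them against $\sim m^2n^{2\ell+2}(2\ell+3)$ identities). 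Hence a single coordinate is good with $x\neq y$ in $F$ with probability tending to $1$, and by independence the probability that \emph{no} coordinate has this property is at most a $k$-th power of a quantity tending to $0$. So with limiting density $1$, at least one of the $k$ identities is good with stripped arguments distinct in the free structure.

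The remaining step---\emph{survival}, i.e.\ upgrading $x\neq y$ in $F$ to $x\neq y$ in the further quotient $\mathcal{A}$ obtained by also imposing the other $k-1$ identities---is where I expect the main obstacle to lie, since this couples all the coordinates rather than just the marginal of one. Here I would describe the congruence generated by the identities as a suffix-rewriting relation on words over $\{f_1,\dots,f_n\}$ carrying a generator label: an identity $t(a_i)=t'(a_j)$ permits replacing a suffix equal to the word of $t$ (on generator $i$) by the word of $t'$ (on generator $j$), and two elements are congruent iff joined by a finite chain of such moves. A collapse of the class of $x=t^*(a_i)$ must \emph{begin} with some identity side (the other side of its own identity, or a side of one of the remaining $k-1$ identities) matching a suffix of the word of $x$, and likewise for $y$; this is a necessary condition independent of the direction of the moves. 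I would then bound the probability of any such suffix coincidence and show it tends to $0$, via a union bound over the constantly many identities and sides and a tail estimate in the spirit of Section~\ref{bij-two-id}. The key inputs are that most identities of length at most $s$ actually have length greater than $\sqrt{s}$ (the argument of Lemma~\ref{sqareroot}), so the relevant words are long, and that an independent uniformly random word over an $n$-letter alphabet realizes a prescribed suffix of a given long word only with exponentially small probability. Combining the density-$1$ existence of a good, distinct-in-$F$ identity with the density-$0$ estimate for collapse yields a density-$1$ set of presentations with a surviving witness for $\varphi$, proving the proposition.
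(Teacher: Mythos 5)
Your proposal is correct in outline, but it takes a genuinely different route from the paper's proof. The paper stays with unordered sets: it lets $C$ be the set of presentations in which \emph{all} $k$ identities have both sides of positive length, shows $P_s(C)/P_s = \binom{J_s}{k}/\binom{I_s}{k}\to 1$ by induction on $k$, and then claims that $\varphi$ holds in \emph{every} structure arising from a presentation in $C$: take an identity of shortest length, strip the outermost symbols as in Proposition~\ref{prop:FinFreeStruct}, and assert that ``by minimality'' the stripped terms remain distinct in the quotient. You instead demand only \emph{one} identity that is good and has stripped terms distinct in the free structure (density $1$, by your two marginal estimates together with the independence of coordinates licensed by Proposition~\ref{unorderedsets.v}), and you then do real work---the suffix-rewriting analysis---to show that this witness survives the quotient by the remaining $k-1$ identities.

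That survival step is not optional caution: the paper's minimality claim is false as stated, so an analysis like yours is exactly what a complete proof needs. Concretely, take $m=1$, $n=2$, $k=2$ and the presentation $\{f_1f_1(a)=f_2f_2(a),\; f_1(a)=f_2(a)\}$. Both identities have nontrivial sides, so this presentation lies in the paper's set $C$; but the shortest identity strips to the pair $(a,a)$, and in the resulting structure the congruence only ever alters the innermost one or two symbols of a term, from which one checks directly that there is no pair $x\neq y$ with $f_i(x)=f_j(y)$; that is, $\varphi$ fails. So $\varphi$ holds on $C$ only outside a set of density zero---which is precisely what your argument establishes, whereas the paper's shortcut asserts more than is true. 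One detail of your sketch does need tightening: your tail estimate is exponential in the length of the identity \emph{side} being matched, not of the identity, and a long identity can still have a short side, which matches a prescribed suffix with probability that is not small. The repair is that, for an identity of length $\ell$, the split point is uniform over the $\ell+1$ positions, so a side has length exactly $\lambda$ with probability $O(1/\ell)$; summing $O(1/\ell)\,n^{-\lambda}$ over $\lambda$, and using (as you note, in the spirit of Lemma~\ref{sqareroot}) that identities of length below $\sqrt{s}$ have vanishing density, gives a per-side bound of $O(1/\sqrt{s})$, after which your union bound over the $2k$ sides finishes the survival step.
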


\begin{proof}

Let $I_s$ be the number of identities of length at most $s$.  Then the number of presentations in which all identities have length at most $s$ is 
$P_s = \left(\begin{array}{c}
I_s\\k\end{array}\right)$.
Consider the identities $t(a_i) = t'(a_j)$ in which neither side has length $0$.  The number of these identities of length $\ell$, where $\ell\geq 2$, is $m^2 n^\ell(\ell-1)$, so the number of length at most $s$ is 
$m^2\sum_{2\leq \ell\leq s}n^\ell(\ell-1) = m^2\frac{(n-1)sn^{s-1} - (n^s - 1))}{(n-1)^2}$.  For convenience, we call this $J_s$.  Let $C$ be the set of presentations with $k$ identities in which neither side has length $0$.  Then $P_s(C) =  
\left(\begin{array}{c}
J_s\\k\end{array}\right)$.    

We show by induction on $k$ that $\lim_{s\rightarrow\infty}\frac{P_s(C)}{P_s} = 1$.  We write $P_s^k$ and $P_s^k(C)$ to indicate the value of $k$ under consideration.  For $k = 2$, $\frac{P_s^2(C)}{P_s^2} = \frac{J_s(J_s - 1)}{I_s(I_s - 1)}$.  We know that $\frac{J_s}{I_s} \rightarrow 1$.  For the expression $\frac{P_s^2(C)}{P_s^2}$, we divide top and bottom both by $I_s$ and get a new numerator $\frac{J_s}{I_s} - \frac{1}{I_s}$ that goes to $1$ and a new denominator $\frac{I_s}{I_s} - \frac{1}{I_s}$ that also goes to $1$.  Now, supposing that the statement holds for $k$, we show that it holds for $k+1$.  We have $\frac{P_s^{k+1}(C)}{P_s^k} = \left(\frac{P_s^k(C)}{P_s^k}\right)\left(\frac{J_s - k}{I_s - k}\right)$.  By the Induction Hypothesis, the first factor goes to $1$.  For the second factor, we again divide top and bottom by $I_s$.  The new numerator is $\frac{J_s}{I_s} - \frac{k}{I_s}$, which has limit $1$.  The new denominator is $\frac{I_s}{I_s} - \frac{k}{I_s}$, which also has limit $1$.

We claim that the sentence $\varphi$ is true in all structures obtained from presentations in $C$.  Take any presentation in $C$ and consider the resulting model $\mathcal{A}$.  No $a_i$ is in the range of any function in any model of this sort.  The given identities all take us from a non-trivial term in some $a_i$ to a non-trivial term in some $a_j$ and do not force us to assign values $a_i$, so we can fill out the rest of the function values without ever using these values $a_i$. Thus, all nontrivial identities true in $\mathcal{A}$ are in all structures with presentations in $C$.  Take an identity of shortest length, say $t(a_i) = t'(a_j)$, and proceed as for a single identity. Say that $t(a_i) = f_{i'}(t^*(a_i))$ and $t'(a_j) = f_{j'}(t'^*(a_j))$ for terms $t^*$ and $t'^*$. By the minimality of the length of $t(a_i) = t'(a_j)$, we have $t^*(a_i)\not= t'^*(a_j)$.  This witnesses the truth of $\varphi$.
\end{proof}

\bibliographystyle{plain}
\bibliography{zero_one}

\begin{thebibliography}{10}

\bibitem{Ar97}
G.~N. Arzhantseva.
\newblock On groups in which subgroups with a fixed number of generators are
  free.
\newblock {\em Fundam. Prikl. Mat.}, 3(3):675--683, 1997.

\bibitem{Ar96}
G.~N. Arzhantseva and A.~Yu. Ol'shanski\u{\i}.
\newblock Generality of the class of groups in which subgroups with a lesser
  number of generators are free.
\newblock {\em Mat. Zametki}, 59(4):489--496, 638, 1996.

\bibitem{BS}
Stanley Burris and H.~P. Sankappanavar.
\newblock {\em A course in universal algebra}, volume~78 of {\em Graduate Texts
  in Mathematics}.
\newblock Springer-Verlag, New York-Berlin, 1981.

\bibitem{CDDHS}
Matthew Cordes, Moon Duchin, Yen Duong, Meng-Che Ho, and Andrew~P. S\'{a}nchez.
\newblock Random nilpotent groups {I}.
\newblock {\em Int. Math. Res. Not. IMRN}, (7):1921--1953, 2018.

\bibitem{D}
Shagnik Das.
\newblock A brief note on estimates of binomial coefficients.
\newblock \url{http://page.mi.fu-berlin.de/shagnik/notes/binomials.pdf}.
\newblock Accessed March 1, 2022.

\bibitem{EF}
Paul~C. Eklof and Edward~R. Fischer.
\newblock The elementary theory of abelian groups.
\newblock {\em Ann. Math. Logic}, 4:115--171, 1972.

\bibitem{Gaifman}
Haim Gaifman.
\newblock On local and nonlocal properties.
\newblock In {\em Proceedings of the {H}erbrand symposium ({M}arseilles,
  1981)}, volume 107 of {\em Stud. Logic Found. Math.}, pages 105--135.
  North-Holland, Amsterdam, 1982.

\bibitem{G}
M.~Gromov.
\newblock Hyperbolic groups.
\newblock In {\em Essays in group theory}, volume~8 of {\em Math. Sci. Res.
  Inst. Publ.}, pages 75--263. Springer, New York, 1987.

\bibitem{HKT}
Matthew Harrison-Trainor, Bakh Khoussainov, and Daniel Turetsky.
\newblock Effective aspects of algorithmically random structures.
\newblock {\em Computability}, 8(3-4):359--375, 2019.

\bibitem{Ho}
Meng-Che Ho.
\newblock {\em Randomizing and {D}escribing {G}roups}.
\newblock ProQuest LLC, Ann Arbor, MI, 2017.
\newblock Thesis (Ph.D.)--The University of Wisconsin - Madison.

\bibitem{KS}
Ilya Kapovich and Paul Schupp.
\newblock Genericity, the {A}rzhantseva-{O}lshanskii method and the isomorphism
  problem for one-relator groups.
\newblock {\em Math. Ann.}, 331(1):1--19, 2005.

\bibitem{KM}
Olga Kharlampovich and Alexei Myasnikov.
\newblock Elementary theory of free non-abelian groups.
\newblock {\em J. Algebra}, 302(2):451--552, 2006.

\bibitem{KhaSkl}
Olga Kharlampovich and Rizos Sklinos.
\newblock First-order sentences in random groups.
\newblock {\em arXiv preprint arXiv:2106.05461}, 2022.

\bibitem{Kolaitis}
Phokion~G. Kolaitis.
\newblock On the expressive power of logics on finite models.
\newblock In {\em Finite model theory and its applications}, Texts Theoret.
  Comput. Sci. EATCS Ser., pages 27--123. Springer, Berlin, 2007.

\bibitem{Libkin}
Leonid Libkin.
\newblock {\em Elements of finite model theory}.
\newblock Texts in Theoretical Computer Science. An EATCS Series.
  Springer-Verlag, Berlin, 2004.

\bibitem{Marker}
David Marker.
\newblock {\em Model theory}, volume 217 of {\em Graduate Texts in
  Mathematics}.
\newblock Springer-Verlag, New York, 2002.
\newblock An introduction.

\bibitem{Olivier}
Yann Ollivier.
\newblock {\em A {J}anuary 2005 invitation to random groups}, volume~10 of {\em
  Ensaios Matem\'{a}ticos [Mathematical Surveys]}.
\newblock Sociedade Brasileira de Matem\'{a}tica, Rio de Janeiro, 2005.

\bibitem{Ol'shanskii}
A.~Yu. Ol'shanski\u{\i}.
\newblock Almost every group is hyperbolic.
\newblock {\em Internat. J. Algebra Comput.}, 2(1):1--17, 1992.

\bibitem{SC}
Laurent Saloff-Coste.
\newblock Random walks on finite groups.
\newblock In {\em Probability on discrete structures}, volume 110 of {\em
  Encyclopaedia Math. Sci.}, pages 263--346. Springer, Berlin, 2004.

\bibitem{S}
Z.~Sela.
\newblock Diophantine geometry over groups. {VI}. {T}he elementary theory of a
  free group.
\newblock {\em Geom. Funct. Anal.}, 16(3):707--730, 2006.

\bibitem{Szmielew}
W.~Szmielew.
\newblock Elementary properties of {A}belian groups.
\newblock {\em Fund. Math.}, 41:203--271, 1955.

\bibitem{vdVaart}
A.~W. van~der Vaart.
\newblock {\em Asymptotic statistics}, volume~3 of {\em Cambridge Series in
  Statistical and Probabilistic Mathematics}.
\newblock Cambridge University Press, Cambridge, 1998.

\end{thebibliography}

\end{document}